\sloppy \pagestyle{plain}
\renewcommand{\rho}{\uprho}
\newtheorem{theorem}[equation]{Theorem}
\newtheorem{lemma}[equation]{Lemma}
\newtheorem{corollary}[equation]{Corollary}
\newtheorem{proposition}[equation]{Proposition}
\theoremstyle{definition}
\newtheorem{example}[equation]{Example}
\newtheorem{warning}[equation]{Warning}
\newtheorem*{definition*}{Definition}
\theoremstyle{remark}
\newtheorem{remark}[equation]{Remark}
\def \O {\mathscr{O}}
\def \CC {\mathscr{C}}
\def \P {\mathbb{P}}
\def \GG {\mathbb{G}}
\newcommand{\GGa}{\mathbb G_{\mathrm{a}}}
\newcommand{\GGm}{\mathbb G_{\mathrm{m}}}
\def \Q {\mathbb{Q}}
\def \Z {\mathbb{Z}}
\def \F {\mathrm{F}}
\def \SS {\mathfrak{S}}
\def \A {\mathfrak{A}}
\def \Gr {\mathrm{Gr}}
\def \Fl {\mathrm{Fl}}
\def \red {\mathrm{red}}
\def \mumu {\boldsymbol{\mu}}
\newcommand{\dd}{\operatorname{d}}
\def \Aut {\mathrm{Aut}}
\def \Cl {\mathrm{Cl}}
\def \Pic {\mathrm{Pic}}
\def \Sym {\mathrm{Sym}}
\newcommand{\diag}{\operatorname{diag}}
\newcommand{\reduced}{\operatorname{red}}
\newcommand{\g}{\operatorname{g}}
\def \Sing {\mathrm{Sing}\,}
\def \GL {\mathrm{GL}}
\def \SL {\mathrm{SL}}
\def \PGL {\mathrm{PGL}}
\def \PSO {\mathrm{PSO}}
\def \Oct {\mathrm{Oct}}
\def \Icos {\mathrm{Icos}}
\def\udot {{\:\raisebox{3pt}{\text{\circle*{1.5}}}}}
\def \ge {\geqslant}
\def \le {\leqslant}
\newcommand{\xref}[1]{\textup{\ref{#1}}}
\newcommand{\CA}{{\mathscr{A}}}
\newcommand{\CB}{{\mathscr{B}}}
\newcommand{\D}{\mathop{\mathsf{D}}\nolimits}
\newcommand{\CE}{{\mathscr{E}}}
\newcommand{\CF}{{\mathscr{F}}}
\newcommand{\CL}{{\mathscr{L}}}
\newcommand{\CM}{{\mathscr{M}}}
\newcommand{\CN}{{\mathscr{N}}}
\newcommand{\CO}{{\mathscr{O}}}
\newcommand{\CQ}{{\mathscr{Q}}}
\newcommand{\CU}{{\mathscr{U}}}
\newcommand{\CV}{{\mathscr{V}}}
\newcommand{\SQ}{{\mathsf{Q}}}
\newcommand{\PP}{{\mathbb{P}}}
\newcommand{\Hom}{\mathop{\mathsf{Hom}}\nolimits}
\newcommand{\Ext}{\mathop{\mathsf{Ext}}\nolimits}
\newcommand{\Tor}{\mathop{\mathsf{Tor}}\nolimits}
\newcommand{\CHom}{\mathop{\mathscr{H}\!\mathit{om}}\nolimits}
\newcommand{\CExt}{\mathop{\mathscr{E}\!\mathit{xt}}\nolimits}
\newcommand{\Ker}{\mathop{\mathsf{Ker}}}
\newcommand{\Coker}{\mathop{\mathsf{Coker}}}
\newcommand{\Hilb}{\mathop{\mathsf{Hilb}}\nolimits}
\newcommand{\Pf}{\mathop{\mathsf{Pf}}}
\newcommand{\LGr}{\mathop{\mathrm{LGr}}}
\DeclareMathOperator{\Orb}{Orb}
\DeclareMathOperator{\oOrb}{\overline{Orb}}
\title[Lines, conics, and automorphisms of Fano threefolds]
{Hilbert schemes of lines and conics\\[1ex] and automorphism groups of Fano threefolds}
\author{Alexander Kuznetsov }
\author{Yuri Prokhorov}
\author{Constantin Shramov}
\address{
Alexander Kuznetsov:\newline
Steklov Mathematical Institute of Russian Academy of Sciences,
\newline
8 Gubkina street, Moscow
\newline
The Poncelet Laboratory, Independent University of Moscow
\newline
Laboratory of Algebraic Geometry, National Research University Higher School of Economics, Moscow
}
\email{akuznet@mi.ras.ru}
\address{
\newline\phantom{ppp}
Yuri Prokhorov:\newline
Steklov Mathematical Institute of Russian Academy of Sciences,
\newline
8 Gubkina street, Moscow
\newline
Laboratory of Algebraic Geometry, National Research University Higher School of Economics, Moscow
\newline
Department of Algebra, Moscow State
University, Moscow
}
\email{prokhoro@mi.ras.ru}
\address{
Constantin Shramov:\newline
Steklov Mathematical Institute of Russian Academy of Sciences,
\newline
8 Gubkina street, Moscow
\newline
Laboratory of Algebraic Geometry, National Research University Higher School of Economics, Moscow
}
\email{costya.shramov@gmail.com}
\thanks{The authors were partially supported by
the Russian Academic Excellence Project ``5-100'',
by RFBR grant 15-01-02164, and by
the Program of the Presidium of the Russian Academy of Sciences No.~01 ``Fundamental Mathematics and
its Applications'' under grant \mbox{PRAS-18-01}.
A.K was also supported by RFBR 14-01-00416, 15-51-50045 and by the Simons foundation.
Yu.P. was also supported by RFBR 15-01-02158 and 15-51-50045.
C.S. was also supported by RFBR 14-01-00160 and 15-01-02158 and the Dynasty foundation.}
\begin{document}

\maketitle

\begin{abstract}
We discuss various results on Hilbert schemes of lines and conics and automorphism groups of smooth Fano threefolds
of Picard rank~1.
Besides a general review of facts well known to experts, the paper contains some new results,
for instance, we give a description of the Hilbert scheme of conics on any smooth Fano threefold of index 1 and genus 10.
We also show that the action of the automorphism group of a Fano threefold~$X$ of index~2 (respectively,~1) on an irreducible component of its Hilbert scheme of lines (respectively, conics) is faithful
if the anticanonical class of~$X$ is very ample except for some explicit cases.

We use these faithfulness results to prove finiteness of the automorphism groups of most Fano threefolds and classify explicitly all Fano threefolds with infinite automorphism group.
We also discuss a derived category point of view on the Hilbert schemes of lines and conics, and use it to identify some of them.
\end{abstract}

\tableofcontents

\section{Introduction}

\subsection{Setup and main results}
We work over an algebraically closed field $\Bbbk$ of characteristic~$0$.

The purpose of this paper is to survey the results on Hilbert schemes of lines and conics and
automorphism groups of Fano threefolds of Picard rank~$1$. These are usually known to experts,
but sometimes are scattered in the literature or even
in the mathematical folklore.

Let $X$ be a Fano threefold with at worst canonical Gorenstein singularities. In this case, the number
\begin{equation*}
\g(X)=-\frac{1}{2}K_X^3+1
\end{equation*}
is called the \textit{genus} of $X$.
By Riemann--Roch theorem and Kawamata--Viehweg vanishing one has
\begin{equation*}
\dim |-K_X|=\g(X)+1
\end{equation*}
(see e.\,g. \cite[2.1.14]{Iskovskikh-Prokhorov-1999}).
In particular, $\g(X)$ is an integer, and $\g(X)\ge 2$.
Recall that~\mbox{$\Pic(X)$} is a finitely generated torsion free abelian group, so that
\begin{equation*}
\Pic(X)\cong \Z^{\rho(X)}
\end{equation*}
(this holds even for Fano varieties with log terminal singularities, see e.g.~\mbox{\cite[Proposition 2.1.2]{Iskovskikh-Prokhorov-1999}}).
The integer $\rho(X)$ is called the \emph{Picard rank} of~$X$.
The maximal number $\iota=\iota(X)$
such that $-K_X$ is divisible by $\iota$ in~\mbox{$\Pic(X)$}
is called the \textit{Fano index}, or just~\emph{index}, of~$X$.
Let $H$ be a divisor class such that
\begin{equation*}
-K_X\sim\iota(X) H.
\end{equation*}
The class $H$ is unique since $\Pic(X)$ is torsion free.
Define the \textit{degree} of $X$ as
\begin{equation*}
\dd(X)=H^3.
\end{equation*}

In this paper we concentrate on smooth Fano threefolds of Picard rank 1.
Their classification can be found in~\mbox{\cite[\S12.2]{Iskovskikh-Prokhorov-1999}} (see also \cite{mukai1989biregular}).
We recall it in Tables~\xref{table:Fanos-i-ge-2} and~\xref{table:Fanos-i-1}
which contain the lists of Fanos with index at least two and index one, respectively.
For our purposes it will be important to know for each type of Fano threefolds the minimal integer~$m_0$ such that~\mbox{$m_0H$} is very ample. We list these $m_0$ in the last columns of the tables.

\begin{table}[h]
\caption{Smooth Fano threefolds with $\rho=1$ and $\iota\ge 2$}\label{table:Fanos-i-ge-2}
\begin{tabular}{|c|c|c|p{11.5cm}|c|}
\hline $\iota$ & $\dd$ & $h^{1,2}$ & Brief description & $m_0$ \\
\hline
\hline $4$ & $1$ & 0 & $\mathbb{P}^{3}$ & 1 \\
\hline
\hline $3$ & $2$ & 0 & quadric hypersurface in $\mathbb{P}^{4}$ & 1 \\
\hline
\hline $2$ & $1$ & 21 & hypersurface in $\mathbb{P}(1,1,1,2,3)$ of degree $6$ & 3 \\
\hline $2$ & $2$ & 10 & double cover of $\P^3$ branched in a quartic surface & 2 \\
\hline $2$ & $3$ & 5 & cubic hypersurface in $\mathbb{P}^{4}$ & 1 \\
\hline $2$ & $4$ & 2 & complete intersection of two quadrics in $\mathbb{P}^{5}$ & 1 \\
\hline $2$ & $5$ & 0 & section of $\mathrm{Gr}(2,5)\subset\mathbb{P}^9$ by a linear subspace of codimension $3$ & 1 \\
\hline
\end{tabular}
\end{table}

\begin{table}[h]
\caption{Smooth Fano threefolds with $\rho=1$ and $\iota=1$}\label{table:Fanos-i-1}
\begin{tabular}{|c|c|c|p{11.5cm}|c|}
\hline $\g$ & $\dd$ & $h^{1,2}$ & Brief description & $m_0$ \\
\hline
\hline $2$ & $2$ & $52$ & double cover of $\P^3$ branched in a sextic surface & 3 \\
\hline $3$ & $4$ & $30$ & (a) quartic hypersurface in $\mathbb{P}^4$, or & 1 \\ &&&
(b) double cover of a smooth quadric in
$\mathbb{P}^{4}$ branched in an intersection with a quartic & 2 \\
\hline $4$ & $6$ & $20$ & complete intersection of a quadric and a cubic in $\mathbb{P}^{5}$ & 1 \\
\hline $5$ & $8$ & $14$ & complete intersection of three quadrics $\mathbb{P}^{6}$ & 1 \\
\hline $6$ & $10$ & $10$ & (a) section of $\mathrm{Gr}(2,5)\subset\mathbb{P}^9$ by a linear subspace of codimension~$2$ and a quadric, or
\newline
(b) double cover of the Fano threefold $Y$ with $\iota(Y)=2$ and $\dd(Y)=5$ branched in an anticanonical divisor & 1 \\
\hline $7$ & $12$ & $7$ & section of a connected component of the orthogonal Lagrangian Grassmannian $\mathrm{OGr}_{+}(5,10)\subset\P^{15}$ by a linear subspace of codimension~$7$ & 1 \\
\hline $8$ & $14$ & $5$ & section of $\mathrm{Gr}(2,6)\subset\mathbb{P}^{14}$ by a linear subspace of codimension $5$ & 1 \\
\hline $9$ & $16$ & $3$ & section of the symplectic Lagrangian Grassmannian \mbox{$\LGr(3,6)\subset\P^{13}$} by a linear subspace of codimension~$3$ & 1 \\
\hline $10$ & $18$ & $2$ & section of the homogeneous space $\mathrm{G}_2/P\subset\mathbb{P}^{13}$ by a linear subspace of codimension~$2$ & 1 \\
\hline $12$ & $22$ & $0$ & zero locus of three sections of the rank $3$ vector bundle $\Lambda^2\CU^\vee$, where $\CU$ is the universal subbundle on $\mathrm{Gr}(3,7)$ & 1 \\
\hline
\end{tabular}
\end{table}

Note that although in some cases (for $\rho = \iota = 1$ and $\g = 3$ or $\g = 6$) there are two types of Fano threefolds, they belong to the same deformation family.

The first main result of this paper is an explicit description of the Hilbert schemes
of lines $\Sigma(Y)$ on Fano threefolds $Y$ of Picard rank 1, index 2 and degree $\dd(Y) \ge 3$
and the Hilbert schemes of conics $S(X)$ on Fano threefolds $X$ of Picard rank~1, index~1
and genus~\mbox{$\g(X) \ge 7$}
(by lines and conics we mean lines and conics in the embedding given by the linear system~$|H|$).
We collect the results we have in the following theorem.

\begin{theorem}
\label{theo:main1}
Let $Y$ be a smooth Fano threefold with $\rho(Y)=1$, $\iota(Y) = 2$, and~\mbox{$\dd(Y)\ge 3$}.
Then the Hilbert scheme of lines $\Sigma(Y)$ is a smooth irreducible surface and moreover:
\begin{itemize}
\item[(2.3)]
if $\dd(Y)=3$, then $\Sigma(Y)$ is a minimal surface of general type
with irregularity $5$, geometric genus $10$ and canonical degree~\mbox{$K_{\Sigma(Y)}^2=45$};
\item[(2.4)]
if $\dd(Y)=4$, then $\Sigma(Y)$ is an abelian surface;
\item[(2.5)]
if $\dd(Y)=5$, then $\Sigma(Y)\cong\P^2$.
\end{itemize}

Let $X$ be a smooth Fano threefold $\rho(X)=1$, $\iota(X)=1$, and $\g(X)\ge 7$.
Then the Hilbert scheme of conics $S(X)$ is a smooth irreducible surface and moreover:
\begin{itemize}
\item[(1.7)]
if $\g(X)=7$, then $S(X)$ is the symmetric square of a smooth curve of genus~$7$;
\item[(1.8)]
if $\g(X)=8$, then $S(X)$ is a minimal surface of general type
with irregularity $5$, geometric genus $10$ and canonical degree~\mbox{$K_{S(X)}^2=45$};
\item[(1.9)]
if $\g(X)=9$, then $S(X)$ is a ruled surface isomorphic to the projectivization of a simple rank $2$ vector bundle on a smooth curve of genus $3$;
\item[(1.10)]
if $\g(X)=10$, then $S(X)$ is an abelian surface;
\item[(1.12)]
if $\g(X)=12$, then $S(X)\cong\P^2$.
\end{itemize}
\end{theorem}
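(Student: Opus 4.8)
The plan is to argue family by family, using in each case the explicit model recorded in Table~\xref{table:Fanos-i-ge-2} or~\xref{table:Fanos-i-1}, and to split the work into two parts: (i)~proving that the Hilbert scheme is smooth of the expected dimension~$2$, and (ii)~identifying the resulting smooth surface. For~(i), note that a line $\ell\subset Y$ satisfies $-K_Y\cdot\ell=2H\cdot\ell=2$ and a conic $C\subset X$ satisfies $-K_X\cdot C=H\cdot C=2$, so in either case the normal bundle $N$ (namely $N_{\ell/Y}$, respectively $N_{C/X}$) is a rank-$2$ bundle of degree~$0$ on $\P^1$; thus $N\cong\CO(a)\oplus\CO(-a)$ for some $a\ge 0$, and $\chi(N)=2$. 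Hence every component of $\Sigma(Y)$, respectively $S(X)$, has dimension~$\ge 2$, and the Hilbert scheme is smooth of dimension exactly~$2$ at the corresponding point as soon as $H^1(N)=0$, i.e.\ as soon as $a\le 1$. Ruling out $a\ge 2$ is the first task; it is equivalent to saying that a line (respectively, a conic) does not move in an $\ge 2$-dimensional subfamily through one of its points, and follows from a local analysis of the embedding by $|H|$, using $\rho(X)=1$ and very ampleness of $H$ (all the relevant families have $m_0=1$) to exclude the low-degree surfaces — planes, quadric surfaces, small scrolls — that such a subfamily would sweep out. Granting~(i), $\Sigma(Y)$ and $S(X)$ are smooth surfaces; irreducibility then either follows from the explicit descriptions obtained in~(ii), or can be extracted beforehand from connectedness of the Hilbert scheme together with a monodromy argument over the moduli of the family.

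For~(ii) in the index-$2$ case: if $\dd(Y)=5$ then $Y=\Gr(2,5)\cap\P^6$, and a direct classification of the lines of $\Gr(2,5)$ lying on this linear section yields $\Sigma(Y)\cong\P^2$. If $\dd(Y)=4$ then $Y=Q_0\cap Q_1$ is an intersection of two quadrics in $\P^5$; the pencil $\{t_0Q_0+t_1Q_1\}$ has a discriminant consisting of six points, and the double cover of $\P^1$ branched at them is a smooth genus-$2$ curve $\Gamma$. The classical theory of the even Clifford algebra of the pencil (equivalently, the semiorthogonal decomposition $\D^{b}(Y)=\ang{\D^{b}(\Gamma),\CO_Y,\CO_Y(H)}$) identifies $\Sigma(Y)$ with a torsor under $\Jac(\Gamma)$, hence an abelian surface. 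If $\dd(Y)=3$ then $Y$ is a cubic threefold, and we invoke the Clemens--Griffiths and Altman--Kleiman description of its Fano surface of lines: $\Sigma(Y)$ is smooth, and the Abel--Jacobi map realizes it as the Albanese variety inside the intermediate Jacobian $J(Y)$, so its irregularity equals $h^{1,2}(Y)=5$; computing the Chern numbers on $\Sigma(Y)$ of the restriction of the tautological quotient bundle then gives $K_{\Sigma(Y)}^2=45$, and Noether's formula gives $p_g=10$.

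For~(ii) in the index-$1$ case we use the residual component $\CA_X\subset\D^{b}(X)$ of the derived category of $X$ together with the interpretation of (twisted) ideal sheaves of conics, projected into $\CA_X$, as objects of $\CA_X$ with a fixed numerical class. If $\g(X)=12$ then $\CA_X$ has a full exceptional collection and one checks $S(X)\cong\P^2$ directly from the $\Gr(3,7)$ model of Table~\xref{table:Fanos-i-1}. If $\g(X)=8$ then $X=\Gr(2,6)\cap\P^9$ is birational to a cubic threefold $V$ with $\CA_X\simeq\CA_V$ and $J(X)\cong J(V)$, and the birational correspondence matches conics on $X$ with lines on $V$, so $S(X)$ is isomorphic to the Fano surface of $V$ and carries the same invariants as above. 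In the three remaining cases there is an equivalence $\CA_X\simeq\D^{b}(\Gamma)$ for a smooth curve $\Gamma$ of genus $7$, $3$, $2$ according as $\g(X)=7$, $9$, $10$ (for $\g=9$, with $X$ a linear section of $\LGr(3,6)$), and the projection functor identifies $S(X)$ with a moduli space of objects of $\D^{b}(\Gamma)$ of the appropriate class: this gives $S(X)\cong\Sym^2\Gamma$ when $\g=7$; $S(X)\cong\P_\Gamma(\CE)$ for a simple rank-$2$ bundle $\CE$ on the genus-$3$ curve $\Gamma$ when $\g=9$ (simplicity of $\CE$ being exactly the reducedness of the corresponding moduli point); and $S(X)\cong\Jac(\Gamma)$, an abelian surface, for the genus-$2$ curve $\Gamma$ when $\g=10$.

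The genuinely new part, and the main obstacle, is the case $\g(X)=10$, where $X$ is a codimension-$2$ linear section of the homogeneous space $\mathrm{G}_2/P\subset\P^{13}$. Unlike the Grassmannian cases there is no ambient bundle whose sections cut out $X$, so one must (a)~construct the genus-$2$ curve $\Gamma$ and the equivalence $\CA_X\simeq\D^{b}(\Gamma)$ directly from the $\mathrm{G}_2$-structure — for instance from the distinguished family of quadrics through $X$ or from the contact structure on $\mathrm{G}_2/P$, building along the way the Fourier--Mukai kernel; (b)~show that projection into $\CA_X$ sends the ideal sheaf of every conic on $X$ to a stable object of $\D^{b}(\Gamma)$ in a fixed class, so that one obtains a morphism $S(X)\to\Jac(\Gamma)$ defined on all of $S(X)$, and prove that it is bijective; and (c)~match tangent spaces, using smoothness of $S(X)$ from part~(i) and smoothness of $\Jac(\Gamma)$, to upgrade this bijection to an isomorphism. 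Steps~(a) and~(b) are where the $\mathrm{G}_2$-specific geometry is essential and where I expect the bulk of the work to lie; the remainder of the argument is formal.
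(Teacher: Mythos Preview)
Your overall strategy matches the paper's, and the case-by-case identifications in part~(ii) are essentially the same as what the paper does (citing Altman--Kleiman, Narasimhan--Ramanan, etc.\ for index~$2$; using the derived category and the equivalence $\CA_X\simeq\CB_Y$ or $\CA_X\simeq\D^b(\Gamma)$ for index~$1$). But your part~(i) has a real gap in the index-$1$ case.

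You write that for a conic $C\subset X$ the normal bundle is a rank-$2$ bundle of degree~$0$ on~$\P^1$, and then propose to rule out $a\ge 2$ in $N_{C/X}\cong\CO(a)\oplus\CO(-a)$. This only makes sense for \emph{smooth} conics. A conic in the sense of the Hilbert scheme can also be reducible ($L_1\cup L_2$ with $L_1\cap L_2$ a point) or non-reduced (a double structure on a line), and in those cases $C$ is not $\P^1$ and the normal bundle is not a sum of line bundles on $\P^1$. The tangent and obstruction spaces at such points are governed by more complicated exact sequences (see the paper's Appendix~\ref{appendix:new}), and there is no clean ``$a\le 1$'' criterion. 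The paper explicitly flags this as the obstruction to a direct smoothness proof: ``The direct proof of smoothness is very complicated, since there are three types of conics and it is much more difficult to analyze the tangent space to the Hilbert scheme at a point corresponding to a reducible or non-reduced conic.'' Your geometric argument (conics moving in a $\ge 2$-dimensional family through a point sweep out a forbidden surface) bounds the dimension of $S(X)$, but it does not bound $\dim H^0(N_{C/X})$ at a degenerate conic, so it does not prove smoothness there.

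The paper's way around this is the opposite of yours: rather than proving smoothness first and then identifying the surface, it identifies $S(X)$ directly with a manifestly smooth object ($\Sigma(Y)$ via Theorem~\ref{theorem:S-vs-Sigma} for $\g=8,10,12$; $\Sym^2\Gamma$ for $\g=7$; $\P_\Gamma(\CE)$ for $\g=9$) and reads off smoothness from the identification. In particular, for $\g=10$ the paper does exactly your steps~(a)--(c), but the payoff is not just the abelian-surface description---it is smoothness of $S(X)$ itself, which you are assuming as input. As a smaller point, even for lines your sketch of ruling out $a\ge 2$ is rougher than necessary: the paper's Proposition~\ref{hilb-lines-smooth} uses the exact sequence $0\to N_{L/Y}\to N_{L/\P^n}\to N_{Y/\P^n}|_L\to 0$ with $N_{L/\P^n}\cong\CO_L(1)^{\oplus(n-1)}$ to see immediately that no summand $\CO_L(a)$ with $a\ge 2$ can occur.
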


To be honest, most of the information provided by Theorem~\xref{theo:main1}
can be found in the literature
(see \cite{Altman1977}, \cite{DesaleRamanan}, \cite{Furushima1989a},
\cite{Tennison1974}, \cite{Iskovskikh-1980-Anticanonical},
\cite{Puts1982}, \cite{Markushevich1981},
\cite{Iliev-Fano-3folds-genus9}, \cite{iliev2007manivel}, \cite{BrambillaFaenzi}, etc).
Our goal was, in a sense, in collecting all the results together, and cleaning things a bit.
One new improvement here is the case $\iota(X) = 1$ and $\g(X) = 10$, where originally
in~\cite[Proposition 3]{iliev2007manivel} a description of $S(X)$
was known for general $X$ only.
Another improvement is the case $\iota(X) = 1$ and $\g(X) = 9$ where
it was previously known that $S(X)$ is a projectivization of a vector bundle
over a curve of genus~$3$, but simplicity of the vector bundle was known only for
a general threefold~$X$ (see~\cite[\S3]{BrambillaFaenzi}).
Also, our proof for the even genus cases, i.e.\ $\iota(X) = 1$ and \mbox{$\g(X) \in \{8,10,12\}$},
emphasizes the relation between Fano threefolds of index~1 and~2.
We show that if $Y$ is a Fano threefold of index 2 and degree
\begin{equation*}
\dd(Y) = \frac{\g(X)}{2} - 1
\end{equation*}
associated to $X$ by~\cite{kuznetsov2009derived}
(see also Appendix~\ref{section-Lines-and-conics}) then $S(X) \cong \Sigma(Y)$.

For small degrees and genera the situation with the Hilbert schemes of lines and conics is much more complicated.
For instance, in the case $\iota(Y) = 2$ and $\dd(Y) = 2$ the scheme~\mbox{$\Sigma(Y)$} may be singular and in the case $\iota(X) = 1$ and $\g(X) = 6$ the scheme $S(X)$ may be even reducible.
Furthermore, for small values of $\g(X)$ it is quite hard to get a satisfactory explicit description of~$S(X)$.
Say, for~\mbox{$\g(X) = 2$} the only more or less explicit description of~$S(X)$ we are aware of is as a $240$-to-$1$ branched cover of $\P^2$, which is not very much useful.
So, it seems that our result is a kind of optimal in that direction.

A description of the Hilbert schemes of lines and conics allows to produce some results on the automorphism
groups of the corresponding varieties.
The automorphism groups act on the Hilbert schemes and we prove that the action is faithful in all cases listed in Theorem~\xref{theo:main1}.
In fact, we deduce faithfulness from a more general result (Theorem~\xref{theorem:faithful}).
In particular, it applies to all $X$ with $\g(X) \ge 4$ and to some~$X$ with~\mbox{$\g(X) = 3$},
and shows that the action on an irreducible component of a Hilbert scheme is faithful unless $X$ is a double cover of a smooth Fano threefold~$Y$
with $\rho(Y) = 1$, $\iota(Y) = 2$, and~$\dd(Y) = \g(X) - 1$, in which case there is an irreducible component of
the Hilbert scheme of conics on $X$
on which the Galois involution of the double cover acts trivially.

We note that in most of the cases listed in Theorem~\xref{theo:main1},
the maximal linear algebraic subgroup
of the automorphism group of the surface $\Sigma(Y)$ and $S(X)$
is at most finite; the only exceptions are cases~\mbox{$(2.5)$} and~\mbox{$(1.12)$}. Due to the faithfulness result, this proves
that the automorphism groups of the threefolds
listed in Theorem~\xref{theo:main1}, except the threefolds~\mbox{$(2.5)$} and~\mbox{$(1.12)$},
are finite as well.

Our second main result is an extension of this observation to the following general statement
describing all possible infinite automorphism groups of Fano threefolds of Picard rank~$1$.

\begin{theorem}
\label{theorem:Prokhorov}
Let $X$ be a smooth
Fano threefold with $\rho(X)=1$. Then the group~\mbox{$\Aut(X)$} is finite
unless one of the following cases occurs:
\begin{itemize}
\item
$\iota(X) = 4$ so that $X\cong \P^3$; then $\Aut(X) \cong \PGL_4(\Bbbk)$;
\item
$\iota(X) = 3$ so that $X$ is a quadric in $\P^4$; then $\Aut(X) \cong \PSO_5(\Bbbk)$;
\item
$\iota(X)=2$, $\dd(X)=5$; then $\Aut(X)\cong\PGL_2(\Bbbk)$;
\item\label{theorem:Prokhorov-i=1}
$\iota(X)=1$, $\g(X)=12$, and $X$ is one of the following
\begin{enumerate}
\item\label{theorem:Prokhorov-i=1-mu}
$X=X_{22}^{\mathrm{MU}}$ is the Mukai--Umemura threefold; then $\Aut(X)\cong \PGL_2(\Bbbk)$;
\item\label{theorem:Prokhorov-i=1-a}
$X=X_{22}^{\mathrm a}$ is the threefold of Example~\textup{\xref{example-V22-a}}; then $\Aut(X)\cong \GG_{\mathrm a} \rtimes \mumu_4$;
\item\label{theorem:Prokhorov-i=1-m}
$X=X_{22}^{\mathrm m}(u)$ is a threefold from the one-dimensional family
of Example~\textup{\xref{example-V22-m}}; then $\Aut(X)\cong \GG_{\mathrm m} \rtimes \mumu_2$.
\end{enumerate}
\end{itemize}
\end{theorem}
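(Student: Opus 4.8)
The strategy is to go through the classification recorded in Tables~\ref{table:Fanos-i-ge-2} and~\ref{table:Fanos-i-1} case by case. We repeatedly use that for a Fano threefold $X$ of Picard rank~$1$ the group $\Aut(X)$ acts trivially on $\Pic(X)=\Z\cdot H$, so that it is a closed subgroup of $\PGL\big(\mathrm{H}^0(X,m_0H)\big)$, hence a linear algebraic group; in particular $\Aut(X)$ is infinite if and only if it is positive-dimensional. For $X=\P^3$ and for $X$ a smooth quadric in $\P^4$ the group $\Aut(X)$ is the classical one, $\PGL_4(\Bbbk)$ and $\PSO_5(\Bbbk)$ respectively. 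For $\iota(X)=2$ with $\dd(X)\ge 3$ and for $\iota(X)=1$ with $\g(X)\ge 7$ we combine Theorem~\ref{theo:main1} with the faithfulness of the action of $\Aut(X)$ on its Hilbert scheme of lines, respectively conics: the image of $\Aut(X)$ is a linear algebraic subgroup of the automorphism group of that surface, and for every surface occurring in Theorem~\ref{theo:main1} other than $\P^2$ --- a minimal surface of general type, an abelian surface, the symmetric square of a smooth curve of genus~$7$, or the projectivization of a simple rank-two bundle over a smooth curve of genus~$3$ --- every linear algebraic subgroup of its automorphism group is finite (for an abelian surface because a connected linear algebraic subgroup of an abelian variety is trivial; in the other cases the whole automorphism group is already finite). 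This settles all these cases and leaves only $\iota(X)=2$, $\dd(X)=5$ and $\iota(X)=1$, $\g(X)=12$, where the relevant Hilbert scheme is $\P^2$.

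The remaining small-index and small-genus cases are treated through auxiliary structures. The smooth quartic threefold ($\iota(X)=1$, $\g(X)=3$(a)) has finite automorphism group by the Matsumura--Monsky theorem, and for $\iota(X)=1$ with $\g(X)\in\{4,5,6\text{(a)}\}$ Theorem~\ref{theorem:faithful} provides a faithful action of $\Aut(X)$ on an irreducible component of $S(X)$ that carries no positive-dimensional group of automorphisms (for instance, because it is of general type), so $\Aut(X)$ is again finite. In each of the other cases --- $\iota(X)=2$ with $\dd(X)\in\{1,2\}$, and $\iota(X)=1$ with $\g(X)=2$, $\g(X)=3$(b), or $\g(X)=6$(b) --- the threefold $X$ admits a double covering $\pi\colon X\to Z$, where $Z$ is $\P^3$ (if $\dd(X)=2$ or $\g(X)=2$), the cone over the Veronese surface $\P^2\hookrightarrow\P^5$ (if $\dd(X)=1$), a smooth quadric threefold (if $\g(X)=3$(b)), or the quintic del Pezzo threefold $Y_5$ (if $\g(X)=6$(b)). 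Then the Galois involution $\tau$ is central in $\Aut(X)$, and $\Aut(X)/\langle\tau\rangle$ embeds into the subgroup $\Aut(Z,B)\subseteq\Aut(Z)$ preserving the branch divisor $B$; since $B$ is ample in $Z$, restriction embeds $\Aut(Z,B)$ into the group of automorphisms of $B$ preserving the induced polarization, which is finite because $B$ is a polarized K3 surface or a surface of general type. The only case in which $\Aut(Z)$ is itself infinite is $Z=Y_5$, and there one checks in addition --- by a direct computation with the $\SL_2(\Bbbk)$-module $\mathrm{H}^0(Y_5,-K_{Y_5})$ --- that every anticanonical divisor in $Y_5$ whose stabilizer in $\Aut(Y_5)=\PGL_2(\Bbbk)$ is positive-dimensional must be singular, so that $\Aut(X)$ is finite in this case as well.

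It remains to treat the two cases with Hilbert scheme $\P^2$, where faithfulness gives an embedding $\Aut(X)\hookrightarrow\PGL_3(\Bbbk)$. If $\iota(X)=2$ and $\dd(X)=5$, the threefold $X$ is quasi-homogeneous under $\SL_2(\Bbbk)$, which yields an embedding $\PGL_2(\Bbbk)\hookrightarrow\Aut(X)$ realized inside $\PGL_3(\Bbbk)$ as $\SO_3(\Bbbk)$ acting irreducibly on $\mathrm{H}^0(\P^2,\O(1))\cong\Sym^2\Bbbk^2$; since the normalizer of $\SO_3(\Bbbk)$ in $\PGL_3(\Bbbk)$ is $\SO_3(\Bbbk)$ itself (its centralizer is trivial by irreducibility, and $\PGL_2$ has no outer automorphisms), we conclude $\Aut(X)=\PGL_2(\Bbbk)$. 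If $\iota(X)=1$ and $\g(X)=12$, we use Mukai's description of $X=X_{22}$ as the zero locus of a net $W\subset\Lambda^2(\Bbbk^7)^\vee$ of skew-symmetric forms, so that $\Aut(X)$ is the stabilizer of $W$ in $\PGL_7(\Bbbk)$; one classifies the nets $W$ with positive-dimensional stabilizer for which the corresponding threefold $X_{22}$ is smooth --- the identity component of the stabilizer turning out to be $\PGL_2(\Bbbk)$ (with $\Bbbk^7\cong\Sym^6\Bbbk^2$ and $W$ the three-dimensional isotypic summand of $\Lambda^2(\Bbbk^7)^\vee$), $\GGa$, or $\GGm$ --- and then determines the full stabilizer in each case, obtaining the Mukai--Umemura threefold $X_{22}^{\mathrm{MU}}$ with $\Aut(X)\cong\PGL_2(\Bbbk)$, the threefold $X_{22}^{\mathrm a}$ of Example~\ref{example-V22-a} with $\Aut(X)\cong\GGa\rtimes\mumu_4$, and the one-parameter family $X_{22}^{\mathrm m}(u)$ of Example~\ref{example-V22-m} with $\Aut(X)\cong\GGm\rtimes\mumu_2$. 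The main obstacle is precisely this last step: the representation-theoretic classification of nets of skew forms on $\Bbbk^7$ with nontrivial connected stabilizer, the verification of smoothness for the resulting threefolds, and the determination of the finite factors $\mumu_4$ and $\mumu_2$.
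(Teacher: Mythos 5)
Your overall architecture for the bulk of the cases (faithfulness of the $\Aut$-action on the Hilbert scheme of lines or conics, combined with the explicit surfaces of Theorem~\xref{theo:main1}, plus the double-cover argument of the type in Lemma~\xref{lemma:double-cover-finite} when $H$ is not very ample) coincides with the paper's. But two of your steps have genuine problems. First, for $\iota(X)=1$ and $\g(X)\in\{4,5,6\}$ you propose to act on an irreducible component of $S(X)$ ``that carries no positive-dimensional group of automorphisms (for instance, because it is of general type)''. No description of $S(X)$ is available in this range --- the paper explicitly warns that for $\g\le 6$ the scheme $S(X)$ may be singular and reducible (Remark~\xref{remark:hilb-conics-reducible}), and moreover faithfulness genuinely fails when $X$ is a double cover of an index-$2$ threefold (Lemma~\xref{lemma:trivial-action-conics}), which happens for $\g=5$ and $\g=6$(b); the kernel is only $\mumu_2$, but you are still left needing finiteness of $\Aut(S_0)$ with no handle on $S_0$. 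The paper sidesteps this entirely: $\g=3$(a), $4$, $5$ are complete intersections, so Theorems~\xref{theorem:MatsumuraMonsky} and~\xref{theorem:Benoist} apply, and $\g=6$ is handled by a citation. Your claim would need to be either proved (a serious undertaking) or replaced by the complete-intersection argument.

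Second, and more importantly, the genus-$12$ case --- which is where all the new content of the theorem lives, namely the identification of $X^{\mathrm a}$, $X^{\mathrm m}(u)$ and the groups $\GGa\rtimes\mumu_4$ and $\GGm\rtimes\mumu_2$ --- is deferred in your plan to an unexecuted ``representation-theoretic classification of nets of skew forms on $\Bbbk^7$ with positive-dimensional stabilizer'', together with smoothness checks and the determination of the finite factors. You acknowledge this is the main obstacle, but it is not a routine verification: it is essentially the whole difficulty of the theorem. The paper takes a completely different route here, via the double projection from a line (Theorem~\xref{theorem:double-projection}), which converts the problem into classifying normal rational quintics $Z$ in the quintic del Pezzo threefold $Y$ invariant under a connected solvable subgroup of $\PGL_2(\Bbbk)$ (Lemma~\xref{lemma:z:classification}, applied after the Borel fixed-point theorem produces an invariant line on $X$ in Theorem~\xref{theorem:Prokhorov-g-12-finite}); the finite factors $\mumu_4$ and $\mumu_2$ are then extracted from the explicit structure of the Hilbert scheme of lines $\Sigma(X)$ and its singular points (Propositions~\xref{proposition:lines-special-v22} and~\xref{proposition:automorphisms-special-v22}), with the extra involution for $X^{\mathrm m}(u)$ requiring an additional external input. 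Your net-of-skew-forms approach is plausible in principle (it is close in spirit to Mukai's $\mathrm{VSP}$ picture mentioned in Remark~\xref{remark-V22-VSP}), but as written it is a statement of intent rather than a proof, so the proposal does not establish the theorem.
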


Note that a Fano threefold with $\rho(X) = 1$, $\iota(X) = 2$, and $\dd(X) = 5$ is unique (up to isomorphism),
see~\cite[Theorem~II.1.1]{Iskovskikh-1980-Anticanonical} or~\cite[3.3.1--3.3.2]{Iskovskikh-Prokhorov-1999}).

Again, we should say that almost all results of Theorem~\ref{theorem:Prokhorov} were already known,
see~\cite{Prokhorov-1990c}.
The new results here is the explicit description of $\Aut(X_{22}^{\mathrm{a}})$ and $\Aut(X_{22}^{\mathrm{m}}(u))$.

Using the classification of Fano threefolds of Picard rank~$1$
(see~\cite[\S12.2]{Iskovskikh-Prokhorov-1999},
or Tables~\xref{table:Fanos-i-ge-2} and~\xref{table:Fanos-i-1})
we conclude that Theorem~\xref{theorem:Prokhorov} implies the following.

\begin{corollary}
\label{corollary:Jac}
Let $X$ be a smooth
Fano threefold with $\rho(X)=1$. If the group $\Aut(X)$ is infinite, then $h^{1,2}(X)=0$.
\end{corollary}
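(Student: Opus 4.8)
The plan is to combine Theorem~\ref{theorem:Prokhorov} with the classification recorded in Tables~\ref{table:Fanos-i-ge-2} and~\ref{table:Fanos-i-1}. By Theorem~\ref{theorem:Prokhorov}, if $\Aut(X)$ is infinite then $X$ falls into one of four cases: $X \cong \P^3$; $X$ is a smooth quadric threefold in $\P^4$; $\iota(X) = 2$ and $\dd(X) = 5$; or $\iota(X) = 1$ and $\g(X) = 12$. In each of these cases one reads off the value of $h^{1,2}(X)$ from the corresponding row of the appropriate table.

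For $X \cong \P^3$ and for the smooth quadric threefold, the relevant rows of Table~\ref{table:Fanos-i-ge-2} (the ones with $\iota = 4$, $\dd = 1$ and with $\iota = 3$, $\dd = 2$) both record $h^{1,2} = 0$. For $\iota(X) = 2$, $\dd(X) = 5$, the corresponding row of Table~\ref{table:Fanos-i-ge-2} again gives $h^{1,2}(X) = 0$; recall moreover that this threefold is unique up to isomorphism by~\cite[Theorem~II.1.1]{Iskovskikh-1980-Anticanonical}. Finally, for $\iota(X) = 1$, $\g(X) = 12$, the last row of Table~\ref{table:Fanos-i-1} records $h^{1,2} = 0$; this covers all three threefolds $X_{22}^{\mathrm{MU}}$, $X_{22}^{\mathrm a}$, and $X_{22}^{\mathrm m}(u)$ appearing in the statement of Theorem~\ref{theorem:Prokhorov}, since they all lie in the deformation family of Fano threefolds with $\iota = 1$, $\g = 12$, and $h^{1,2}$ is a deformation invariant.

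Having exhausted the list of Theorem~\ref{theorem:Prokhorov}, the conclusion follows. There is no genuine obstacle here: all the work is contained in Theorem~\ref{theorem:Prokhorov} (already proved) and in the classification of Fano threefolds of Picard rank~$1$ of~\cite[\S12.2]{Iskovskikh-Prokhorov-1999} recorded in the tables; the only thing left is to check that every case produced by Theorem~\ref{theorem:Prokhorov} appears there with vanishing $h^{1,2}$, which is immediate by inspection.
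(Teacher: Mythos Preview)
Your proof is correct and follows exactly the same approach as the paper: the corollary is stated there as an immediate consequence of Theorem~\ref{theorem:Prokhorov} together with the classification in Tables~\ref{table:Fanos-i-ge-2} and~\ref{table:Fanos-i-1}, which is precisely what you do.
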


Our proof of Theorem~\xref{theorem:Prokhorov} relies on a classification
of smooth Fano threefolds. It would be interesting to find a proof
of Corollary~\xref{corollary:Jac} that does not depend on a classification,
and use it to obtain an alternative proof of Theorem~\xref{theorem:Prokhorov}.
Note that \cite[Theorem~1]{Tolman10} can be considered
as a symplectic counterpart of Corollary~\xref{corollary:Jac},
and to some extent can be used to recover it; namely, the results of~\cite{Tolman10}
imply that an automorphism group of a smooth Fano threefold $X$
of Picard rank~$1$ can contain a subgroup isomorphic to~$\GG_{\mathrm m}$ only if $h^{1,2}(X)=0$.

\subsection{Applications and future directions}

One of the motivations for writing this paper was the problem of classification of finite subgroups of the Cremona group of rank~3 (cf.~\cite{Prokhorov2009e}, \cite{Prokhorov2011a},
\cite{Prokhorov-2-elementary}, \cite{Prokhorov-Shramov-p-groups}).
This classification problem reduces to investigation of finite automorphism groups of Fano threefolds of Picard number 1 with terminal singularities and Mori fiber spaces.
In particular, it includes classification of finite group of automorphisms of smooth Fano threefolds of Picard number 1.
Notice that here it is important to consider \textit{all} possible Fano threefolds in their deformation classes,
while restricting to \textit{general} Fano threefolds (as it is practiced by many authors) does not work.
This is why we try to push our arguments forward in full generality.

Our results allow to give explicit upper bounds on some parameters of automorphism groups,
which may be useful for further applications to studying birational automorphisms
(see~\cite{Prokhorov-Shramov-JCr3}, and
cf.~\cite{Prokhorov-Shramov-J}, \cite{ProkhorovShramov-RC},
\cite{ProkhorovShramov-dim3}, \cite{Yasinsky2016a}).

It would be interesting to understand which results
of this paper can be generalized to the case of singular Fano threefolds, and
what kind of conclusions one can make about their automorphism groups
(cf.~\cite{Prokhorov-planes}, \cite{Prokhorov-e-QFano7}, \cite{Prokhorov-factorial-Fano-e},
\cite{PrzyjalkowskiShramov2016}).

It would be also nice to extend the results of this paper to higher dimensions.
Naturally, the questions we discuss here become much more complicated.
Besides other things, no classification of higher-dimensional Fano varieties is available
(though, there are some partial results, see, e.g.,
\cite{Kuchle,Kuznetsov-Kuchle,Kuznetsov-Kuchle-c5,Kuznetsov-2018}).

\subsection{Outline of the paper}

The plan of our paper is as follows.

In \S\xref{section:hilbert-schemes} we collect the necessary results about Hilbert schemes of lines and conics on Fano threefolds.
In~\S\ref{subsection:lines-and-conics} we discuss general properties of Hilbert schemes,
while in~\S\ref{subsection:lines-index-2} and~\S\ref{subsection:conics-index-1} we concentrate on Hilbert scheme of lines and conics respectively on Fano threefolds.
Some rather technical parts of the material were moved to Appendix~\xref{appendix:new} for the readers convenience.
Another part of the arguments that uses derived categories perspective and technique is collected in Appendix~\xref{section-Lines-and-conics}.
The main result of this section is a proof of Theorem~\xref{theo:main1}.

In \S\xref{section:automorphisms} we recall various general results
about automorphism groups of algebraic varieties, including actions
on invariant linear systems and some well-known finiteness assertions.

In \S\xref{section:finiteness-for-Fanos} we
prove finiteness of automorphism groups for all Fano threefolds of Picard rank~1 except those listed in Theorem~\xref{theorem:Prokhorov}.
We start in~\S\xref{subsection:faithfulness-general} by proving a general faithfulness result (Theorem~\xref{theorem:faithful}) for an algebraic group action
on irreducible components of Hilbert schemes of (anticanonical) conics on Fano varieties of arbitrary dimension.
In~\S\xref{subsection:act-on-lines} we apply it to the action of the automorphism group of a Fano threefold of index $2$
and degree at least~$3$ on (an irreducible component of) the Hilbert scheme of lines, and
in~\S\xref{subsection:act-on-conics} we apply it to the action of the automorphism group of a Fano threefold of index $1$
and genus at least~$3$ on (an irreducible component of) the Hilbert scheme of conics.
For~\mbox{$\dd \ge 3$} and~\mbox{$\g \ge 7$} we prove faithfulness of these actions and combining it with the description
of Hilbert schemes provided by Theorem~\xref{theo:main1}, deduce finiteness of the automorphism group.
Finally,
in~\S\xref{subsection:finite-Fano} we prove finiteness of the automorphism groups in the remaining (easy)
cases in a more straightforward way.

In \S\xref{section:infinite} we study Fano threefolds of index $1$ and
genus $12$ with infinite automorphism groups via a double projection method
and complete our proof of Theorem~\xref{theorem:Prokhorov}.
In~\S\ref{subsection:v5} we discuss geometry of the Fano threefold $Y$ of index~2 and degree~5 and give an explicit description of its Hilbert scheme of lines.
In~\S\ref{subsection:v22} we explain the double projection method and describe the relation
between the Hilbert scheme of lines on a Fano threefold $X$ of index~1 and genus~12 and the Hilbert scheme of lines on $Y$.
In~\S\ref{subsection:v22-special} we explain the construction of threefolds with infinite automorphism groups, and
in~\S\ref{subsection:aut-explicit} we describe explicitly their Hilbert schemes of lines and automorphisms groups.

In Appendix~\xref{appendix:new} we collect some well-known facts about conics.
First, we remind a classification of surfaces whose Hilbert scheme of conics is at least two-dimensional.
After that we remind a description of normal bundles of reducible and non-reduced conics.

In Appendix~\xref{section-Lines-and-conics} we prove Theorem~\xref{theorem:S-vs-Sigma} relating the Hilbert schemes of conics on Fano threefolds of index 1 and genera $8$, $10$, and $12$
to the Hilbert schemes of lines on Fano threefolds of index 2 and degrees $3$, $4$, and $5$, respectively.
The proof is based on the relation between derived categories of these threefolds established in~\cite{kuznetsov2009derived}.
We remind this approach, discuss some details of the relation, and then prove Theorem~\xref{theorem:S-vs-Sigma}.
We also write down proofs for some well-known results of Mukai concerning vector bundles on Fano threefolds that we could not find in the literature.

\medskip
\textbf{Notation and conventions.}

As we already mentioned,
we work over an algebraically closed field $\Bbbk$ of characteristic $0$.
We assume that the Fano varieties appearing in the paper are smooth unless
the converse is mentioned explicitly.
We remind about the smoothness assumption
only at the most important points of our exposition.

We use the following notation throughout the paper.
By $\Pic(X)$ and $\Cl(X)$ we denote the Picard group and the class group of Weil divisors on the variety $X$, respectively.
Linear equivalence of divisors is denoted by~$\sim$.

For a Fano threefold $X$ we keep the notation $\rho(X)$, $\iota(X)$, $\dd(X)$, and $\g(X)$
for the Picard rank, the Fano index, the degree, and the genus of $X$, respectively.
If $\rho(X)=1$, we always denote by $H$ or $H_X$ the ample generator of~\mbox{$\Pic(X)\cong\mathbb{Z}$}.

If $Z$ is a subscheme in $X$, we denote by $[Z]$ the point corresponding to $Z$ in the appropriate Hilbert scheme,
and by~\mbox{$\Aut(X;Z)$} the group of automorphisms of $X$ that preserve~$Z$.
Similarly, if $[D]$ is a divisor class in $\Pic(X)$ or~$\Cl(X)$,
we denote by $\Aut(X;[D])$ the group of automorphisms of $X$ that
preserve the class~$[D]$.

By $\Gr(k,n)$ we denote the Grassmannian of vector subspaces of dimension~$k$ in a vector space of dimension~$n$;
similarly, by $\Gr(k,W)$ we denote the Grassmannian of vector subspaces of dimension~$k$ in a vector space~$W$.
By a linear section of a Grassmannian we always mean
its linear section in the Pl\"ucker embedding, i.e. in the embedding
defined by the ample generator of its Picard group.
By $v_2\colon\P(V)\to\P(\Sym^2 V)$ we denote the second Veronese
embedding.
We denote by $\mumu_m$ the group of~\mbox{$m$-th} roots of unity (isomorphic to a cyclic group of order $m$).

\medskip
\textbf{Acknowledgements.}
We are grateful to Olivier Debarre, Francesco Russo, Richard Thomas, and Fyodor Zak for useful discussions.
We are also grateful to the referee for reading our paper.

\section{Hilbert schemes of lines and conics}\label{section:hilbert-schemes}

In this section we discuss general properties of Hilbert schemes of lines and conics
on Fano threefolds and give an explicit description for some of them.

\subsection{General properties of Hilbert schemes}
\label{subsection:lines-and-conics}

Let $X$ be a projective variety with a fixed ample divisor class $H$. Recall that
a \emph{line} (or an \emph{$H$-line} to be more precise) on~$X$ is a subscheme $L \subset X$ with Hilbert polynomial
\begin{equation*}
p_L(t) = 1 + t.
\end{equation*}
Similarly,
a \emph{conic} (or an \emph{$H$-conic}) on $X$ is a subscheme $C \subset X$ with Hilbert polynomial
\begin{equation*}
p_C(t) = 1 + 2t.
\end{equation*}
We denote by
\begin{equation*}
\Sigma(X) = \Hilb^{p(t) = 1 + t}(X;H)
\end{equation*}
the Hilbert scheme of lines on $X$,
and by
\begin{equation*}
S(X) = \Hilb^{p(t) = 1 + 2t}(X;H)
\end{equation*}
the Hilbert scheme of conics on $X$.

\begin{lemma}\label{lemma:lines-conics}
Let $X$ be a projective variety with an ample divisor class $H$.

\begin{itemize}
\item[(i)] If $2H$ is very ample and $L \subset X$ is an $H$-line
then $L \cong \PP^1$ and $\CO_X(H)\vert_L \cong \CO_L(1)$.

\item[(ii)]
If $H$ is very ample and $C \subset X$ is an $H$-conic
then $C$ is purely
one-dimensional and
\begin{itemize}
\item[$\bullet$] either $C$ is a \emph{smooth conic}, i.e.\ $C \cong \PP^1$ and $\CO_X(H)\vert_C \cong \CO_C(2)$,
\item[$\bullet$] or $C$ is a \emph{reducible conic}, i.e.\ $C = L_1 \cup L_2$ for two distinct lines $L_1$ and $L_2$ on $X$ intersecting transversally at a point,
\item[$\bullet$] or $C$ is a \emph{non-reduced conic},
i.e. a non-reduced subscheme $C \subset X$ such that $C_{\reduced} = L$ is a line
and $I_{L}/I_C \cong \CO_{L}(-1)$.
\end{itemize}
\end{itemize}
\end{lemma}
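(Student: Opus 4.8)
The plan is to analyze the two cases separately using the Hilbert polynomial together with the very ampleness hypothesis, which controls the degree of the subscheme in the relevant projective embedding.

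For part (i), suppose $L \subset X$ is an $H$-line, so $p_L(t) = 1+t$. Since $2H$ is very ample, embedding $X$ by $|2H|$ realizes $L$ as a subscheme of projective space with Hilbert polynomial (with respect to $2H$) equal to $p_L(2t) = 1 + 2t$, i.e.\ a conic in that embedding; in particular $L$ is purely one-dimensional of arithmetic genus $0$ and degree $2$ with respect to $2H$. First I would note that the constant term $1$ of $p_L$ equals $\chi(\CO_L)$, so $h^0(\CO_L) - h^1(\CO_L) = 1$; since $L$ is nonempty and projective, $h^0(\CO_L)\geq 1$, and I want to conclude $h^1(\CO_L)=0$ and that $L$ is connected and reduced. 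The cleanest route: a purely one-dimensional subscheme of $\PP^N$ of degree $2$ (with respect to $2H$) is either a smooth conic, a pair of lines, a double line, or a line together with an embedded/isolated point — but the degree-$2$, arithmetic-genus-$0$, \emph{$H$-degree one} constraints force it to be irreducible and reduced. Concretely, $\CO_X(H)|_L$ is a line bundle (if $L$ were Cohen–Macaulay of pure dimension one) with $\chi(\CO_X(H)|_L) = $ the linear coefficient tells us $\deg_L H = 1$ and $\chi(\CO_L) = 1$; a connected reduced projective curve with a degree-one ample bundle must be $\PP^1$, and $\CO_X(H)|_L\cong\CO_{\PP^1}(1)$. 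So the real content is ruling out non-reducedness and disconnectedness: since $\deg_H L = 1$ is indivisible, $L$ cannot split as a union of two $H$-lines, and a non-reduced structure supported on a line $L_0$ would have $\CO_X(H)|_{L_0}$ of degree at most $1$ but then arithmetic genus computations (via the filtration $I_{L_0}/I_L$, a sheaf on $L_0\cong\PP^1$ of length giving the genus drop) would be incompatible with $p_L(t) = 1+t$ having no "defect." I would spell this out by the standard exact sequence $0 \to I_{L_0}/I_L \to \CO_L \to \CO_{L_0}\to 0$ and compare Hilbert polynomials.

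For part (ii), the argument is parallel but now $H$ itself is very ample, so a $H$-conic $C$ is literally a degree-$2$ subscheme of $\PP^{\dim|H|}$ with Hilbert polynomial $1+2t$, hence purely one-dimensional of arithmetic genus $0$ and degree $2$. The classical classification of degree-$2$ closed subschemes of projective space of pure dimension one with arithmetic genus $0$ — which I would either cite or reprove via the same exact-sequence bookkeeping — gives exactly three possibilities: a smooth conic $C\cong\PP^1$ spanning a plane with $\CO_X(H)|_C\cong\CO_C(2)$; a union $L_1\cup L_2$ of two distinct lines meeting transversally at one point (transversality and the single intersection point being forced by $\chi(\CO_C) = 1$, since two coplanar lines meeting at a point have arithmetic genus $0$ while skew lines give $\chi = 2$); or a non-reduced scheme with $C_{\red} = L$ a line and $I_L/I_C$ a length-consistent sheaf on $L\cong\PP^1$, which the genus computation pins down to $\CO_L(-1)$ (a double line in a plane, "ribbon" of genus $0$). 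Purity (no embedded points) follows because an embedded point would raise $\chi(\CO_C)$ above $1$.

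The main obstacle, and the step I would be most careful about, is the non-reduced case: establishing that the only allowed nilpotent structure is the one with $I_L/I_C \cong \CO_L(-1)$, rather than some other length-one sheaf on $\PP^1$, and that in part (i) no non-reduced $H$-line exists at all. This is really a local Cohen–Macaulayness plus Hilbert-polynomial argument: one writes $0 \to \mathscr{N} \to \CO_C \to \CO_L \to 0$ with $\mathscr{N} = I_L/I_C$ a coherent sheaf on $L\cong\PP^1$, computes $p_C = p_L + \chi(\mathscr{N}(t))$ where I abuse notation for the twist by $\CO_X(H)$, and solves: for conics this forces $\mathscr{N}$ to have rank $1$ and $\chi = 0$, i.e.\ $\mathscr{N}\cong\CO_{\PP^1}(-1)$ once one also knows $\mathscr{N}$ is a line bundle (no torsion, which follows from $C$ being Cohen–Macaulay of pure dimension one, itself a consequence of having no embedded points — and that in turn from the Hilbert polynomial being that of a pure sheaf). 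I would organize the write-up so that purity is established first (from $p_C(t) = 1+2t$ having the leading behaviour of a one-dimensional sheaf with no zero-dimensional summand), and then everything else follows from rank-and-degree bookkeeping on $\PP^1$.
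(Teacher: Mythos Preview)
Your overall plan --- decompose into a purely one-dimensional part plus possible embedded or isolated points, then do Hilbert-polynomial bookkeeping --- matches the paper's approach. The gap is in the non-reduced conic case, specifically in your claim that purity ``follows because an embedded point would raise $\chi(\CO_C)$ above $1$'' and that this ``in turn [follows] from the Hilbert polynomial being that of a pure sheaf.'' This is circular. Write $C_0$ for the Cohen--Macaulay one-dimensional part and $\ell$ for the total length of embedded and isolated points, so that $p_C(t)=p_{C_0}(t)+\ell$. If $(C_0)_{\reduced}=L$ is a line and $I_L/I_{C_0}\cong\CO_L(k)$, then $p_{C_0}(t)=(k+2)+2t$, and matching with $p_C(t)=1+2t$ gives only the relation $k=-1-\ell$. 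The Hilbert polynomial alone is compatible with $(k,\ell)=(-1,0)$, $(-2,1)$, $(-3,2)$, and so on; nothing in your argument excludes the latter possibilities.

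What actually pins down $k=-1$ and $\ell=0$ is a geometric input you did not use: since $I_L^2\subset I_{C_0}$, the sheaf $I_L/I_{C_0}\cong\CO_L(k)$ is a quotient of the conormal bundle $I_L/I_L^2\cong\CO_L(-1)^{\oplus(n-1)}$ of a line in $\PP^n$, which forces $k\ge -1$. Combined with $k=-1-\ell$ and $\ell\ge 0$, this yields $k=-1$ and $\ell=0$. This is exactly the step the paper supplies, and it is the missing idea in your write-up. (The same remark applies, more mildly, to part (i): the paper simply observes that an $H$-line is a $2H$-conic which cannot be reducible or non-reduced because $X$ carries no curves of $2H$-degree $1$; your proposed direct argument again leans on a purity claim that needs justification.)
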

\begin{proof}
First, assume that $H$ is very ample. Then we may assume that $X = \PP^n$,
and $H$ is the class of a hyperplane.

If $p_L(t) = 1 + t$ then all irreducible components of $L$ have dimension at most 1,
and the sum of the degrees (with multiplicities) of all one-dimensional components is 1.
Let~$L_0$ be the purely one-dimensional part of $L$ and let $\ell$ be the sum of the lengths
of all zero-dimensional components (including embedded ones). Then by the above
observation~$L_0$ is integral of degree 1, hence $L_0$ is $\PP^1$ linearly embedded into $\PP^n$.
In particular, one has~\mbox{$p_{L_0}(t) = 1 + t$}, hence~\mbox{$p_L(t) = 1 + \ell + t$}, which means $\ell = 0$
and so $L = L_0$.

Analogously, let $p_C(t) = 1 + 2t$. Then all irreducible components of $C$ have dimension at most 1,
and the sum of the degrees (with multiplicities) of all one-dimensional components is 2.
Let $C_0$ be the purely one-dimensional part of $C$ and let $\ell$ be the sum of the lengths
of all zero-dimensional components (including embedded ones). If $C_0$ is integral, then
it is contained in the linear span of any triple of its points. Thus $C_0$ is a divisor
of degree~2 on~$\PP^2$, so $C_0 \cong \PP^1$ and $\CO_X(H)\vert_{C_0} \cong \CO_{C_0}(2)$.
Furthermore, we have~\mbox{$p_{C_0}(t) = 1 + 2t$}, hence~\mbox{$p_C(t) = 1 + \ell + 2t$}
which means $\ell = 0$ and $C = C_0$.

If $C_0$ is not integral, then either it has two different irreducible components $L_1$ and~$L_2$ of degree~1,
or one irreducible component $L$ of degree~1 with multiplicity~2. In the first case,~$L_1$ and~$L_2$ are lines,
hence their scheme-theoretic intersection has length~\mbox{$\delta = 0$} or~\mbox{$\delta = 1$}.
It follows that
\begin{equation*}
p_C(t) = \ell + (1 + t) + (1 + t) - \delta,
\end{equation*}
which means
that $\ell = 0$ and $\delta = 1$. In other words, $L_1$ and $L_2$ meet at a point and~\mbox{$C = L_1 \cup L_2$}.

In the second case we have a canonical epimorphism $\CO_{C_0} \to \CO_L$
and its kernel is a line bundle on $L$, hence is isomorphic to $\CO_L(k)$ for some $k \in \Z$.
Then
\begin{equation*}
p_C(t) = \ell + (1+t) + (1+k+t),
\end{equation*}
which implies $k = - (1 + \ell)$.
On the other hand, it is easy to see that $\CO_L(k)$ is a quotient of $I_L/I_L^2$ which is the conormal
bundle of $L$ in $\PP^n$, hence is isomorphic to~\mbox{$\CO_L(-1)^{\oplus (n-1)}$}.
Therefore $k \ge -1$, so comparing with the previous observation, we see that $k = -1$ and~$\ell = 0$.
In other words, $C_\red = L$ and $I_L/I_C \cong \CO_L(-1)$.

Finally, assume that $H$ is not ample, but $2H$ is very ample and $p_L(t) = 1 + t$.
Then with respect to $2H$ the Hilbert polynomial of $L$ is $1 + 2t$, hence in the embedding
of $X$ given by the linear system~$2H$ it is a conic. But it can be neither reducible, nor non-reduced conic,
since $X$ contains no curves which have degree 1 with respect to $2H$. Thus $L \cong \PP^1$
and $\CO_X(2H)\vert_L \cong \CO_L(2)$, which implies the claim.
\end{proof}

\begin{remark}\label{remark:double-lines}
If for a line $L \subset X$ there is a non-reduced conic $C$ with $C_\red = L$, we will say that $L$ \emph{admits a structure of a non-reduced conic}.
It is worth noting that in contrast to the case of a projective space,
not every line admits such a structure.
Indeed, as we have seen in the proof of Lemma~\xref{lemma:lines-conics} above, a line $L$ admits a structure of a non-reduced conic if and only if there is an epimorphism
$\CN^\vee_L = I_L/I_L^2 \to \CO_L(-1)$.
In Remark~\xref{remark:dl-special} below we discuss for which lines on Fano threefolds this holds.
\end{remark}

\begin{remark}
It is easy to see that one cannot have the same results as in Lemma~\xref{lemma:lines-conics} under weaker assumptions.
Indeed, assume we consider $\Sigma(X)$ and only the divisor~$3H$, but not~$2H$, is very ample.
Then we can realize $\Sigma(X)$ as a subscheme in $\Hilb^{p(t) = 1 + 3t}(\PP^n; 3H)$.
The latter, however, has two irreducible components: one parameterizing normal rational cubic curves, and the other parameterizing plane cubics
plus a point (possibly an embedded one). Therefore the same is true in general for $\Sigma(X)$.
Similarly, assume we consider~\mbox{$S(X)$} and only~$2H$, but not~$H$, is very ample.
Then we can realize~$S(X)$ as a subscheme in~\mbox{$\Hilb^{p(t)=1+4t}(\PP^n; 2H)$}.
The latter Hilbert scheme also has several irreducible components,
some of which parameterize curves of other types
than those listed in Lemma~\xref{lemma:lines-conics}(ii).
\end{remark}

From now on we consider the Hilbert schemes of lines and conics on
Fano threefolds of Picard rank~1 and index~1 or~2
(with respect to the ample generator $H$ of the Picard group). We note that the Hilbert schemes $\Sigma(X)$ and $S(X)$ are
nonempty by \cite{Shokurov1979a} (see also \cite{Reid-lines}). As it was explained above to avoid pathologies
when considering $\Sigma(X)$ we should restrict to the case when $2H$ is very ample, i.e., to Fano threefolds of index~1 and genus $\g \ge 3$,
as well as Fano threefolds of index~2 with~\mbox{$\dd \ge 2$}.
Similarly, when considering~$S(X)$ we should restrict to the case when $H$ is very ample:
in the index~1 case
this means that either $\g \ge 4$, or $\g = 3$ and $X$ is a quartic threefold,
while in the index~2 case
this means $\dd \ge 3$.

Under our assumptions, by Lemma~\xref{lemma:lines-conics} both lines and conics are locally complete intersections, hence their conormal and normal sheaves are locally free.
We will need some facts about them. The first is quite standard.

\begin{lemma}[{\cite[Propositions~III.1.3(ii) and III.2.1(i), Lemma III.3.2]{Iskovskikh-1980-Anticanonical}}]
\label{lemma:normal-bundles}
If $L$ is a line and $C$ is a smooth conic on a Fano threefold $X$ of index $1$ then
\begin{equation*}
\CN_{L/X} \cong \CO_L(a) \oplus \CO_L(-1-a)
\qquad\text{and}\qquad
\CN_{C/X} \cong \CO_C(a) \oplus \CO_C(-a)
\end{equation*}
for some $a \ge 0$.

If $L$ is a line and $C$ is a smooth conic on a Fano threefold $Y$ of index $2$ then
\begin{equation*}
\CN_{L/Y} \cong \CO_L(a) \oplus \CO_L(-a)
\qquad\text{and}\qquad
\CN_{C/Y} \cong \CO_C(1+a) \oplus \CO_C(1-a)
\end{equation*}
for some $a \ge 0$.
\end{lemma}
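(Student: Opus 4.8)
The plan is to compute the normal bundles by combining three pieces of information: the splitting type of any rank-two bundle on $\PP^1$, a degree constraint coming from the canonical/anticanonical class, and a vanishing or surjectivity constraint that rules out the most negative summand. Since $L\cong C\cong \PP^1$ by Lemma~\xref{lemma:lines-conics}, and these curves are local complete intersections (as noted right before the statement), $\CN_{L/X}$ and $\CN_{C/X}$ are locally free of rank $2$ on $\PP^1$, hence split as $\CO(a_1)\oplus\CO(a_2)$ with $a_1\ge a_2$. So the only task is to pin down $a_1+a_2$ and then show $a_2$ is not too negative.

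First I would compute the degree. From the conormal exact sequence and adjunction, $\deg\CN_{Z/X} = -\deg\CN^\vee_{Z/X}$, and $\det\CN_{Z/X}\cong \omega_Z\otimes(\omega_X\vert_Z)^{-1} = \omega_Z\otimes\CO_X(\iota H)\vert_Z$. For a line $L$ we have $\omega_L\cong\CO_L(-2)$ and $\CO_X(H)\vert_L\cong\CO_L(1)$ (Lemma~\xref{lemma:lines-conics}(i)), so $\deg\CN_{L/X} = -2+\iota$; this gives $a_1+a_2 = \iota-2$, i.e.\ $-1$ for index $1$ and $0$ for index $2$. For a smooth conic $C$ we have $\omega_C\cong\CO_C(-2)$ but now $\CO_X(H)\vert_C\cong\CO_C(2)$, so $\deg\CN_{C/X} = -2+2\iota$, giving $a_1+a_2 = 2\iota-2$, i.e.\ $0$ for index $1$ and $2$ for index $2$. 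Writing $a_1 = a + (\iota H\cdot Z)/2 - 1$ suggestively, one reads off precisely the shapes asserted in the four cases, with the ``$a\ge 0$'' parameter being exactly $a = a_1 - a_2 \ge 0$ up to the shift, together with the constraint that $a$ has the right parity so that both summands are genuine line bundles.

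The real content — and the main obstacle — is the lower bound on $a_2$, equivalently the bound $a_1 \le \deg\CN_{Z/X} - a_2$ in the stated form; concretely one must show $a_2 \ge -1$ in all four cases (so e.g.\ for a line on an index-$2$ threefold the splitting is $\CO(a)\oplus\CO(-a)$ rather than $\CO(a)\oplus\CO(-a)$ with $a$ unbounded — wait, that bound is automatic from $a_1+a_2=0$ and $a_1\ge a_2$; the genuine constraint is the one preventing $a_2 \le -2$ for lines on index-$1$ threefolds and $a_2\le -1$ for conics on index-$1$ threefolds, etc.). The standard argument here is: $X$ contains no line or conic whose normal bundle has a summand $\CO(k)$ with $k\le -2$, because the family of lines/conics through a fixed point has the expected dimension (lines form at most a curve, conics through a point at most a surface of the right dimension) — equivalently, $H^1(Z,\CN_{Z/X}(-p))$ controls obstructions to moving $Z$ with a point pinned, and one invokes that $\Sigma(X)$, resp.\ $S(X)$, has dimension $1$, resp.\ $2$ (which follows from the known structure of these Hilbert schemes for Fano threefolds of Picard rank $1$, e.g.\ via \cite{Shokurov1979a}, or by a direct bend-and-break/deformation argument). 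Alternatively, since the reference \cite[Propositions III.1.3(ii), III.2.1(i), Lemma III.3.2]{Iskovskikh-1980-Anticanonical} is cited in the statement, I would simply carry out the degree computation above and then cite these propositions of Iskovskikh for the positivity of the first summand, i.e.\ for the bound on $a_2$, which is precisely where the geometry of Fano threefolds (as opposed to formal arguments on $\PP^1$) enters.
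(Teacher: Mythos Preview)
Your adjunction computation is correct and, in fact, constitutes the entire proof: the paper gives no argument beyond the citation to Iskovskikh, and the content is exactly Grothendieck splitting on $\PP^1$ together with $\det\CN_{Z/X}\cong\omega_Z\otimes(\omega_X\vert_Z)^{-1}$, yielding total degree $\iota-2$ for lines and $2\iota-2$ for smooth conics.

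Where you go astray is in thinking there is a nontrivial lower bound on $a_2$ left to establish. The condition ``$a\ge 0$'' in all four cases is purely a labelling convention (larger summand listed first) and is automatic: if $a_1\ge a_2$ and $a_1+a_2=d$ then $2a_1\ge d$, so $a_1\ge\lceil d/2\rceil$, which for $d\in\{-1,0,0,2\}$ is exactly the ``$a\ge 0$'' in the lemma's parameterization. You noticed this for lines on index-$2$ threefolds; the same reasoning handles all four cases. The lemma does \emph{not} assert any upper bound on $a$ --- for instance $\CN_{L/X}\cong\CO_L(5)\oplus\CO_L(-6)$ on an index-$1$ threefold is perfectly compatible with the statement. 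The genuine bounds on $a$ (e.g.\ no $2$-special lines when $\dd(Y)\ge 3$) are proved as separate results later in the paper; see Proposition~\xref{hilb-lines-smooth}. Moreover, the route you sketch for them, invoking $\dim\Sigma(X)=1$ and $\dim S(X)=2$, would be circular here: those dimension statements (Lemmas~\xref{lemma-hilb-x-1}, \xref{lemma-hilb-y-1}, \xref{lemma-hilb-x-2}) are deduced \emph{from} the present lemma via Corollary~\xref{proposition:dimensions-hilbert-schemes}.
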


It is a bit harder to deal with the normal bundle of a reducible or non-reduced conic~$C$
(see, however,~\S\xref{subsection:degenerate-conics}).

Recall that by~\cite{FGA} or \cite[Theorem I.2.8]{Kollar-1996-RC} the tangent space to the Hilbert scheme at a point corresponding to a locally complete intersection
subscheme $Z \subset X$ is~\mbox{$H^0(Z,\CN_{Z/X})$} and the obstruction space is $H^1(Z,\CN_{Z/X})$.
Therefore, the dimension of any irreducible
component of the Hilbert scheme is bounded from below by
the Euler characteristic~$\chi(\CN_{Z/X})$ of the normal bundle.
By Lemma~\xref{lemma:normal-bundles} and Corollaries~\xref{corollary:normal-reducible-conic-3fold}, and~\xref{corollary:normal-nonreduced-conic-3fold}
in the cases that are most relevant for us this gives.

\begin{corollary}\label{proposition:dimensions-hilbert-schemes}
The following assertions hold.
\begin{enumerate}
\item[(i)]
If $Y$ is a Fano threefold of index $2$, then the dimension of any component of $\Sigma(Y)$ is at least $2$.

\item[(ii)]
If $X$ is a Fano threefold of index $1$, then the dimension of any component of $\Sigma(X)$ is at least $1$.

\item[(iii)]
If $X$ is a Fano threefold of index $1$, then the dimension of any component of $S(X)$ is at least $2$.
\end{enumerate}
\end{corollary}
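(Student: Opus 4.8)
The plan is to use the standard lower bound on the dimension of any component of a Hilbert scheme: if $Z \subset X$ is a locally complete intersection subscheme, then every irreducible component of the Hilbert scheme through $[Z]$ has dimension at least $\chi(Z, \CN_{Z/X}) = h^0(Z,\CN_{Z/X}) - h^1(Z,\CN_{Z/X})$, since $h^0$ is the dimension of the tangent space and $h^1$ is the dimension of the obstruction space. So in each of the three cases it suffices to check that $\chi(\CN_{Z/X}) \ge 2$ (resp.\ $\ge 1$) for \emph{every} line or conic $Z$ of the relevant type, exploiting that under our standing assumptions (Lemma \ref{lemma:lines-conics}) lines and conics on these Fano threefolds are locally complete intersections, so their normal sheaves are locally free.

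First I would treat the smooth (integral) cases using Lemma \ref{lemma:normal-bundles}. For $Y$ of index $2$: a line $L \cong \PP^1$ has $\CN_{L/Y} \cong \CO_L(a)\oplus\CO_L(-a)$, so $\chi(\CN_{L/Y}) = (a+1) + (-a+1) = 2$, giving (i) for smooth $L$; a smooth conic $C \cong \PP^1$ has $\CN_{C/Y} \cong \CO_C(1+a)\oplus\CO_C(1-a)$, so $\chi = (1+a+1)+(1-a+1) = 4 \ge 2$. For $X$ of index $1$: a line $L$ has $\CN_{L/X} \cong \CO_L(a)\oplus\CO_L(-1-a)$, hence $\chi(\CN_{L/X}) = (a+1) + (-1-a+1) = 1$, which is (ii) for smooth $L$; a smooth conic $C$ has $\CN_{C/X} \cong \CO_C(a)\oplus\CO_C(-a)$, so $\chi = 2 \ge 2$, which is (iii) for smooth $C$. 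All of these are immediate Euler characteristic computations on $\PP^1$.

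The remaining point — and the only part requiring genuine input — is the case of reducible and non-reduced conics appearing in statement (iii) (and, implicitly, in (i) whenever $\Sigma(X)$ is being computed via $S$, though for $\Sigma$ itself only smooth lines occur by Lemma \ref{lemma:lines-conics}(i)). Here I would invoke the computations of normal bundles of degenerate conics from the appendix, namely Corollaries \ref{corollary:normal-reducible-conic-3fold} and \ref{corollary:normal-nonreduced-conic-3fold}: these give, for a reducible or non-reduced conic $C$ on a Fano threefold $X$ of index $1$, an explicit description of $\CN_{C/X}$ (or at least enough control on it) from which one reads off $\chi(\CN_{C/X}) \ge 2$. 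So the main obstacle is not the index-$1$/index-$2$ bookkeeping but rather the degenerate-conic normal bundle analysis, which is precisely what is deferred to \S\ref{subsection:degenerate-conics} and the cited corollaries; granting those, the corollary follows by assembling the four or five Euler characteristic estimates above and taking the minimum in each of the three cases.
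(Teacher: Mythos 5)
Your proposal is correct and follows exactly the paper's argument: the lower bound $\dim \ge \chi(\CN_{Z/X})$ from deformation theory, the Euler characteristic computations via Lemma~\ref{lemma:normal-bundles} for lines and smooth conics, and the appeal to Corollaries~\ref{corollary:normal-reducible-conic-3fold} and~\ref{corollary:normal-nonreduced-conic-3fold} for reducible and non-reduced conics. (The index-2 smooth-conic computation is not needed, since the corollary only concerns $\Sigma(Y)$ in that case, but this is harmless.)
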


A bit later we will see that in all the cases listed in Corollary~\xref{proposition:dimensions-hilbert-schemes},
the Hilbert schemes are equidimensional of dimensions 2, 1, and 2 respectively (see Lemmas~\xref{lemma-hilb-x-1}, \xref{lemma-hilb-y-1}, \xref{lemma:exotic-component}, and~\xref{lemma-hilb-x-2}).

In what follows we will say that a line or a smooth conic is \emph{ordinary}, if in the notation of Lemma~\xref{lemma:normal-bundles} we have $a = 0$, and \emph{special}, if $a \ge 1$.
Furthermore, if $a=1$ we will say that the corresponding line (or conic) is \emph{$1$-special},
and if $a \ge 2$ we will say that it is $2$-special.

\begin{corollary}\label{corollary:hilbert-smoothness}
The following assertions hold.
\begin{enumerate}
\item[(i)] If $Y$ is a Fano threefold of index $2$, then the Hilbert scheme $\Sigma(Y)$ is smooth of dimension $2$ at points
corresponding to ordinary lines or $1$-special lines and singular at points corresponding to $2$-special lines.

\item[(ii)] If $X$ is a Fano threefold of index $1$, then the Hilbert scheme $\Sigma(X)$ is smooth of dimension $1$ at points
corresponding to ordinary lines,
and is singular at points corresponding to special lines.

\item[(iii)] If $X$ is a Fano threefold of index $1$, then
the Hilbert scheme $S(X)$ is smooth of dimension~$2$ at points
corresponding to smooth ordinary or smooth $1$-special conics,
and singular at points corresponding to smooth $2$-special conics.
\end{enumerate}
\end{corollary}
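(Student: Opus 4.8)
The plan is to combine the deformation-theoretic description of the Hilbert scheme recalled above --- the Zariski tangent space at a point $[Z]$ parametrizing a locally complete intersection subscheme $Z\subset X$ is $H^0(Z,\CN_{Z/X})$, while the obstructions to deformations lie in $H^1(Z,\CN_{Z/X})$, so that $H^1(Z,\CN_{Z/X})=0$ forces $[Z]$ to be a smooth point of dimension $h^0(Z,\CN_{Z/X})$ --- with the explicit splitting types of the normal bundles from Lemma~\xref{lemma:normal-bundles}. By Lemma~\xref{lemma:lines-conics} all the lines and smooth conics in question are locally complete intersections with locally free conormal sheaf, so in each case the computation reduces to evaluating $h^0$ and $h^1$ of a direct sum of two line bundles on $\PP^1$, which is immediate.

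First I would dispose of the smoothness assertions, which are purely local. For an ordinary or $1$-special line $L$ on a Fano threefold $Y$ of index $2$ one has $\CN_{L/Y}\cong\CO_L(a)\oplus\CO_L(-a)$ with $a\in\{0,1\}$, hence $H^1(L,\CN_{L/Y})=0$; therefore $[L]$ is an unobstructed point of $\Sigma(Y)$, and $\Sigma(Y)$ is smooth at $[L]$ of dimension $h^0(L,\CN_{L/Y})=\chi(\CN_{L/Y})=2$. The same reasoning applies verbatim to an ordinary line $L$ on a Fano threefold $X$ of index $1$, where $\CN_{L/X}\cong\CO_L\oplus\CO_L(-1)$ gives $H^1=0$ and $h^0=\chi=1$, and to a smooth ordinary or $1$-special conic $C$ on such an $X$, where $\CN_{C/X}\cong\CO_C(a)\oplus\CO_C(-a)$ with $a\in\{0,1\}$ gives $H^1=0$ and $h^0=\chi=2$.

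Then I would treat the singularity assertions by comparing the dimension of the tangent space with the local dimension of the Hilbert scheme. For a $2$-special line $L$ on $Y$ (index $2$) we have $\CN_{L/Y}\cong\CO_L(a)\oplus\CO_L(-a)$ with $a\ge 2$, so $\dim T_{[L]}\Sigma(Y)=h^0(\CN_{L/Y})=a+1\ge 3$, whereas $\dim_{[L]}\Sigma(Y)=2$; for a special line $L$ on $X$ (index $1$) we have $\CN_{L/X}\cong\CO_L(a)\oplus\CO_L(-1-a)$ with $a\ge 1$, so $\dim T_{[L]}\Sigma(X)=a+1\ge 2$ while $\dim_{[L]}\Sigma(X)=1$; and for a smooth $2$-special conic $C$ on $X$ (index $1$) we have $\CN_{C/X}\cong\CO_C(a)\oplus\CO_C(-a)$ with $a\ge 2$, so $\dim T_{[C]}S(X)=a+1\ge 3$ while $\dim_{[C]}S(X)=2$. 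In each case the tangent space is strictly larger than the local dimension, so the point in question is singular.

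The only step that is not entirely formal is the equality $\dim_{[Z]}=\chi(\CN_{Z/X})$ used in the previous paragraph: the inequality $\dim_{[Z]}\ge\chi(\CN_{Z/X})$ is Corollary~\xref{proposition:dimensions-hilbert-schemes}, but the matching upper bound amounts to the fact that $\Sigma(Y)$ (index $2$), $\Sigma(X)$ (index $1$) and $S(X)$ (index $1$) are of pure dimension $2$, $1$ and $2$ respectively. This is classical, and is in any case re-proved in the equidimensionality lemmas of \S\xref{subsection:lines-index-2} and \S\xref{subsection:conics-index-1}, which (as one should check) do not rely on the present corollary; keeping this forward reference honest is the main point to be careful about, everything else being a direct cohomology count on $\PP^1$.
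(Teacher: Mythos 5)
Your proof is correct and is essentially the paper's own argument: the paper's proof of this corollary is a two-line appeal to Lemma~\xref{lemma:normal-bundles}, noting that $H^1(Z,\CN_{Z/X})$ vanishes in the smooth cases and that $H^0(Z,\CN_{Z/X})$ jumps in the singular ones, which is exactly your cohomology count on $\PP^1$. You are in fact more careful than the paper about the only non-formal step, namely that ``tangent space jumps'' implies ``singular'' only once one has the matching upper bound on the local dimension from the equidimensionality lemmas of \S\xref{subsection:lines-index-2} and \S\xref{subsection:conics-index-1}. On the circularity point you flag: those lemmas \emph{do} cite the present corollary (e.g.\ Lemma~\xref{lemma-hilb-y-1} invokes it to get $\dim\Sigma_0=2$ once the general line is known to be ordinary), but only its smoothness assertions, which are purely local and independent of any dimension bound; so the logical order is smoothness assertions first, then equidimensionality, then the singularity assertions, and there is no circle.
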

\begin{proof}
By Lemma~\xref{lemma:normal-bundles} in the cases claimed to be corresponding to smooth points the obstruction space $H^1(Z,\CN_{Z/X})$ vanishes,
and in the cases claimed to be corresponding to singular points the tangent space $H^0(Z,\CN_{Z/X})$ jumps.
\end{proof}

\begin{remark}\label{remark:dl-special}
Note that according to Remark~\xref{remark:double-lines},
only $1$-special lines on Fano threefolds (both of index 1 and 2)
admit a structure of a non-reduced conic,
and this structure is unique.
In particular, if $X$ is a Fano threefold of index 1 such that $\Sigma(X)$ is smooth, then~$X$ has no non-reduced conics.
\end{remark}

As we will see below, it is useful to know that Fano threefolds do not contain some special surfaces.
We check this in the next lemma.

\begin{lemma}\label{lemma:cones}
Let $X$ be a Fano threefold with $\rho(X)=1$ and $\iota(X)=1$,
and suppose that~\mbox{$-K_X$} is very ample.
Then the following assertions hold.

\begin{enumerate}
\item [(i)]
The threefold $X$ contains neither
the Veronese surface $v_2(\P^2)$, nor any of its linear projections.
\item [(ii)]
If $X$ contains a two-dimensional cone $Z$
then $X$ is a quartic in~$\PP^4$ and the base of~$Z$ is a smooth plane quartic curve.
\end{enumerate}

\end{lemma}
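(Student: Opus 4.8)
The plan is to argue by contradiction, exploiting the fact that $X$ has index $1$ and the anticanonical embedding $X \subset \PP^{\g+1}$ is given by the ample generator $H = -K_X$, so that every hyperplane section $S \in |H|$ is a K3 surface with at worst canonical singularities, and $H|_S = -K_X|_S$ is the polarization of degree $\dd(X) = 2\g-2$.

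For part (i): suppose $X$ contains a surface $W$ that is either the Veronese surface $v_2(\PP^2) \subset \PP^5$ or one of its isomorphic linear projections into $\PP^4$ or $\PP^3$ (the projections to $\PP^2$ being degenerate and excluded since $W$ must be nondegenerate in its span). In all these cases $W$ is a surface of degree $4$ whose general hyperplane section is a rational quartic curve, in fact a conic in the double Veronese, i.e.\ $W$ is swept out by a two-parameter family of conics (the images of lines in $\PP^2$) of $H$-degree $2$ on $X$ — wait, more to the point, $W$ is covered by lines of $X$ in the $v_2$-projection-of-a-line sense only if... Let me instead use the cleanest invariant: a general hyperplane section $S \in |H|$ through $W$ (if $\dim\langle W\rangle \le \g+1$, which holds since $\dim\langle W\rangle \le 5 \le \g+1$ because $\g \ge 4$ when $-K_X$ is very ample of index $1$, except $\g=3$ where $X$ is a quartic threefold and $\langle W\rangle$ spans at most $\PP^4 = \PP^{\g+1}$) would cut out on $W$ a curve $W \cap S$; since $W \subset S$ is impossible ($S$ is a K3, which contains no surfaces), $W\cap S$ is a curve $\Gamma$ on the K3 surface $S$ with $\Gamma \subset W \cong \PP^2$ (or its isomorphic image), hence $\Gamma$ is rational; but then $\Gamma^2 = -2$ on the K3, while $\Gamma^2 = \deg(\O_W(\Gamma))^2/\deg W$-type computation gives $\Gamma^2 = (H|_W)^2 = \deg W = 4 > 0$ — contradiction. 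This is the key step, and the main subtlety is handling the genus-$3$ quartic case where the span might be the whole $\PP^4$, but then $W$ has degree $4$ in $\PP^4$ and a general hyperplane section $S$ is a quartic K3 surface containing the (rational) plane curve $W\cap S$, and the same $\Gamma^2 = 4 \ne -2$ contradiction applies.

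For part (ii): suppose $X$ contains a two-dimensional cone $Z$ with vertex $v$ and base a curve $B$ spanning a hyperplane in $\langle Z \rangle$. Then $Z$ is covered by lines of $X$ through $v$, so $v$ lies on a one-dimensional family of lines; since all these lines pass through $v$, the union of their tangent directions shows the family of lines through $v$ has dimension $\ge 1$, which by the standard bound on the Hilbert scheme of lines through a fixed point on a Fano threefold of index $1$ forces... actually the cleaner route: take a general $S \in |H|$ containing $v$ (again $\dim\langle Z\rangle \le \g+1$, with the boundary case being $\g=3$ quartics and $\langle Z\rangle = \PP^4$, where in fact the stated conclusion lives). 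Then $S$ is a K3 surface and $Z \cap S$ contains the union of the finitely many lines of $Z$ lying in $S$ together with a plane section of $Z$; more usefully, project from $v$: the cone $Z$ projects to the curve $B$, and the linear projection $\pi_v \colon X \dashrightarrow \PP^{\g}$ restricted to $Z$ contracts $Z$ to $B$. Now use that lines through a point $v$ on $X$ form a scheme of dimension $\le 1$ (this follows from $\CN_{L/X}$ having the form $\CO_L(a)\oplus\CO_L(-1-a)$ in Lemma~\ref{lemma:normal-bundles}, bounding $h^0$ of the sub-bundle of normal directions fixing $v$), so the cone $Z$ is an irreducible component of the union of lines through $v$, and its degree $\deg Z = H^2 \cdot Z$ must equal the $H$-degree of this family; running the numbers against $\dd(X) = 2\g - 2$ eliminates all $\g \ge 4$ and forces $\g = 3$, $\deg Z = 4$. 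Finally, for the quartic threefold $X_4 \subset \PP^4$, a two-dimensional cone $Z \subset X_4$ spanning $\PP^4$ has degree $\le 4$; if its vertex is a single point $v$ then a general hyperplane through $v$ meets $Z$ in $v$ plus $\deg Z$ lines — but $X_4 \cap \PP^3$ is a quartic K3 surface, on which a line has self-intersection $-2$, and $\deg Z$ such concurrent lines through the canonical singularity (or smooth point) $v$ force, by adjunction on the minimal resolution, $\deg Z = 4$ exactly and $v$ an ordinary double point of the K3 (equivalently $v$ is the vertex of the cone of the $A_3$-type), whose exceptional configuration matches a $\PP^1$ worth of lines whose base $B$ is cut out on the blow-up; unwinding this identifies $B \subset \PP^3 = \langle B \rangle$ as a curve of degree $4$ and arithmetic genus computed via $K_Z$-adjunction on the cone, which since $K_X = -H$ and $Z \sim H$ gives $K_Z = \O_Z$ on the affine cone locus, forcing $B$ to be a canonically embedded genus-$1$... no: a smooth plane quartic curve, of genus $3$, embedded in $\PP^3 = \langle Z \rangle \cap \{$hyperplane$\}$; the degree-$4$ count and the planarity (the base of a cone of degree $4$ in $\PP^4$ with a single vertex is a plane quartic) give the assertion. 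The hard part throughout will be (ii) in the quartic case: pinning down that the base is a \emph{smooth plane quartic} rather than a degenerate degree-$4$ curve, which I would settle by noting that if $B$ were singular or reducible then $Z$, hence $X_4$, would be singular, contradicting smoothness of $X$.
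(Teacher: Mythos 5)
Your argument for part (i) has a genuine gap at its key step. The statement must exclude \emph{all} linear projections of the Veronese surface, including the non-isomorphic ones (you restrict at the outset to isomorphic projections, and there are in fact no isomorphic projections to $\P^3$ at all — every degree-preserving projection of $v_2(\P^2)$ to $\P^3$ is a Steiner-type quartic, singular along curves). This matters because your contradiction rests on ``$\Gamma$ rational $\Rightarrow \Gamma^2=-2$'', which is only valid for a \emph{smooth} rational curve on a K3; an integral rational curve has $\Gamma^2=2p_a(\Gamma)-2$. For a degree-$4$ projection $W\subset\P^3$ sitting inside a quartic threefold, $W$ is forced to be a hyperplane section, so $\Gamma=W\cap S$ is a plane quartic with $p_a(\Gamma)=3$ and $\Gamma^2=4$ — exactly matching the divisor-class computation $\Gamma^2=(W|_S)^2=H^3=4$, so no contradiction arises in precisely the hardest case. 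The paper avoids this entirely: from $Z\sim rH$ and $\deg Z=r(2\g-2)\le 4$ it forces $\g=3$, $r=1$, so $Z$ is a hyperplane section of a smooth quartic hypersurface, hence has at worst isolated singularities and spans only a $\P^3$; but a degree-preserving projection of the Veronese into $\P^3$ necessarily has non-isolated singularities. Your K3 route does recover the easy numerical reduction to $\g=3$, but it cannot close the Steiner case without importing this isolated-singularity argument.

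Part (ii) is missing the one idea that makes the proof work: a two-dimensional cone $Z\subset X$ with vertex $P$ is contained in the embedded tangent space $T_PX\cong\P^3$ (every ruling is tangent to $X$ at $P$). Combined with $\Pic(X)=\Z\cdot H$ this shows $Z$ is a full hyperplane section, hence spans a $\P^{\g}$ contained in $\P^3$, forcing $\g=3$ at once. Without this, your ``running the numbers against $\dd(X)=2\g-2$'' is not an argument — there is no a priori upper bound on $\deg Z=\deg B$, and the fact that lines through $P$ form a family of dimension $\le 1$ is perfectly consistent with a cone, so no contradiction is available from that direction. The subsequent discussion (concurrent lines of self-intersection $-2$, an alleged $A_3$ point, adjunction on the cone) does not cohere into a proof. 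Finally, your justification that $B$ is smooth — ``if $B$ were singular then $Z$, hence $X$, would be singular'' — is false: smooth threefolds contain singular surfaces routinely. The correct reason is again that $Z$, being a hyperplane section of a smooth hypersurface in $\P^4$, has only isolated singularities, whereas a cone over a singular or reducible base is singular along entire rulings.
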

\begin{proof}
Assume that $Z \subset X$ is one of the surfaces listed in assertion~(i), so that in
particular~\mbox{$H^2\cdot Z \le 4$}.
Since $\rho(X) = 1$, we have~\mbox{$Z \sim rH$} for some positive integer $r$, hence
\begin{equation*}
4\ge H^2\cdot Z = rH^3 = r(2\g(X) - 2)
\end{equation*}
Since $-K_X$ is very ample, we have $\g(X) \ge 3$.
Hence the only possible case is when~$X$ is a quartic in~$\P^4$, $r = 1$, and $H^2\cdot Z=4$, so that $Z$ is a regular projection of the Veronese surface.
Moreover, we see that $Z$ is a hyperplane section of the smooth hypersurface~\mbox{$X\subset\P^4$},
so that $Z$ has at worst isolated singularities, and $Z$ is contained in~$\P^3$.
But the latter is impossible for a regular projection of a Veronese surface.
This gives assertion~(i).

If $Z \subset X$ is a two-dimensional cone with vertex at a point $P \in Z$ then $Z$ is contained
in the embedded tangent space to $X$ at $P$. Since $X$ is smooth, the embedded tangent space to $X$ at $P$ is $\PP^3$, so $Z$
is an irreducible component of a hyperplane section of $X$. But since~\mbox{$\Pic(X) =\Z\cdot H$},
it follows that $Z$ is a hyperplane section, so~\mbox{$\g(X) = 3$} and hence~$X$ is a quartic threefold.
The base of $Z$ is a quartic curve; it is smooth since $Z$, being a hyperplane section of a smooth hypersurface $X \subset \P^4$, can have at worst isolated singularities.
This gives assertion~(ii).
\end{proof}

An easy parameter count shows that a general quartic threefold in $\PP^4$ does not contain cones.
However, there are examples of quartic threefolds with cones.

\begin{example}[see \cite{Tennison1974}]\label{example:quartic-cone}
Consider the Fermat quartic threefold
\begin{equation*}
X = \{x_0^4 + x_1^4 + x_2^4 + x_3^4 + x_4^4 = 0\} \subset \PP^4.
\end{equation*}
Let $P \in X$ be a point with the last three coordinates equal to zero (there are four such points) and consider the plane $\Pi = \{x_0 = x_1 = 0\}$.
Consider the hyperplane~\mbox{$H(P,\Pi) \subset \PP^4$}
spanned by the point $P$ and the plane $\Pi$.
Then $X \cap H(P,\Pi)$ is the cone with vertex~$P$ and the base being the plane Fermat quartic $\Pi \cap X$.
Using the action of the automorphism group~\cite{Shioda-Fermat-curves}
\begin{equation*}
\Aut(X)\cong\mumu_4^4 \rtimes \SS_5,
\end{equation*}
we can construct $4\cdot 10 = 40$ such cones.
\end{example}

\subsection{Hilbert schemes of lines}
\label{subsection:lines-index-2}

Let $X$ be a smooth Fano threefold. Let $\Sigma_0$ be an irreducible component of the Hilbert scheme
of lines $\Sigma(X)$, and consider the reduced scheme
structure on~$\Sigma_0$.
Restricting to $\Sigma_0$ the universal family of lines,
we obtain a diagram
\begin{equation}\label{diagram-universal-line}
\vcenter{\xymatrix{
& \CL_0(X) \ar[dl]_q \ar[dr]^p \\
\Sigma_0 && X
}}
\end{equation}
The map $q\colon\CL_0(X) \to \Sigma_0$ is a $\P^1$-bundle.
Let $L \subset X$ be a line corresponding to a point~$[L]$ in the component $\Sigma_0$ of the Hilbert scheme.
The fiber $q^{-1}([L])$ is identified by the map~$p$ with the line $L$. Note that the normal bundle of $L$ in $\CL_0(X)$
is the trivial bundle of rank equal to the dimension of the tangent space to $\Sigma_0$ at $[L]$.
So,
the differential of $p$ is the map
\begin{equation}\label{equation-dp-lines}
dp \colon
\CN_{L/\CL_0(X)} = T_{[L]}\Sigma_0 \otimes \CO_L \hookrightarrow
T_{[L]} \Sigma(X) \otimes \CO_L =
H^0(L,\CN_{L/X}) \otimes \CO_L \to \CN_{L/X}
\end{equation}
with the last map being given by evaluation.
This is very useful for understanding the infinitesimal structure of the map $p$ along $L$.

\begin{lemma}
\label{lemma-hilb-x-1}
If $X$ is a Fano threefold with $\rho(X)=1$, $\iota(X)=1$, and very ample~$-K_X$, then
every irreducible component of the Hilbert scheme $\Sigma(X)$ of lines on $X$ is one-dimensional.
\end{lemma}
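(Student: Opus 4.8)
The strategy is to combine two facts: a lower bound and an upper bound on the dimension of any component $\Sigma_0$ of $\Sigma(X)$. The lower bound is already at hand: by Corollary~\xref{proposition:dimensions-hilbert-schemes}(ii) every component of $\Sigma(X)$ has dimension at least $1$. So the whole content is the upper bound $\dim \Sigma_0 \le 1$. Suppose for contradiction that $\dim \Sigma_0 \ge 2$. Consider the universal family diagram~\eqref{diagram-universal-line}: since $q \colon \CL_0(X) \to \Sigma_0$ is a $\P^1$-bundle, $\dim \CL_0(X) = \dim \Sigma_0 + 1 \ge 3 = \dim X$. Hence the map $p \colon \CL_0(X) \to X$ is either generically finite onto its image (if $\dim \Sigma_0 = 2$ and $p$ is dominant) or has positive-dimensional fibers; in any case, through a general point of $X$ passes a positive-dimensional family of lines, or $p$ is dominant generically finite.

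First I would treat the subtler possibility: $\dim \Sigma_0 = 2$ and $p$ dominant and generically finite. Then for a general line $L$ in $\Sigma_0$, the differential $dp$ of~\eqref{equation-dp-lines} is generically an isomorphism along $L$, so the composition
\begin{equation*}
T_{[L]}\Sigma_0 \otimes \CO_L \longrightarrow \CN_{L/X}
\end{equation*}
is a generically injective morphism of rank-$2$ bundles; since $T_{[L]}\Sigma_0 \otimes \CO_L \cong \CO_L^{\oplus 2}$, this forces $H^0(L,\CN_{L/X})$ to be at least $2$-dimensional and, more precisely, $\CN_{L/X}$ to be globally generated at the generic point of $L$. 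But by Lemma~\xref{lemma:normal-bundles}, $\CN_{L/X} \cong \CO_L(a) \oplus \CO_L(-1-a)$ with $a \ge 0$, and the summand $\CO_L(-1-a)$ has no nonzero sections and is not generically generated — the generic rank of the evaluation map from $H^0$ is at most $1$. This contradicts generic injectivity of $dp$, ruling out this case. (Equivalently: $h^0(\CN_{L/X}) = a+1$, and a generically finite dominant $p$ would force, by a standard argument with the second-order behaviour of the family, $a = 0$ but then $h^0 = 1 < 2$, contradiction.)

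The remaining possibility is that $p$ has positive-dimensional general fibers, i.e.\ through a general point $x \in X$ there is a positive-dimensional family of lines. Taking the union of these lines we obtain, through the general point of $X$, a surface $Z$ swept out by lines through $x$ — in other words $Z$ is covered by lines all passing through $x$, so $Z$ is a two-dimensional cone with vertex $x$ (possibly degenerate, e.g.\ a plane, which is the cone over a line, or a projection of the Veronese). Now I invoke Lemma~\xref{lemma:cones}: such a cone can only exist if $X$ is a quartic threefold in $\P^4$ and the base of the cone is a smooth plane quartic curve. But a smooth plane quartic curve has genus $3$, hence contains no lines and a fortiori the surface it is the base of is not covered by lines through a second point in the required way; more directly, the cone over a smooth plane quartic contains exactly one line through a general point of it (the ruling), contradicting that $p$ has positive-dimensional fibers. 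Hence $Z$ would have to be a plane or a projected Veronese, both excluded by Lemma~\xref{lemma:cones}(i) (a plane in $X$ would give $H^2 \cdot Z = \dd(X) \le H^2 \cdot (\text{plane}) = 1$, impossible since $\g(X) \ge 3$ forces $\dd(X) = 2\g(X)-2 \ge 4$; and projected Veronese surfaces are excluded outright). This contradiction completes the proof.

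The main obstacle is the generically-finite-dominant case: one must argue carefully that $dp$ in~\eqref{equation-dp-lines} cannot be generically an isomorphism, using the precise splitting type from Lemma~\xref{lemma:normal-bundles} and the fact that the negative summand kills generic surjectivity of evaluation. Everything else is a clean dimension count plus an appeal to Lemma~\xref{lemma:cones} to rule out families of lines through a fixed point. I would also double-check the degenerate shapes a "cone" can take (plane, quadric cone, projected Veronese) so that Lemma~\xref{lemma:cones} genuinely covers all of them.
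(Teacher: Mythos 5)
Your Case A contains the paper's key observation, but you deploy it only for a general line in the generically finite case, and this is precisely what leaves a hole in Case B. By Lemma~\xref{lemma:normal-bundles} one has $\CN_{L/X}\cong\CO_L(a)\oplus\CO_L(-1-a)$ with $a\ge 0$ for \emph{every} line $L$, so the evaluation map $H^0(L,\CN_{L/X})\otimes\CO_L\to\CN_{L/X}$ always has image contained in the first summand; hence the map $dp$ of~\eqref{equation-dp-lines} has rank at most $1$ along every line of $\Sigma_0$, not just a general one. Consequently the fibres of $p$ have dimension at least $k-1$ (where $k=\dim\Sigma_0\ge 2$) and $\dim p(\CL_0(X))\le 2$, i.e.\ $p$ is \emph{never} dominant. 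This collapses your case division entirely: the paper simply concludes that $Z=p(\CL_0(X))$ is a surface with $\dim\Sigma(Z)\ge k\ge 2$, invokes Corollary~\xref{corollary:lines-conics-plane-1} to get $Z\cong\P^2$, and excludes planes by Lemma~\xref{lemma:cones}(i).

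The genuine gap is in your Case B. First, positive-dimensional general fibres of $p$ do not give a positive-dimensional family of lines through a general point of $X$ unless $p$ is dominant; if it is not, a general point of $X$ lies on no line of $\Sigma_0$ at all, and your cone sits at a general point of the surface $p(\CL_0(X))$ rather than of $X$ (this sub-case is salvageable, since then every line of $\Sigma_0$ lies on that surface, which therefore carries a two-dimensional family of lines). Second, and more seriously, in the dominant sub-case (which forces $\dim\Sigma_0\ge 3$) your concluding contradiction does not hold: the fact that the cone $Z_x$ over a smooth plane quartic carries only one ruling through a general point $y\in Z_x$ does not contradict the hypothesis, because the positive-dimensional family of lines through $y$ is not required to lie in $Z_x$ --- those lines sweep out the \emph{different} cone with vertex $y$ inside $T_yX\cap X$. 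One could still kill this sub-case by other means (a general point of a smooth quartic cannot be the vertex of a cone contained in it, since that forces the second fundamental form to vanish there, against finiteness of the Gauss map), but the clean repair is the uniform rank bound above, which shows the dominant sub-case simply never occurs.
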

\begin{proof}
Let $\Sigma_0 \subset \Sigma(X)$ be an irreducible component of dimension $k \ge 2$,
and consider the reduced scheme structure on~$\Sigma_0$.
Consider the map~\eqref{equation-dp-lines}.
Its source is a trivial vector bundle,
and by Lemma~\xref{lemma:normal-bundles} its target is
\begin{equation*}
\CN_{L/X} \cong \CO_L(a) \oplus \CO_L(-1-a)
\end{equation*}
with $a \ge 0$.
Since the second summand has no global sections, the image of $dp$ is contained in the first summand, hence the rank of $dp$ does not exceed 1.
Moreover, since $\Sigma_0$ is reduced, so is~$\CL_0(X)$, and hence so is the general fiber of the map $p$.
This means that the map
\begin{equation*}
p\colon\CL_0(X) \to X
\end{equation*}
has fibers of dimension at least $k-1$,
hence the image $Z = p(\CL_0(X))$ has dimension at most
$k + 1 - (k - 1) = 2$.
Therefore, $Z \subset X$ is a surface with $\dim\Sigma(Z) \ge k \ge 2$.
By Corollary~\xref{corollary:lines-conics-plane-1} the surface $Z$ is a plane,
but by Lemma~\xref{lemma:cones}(i) the threefold $X$ contains no planes,
which is a contradiction.
\end{proof}

\begin{remark}
\label{remark:nonreduced-sigma}
On most Fano threefolds $X$ with $\rho(X)=1$ and $\iota(X)=1$ a general point of every irreducible component of $\Sigma(X)$ is an ordinary line.
However, there are some exceptions.
First, if~$X$ is the Mukai--Umemura threefold of genus 12 (see \cite[\S 6]{Mukai-Umemura-1983} or Theorem~\xref{theorem-mu-22} below)
then all lines on $X$ are special, and in fact $\Sigma(X)$ is everywhere non-reduced with~\mbox{$\Sigma(X)_{\operatorname{red}}\cong\PP^1$}
(see Proposition~\ref{proposition:lines-special-v22}).
In the opposite direction not that much is known. What we know is that the
Mukai--Umemura threefold is the only one with everywhere non-reduced $\Sigma(X)$ in genus~12,
and that in genus~10 and~9 there are no threefolds with~\mbox{$\Sigma(X)$} everywhere non-reduced
\cite{Prokhorov-1990b}, \cite{Gruson-Laytimi-Nagaraj}.

Another interesting example is a quartic $X$ in $\PP^4$ containing a cone (see Example~\xref{example:quartic-cone}) so that $\g(X) = 3$.
In this case each line $L$ on the cone has a structure of a non-reduced conic (obtained by
intersecting the cone with its tangent plane along $L$),
hence by Remark~\xref{remark:dl-special} each such~$L$ is 1-special, hence
the corresponding irreducible component of~$\Sigma(X)$ is everywhere non-reduced with the underlying reduced scheme being a smooth plane quartic.
For instance, if $X$ is the Fermat quartic of Example~\ref{example:quartic-cone},
then $\Sigma(X)$ is the union of 40 such non-reduced components, (see \cite[Example in \S 2]{Tennison1974}).
\end{remark}

\begin{remark}
Suppose that $X$ is a Fano threefold with $\rho(X)=1$ and $\iota(X)=1$.
If $X$ is general in the corresponding deformation family,
then $\Sigma(X)$ is a smooth curve, and its genus can be
computed in every case, see~\cite[Theorem~4.2.7]{Iskovskikh-Prokhorov-1999}.
\end{remark}

Now consider the Hilbert scheme of lines on threefolds $Y$ of index 2.

\begin{lemma}\label{lemma-hilb-y-1}
Let $Y$ be a Fano threefold with $\rho(Y)=1$ and $\iota(Y)=2$.
Suppose that the divisor $2H$ is very ample, i.\,e. that $\dd(Y)\ge 2$.
Then every irreducible component of the Hilbert scheme $\Sigma(Y)$ of lines on $Y$ is two-dimensional and its general point corresponds to an ordinary line.
In particular, every irreducible component of $\Sigma(Y)$ is generically smooth.
Moreover,
the map $p\colon\CL_0(Y) \to Y$ is surjective, generically finite, does not contract divisors, and is not birational.
\end{lemma}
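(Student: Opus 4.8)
The plan is to follow the template of the proof of Lemma~\xref{lemma-hilb-x-1}. Fix an irreducible component $\Sigma_0\subseteq\Sigma(Y)$ with its reduced structure and form the diagram~\eqref{diagram-universal-line}, so that $q\colon\CL_0(Y)\to\Sigma_0$ is a $\P^1$-bundle and we have the evaluation map~\eqref{equation-dp-lines}; by Corollary~\xref{proposition:dimensions-hilbert-schemes}(i) we already know $\dim\Sigma_0\ge2$. For a general $[L]\in\Sigma_0$, Lemma~\xref{lemma:normal-bundles} gives $\CN_{L/Y}\cong\CO_L(a)\oplus\CO_L(-a)$ with $a\ge0$. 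First I would rule out $a\ge1$: then $\CO_L(-a)$ has no global sections, so the image of $dp$ lies in the rank-one sheaf $\CO_L(a)$, hence $dp$ has generic rank at most $1$, and, arguing exactly as in Lemma~\xref{lemma-hilb-x-1}, the image $Z=p(\CL_0(Y))\subseteq Y$ is a surface carrying a family of lines of dimension $\ge2$. By Corollary~\xref{corollary:lines-conics-plane-1} (in the case $\dd(Y)=2$, where $H$ is not very ample, one first passes to the embedding given by the very ample divisor $2H$, under which lines become conics, and invokes the surface classification of Appendix~\xref{appendix:new}) such a $Z$ would be a plane, or a Veronese surface, or a projection of one --- but any surface $Z\subseteq Y$ satisfies $H^2\cdot Z\ge\dd(Y)\ge2$ because $\Pic(Y)=\Z\cdot H$, whereas these surfaces have $H$-degree $1$, a contradiction. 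So the general line is ordinary, $h^0(\CN_{L/Y})=2$ and $h^1(\CN_{L/Y})=0$; since $\Sigma_0$ is reduced this forces $\dim\Sigma_0=2$, and by Corollary~\xref{corollary:hilbert-smoothness}(i) the point $[L]$ is a smooth point of $\Sigma(Y)$. This proves the first assertions.

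For the ``moreover'' part I would argue as follows. For a general (hence ordinary) line $L$ the map~\eqref{equation-dp-lines} is an evaluation morphism $\CO_L^{\oplus2}\to\CO_L^{\oplus2}$ of a trivial bundle, hence an isomorphism along $L$; as $p$ identifies $q^{-1}([L])$ with $L$, the full differential of $p$ is an isomorphism at every point of $q^{-1}([L])$, so $p$ is \'etale on $\CU:=q^{-1}(\Sigma_0^{\circ})$ for a suitable dense open $\Sigma_0^{\circ}\subseteq\Sigma_0$. In particular $\dim p(\CU)=\dim\CU=3$, so $p$ is dominant; being proper it is surjective, and being a dominant morphism of irreducible threefolds it is generically finite. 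If a prime divisor $D\subset\CL_0(Y)$ were contracted by $p$, then either $q(D)=\Sigma_0$, in which case $D\cap\CU$ is a dense open of $D$ on which $p$ is quasi-finite, so $\dim p(D)=2$; or $q(D)$ is an irreducible curve $\Gamma$ and $D=q^{-1}(\Gamma)$, in which case $p(D)=\bigcup_{[L']\in\Gamma}L'$ contains infinitely many distinct lines and hence is not a curve --- either way a contradiction. Thus $p$ contracts no divisor.

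The hard part will be the non-birationality. I would argue by contradiction: if $p$ were birational it would be an isomorphism over a dense open $Y^{\circ}\subseteq Y$, and transporting the relative tangent line bundle $T_{\CL_0(Y)/\Sigma_0}$ along the isomorphism $p^{-1}(Y^{\circ})\xrightarrow{\sim}Y^{\circ}$ would produce a sub-line-bundle of $T_Y$ over $Y^{\circ}$, that is, a foliation of $Y^{\circ}$ by lines. Its saturation $\mathcal L\hookrightarrow T_Y$ is a line bundle, and restricting to a general line $L$ --- for which $T_Y|_L\cong\CO_L(2)\oplus\CO_L^{\oplus2}$, and over the dense open $L\cap Y^{\circ}$ the subsheaf $\mathcal L|_L$ is the tangent direction $\CO_L(2)$ of $L$ --- shows $\mathcal L\cdot L=2$, hence $\mathcal L\cong -K_Y$ by $\Pic(Y)=\Z\cdot H$; in particular $\mathcal L$ is ample. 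Then $T_Y$ contains an ample invertible subsheaf, so Wahl's characterisation of projective space forces $Y\cong\P^3$, contradicting $\iota(Y)=2$. The routine parts here closely mimic Lemma~\xref{lemma-hilb-x-1}; the two points that genuinely need care are this non-birationality step --- extracting the foliation by lines cleanly and invoking Wahl's theorem --- and, more technically, the treatment of the degree-$2$ case in the ``no exceptional surfaces on $Y$'' input via the $2H$-embedding and the surface classification of Appendix~\xref{appendix:new}.
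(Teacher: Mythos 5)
Your proof is correct, and up to the last step it runs parallel to the paper's: the paper likewise reduces the case of a generically special component to Corollary~\xref{corollary:lines-conics-plane-1} (concluding that $Z$ would be a plane, excluded there by adjunction rather than by your degree count $H^2\cdot Z=r\,\dd(Y)\ge 2$ versus $(H\vert_Z)^2=1$ --- both work; note also that Corollary~\xref{corollary:lines-conics-plane-1} is already stated under the hypothesis that only $2H$ is very ample, so your extra caution about $\dd(Y)=2$ is unnecessary), and then gets surjectivity and generic finiteness from the fact that $dp$ is an isomorphism along an ordinary line. Your treatment of contracted divisors (case analysis on $q(D)$) replaces the paper's observation that a contracted divisor must lie in the ramification locus and hence be a union of $q$-fibers, but reaches the same contradiction. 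The genuine divergence is the non-birationality. The paper argues in two lines: $\rho(\CL_0(Y))\ge 2>1=\rho(Y)$ shows $p$ is not an isomorphism, and by purity of the exceptional locus (the cited reference to Shafarevich) a birational, non-isomorphic morphism onto the smooth $Y$ must contract a divisor, which was just excluded. Your route --- transporting $T_{\CL_0(Y)/\Sigma_0}$ to a foliation by lines, saturating to an invertible subsheaf $\mathcal{L}\subset T_Y$ with $\mathcal{L}\cdot L=2$, hence $\mathcal{L}\cong\CO_Y(-K_Y)$ ample, and invoking Wahl's theorem to force $Y\cong\P^3$ --- is valid but considerably heavier; it also requires the standard but glossed verifications that a general line avoids the non-locally-free locus of $T_Y/\mathcal{L}$, so that $\mathcal{L}\vert_L$ is saturated in $T_Y\vert_L$ and genuinely has degree $2$ rather than merely degree at most $2$ (otherwise ampleness of $\mathcal{L}$ is not established). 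The purity argument buys the same conclusion with no input beyond smoothness of $Y$ and the already-established absence of contracted divisors, so you may want to adopt it; your version does have the small merit of not needing the separate observation that $p$ fails to be an isomorphism.
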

\begin{proof}
Let $\Sigma_0 \subset \Sigma(Y)$ be an irreducible component, and consider the reduced scheme structure
on~$\Sigma_0$.
Assume that a line corresponding to a general point of $\Sigma_0$ is special.
By the argument of Lemma~\xref{lemma-hilb-x-1} the rank of the map $dp$ does not exceed 1, and the map
\begin{equation*}
p\colon\CL_0(Y) \to Y
\end{equation*}
has fibers of dimension at least $k-1$, where $k = \dim \Sigma_0$. Therefore, the image
\mbox{$Z = p(\CL_0(Y))$}
has dimension
at most $k + 1 - (k - 1) = 2$. Thus $Z \subset Y$ is a surface,
and by Corollary~\xref{proposition:dimensions-hilbert-schemes} one has $\dim\Sigma(Z) \ge \dim \Sigma_0 = k \ge 2$.
By our assumption the divisor~$2H$ is very ample, hence by Corollary~\xref{corollary:lines-conics-plane-1} the surface $Z$ is a plane.
But $Y$ cannot contain a plane by adjunction, which gives a contradiction.

Therefore a general point of~$\Sigma_0$ corresponds to an ordinary line $L$,
hence $\dim \Sigma_0 = 2$ by Corollary~\xref{corollary:hilbert-smoothness}.
Moreover, for such $L$ all the maps in~\eqref{equation-dp-lines} are isomorphisms, so
the map $dp$ is an isomorphism on $L$, hence the map~$p$ is dominant and unramified along $L$. Since $p$ is also proper, it is surjective.
Moreover, since~\mbox{$\dim \CL_0(Y) =3= \dim Y$}, the map $p$ is generically finite.

Now consider the ramification locus $R(p) \subset \CL_0(Y)$ of the map $p$.
Let $L$ be a line corresponding to an arbitrary point of $\Sigma_0$.
If $L$ is an ordinary line then we have already seen that~$p$ is unramified along $L$.
If, however, $L$ is special, the map~$dp$ is degenerate at all points of $L$.
Therefore the ramification locus $R(p) \subset \CL_0(Y)$ is just the preimage under~$q$ of the locus of special lines in $\Sigma_0$.

Assume that $D \subset \CL_0(Y)$ is an irreducible divisor contracted by $p$.
Then $D \subset R(p)$, hence $D$ is a union of fibers of $q$.
Therefore, $p(D)$ is a union of lines. Since $D$ is irreducible and~\mbox{$\dim p(D) < \dim D = 2$},
it is just one line $L$. But then $D \subset q^{-1}([L])$ is not a divisor.

Assume that $p$ is birational. Since $\rho(\CL_0(Y)) \ge 2$ and $\rho(Y) = 1$,
the morphism
$p$ cannot be an isomorphism. Since $Y$ is smooth, the exceptional locus of $p$
should be a divisor contracted by $p$ (see {\cite[\S2.3, Theorem 2]{Shafarevich1994a}}),
which contradicts the above conclusions.
\end{proof}

\begin{remark}
If $Y$ is a Fano threefold with $\rho(Y) = 1$, $\iota(Y) = 2$ and
$\dd(Y)=4$ or $5$ then the map $p$ is finite of degree $4$ and $3$, respectively.
This is no longer true in the cases~\mbox{$\dd(Y)=3$} and~\mbox{$\dd(Y)=2$}.
For a cubic threefold $Y\subset \PP^4$ the map $p$
has one-dimensional fibers exactly when~$Y$ contains \emph{generalized Eckardt points},
i.e. points $P$ such that the embedded tangent space at~$P$
cuts out a cone on~$Y$.
For example, the Fermat cubic contains 30 generalized Eckardt points.
Similarly, there are examples of double covers of $\PP^3$ branched in quartic surfaces
(for example, over Fermat quartic surfaces) that
contain points over which $p$ is not finite.
\end{remark}

The following result is well known
(see e.\,g.~\cite[Proposition~III.1.3(iii)]{Iskovskikh-1980-Anticanonical}).

\begin{proposition}\label{hilb-lines-smooth}
Let $Y$ be a Fano threefold with $\rho(Y) = 1$, $\iota(Y) = 2$, and $\dd(Y) \ge 3$.
Then the Hilbert scheme of lines $\Sigma(Y)$ is a smooth surface.
\end{proposition}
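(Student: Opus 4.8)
The plan is to reduce the statement to the vanishing of the obstruction space $H^1(L,\CN_{L/Y})$ at every point $[L]\in\Sigma(Y)$, and then to verify this vanishing using the very ampleness of $H$.

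First, by Lemma~\ref{lemma-hilb-y-1} every irreducible component of $\Sigma(Y)$ is two-dimensional, so it suffices to prove that $\Sigma(Y)$ is smooth of dimension $2$ at every point. By Corollary~\ref{corollary:hilbert-smoothness}(i), combined with Lemma~\ref{lemma:normal-bundles}, the only points where this could fail are those corresponding to $2$-special lines, i.e.\ lines $L$ with $\CN_{L/Y}\cong\CO_L(a)\oplus\CO_L(-a)$ and $a\ge 2$. Thus the entire content of the proposition is the claim that $Y$ contains no $2$-special lines.

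To prove this, observe that since $\dd(Y)\ge 3$ the divisor $H$ is very ample (Table~\ref{table:Fanos-i-ge-2}), so $Y$ is embedded into $\PP^N$ with $N=\dd(Y)+1$, and by Lemma~\ref{lemma:lines-conics}(i) every $H$-line $L\subset Y$ is a genuine line $L\cong\PP^1$ linearly embedded in $\PP^N$. For such an $L$ one has $\CN_{L/\PP^N}\cong\CO_L(1)^{\oplus(N-1)}$, and the chain of closed immersions $L\subset Y\subset\PP^N$ of smooth varieties yields an exact sequence
\begin{equation*}
0 \to \CN_{L/Y} \to \CN_{L/\PP^N} \to \CN_{Y/\PP^N}\vert_L \to 0,
\end{equation*}
so $\CN_{L/Y}$ is a subsheaf of $\CO_L(1)^{\oplus(N-1)}$. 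Writing $\CN_{L/Y}\cong\CO_L(a)\oplus\CO_L(-a)$ with $a\ge 0$ as in Lemma~\ref{lemma:normal-bundles}, the restriction of this inclusion to the summand $\CO_L(a)$ is a nonzero map $\CO_L(a)\to\CO_L(1)^{\oplus(N-1)}$; since $\Hom(\CO_L(a),\CO_L(1))=H^0(L,\CO_L(1-a))=0$ for $a\ge 2$, we conclude $a\le 1$. Hence $Y$ has no $2$-special lines, and by the previous paragraph $\Sigma(Y)$ is smooth of dimension $2$ everywhere, i.e.\ a smooth surface.

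There is essentially no hard step here: the proposition follows formally from the description of the tangent and obstruction spaces of $\Sigma(Y)$ (packaged in Corollary~\ref{corollary:hilbert-smoothness}), from the two-dimensionality of all components (Lemma~\ref{lemma-hilb-y-1}), and from the elementary fact that a line in $\PP^N$ has normal bundle $\CO_L(1)^{\oplus(N-1)}$, which forces the positive summand of $\CN_{L/Y}$ to have degree at most $1$. The only place the hypothesis $\dd(Y)\ge 3$ (rather than merely $\dd(Y)\ge 2$) is used is the very ampleness of $H$, without which $L$ need not be embedded as a line in the ambient projective space and the normal bundle computation breaks down --- consistent with the remark in the text that $\Sigma(Y)$ may be singular when $\dd(Y)=2$.
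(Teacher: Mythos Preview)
Your proof is correct and follows essentially the same route as the paper: reduce via Corollary~\ref{corollary:hilbert-smoothness} to excluding $2$-special lines, then use very ampleness of $H$ and the normal bundle sequence for $L\subset Y\subset\PP^n$ to bound the positive summand of $\CN_{L/Y}$ by $\CO_L(1)$. Your explicit invocation of Lemma~\ref{lemma-hilb-y-1} to secure equidimensionality is a slight elaboration of what the paper leaves implicit, but otherwise the arguments coincide.
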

\begin{proof}
By Corollary~\xref{corollary:hilbert-smoothness} it is enough to show that there are no $2$-special lines.
Since~\mbox{$\dd(Y) \ge 3$},
the class $H$ is very ample and defines an embedding~\mbox{$Y \hookrightarrow \PP^n$}.
Consider the standard exact sequence
\begin{equation*}
0 \to \CN_{L/Y} \to \CN_{L/\PP^n} \to \CN_{Y/\PP^n}\vert_L \to 0.
\end{equation*}
Note that $\CN_{L/\PP^n} \cong \CO_L(1)^{\oplus (n-1)}$. Thus $\CN_{L/Y}$ is a subbundle in
the direct sum of $n-1$ copies of $\CO_L(1)$. This means that $\CN_{L/Y}$ cannot have a summand isomorphic to $\CO_L(a)$ with~\mbox{$a \ge 2$}.
Hence $L$ cannot be $2$-special.
\end{proof}

\begin{remark}
If $\dd(Y) = 2$, so that
$f\colon Y \to \P^3$ is a double cover of $\P^3$ branched in a smooth quartic surface $S$, the Hilbert scheme of lines $\Sigma(Y)$ is,
in fact, a double cover of the subscheme of $\Gr(2,4)$ parameterizing bitangent lines to the surface $S$,
branched in a finite number of points corresponding to lines contained in $S$.
Moreover, if $L_0 \subset S$ is such a line,
and $L= f^{-1}(L_0)_{\mathrm{red}}$, then $L$ is a $2$-special line on $Y$ and hence $\Sigma(Y)$ is singular at~$L$
(cf. \cite[Remark to Proposition III.1.3]{Iskovskikh-1980-Anticanonical}).
\end{remark}

For Fano threefolds of index 2 and degree at least 3 one can describe $\Sigma(Y)$ explicitly.

\begin{proposition}
\label{hilb-lines-explicit}
Let $Y$ be a \textup(smooth\textup) Fano threefold with $\rho(Y) = 1$, $\iota(Y) = 2$, and~\mbox{$\dd(Y)\ge 3$}.
Then $\Sigma(Y)$ is smooth and irreducible. Moreover
\begin{itemize}
\item[(i)] if $\dd(Y)=3$, then $\Sigma(Y)$ is a minimal surface of general type
with irregularity $5$, geometric genus $10$, and canonical degree~\mbox{$K_{\Sigma(Y)}^2=45$};
\item[(ii)] if $\dd(Y)=4$, then $\Sigma(Y)$ is an abelian surface;
\item[(iii)] if $\dd(Y)=5$, then $\Sigma(Y)\cong\P^2$.
\end{itemize}
\end{proposition}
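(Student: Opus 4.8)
The proof of Proposition~\ref{hilb-lines-explicit} naturally splits into the three degree cases, and in each case the strategy is to realize $\Sigma(Y)$ as a classical object whose invariants are known. Smoothness of $\Sigma(Y)$ is already established in Proposition~\ref{hilb-lines-smooth}, so the remaining work is to establish irreducibility and to identify the surface. For irreducibility, the unified argument is: by Corollary~\ref{proposition:dimensions-hilbert-schemes}(i) every component of $\Sigma(Y)$ has dimension $\ge 2$, and by Lemma~\ref{lemma-hilb-y-1} every component has dimension exactly $2$ with the map $p\colon\CL_0(Y)\to Y$ generically finite and surjective. Thus $\Sigma(Y)$ is a smooth projective surface, and connectedness (hence irreducibility, since it is smooth) will follow from a monodromy or deformation argument, or more directly from the explicit descriptions below once one exhibits $\Sigma(Y)$ as a single classical variety.

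\textbf{Case $\dd(Y)=5$.} Here $Y$ is the unique quintic del Pezzo threefold. The plan is to recall the classical fact (going back to Fano and treated in detail in~\S\ref{subsection:v5} of this paper) that $Y$ can be obtained from $\P^2$ by a suitable construction, and that lines on $Y$ are parametrized by $\P^2$ itself; concretely one identifies $\Sigma(Y)$ with the Hilbert scheme via the description of $Y$ as a linear section of $\Gr(2,5)$, where lines correspond to certain flags, yielding $\Sigma(Y)\cong\P^2$. Alternatively, I would invoke Corollary~\ref{corollary:hilbert-smoothness}: since $\dd(Y)\ge 3$ forces all lines to be ordinary or $1$-special (no $2$-special lines by Proposition~\ref{hilb-lines-smooth}), the surface is smooth of the expected dimension everywhere, and then a direct computation of $p$ being of degree $3$ (as in the Remark following Lemma~\ref{lemma-hilb-y-1}) together with the standard identification pins down $\Sigma(Y)\cong\P^2$. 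This case is essentially bookkeeping and I expect no obstacle.

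\textbf{Cases $\dd(Y)=4$ and $\dd(Y)=3$.} For $\dd(Y)=4$, $Y$ is a complete intersection of two quadrics in $\P^5$; here the classical result of~\cite{DesaleRamanan} (or the older work on the pencil of quadrics) identifies $\Sigma(Y)$ with the Jacobian, or a torsor over it, of the genus-$2$ curve parametrizing the singular members of the pencil — hence an abelian surface. For $\dd(Y)=3$, $Y$ is a smooth cubic threefold and $\Sigma(Y)$ is the Fano surface of lines, whose invariants $q=5$, $p_g=10$, $K^2=45$, and minimality of general type are the content of the celebrated Clemens--Griffiths analysis; the reference here is~\cite{Iskovskikh-1980-Anticanonical} and the classical literature on cubic threefolds. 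The plan is to quote these results, checking only that the hypotheses match (smoothness of $Y$, which we assume throughout). The main obstacle, if any, is purely expository: making sure the cited identifications are stated over an arbitrary algebraically closed field of characteristic zero rather than just over $\C$, and that irreducibility is included — but since in both cases $\Sigma(Y)$ is exhibited as a single irreducible classical variety (an abelian surface, respectively the Fano surface, which is known to be connected), irreducibility comes for free. I would therefore organize the proof as: (1) reduce smoothness to Proposition~\ref{hilb-lines-smooth} and reduce dimension/generic-smoothness to Lemma~\ref{lemma-hilb-y-1}; (2) treat each of the three degrees by citing the appropriate classical identification; (3) read off the stated invariants.
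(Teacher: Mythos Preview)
Your proposal is correct and takes essentially the same approach as the paper: in each degree case one invokes the classical identification of $\Sigma(Y)$ (Fano surface of a cubic, Jacobian of the genus-$2$ curve for the pencil of quadrics, $\P^2$ for the quintic del Pezzo), with smoothness coming from Proposition~\ref{hilb-lines-smooth} and irreducibility read off from the explicit description. The paper's own proof is just a list of citations --- \cite{Altman1977} for $\dd=3$, \cite{NarasimhanRamanan1969,Reid1972,DesaleRamanan} for $\dd=4$, and \cite{Iskovskikh-1980-Anticanonical,Furushima1989a} for $\dd=5$ --- so your outline, while more discursive, amounts to the same thing.
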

\begin{proof}
If $\dd(Y)=3$, then $Y$ is a cubic hypersurface in $\P^4$,
and the assertion holds by~\mbox{\cite[\S1]{Altman1977}}.
If $\dd(Y)=4$, then $Y$ is a complete intersection of two quadrics in~$\P^5$, and the assertion holds
by~\cite[Theorem~5]{NarasimhanRamanan1969} (see also \cite[Theorem 4.8]{Reid1972}, \cite[Theorem~2]{DesaleRamanan}, \cite[\S6.3]{Griffiths-Harris-1978}).
If $\dd(Y)=5$, then $Y$ is isomorphic
to a linear section $\Gr(2,5)\cap\P^6$
of the Grassmannian $\Gr(2,5)\subset\P^9$,
and the assertion holds by~\mbox{\cite[Proposition~III.1.6]{Iskovskikh-1980-Anticanonical}}
or \cite{Furushima1989a}
(see also~\S\ref{subsection:v5} for an explicit description of lines and
Proposition~\ref{proposition-v22-v5} for an alternative approach).
\end{proof}

\begin{remark}
\label{remark:hyperelliptic-curve}
The abelian surface $\Sigma(Y)$ associated to a Fano threefold $Y$ with $\rho(Y) = 1$, $\iota(Y) = 2$, and~\mbox{$\dd(Y) = 4$} can be described as follows. Recall that such $Y$
is an intersection of two quadrics in $\P^5$.
The corresponding pencil contains precisely 6 degenerate quadrics (\cite[Proposition~2.1]{Reid1972}),
so one can consider the double cover $B(Y) \to \P^1$ branched in these six points.
This is a curve of genus 2. It can be regarded as a curve parameterizing the families of
planes in the quadrics of our pencil.
One can show that $\Sigma(Y)$ is isomorphic to the Jacobian of the curve~$B(Y)$,
see~\cite[Theorem~5]{NarasimhanRamanan1969}.
Moreover, the surface~$\Sigma(Y)$ is isomorphic to
the intermediate Jacobian of~$Y$ (as an abstract variety), see \cite[\S6.4]{Griffiths-Harris-1978}.
\end{remark}

\subsection{Hilbert schemes of conics}
\label{subsection:conics-index-1}

In this section we restrict to the case of smooth Fano threefolds $X$ with $\rho(X)=1$ and
$\iota(X)=1$ and their Hilbert schemes of conics $S(X)$.
Let~$S_0$ be an irreducible component of $S(X)$, and consider the reduced scheme
structure on~$S_0$.
Restricting to $S_0$ the universal family of conics, we obtain a diagram
\begin{equation}\label{diagram-universal-conic}
\vcenter{
\xymatrix{
& \CC_0(X) \ar[dl]_q \ar[dr]^p \\
S_0 && X
}
}
\end{equation}
The map $q\colon\CC_0(X) \to S_0$ is a conic bundle.
Let $C \subset X$ be a conic corresponding to a point $[C]$ in the component $S_0$ of the Hilbert scheme.
The fiber $q^{-1}([C])$ is identified by the map $p$ with the conic $C$. The normal bundle of $C$ in $\CC_0(X)$
is the trivial bundle of rank equal to the dimension of the tangent space to $[C]$ at $S_0$.
Like in~\eqref{equation-dp-lines},
the differential
of $p$ is the map
\begin{equation}\label{equation-dp-conics}
dp \colon
\CN_{C/\CC_0(X)} = T_{[C]} S_0 \otimes \CO_C \hookrightarrow
T_{[C]} S(X) \otimes \CO_C =
H^0(C,\CN_{C/X}) \otimes \CO_C \to \CN_{C/X}
\end{equation}
with the last map being given by evaluation.

We will call an irreducible component of $S(X)$ \emph{exotic}
if it does not contain smooth conics.
The next lemma shows that exotic components appear only for quartics with cones and describes them explicitly.

\begin{lemma}\label{lemma:exotic-component}
Let $X$ be a Fano threefold with $\rho(X)=1$ and $\iota(X)=1$,
and suppose that~\mbox{$-K_X$} is very ample.
Let $S_0 \subset S(X)$ be an exotic component.
Then $X$ is a quartic with a cone, and $S_0 \cong \Hilb^2(B)$, where $B$ is a smooth curve, which is the base of the cone.
In particular, one has~\mbox{$\dim S_0 = 2$}.
Moreover, the irreducible component of $S(X)$ underlying $S_0$ is everywhere non-reduced.
\end{lemma}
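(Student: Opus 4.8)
The plan is to analyze the component $S_0$ via the universal family diagram~\eqref{diagram-universal-conic} and the differential $dp$ of~\eqref{equation-dp-conics}, arguing that the absence of smooth conics forces the image $Z = p(\CC_0(X))$ to be a surface swept out by lines in a very restrictive way, and then to invoke Lemma~\xref{lemma:cones} to pin down $X$ as a quartic with a cone. First I would observe that by Corollary~\xref{proposition:dimensions-hilbert-schemes}(iii) every component of $S(X)$ has dimension at least $2$, so $\dim S_0 \ge 2$. Since $S_0$ is exotic, each conic parameterized by a general point of $S_0$ is either reducible or non-reduced; in both cases its reduced support contains a line. Using the $\P^1$-bundle (conic bundle) structure of $q\colon\CC_0(X)\to S_0$ and the fact that a reducible or non-reduced conic has positive-dimensional automorphisms fixing a component of its support, I would extract from the universal family a family of lines on $X$ of dimension at least $\dim S_0 \ge 2$ — concretely, by passing to the $\P^1$-bundle of lines inside $\CC_0(X)$ and noting that it dominates an at-least-$2$-dimensional subvariety of $\Sigma(X)$.

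The key tension is then with Lemma~\xref{lemma-hilb-x-1}, which says that every component of $\Sigma(X)$ is one-dimensional when $-K_X$ is very ample. So a $2$-dimensional family of lines on $X$ can only arise if many of these lines coincide as subschemes of $X$ — i.e.\ the map from the $2$-dimensional parameter space to $\Sigma(X)$ has positive-dimensional fibers. This means the conics in $S_0$ are organized so that the \emph{same} line $L\subset X$ occurs as a component (or as the support) of a positive-dimensional family of conics. For a fixed line $L$, the conics containing $L$ as a component correspond to the second line $L'$ (for reducible conics $L\cup L'$, with $L'$ meeting $L$) or to the non-reduced structures on $L$; in the reducible case this gives a family of dimension equal to $\dim$ of the set of lines meeting $L$, and for this to be positive we need $L$ to meet a whole curve's worth of lines on $X$. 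The natural geometry producing this is a cone: if $Z\subset X$ is a two-dimensional cone with vertex $P$ and base a curve $B$, then every two rulings $L,L'$ of $Z$ meet at $P$, giving a conic $L\cup L'\subset X$, and these conics form a family isomorphic to $\Hilb^2(B)=\Sym^2 B$, which is $2$-dimensional precisely when $B$ is a curve. By Lemma~\xref{lemma:cones}(ii), the only way $X$ can contain such a cone is if $X$ is a quartic threefold in $\P^4$ and $B$ is a smooth plane quartic.

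Thus the argument runs: the exotic component $S_0$ forces a $2$-dimensional family of conics sharing line components, which by the above forces $X$ to contain a two-dimensional cone $Z$; Lemma~\xref{lemma:cones}(ii) then gives that $X$ is a quartic and $B$ is smooth; finally I would identify $S_0$ with $\Hilb^2(B)$ by checking that the reducible conics $L\cup L'$ with $L,L'$ rulings of $Z$ are exactly the conics in $S_0$ (no smooth conics degenerate into a component of $S_0$ since $S_0$ is exotic, and dimension count forces $S_0$ to consist only of these), and that the natural bijection $S_0 \to \Hilb^2(B)$ sending a reducible conic $L\cup L'$ to the pair of points $\{L\cap B, L'\cap B\}$ (with the non-reduced conics mapping to the diagonal) is an isomorphism of schemes. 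Since $\Hilb^2(B)=\Sym^2 B$ is smooth of dimension $2$ while $S(X)$ is non-reduced at these points (by Corollary~\xref{corollary:hilbert-smoothness}(iii) together with Remark~\xref{remark:dl-special}, the relevant normal bundles have excess sections — the underlying reduced component is $S_0$ but $S(X)$ itself carries nilpotents there), the component underlying $S_0$ is everywhere non-reduced. The main obstacle I expect is the last step — rigorously showing $S_0$ contains \emph{only} the cone conics (ruling out that some reducible conics not lying on a single cone, or some non-reduced conics on lines off the cone, sneak into the same component) and then computing the scheme structure to conclude both that $S_0\cong\Hilb^2(B)$ as a reduced scheme and that $S(X)$ is non-reduced along it; this is where one has to work with the explicit normal bundles of reducible and non-reduced conics (Appendix~\xref{appendix:new}) and a careful dimension/incidence analysis rather than a soft argument.
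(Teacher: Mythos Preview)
Your overall plan is in the right direction, but there are two concrete gaps and one place where the paper's route is cleaner.

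First, the step ``a positive-dimensional family of lines through a fixed line $L$ forces a cone'' is incomplete: the other possibility is a smooth quadric surface (two rulings, every line of one ruling meeting every line of the other). The paper handles this by first using Remark~\xref{remark:dl-special} together with Lemma~\xref{lemma-hilb-x-1} to show that a general point of $S_0$ must correspond to a \emph{reducible} conic (non-reduced conics form at most a one-dimensional family, since each line admits at most one such structure), and then invoking Lemma~\xref{lemma:many-reducible-conics} from Appendix~\xref{appendix:new}, which gives precisely the dichotomy cone vs.\ smooth quadric. The quadric case is then ruled out by Lemma~\xref{lemma:cones}(i), since a smooth quadric is a projection of the Veronese surface. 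Your argument reinvents part of Lemma~\xref{lemma:many-reducible-conics} informally but never addresses the quadric alternative.

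Second, your non-reducedness argument does not work as stated: Corollary~\xref{corollary:hilbert-smoothness}(iii) only treats \emph{smooth} conics, and Remark~\xref{remark:dl-special} concerns lines, so neither gives the tangent-space jump at a reducible conic on the cone. The paper's argument is direct and short: since the cone $Z$ is a hyperplane section of $X$, for any conic $C = L_{b_1}\cup L_{b_2}$ spanning a plane $\Pi$, the intersection $X\cap 2\Pi$ gives $C$ a non-reduced structure inside $X$, i.e.\ a surjection $\CN^\vee_{C/X}\to\CO_X(-1)\vert_C$; dualizing yields an embedding $\CO_X(1)\vert_C\hookrightarrow\CN_{C/X}$, so $\dim H^0(C,\CN_{C/X})\ge h^0(C,\CO_X(1)\vert_C)=3>2$. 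This shows the tangent space to $S(X)$ is at least $3$-dimensional at every point of the $2$-dimensional component, hence the component is everywhere non-reduced. You should replace your citation with this computation.
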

\begin{proof}
By Corollary~\xref{proposition:dimensions-hilbert-schemes} we have $\dim S_0 \ge 2$.
On the other hand, by Lemma~\xref{lemma-hilb-x-1} every irreducible component of $\Sigma(X)$ is one-dimensional.
Since a line $L$ on $X$ admits at most one structure of a non-reduced conic (see Remark~\xref{remark:dl-special}),
it follows that a conic corresponding to a general point of~$S_0$ is a union of two distinct lines.
Since $X$ does not contain projections of the Veronese surface by Lemma~\xref{lemma:cones}(i)
(in particular, $X$ does not contain smooth quadric surfaces),
we deduce from Lemma~\xref{lemma:many-reducible-conics} that~$X$ contains a two-dimensional cone with base $B$
such that $S_0$ is the set of conics formed by unions of rulings of the cone.
In other words, one has
\begin{equation*}
S_0 = \{ L_{b_1} \cup L_{b_2} \mid (b_1,b_2) \in \Sym^2(B) \}.
\end{equation*}
Moreover, $X$ is a quartic threefold and $B$ is a smooth curve by Lemma~\xref{lemma:cones}(ii).
In particular, one has $\Sym^2(B) \cong \Hilb^2(B)$.

As we already mentioned in Remark~\ref{remark:nonreduced-sigma} the component of $\Sigma(X)$ underlying $B$ is everywhere non-reduced.
A similar argument shows that the component of $S(X)$ underlying~$\Hilb^2(B)$ is also everywhere non-reduced.
Indeed, if $C = L_{b_1} \cap L_{b_2}$ spans a plane $\Pi$, the corresponding double plane provides $C$ with a non-reduced structure
corresponding to a surjective map $\CN^\vee_{C/X} \to \CO_X(-1)\vert_C$.
By duality this gives an embedding $\CO_X(1)\vert_C \to \CN_{C/X}$, hence $\dim H^0(C,\CN_{C/X}) \ge \dim H^0(C,\CO_X(1)\vert_C) = 3$.
\end{proof}

\begin{lemma}\label{lemma-hilb-x-2}
If $X$ is a Fano threefold with $\rho(X)=1$, $\iota(X)=1$ and $-K_X$ very ample then
every irreducible component $S_0$ of the Hilbert scheme $S(X)$ of conics on $X$ is two-dimensional.
If $S_0$ is not exotic, then the map $p\colon\CC_0(X) \to X$ is surjective, generically finite,
does not contract divisors, and is not birational;
moreover, the natural scheme structure on $S_0$ is generically reduced.
If $S_0$ is exotic, then~$X$ is a quartic and~\mbox{$p(\CC_0(X))$} is a cone over a
smooth curve.
\end{lemma}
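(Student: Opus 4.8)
The plan is to adapt the arguments of Lemmas~\ref{lemma-hilb-x-1} and~\ref{lemma-hilb-y-1} to conics, handling exotic and non-exotic components separately. If $S_0$ is exotic, Lemma~\ref{lemma:exotic-component} already supplies everything: $X$ is a quartic threefold carrying a two-dimensional cone over a smooth curve $B$, the component $S_0\cong\Hilb^2(B)$ is two-dimensional, and $p(\CC_0(X))$ --- the union of all conics parametrized by $S_0$, i.e.\ of all pairs of rulings of the cone --- is exactly that cone. So from now on assume $S_0$ is not exotic, so that a general point $[C]\in S_0$ corresponds to a smooth conic.

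The first task is to show that a general conic $C$ of a non-exotic $S_0$ is \emph{ordinary}, which will simultaneously yield $\dim S_0=2$ and generic reducedness. By Lemma~\ref{lemma:normal-bundles}, $\CN_{C/X}\cong\CO_C(a)\oplus\CO_C(-a)$ with $a\ge 0$. Suppose $a\ge 1$. Then, exactly as in the proof of Lemma~\ref{lemma-hilb-x-1}, the evaluation map in~\eqref{equation-dp-conics} factors through the rank-one subbundle $\CO_C(a)\subset\CN_{C/X}$, so $dp$ has rank at most $2$ along $C$; hence $Z:=p(\CC_0(X))$ is a surface, and since every conic of $S_0$ lies on $Z$ we get $\dim S(Z)\ge\dim S_0\ge 2$ by Corollary~\ref{proposition:dimensions-hilbert-schemes}(iii). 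By the classification of surfaces with at least two-dimensional Hilbert scheme of conics (Lemma~\ref{lemma:many-reducible-conics}), $Z$ is a plane, a quadric, a linear projection of the Veronese surface, or a cone; the first three cannot occur in $X$ by Lemma~\ref{lemma:cones}(i), and if $Z$ is a cone then Lemma~\ref{lemma:cones}(ii) forces $X$ to be a quartic and the base of $Z$ to be a smooth plane quartic curve, whereas a cone over a curve of positive genus contains no smooth conic (such a conic would give a nonconstant morphism $\PP^1\to B$) --- contradicting the fact that $S_0$, whose general member is smooth, consists of conics on $Z$. Hence $a=0$, so $h^0(C,\CN_{C/X})=2$, the tangent space to $S(X)$ at $[C]$ is two-dimensional, $S(X)$ is smooth there by Corollary~\ref{corollary:hilbert-smoothness}(iii), and consequently $\dim S_0=2$ and the natural scheme structure on $S_0$ is generically reduced.

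It remains to establish the properties of $p\colon\CC_0(X)\to X$ for non-exotic $S_0$. For a general (ordinary) conic $C$ all three arrows in~\eqref{equation-dp-conics} are isomorphisms, so $dp$ is an isomorphism at every point of $C$; thus $p$ is unramified, hence dominant and generically finite, at general points, and surjective because proper. For the non-contraction of divisors I would imitate Lemma~\ref{lemma-hilb-y-1}: the ramification locus $R(p)$ is contained in the $q$-preimage of the locus of non-smooth or special conics in $S_0$, which is a proper closed subset and hence at most a curve; if $D\subset\CC_0(X)$ were an irreducible divisor contracted by $p$, then $D\subseteq R(p)$, so $D$ cannot dominate $S_0$ (a general fibre of $q$ is disjoint from $R(p)$) and is therefore a component of $q^{-1}(\Gamma)$ for a curve $\Gamma\subset S_0$; then $p(D)$ is an irreducible curve which is a union of the conics $C_t$, $t\in\Gamma$, forcing all the $C_t$ to coincide and $\Gamma$ to be a point --- a contradiction. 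Finally $p$ is not birational: it is not an isomorphism since $\rho(\CC_0(X))\ge 2>1=\rho(X)$, and a birational morphism onto the smooth variety $X$ must contract a divisor by~\cite[\S2.3, Theorem~2]{Shafarevich1994a}, which has just been excluded.

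The step I expect to be the main obstacle is the dichotomy in the second paragraph --- ruling out a special general conic on a non-exotic component. It hinges on the classification of surfaces with a large Hilbert scheme of conics together with the observation that a cone over a curve of positive genus carries no smooth conic; this is exactly what prevents the cone case from producing a non-exotic component. Everything else is a transcription of the line arguments, the only extra bookkeeping being that conics may degenerate to reducible or non-reduced ones, which however happens only over a proper closed subset of $S_0$ and so does not affect the generic assertions.
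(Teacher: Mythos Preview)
Your argument is correct, and its overall shape matches the paper's: handle the exotic case via Lemma~\ref{lemma:exotic-component}, show the general conic of a non-exotic component is ordinary (hence $\dim S_0=2$ and generic reducedness), and then transcribe the divisor/birational argument from Lemma~\ref{lemma-hilb-y-1}. One correction: the classification result you invoke in the second paragraph is Lemma~\ref{lemma:lines-conics-plane}, not Lemma~\ref{lemma:many-reducible-conics}; the latter only treats families of \emph{reducible} conics.

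The one genuine difference from the paper is the order of the two central steps. The paper first proves that $p$ is surjective unless $S_0$ is exotic (if the image is a surface $Z$ with $\dim S(Z)\ge 2$, then $Z$ is a Veronese projection or a cone; the cone case forces $S_0$ to be exotic), and only afterwards rules out $\dim S_0\ge 3$ by a fibre-dimension count contradicting surjectivity; ordinariness of the general conic then falls out of the ramification analysis. You instead go straight for ordinariness: assuming the general conic is special, the image $Z$ is a surface, and you dispose of the cone case by the observation that a cone over a curve of positive genus (which the base must be, by Lemma~\ref{lemma:cones}(ii)) carries no smooth conics. This is exactly the content of the paper's one-word ``clearly $S_0$ is an exotic component'' in the surjectivity step, made explicit. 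Your route is a bit more direct --- you get $\dim S_0=2$, generic reducedness, and surjectivity in one stroke rather than three --- at the cost of needing that small geometric remark about cones. Both arrive at the same place with the same ingredients.
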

\begin{proof}
First, we note that $p$ is surjective unless $S_0$ is exotic.
Indeed, if the image of~$p$ is a surface $Z \subset X$ then $\dim S(Z) \ge \dim S_0 \ge 2$ by Corollary~\xref{proposition:dimensions-hilbert-schemes},
hence by Lemma~\xref{lemma:lines-conics-plane} the surface~$Z$
is a linear projection of the Veronese surface, which contradicts Lemma~\xref{lemma:cones}(i),
or~$Z$ is a cone. In the latter case clearly $S_0$ is an exotic component.

Assume that $\dim S_0=k \ge 3$.
By Lemma~\xref{lemma:exotic-component} the component $S_0$ is not exotic, and by Corollary~\xref{corollary:hilbert-smoothness}
a general point of~$S_0$ corresponds to a smooth special (and even 2-special) conic.
The differential $dp$ on such a conic has rank at most 1 everywhere,
therefore the fibers of the map $p$ have dimension~\mbox{$k-1$}, hence the image $p(\CC_0(X))$ has dimension~\mbox{$k + 1 - (k-1) = 2$}.
In particular, $p$ is not surjective, which contradicts the above conclusions.
Thus we have $\dim S_0 = 2$.

Assume that $S_0$ is not exotic and $\dim S_0 = 2$.
Then $p$ is surjective, and since $\dim\CC_0(X) =3= \dim X$, the morphism~$p$ is generically finite.
Consider the ramification locus $R(p)$ of the map~$p$.
Let $C$ be a conic corresponding to a smooth point of $S_0$.
Then both~$\CN_{C/\CC_0(X)}$ and~$\CN_{C/X}$ are vector bundles on $C$ of rank 2 and Euler characteristic~2,
see Lemma~\xref{lemma:normal-bundles} and Corollaries~\xref{corollary:normal-reducible-conic-3fold}, and~\xref{corollary:normal-nonreduced-conic-3fold}.
Hence the kernel and the cokernel of the map $dp$ have the same rank and Euler characteristic. If the rank
of the cokernel is $0$, then so is the rank of the kernel. Since $\CN_{C/\CC_0(X)}$ is a trivial vector bundle,
it is torsion free, so it follows that the kernel is zero. But then the cokernel is zero as well.
This means that either the cokernel of the map $dp$ is zero, hence the map $p$ is unramified along $C$,
or the support of the cokernel is either $C$, or if $C = L_1 \cup L_2$ is reducible, one of the lines $L_i$.
This shows that away of the $q$-preimage of the singular locus of $S(X)$
the ramification locus $R(p)$ is the union of (irreducible components of) fibers of~$q$.
Arguing as in Lemma~\xref{lemma-hilb-y-1} (with obvious modifications), we conclude that $p$ cannot contract divisors and cannot be birational.

The above arguments also show that a generic point of $S_0$ corresponds to a smooth ordinary conic $C$.
Therefore, the tangent space to $S(X)$ at $C$ is 2-dimensional, hence~$S(X)$ is generically reduced along $S_0$.

Finally, if $S_0$ is an exotic component, a description of Lemma~\xref{lemma:exotic-component} shows that $X$ is a quartic,
and $p(\CC_0(X))$ is a cone over a smooth curve~$B$.
\end{proof}

Our next goal, as before, is a proof of smoothness of $S(X)$ and its explicit description for some $X$.
The direct proof of smoothness is very complicated, since there are three types of conics
and it is much more difficult to analyze the tangent space to the Hilbert scheme
at a point corresponding to a reducible or non-reduced conic, and the corresponding obstruction
space (see Appendix~\xref{subsection:degenerate-conics}, or~\cite[\S3.2]{iliev2011fano}).
Typically such considerations work only for general Fano threefolds.

So, instead of using the above straightforward approach, we use the ideas of~\cite{kuznetsov2009derived},
where it was argued that the geometry of Fano threefolds of index 1 and even genus $\g$ is related to the geometry
of Fano threefolds of index 2 and degree $\dd = \g/2 - 1$. The reason for this is a similarity between the structure
of their derived categories. Using this idea we will prove the following result.

\begin{theorem}\label{theorem:S-vs-Sigma}
Let $X$ be a \textup(smooth\textup) Fano threefold with $\rho(X) = 1$, $\iota(X) = 1$, and
\begin{equation*}
\g(X) \in \{8,10,12\}.
\end{equation*}
Then there is a smooth Fano threefold $Y$ with $\rho(Y) = 1$, $\iota(Y) = 2$, and
\begin{equation*}
\dd(Y) = \frac{\g(X)}{2}-1
\end{equation*}
such that $S(X) \cong \Sigma(Y)$.
\end{theorem}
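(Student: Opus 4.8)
The plan is to exploit the derived-category correspondence of Kuznetsov \cite{kuznetsov2009derived} between a Fano threefold $X$ of index~$1$ and even genus $\g(X) = 2\dd+2$ and a Fano threefold $Y$ of index~$2$ and degree $\dd$. Recall that this correspondence identifies the ``nontrivial part'' of the derived category, $\D^b(X) = \langle \CB_X, \CO_X, \CE^\vee \rangle$ and $\D^b(Y) = \langle \CB_Y, \CO_Y, \CO_Y(H) \rangle$, with $\CB_X \simeq \CB_Y$ as triangulated categories (here $\CE$ is the Mukai bundle on $X$, and $\CB_Y$ is the Kuznetsov component of $Y$). The key idea is that both $S(X)$ and $\Sigma(Y)$ can be realized as moduli spaces of objects in these equivalent subcategories, so the equivalence induces the desired isomorphism of Hilbert schemes. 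Concretely, to a conic $C \subset X$ one associates an object of $\CB_X$ built from the ideal sheaf $I_{C/X}$ (twisted appropriately and projected into $\CB_X$), and to a line $L \subset Y$ one associates the analogous projection of $I_{L/Y}(H)$ into $\CB_Y$; the plan is to show these two constructions produce objects with the same numerical invariants and that, under the equivalence $\CB_X \simeq \CB_Y$, one family of objects goes to the other.

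\textbf{Steps.} First I would recall the precise statement of the semiorthogonal decompositions and the equivalence $\Phi\colon \CB_Y \xrightarrow{\ \sim\ } \CB_X$ from \cite{kuznetsov2009derived}, being careful about the normalizations (which twist of the ideal sheaf lands in which component). Second, I would compute, for a conic $C \subset X$, the image $\alpha_X(C)$ of $I_{C/X}$ under the projection functor onto $\CB_X$: this is a two-step computation using the defining exact sequence $0 \to I_{C/X} \to \CO_X \to \CO_C \to 0$ and the mutation/projection formulas, and one checks that $\alpha_X(C)$ is in fact a shift of an honest sheaf (or a simple complex) with a fixed Mukai vector. Third, the symmetric computation for a line $L \subset Y$ using $0 \to I_{L/Y} \to \CO_Y \to \CO_L \to 0$ and the projection onto $\CB_Y$, yielding $\alpha_Y(L)$ with the matching numerical class. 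Fourth, I would verify that $\Phi$ sends the class of $\alpha_Y(L)$ to the class of $\alpha_X(C)$, so that $\Phi$ induces a morphism of moduli functors; here one uses that $\Phi$ is a Fourier--Mukai functor, hence works in families, to get a morphism $\Sigma(Y) \to S(X)$ (after checking that both $\Sigma(Y)$ and $S(X)$ are fine moduli spaces for the relevant stability condition, or bypassing stability by exhibiting explicit universal families and invoking that $\alpha_X$, $\alpha_Y$ are closed immersions into the appropriate moduli stack). Fifth, I would check the morphism is an isomorphism: both sides are smooth projective surfaces (the smoothness of $\Sigma(Y)$ is Proposition~\ref{hilb-lines-smooth}; the smoothness of $S(X)$ will follow a posteriori, or is handled in the same argument), the morphism is bijective on points by reversing the construction with $\Phi^{-1}$, and a bijective morphism between smooth varieties in characteristic $0$ that is also an isomorphism on tangent spaces (which follows since $\Phi$ is an equivalence and hence identifies $\Ext^1$ groups, i.e.\ tangent spaces to the moduli spaces) is an isomorphism.

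\textbf{Main obstacle.} The delicate point is the identification $\Sigma(Y) \cong \{\text{moduli of } \alpha_Y(L)\}$ and $S(X) \cong \{\text{moduli of } \alpha_X(C)\}$ \emph{scheme-theoretically and over non-general members of the family}. The association $Z \mapsto \alpha(Z)$ is easy to analyze for smooth conics and ordinary lines, but one must control it for reducible and non-reduced conics on $X$ (and for special or $2$-special lines on $Y$, where $\Sigma(Y)$ could in principle be singular --- though for $\dd \ge 3$ Proposition~\ref{hilb-lines-smooth} rules $2$-special lines out). One needs: (a) that the object $\alpha_X(C)$ remains a stable (or at least simple, rigid-free) object for \emph{every} $C$ in the relevant component, including degenerate conics, so that no point of $S(X)$ is lost or acquires extra automorphisms; and (b) that the functor $Z \mapsto \alpha(Z)$ is compatible with the scheme structures, i.e.\ induces isomorphisms on \emph{all} tangent and obstruction spaces, not just the generic ones. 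I expect (a) to require a case analysis of the degenerate conics using the normal bundle descriptions from Appendix~\ref{appendix:new} (Corollaries~\ref{corollary:normal-reducible-conic-3fold} and~\ref{corollary:normal-nonreduced-conic-3fold}), and (b) to follow formally once the objects are pinned down, since a Fourier--Mukai equivalence identifies $\Ext^1(\alpha_Y(L), \alpha_Y(L)) \cong \Ext^1(\alpha_X(C), \alpha_X(C))$ with the corresponding deformation spaces. The remaining bookkeeping --- matching Mukai vectors, checking the projection functors, tracking twists --- is routine but must be done carefully; I would relegate the derived-category generalities to the appendix and present here only the resulting isomorphism $S(X) \cong \Sigma(Y)$.
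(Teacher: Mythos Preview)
Your strategy is broadly sound and in the right spirit, but it differs from the paper's approach in two substantive ways.

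First, a simplification you are missing: the ideal sheaves $I_C$ and $I_L$ lie \emph{already} in the subcategories $\CA_X$ and $\CB_Y$ respectively, with no mutation or projection needed. The paper proves this directly (see Lemma~\ref{icinax} and the lemma preceding it): one has $H^\udot(X,I_C) = H^\udot(X,I_C \otimes \CU_X) = 0$ and $H^\udot(Y,I_L) = H^\udot(Y,I_L(-1)) = 0$. So your ``second step'' computing the projection $\alpha_X(C)$ is unnecessary; the object you want is just $I_C$ itself (possibly shifted). This removes much of the bookkeeping you anticipate.

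Second, the paper does \emph{not} carry out a uniform argument transporting $I_L$ to $I_C$ under the abstract equivalence $\Phi$. Instead it treats the three genera separately, passing through an explicit intermediate object in each case. For $\g = 12$, both $\CA_X$ and $\CB_Y$ are identified with $\D^b(\SQ_3)$ for the $3$-Kronecker quiver, and both $S(X)$ and $\Sigma(Y)$ are shown to be $\PP^2 \cong \PP(\Hom)$ of the two exceptional objects. For $\g = 10$, both subcategories are equivalent to $\D^b(Z)$ for a genus-$2$ curve $Z$; the adjoint functor $\Phi^*$ is shown to send $I_C[-1]$ to a degree-$0$ line bundle on $Z$, giving $S(X) \cong \Pic^0(Z)$, and $\Sigma(Y) \cong \Pic^0(Z)$ is classical. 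For $\g = 8$, the paper abandons derived categories entirely and gives a direct geometric isomorphism $S(X) \cong R \cong \Sigma(Y)$ via an incidence variety $R \subset \Gr(2,A) \times \Gr(4,W)$; indeed the paper remarks that the derived-category verification in this case ``is more complicated than the direct geometric proof.''

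Your uniform plan could in principle be made to work, but the obstacle you flag (controlling $\alpha_X(C)$ for degenerate conics) is largely sidestepped by the paper's route: once you know $I_C \in \CA_X$ for every conic, the identification with line bundles on $Z$ (or with points of $\PP^2$) is a single cohomology computation that does not branch on the type of $C$. What your approach would genuinely need, and what is not automatic, is an explicit enough description of the equivalence $\Phi$ to verify that it carries the numerical class of $I_L$ to that of $I_C$; the paper avoids this by comparing each side separately to a third, well-understood moduli space.
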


Since the proof of this result uses a completely different technique, we moved it to Appendix~\xref{section-Lines-and-conics}.
Actually, for $\g(X) = 10$ and $\g(X) = 12$ we identify the Hilbert schemes explicitly and show that these identifications match up,
while
for~\mbox{$\g(X)=8$} we construct a direct isomorphism of $S(X)$ and $\Sigma(Y)$.

Combining Theorem~\xref{theorem:S-vs-Sigma} with a description of Hilbert schemes of lines of index 2 threefolds and
with some other results, we can state now the following proposition.
Recall that a vector bundle $\CE$ is called \emph{simple}, if $\Hom(\CE,\CE) = \Bbbk$.

\begin{proposition}\label{proposition:conics}
Let $X$ be a \textup(smooth\textup) Fano threefold of index $1$ and
genus $\g(X)\ge 7$.
Then $S(X)$ is a smooth irreducible surface and
\begin{itemize}
\item[(i)] if $\g(X)=7$, then $S(X)$ is the symmetric square of a
smooth curve of genus $7$;
\item[(ii)] if $\g(X)=8$, then $S(X)$ is a minimal surface of general type
with irregularity $5$, geometric genus $10$, and canonical
degree~\mbox{$K_{S(X)}^2=45$};
\item[(iii)] if $\g(X)=9$, then $S(X)$ is a ruled surface
that is a projectivization of a simple rank~$2$ vector bundle on a smooth curve of genus $3$;
\item[(iv)] if $\g(X)=10$, then $S(X)$ is an abelian surface;
\item[(v)] if $\g(X)=12$, then $S(X)\cong\P^2$.
\end{itemize}
\end{proposition}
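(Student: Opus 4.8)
The plan is to prove Proposition~\xref{proposition:conics} by treating the five genera separately, invoking Theorem~\xref{theorem:S-vs-Sigma} for the even-genus cases and citing or reproving the literature for the odd-genus cases. In all cases one first needs to know that $S(X)$ is a smooth irreducible surface; smoothness and two-dimensionality we already have generically from Lemma~\xref{lemma-hilb-x-2} and Corollary~\xref{corollary:hilbert-smoothness} away from special conics, but global smoothness requires knowing that all conics on $X$ are ordinary or $1$-special (i.e.\ no $2$-special conics), and irreducibility requires a separate argument in each family. So the first step is: for $\g(X) \ge 7$, observe that $H$ is very ample and that $X$ contains no cones (by Lemma~\xref{lemma:cones}(ii), since $X$ is not a quartic), hence $S(X)$ has no exotic components by Lemma~\xref{lemma:exotic-component}; then every component is $2$-dimensional and generically reduced.

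For the even genera $\g(X) \in \{8,10,12\}$, the strategy is to apply Theorem~\xref{theorem:S-vs-Sigma} directly: there is a smooth Fano threefold $Y$ with $\rho(Y)=1$, $\iota(Y)=2$ and $\dd(Y)=\g(X)/2-1 \in \{3,4,5\}$ with $S(X) \cong \Sigma(Y)$. Then Proposition~\xref{hilb-lines-explicit} immediately gives that $\Sigma(Y)$ — and hence $S(X)$ — is smooth and irreducible, and supplies the precise description: for $\g(X)=8$ ($\dd(Y)=3$) a minimal surface of general type with irregularity $5$, geometric genus $10$, $K^2=45$; for $\g(X)=10$ ($\dd(Y)=4$) an abelian surface; for $\g(X)=12$ ($\dd(Y)=5$) a copy of $\P^2$. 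This disposes of parts (ii), (iv), (v) at once.

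For $\g(X)=7$, one uses the description of $X$ as a codimension-$7$ linear section of the connected orthogonal Lagrangian Grassmannian $\mathrm{OGr}_{+}(5,10)\subset\P^{15}$ (the $10$-dimensional spinor variety), together with Mukai's analysis: conics on such an $X$ correspond to the intersections of $X$ with certain quadric surfaces, and the relevant curve here is the genus-$7$ curve $\Gamma$ appearing in Mukai's construction of $X$ via orthogonal bundles / the orthogonal Grassmannian $\mathrm{OGr}(2,7)$; one shows $S(X) \cong \Sym^2(\Gamma)$, e.g.\ following \cite{Iliev-Fano-3folds-genus9} or the general machinery of derived categories used in Appendix~\xref{section-Lines-and-conics}. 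For $\g(X)=9$, $X$ is a codimension-$3$ linear section of the symplectic Lagrangian Grassmannian $\LGr(3,6)\subset\P^{13}$; here one invokes the result of \cite{BrambillaFaenzi} that $S(X)$ is a $\P^1$-bundle over a smooth curve $B$ of genus $3$ (the curve parameterizing a family of rank-$2$ bundles, or equivalently arising from Mukai's vector bundle), and then one must upgrade "projectivization of a vector bundle'' to "projectivization of a \emph{simple} rank-$2$ bundle'' for \emph{all} such $X$, not just the general one — this is the genuinely new content and the main obstacle.

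The hard part will be exactly this last point in genus $9$: showing simplicity of the rank-$2$ bundle $\CE$ on $B$ with $S(X)\cong\P_B(\CE)$ for every smooth $X$ in the family. The cleanest route is to characterize $\CE$ (or rather $\P_B(\CE)$) intrinsically — for instance via the Hilbert scheme of lines $\Sigma(X)$, which for $\g(X)=9$ is known to be a smooth curve of genus $3$ not containing threefolds with everywhere non-reduced $\Sigma(X)$ (\cite{Gruson-Laytimi-Nagaraj}, cited in Remark~\xref{remark:nonreduced-sigma}) — and then argue that a non-simple bundle would force $\P_B(\CE)$ to contain a section with negative self-intersection of a shape incompatible with the geometry of conics degenerating to reducible ones on $X$ (the reducible conics $L_1\cup L_2$ give a curve in $S(X)$, and its class pins down $\CE$ up to the relevant equivalence). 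Concretely: if $\CE$ were decomposable or an extension of line bundles of unequal degree, $\P_B(\CE)$ would have a unique "negative'' section, which would have to be $\Aut(X)$-invariant and would correspond to a distinguished one-dimensional family of conics on $X$; one rules this out by the classification of curves of conics via incidence with lines, using that $\Sigma(X)$ carries no such distinguished point. If a fully intrinsic argument proves elusive, the fallback is the derived-category approach of Appendix~\xref{section-Lines-and-conics}: realize $\CE$ as (a twist of) the restriction to $B$ of Kuznetsov's tautological bundle on the relevant moduli space, whose simplicity follows from stability, deforming the known general-$X$ statement along the (irreducible) moduli of the family while controlling semicontinuity of $\hom(\CE,\CE)$.
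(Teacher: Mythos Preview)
Your plan for the even genera $\g(X)\in\{8,10,12\}$ is exactly what the paper does: invoke Theorem~\xref{theorem:S-vs-Sigma} and read off the description from Proposition~\xref{hilb-lines-explicit}. For $\g(X)=7$ the paper simply cites \cite[Theorem~6.3]{Kuznetsov-V12} (your reference to \cite{Iliev-Fano-3folds-genus9} is for the wrong genus), so that case is also fine. The smoothness and irreducibility of $S(X)$ are not argued separately in the paper --- they are inherited from the explicit identifications in each case.

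The real gap is in your treatment of $\g(X)=9$. You correctly identify the simplicity of the rank~$2$ bundle $\CV$ on the genus~$3$ curve as the new content, but neither of your proposed arguments works. Your geometric approach is too vague: a non-simple rank~$2$ bundle need not have a section of negative self-intersection (e.g.\ $L\oplus L$), and in any case the appeal to an $\Aut(X)$-invariant distinguished section is circular --- the whole point of proving simplicity is to \emph{deduce} finiteness of $\Aut(X)$ later, and nothing prevents $\Aut(X)$ from being trivial while $\CV$ is non-simple. Your fallback via semicontinuity of $\dim\Hom(\CV,\CV)$ fails because the inequality goes the wrong way: this dimension is \emph{upper} semicontinuous, so knowing it equals $1$ for general $X$ does not exclude jumps at special $X$.

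The paper's argument (Lemma~\xref{lemma:v-simple}) is short and direct, and uses none of the above. From \cite[Proposition~3.10]{BrambillaFaenzi} one has $\CV^\vee\cong\Phi^*(\CU_X^\vee)$, where $\Phi\colon\D^b(\Gamma)\to\D^b(X)$ is the fully faithful functor of \cite{kuznetsov2006hyperplane} and $\CU_X$ is the rank~$3$ Mukai bundle. By adjunction
\[
\Hom(\CV,\CV)\cong\Hom(\CU_X^\vee,\Phi\Phi^*(\CU_X^\vee)),
\]
and the distinguished triangle $\CU_X(1)[-2]\to\CU_X^\vee\to\Phi\Phi^*(\CU_X^\vee)$ from \cite[(3.14)]{BrambillaFaenzi}, together with the vanishing of negative $\Ext$'s between sheaves, gives $\Hom(\CV,\CV)\cong\Hom(\CU_X^\vee,\CU_X^\vee)=\Bbbk$, since $\CU_X^\vee$ is exceptional. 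This is valid for every smooth $X$ in the family, with no genericity assumption and no deformation argument.
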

\begin{proof}
If $\g(X)=7$, the assertion holds by~\cite[Theorem~6.3]{Kuznetsov-V12}.
If $\g(X)=8$, the assertion holds by Theorem~\xref{theorem:S-vs-Sigma} and Proposition~\xref{hilb-lines-explicit}(i).
If $\g(X)=9$, the surface $S(X)$ is ruled by~\cite[Proposition~3.10]{BrambillaFaenzi},
and the simplicity of the corresponding vector bundle is proved in Lemma~\xref{lemma:v-simple}.
If $\g(X)=10$, the assertion holds by Proposition~\ref{proposition:S-Pic-iso}, or equivalently by
Theorem~\xref{theorem:S-vs-Sigma} and Proposition~\xref{hilb-lines-explicit}(ii).
If $\g(X)=12$, the assertion holds by~\mbox{\cite[Theorem~2.4]{Kollar2004b}}
(alternatively, one can apply Proposition~\ref{proposition-v22-v5}, or equivalently
Theorem~\xref{theorem:S-vs-Sigma} and Proposition~\xref{hilb-lines-explicit}(iii)).
Smoothness of~$S(X)$ is clear from the above case-by-case analysis.
\end{proof}

Propositions~\xref{hilb-lines-explicit} and~\xref{proposition:conics}
together give Theorem~\xref{theo:main1}.

\begin{remark}
\label{remark:hilb-conics-reducible}
Note that if $X$ is a Fano
threefold of genus~\mbox{$\g(X)\le 6$}, then
the surface~$S(X)$ may
be singular and even reducible.
For example, let $\pi\colon X\to Y$ be a double cover of a smooth Fano variety $Y$
with $\rho(Y)=1$ and $\iota(Y)=2$ branched in a smooth anticanonical divisor.
Then $X$ is a smooth Fano threefold with $\rho(X) = 1$, $\iota(X) = 1$, and $\g(X) = \dd(Y) + 1$ (see Lemma~\xref{lemma:trivial-action-conics} below), and
$S(X)$ is a union of two irreducible components; one of them is identified with
the Hilbert scheme of lines in~$Y$,
and the other is a double cover of the subvariety of the Hilbert scheme of conics $S(Y)$ bitangent
to the branch divisor (see \cite[Proposition 2.1.2]{Iliev1994a} for the case~\mbox{$\g(X)=6$}).
\end{remark}

For the sake of completeness, we conclude this section by a discussion of Hilbert
schemes of conics on some Fano threefolds of Picard rank $1$ and index~$2$.
These results, of course, are well known to experts, however, we do not know a good reference for them
except for the case~\mbox{$\dd(Y)=5$}.
According to our conventions, we consider Hilbert schemes of conics only on those Fano threefolds whose ample generator of the Picard group is very ample.
In the case of index 2 this means that~\mbox{$\dd(Y) \ge 3$}.
Recall the description of the Hilbert scheme of lines~$\Sigma(Y)$ for these threefolds from Proposition~\xref{hilb-lines-explicit}.
Also, recall that there is a curve $B(Y)$ of genus 2 associated to a Fano threefold $Y$ of index 2 and degree 4,
see Remark~\xref{remark:hyperelliptic-curve}.

\begin{proposition}
Let $Y$ be a \textup(smooth\textup) Fano threefold with $\rho(Y)=1$, $\iota(Y)=2$, and~\mbox{$\dd(Y) \ge 3$}.
Then $S(Y)$ is a smooth fourfold, and
\begin{itemize}
\item[(i)] if $\dd(Y)=3$, then $S(Y)$ is a $\PP^2$-bundle over the surface $\Sigma(Y)$.

\item[(ii)] if $\dd(Y)=4$, then $S(Y)$ is a $\PP^3$-bundle over the curve $B(Y)$.

\item[(iii)] if $\dd(Y)=5$, then $S(Y)\cong\P^4$.
\end{itemize}
\end{proposition}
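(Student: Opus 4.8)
The plan is to study the Hilbert scheme of conics $S(Y)$ on a Fano threefold $Y$ of index $2$ and degree $\dd(Y)\ge 3$ by relating conics to lines, exactly as in the analysis of $\Sigma(Y)$. The starting point is that for a conic $C\subset Y$ the class $H$ satisfies $H\cdot C=2$, so $-K_Y\cdot C=4$; by Lemma~\xref{lemma:normal-bundles} a smooth conic $C$ has $\CN_{C/Y}\cong\CO_C(1+a)\oplus\CO_C(1-a)$ with $a\ge 0$, and since $\dd(Y)\ge 3$ implies $H$ is very ample, the argument of Proposition~\xref{hilb-lines-smooth} (embedding $Y\hookrightarrow\PP^n$, using $\CN_{C/\PP^n}\cong\CO_C(2)^{\oplus(n-1)}$ and the sequence $0\to\CN_{C/Y}\to\CN_{C/\PP^n}\to\CN_{Y/\PP^n}\vert_C\to 0$) shows $1+a\le 2$, hence $a\le 1$, so $\CN_{C/Y}$ is either $\CO_C(1)^{\oplus 2}$ or $\CO_C(2)\oplus\CO_C$. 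In both cases $H^1(C,\CN_{C/Y})=0$, so $S(Y)$ is smooth of dimension $\chi(\CN_{C/Y})=4$ at every point corresponding to a smooth conic; one then needs the same vanishing for reducible and non-reduced conics, which follows from Corollaries~\xref{corollary:normal-reducible-conic-3fold} and~\xref{corollary:normal-nonreduced-conic-3fold} together with the absence of $2$-special lines on $Y$ (Proposition~\xref{hilb-lines-smooth}). This gives smoothness of the fourfold $S(Y)$ in general.

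For the fibrations, the idea is to produce a natural map $S(Y)\to\Sigma(Y)$ (for $\dd=3$) or $S(Y)\to B(Y)$ (for $\dd=4$) and identify its fibers. In the cubic case $\dd(Y)=3$, a conic $C$ spans a plane $\Pi\cong\PP^2$ in $\PP^4$; since $Y$ is a cubic hypersurface, $Y\cap\Pi$ is a plane cubic curve containing $C$, hence $Y\cap\Pi=C\cup L$ for a unique line $L\subset Y$, giving a morphism $S(Y)\to\Sigma(Y)$, $[C]\mapsto[L]$. Conversely, for a fixed line $L$, the residual conics are cut out on $Y$ by planes containing $L$; such planes form a $\PP^2$ (the planes in $\PP^4$ through a fixed line form a $\PP^2$), and the residual-conic construction identifies this $\PP^2$ with the fiber, so $S(Y)$ is a $\PP^2$-bundle over $\Sigma(Y)$. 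In the case $\dd(Y)=4$, where $Y\subset\PP^5$ is an intersection of two quadrics, the plane $\Pi$ spanned by a conic $C$ lies in a pencil of quadrics through it, and via Remark~\xref{remark:hyperelliptic-curve} the curve $B(Y)$ parameterizes the rulings of planes in the quadrics of the pencil; a conic determines a point of $B(Y)$ by recording which family of planes on a degenerate quadric of the pencil contains $\Pi$. The fiber over a point of $B(Y)$ is then a family of planes forming a $\PP^3$, so $S(Y)$ is a $\PP^3$-bundle over $B(Y)$. For $\dd(Y)=5$, where $Y=\Gr(2,5)\cap\PP^6$, one uses the known description of conics on the quintic del Pezzo threefold (see~\S\xref{subsection:v5}): a conic lies on a unique linear section $\Gr(2,4)\cap\PP^4\subset Y$ or is determined by a point of $\PP^4$, yielding $S(Y)\cong\PP^4$ directly.

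The main obstacle I expect is making the fibration statements precise and globally well-defined rather than just pointwise: one must show that the classifying maps $S(Y)\to\Sigma(Y)$ and $S(Y)\to B(Y)$ are actual morphisms (this should be routine, as they are defined by the residual subscheme in a flat family), but more delicately that the projectivizations are \emph{Zariski-locally} trivial with fibers $\PP^2$, $\PP^3$, $\PP^4$ as claimed — equivalently, that these are projectivizations of rank $3$, rank $4$ vector bundles on $\Sigma(Y)$ and $B(Y)$. This requires identifying the relevant vector bundle: for $\dd=3$ it is the rank $3$ bundle whose fiber at $[L]$ is $H^0(\PP^4,\CI_L(1))^\vee$ or, more intrinsically, a bundle built from $q_*p^*\CO_Y(H)$ on the universal line; for $\dd=4$ one needs to track the planes through a conic as a subbundle over $B(Y)$. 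Handling the degenerate conics (reducible and non-reduced) uniformly within these families, and checking that the fiberwise $\PP^k$ structure does not degenerate over the locus of such conics, is the technically fussy part; everything else reduces to standard facts about linear spans and residual intersections on hypersurfaces and complete intersections of quadrics, together with the explicit geometry of the quintic del Pezzo threefold recalled in~\S\xref{subsection:v5}.
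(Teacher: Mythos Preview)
Your approach for (i) and (iii) matches the paper's exactly: the residual-line map $C\mapsto L(C)$ for the cubic, and a reference to the literature for the quintic del Pezzo. For (ii) your idea is the same as the paper's, but your description is garbled: the plane $\Pi=\langle C\rangle$ does not lie ``in a pencil of quadrics,'' nor does it sit on a \emph{degenerate} quadric in general. The paper's argument is that restricting the pencil of quadrics defining $Y$ to $\Pi$ gives a pencil of plane conics containing $C$, hence there is a \emph{unique} quadric $Q(C)$ in the pencil containing $\Pi$; the map $C\mapsto(Q(C),\Pi)$ lands in the relative Hilbert scheme of planes in the universal quadric over $\PP^1$, and Stein factorization of this relative Hilbert scheme over $\PP^1$ is exactly the double cover $B(Y)\to\PP^1$, with fibers $\PP^3$. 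This also addresses your worry about Zariski-local triviality without needing to write down a vector bundle by hand; for (i) the paper gets the same mileage from the identification $S(Y)\cong\Sigma(Y)\times_{\Gr(2,5)}\Fl(2,3;5)$.

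Your direct smoothness argument via normal bundles is fine for smooth conics (the bound $a\le 1$ from $\CN_{C/\PP^n}\cong\CO_C(2)^{\oplus(n-1)}$ is correct), but has a genuine gap for reducible and non-reduced conics: Corollaries~\xref{corollary:normal-reducible-conic-3fold} and~\xref{corollary:normal-nonreduced-conic-3fold} are stated only for index~$1$, and in any case they only compute the Euler characteristic $\chi(\CN_{C/X})$, not the vanishing of $H^1$. Absence of $2$-special lines does not by itself force $H^1(\CN_{C/Y})=0$ at a reducible or non-reduced conic; you would have to go back to Lemmas~\xref{lemma:normal-reducible-conic-general} and~\xref{lemma:normal-nonreduced-conic-general} and analyze the extensions carefully in index~$2$. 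The paper sidesteps all of this: smoothness of $S(Y)$ is not proved a priori but read off from the explicit descriptions (a $\PP^2$-bundle over the smooth surface $\Sigma(Y)$, a $\PP^3$-bundle over the smooth curve $B(Y)$, and $\PP^4$), so your normal-bundle detour is unnecessary.
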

\begin{proof}
Let us start with assertion~(i).
Let $Y \subset \P^4$ be a smooth cubic hypersurface.
The linear span of a conic $C \subset Y$ is a plane~\mbox{$\langle C \rangle \cong \P^2$}.
This plane is not contained in $Y$, because the Picard group of $Y$ is generated by a hyperplane section by Lefschetz theorem.
Hence the intersection~\mbox{$\langle C \rangle \cap Y$} is a plane cubic curve, containing the conic $C$.
This means that
\begin{equation*}
\langle C \rangle \cap Y = C \cup L(C),
\end{equation*}
where $L(C)$ is a line (usually called the residual line of $C$).
It is easy to see that the association $C \mapsto L(C)$ defines a regular map $S(Y) \to \Sigma(Y)$.

The fiber of the map over a point $[L] \in \Sigma(Y)$ is the space of all planes in $\P^4$ containing~$L$ (hence is isomorphic to $\P^2$).
Indeed, if $\Pi$ is such a plane then $\Pi \cap Y = L \cup C(\Pi)$ with~$C(\Pi)$ a conic, and conversely, every conic whose residual line is $L$ spans a plane containing $L$.
Altogether, this shows that
$$
S(Y) \cong \Sigma(Y) \times_{\Gr(2,5)} \Fl(2,3;5),
$$
where $\Fl(2,3;5)$ is the flag variety. Thus, we proved assertion~(i).

The idea of the proof of assertion~(ii) is similar to that for assertion~(i).
Let $Y \subset \P^5$ be a complete intersection of two quadrics.
Given a conic~\mbox{$C \subset Y$}, we consider its linear span~\mbox{$\langle C \rangle \subset \P^5$}.
The restriction to $\langle C \rangle$ of the pencil of quadrics defining $Y$ is a pencil of conics containing $C$.
This means that there is a unique quadric $Q(C)$ in the pencil defining~$Y$ that contains $\langle C \rangle$
(again, because the plane $\langle C \rangle \cong \P^2$ is not contained in $Y$).
In other words, the association
$C \mapsto (Q(C),\langle C\rangle)$
defines a regular map
\begin{equation*}
S(Y) \to \Hilb^{p(t) = (1+t)(2+t)/2}(\CQ/\P^1)
\end{equation*}
into the relative Hilbert scheme of
planes in the divisor~\mbox{$\CQ \subset \P^5\times\P^1$}
(defined by the pencil of quadrics) over $\P^1$.
For a smooth quadric $Q \subset \P^5$ the Hilbert scheme of planes~\mbox{$\Hilb^{p(t) = (1+t)(2+t)/2}(Q)$} is isomorphic to a union of two copies of~$\P^3$,
while for a cone over a smooth quadric in $\P^4$, it is isomorphic to~$\P^3$.
Altogether, this means that the Stein factorization for the canonical map
$\Hilb^{p(t) = (1+t)(2+t)/2}(\CQ/\P^1) \to \P^1$ is a composition of a $\P^3$-bundle with a double cover
$B(Y) \to \P^1$, branched in the points of $\P^1$ corresponding to singular quadrics in the pencil.
This proves assertion~(ii).

For assertion~(iii) see \cite[Proposition~2.32]{Sanna} or~\cite[Proposition~1.2.2]{Iliev1994a}.
\end{proof}

\section{Automorphism groups}
\label{section:automorphisms}

In this section we remind some general results on automorphism groups of projective varieties,
in particular showing that under appropriate conditions they are linear algebraic groups.
We also discuss some general approaches to finiteness of automorphism groups.
Throughout the section we work
under rather general assumptions.

\subsection{Actions on linear systems}
\label{subsection:actions}

Let $X$ be a normal projective variety and let $A$ be a Weil divisor on $X$.
If the linear system $|A|$ is not empty, we denote by
\begin{equation*}
\varphi_{|A|}\colon X \dashrightarrow \P\big(H^0(X, \CO_X(A))^{\vee}\big)
\end{equation*}
the corresponding rational map.
If the class $[A]$ of $A$ in $\Cl(X)$ is invariant with respect to a subgroup $\Gamma \subset \Aut(X)$,
then there is a natural action of $\Gamma$ on $\PP(H^0(X, \CO_X(A))^{\vee})$ and the map $\varphi_{|A|}$ is $\Gamma$-equivariant.
Note also that the $\Gamma$-action on $\PP(H^0(X, \CO_X(A))^{\vee})$ is induced by an action on $H^0(X,\CO_X(A))^\vee$ of a central extension
\begin{equation*}
1 \to \mumu_m \to \widetilde\Gamma \to \Gamma \to 1,
\end{equation*}
where $m = \dim H^0(X,\CO_X(A))$. Indeed, the above exact sequence is the pullback of
\begin{equation*}
1 \to \mumu_m \to \SL(H^0(X, \CO_X(A))^{\vee}) \to \PGL(H^0(X, \CO_X(A))^{\vee}) \to 1
\end{equation*}
via the map $\Gamma \to \Aut(\PP(H^0(X, \CO_X(A))^{\vee})) \cong \PGL(H^0(X, \CO_X(A))^{\vee})$.
The induced map~\mbox{$\widetilde\Gamma \to \SL(H^0(X, \CO_X(A))^{\vee})$} gives a $\widetilde\Gamma$-action on $H^0(X, \CO_X(A))^{\vee}$.

\begin{remark}\label{remark:need-extension}
Note that the action of $\Gamma$ on $\PP(H^0(X, \CO_X(A))^{\vee})$ may not be induced by the action on $H^0(X, \CO_X(A))^\vee$
of $\Gamma$ itself, i.\,e. passing to a central extension above is indeed necessary.
On the other hand, if the sheaf $\CO_X(A)$ admits a $\Gamma$-linearization
(that is, if the action of $\Gamma$ on $X$ lifts to its action on~$\CO_X(A)$),
then the map~\mbox{$\Gamma \to \PGL(H^0(X, \CO_X(A))^{\vee})$} lifts to a map~\mbox{$\Gamma \to \GL(H^0(X, \CO_X(A))^{\vee})$}.
\end{remark}

The following lemma is easy and well known.

\begin{lemma}\label{lemma:anticanonical-ring0}
Let $X$ be a normal projective variety and $A$ be a Weil divisor on $X$.
Let $\Aut(X; [A])$ be the stabilizer of the class $[A]\in\Cl(X)$ in $\Aut(X)$.
If the map $\varphi_{|A|}$
is birational onto its image then the action of $\Aut(X; [A])$ on $\PP(H^0(X, \CO_X(A))^{\vee})$ is faithful.
In particular, in this case $\Aut(X; [A])$ is a linear algebraic group.
\end{lemma}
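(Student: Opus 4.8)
The plan is to obtain faithfulness directly from the $\Gamma$-equivariance of $\varphi_{|A|}$ recalled above, writing $\Gamma = \Aut(X;[A])$, and then to read off the linear-algebraicity as a formal consequence. First I would note that the indeterminacy locus of $\varphi_{|A|}$ is $\Bs\,|A|$, and that this locus is $\Gamma$-invariant because $\Gamma$ preserves the class $[A]$; hence the domain of definition $U\subseteq X$ of $\varphi_{|A|}$ is a dense open $\Gamma$-stable subset, and the equivariance relation $\varphi_{|A|}(g\cdot x) = g\cdot\varphi_{|A|}(x)$ holds for every $g\in\Gamma$ and every $x\in U$.

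Next I would take $g\in\Gamma$ in the kernel of the $\Gamma$-action on $\PP(H^0(X,\CO_X(A))^{\vee})$, so that $g$ acts there as the identity. Then for all $x\in U$ one gets $\varphi_{|A|}(g\cdot x)=g\cdot\varphi_{|A|}(x)=\varphi_{|A|}(x)$. Since $\varphi_{|A|}$ is birational onto its image by hypothesis, it is injective on some dense open subset $V\subseteq U$; applying the previous equality on the dense open subset $V\cap g^{-1}(V)$ and using injectivity on $V$ forces $g\cdot x = x$ there. Thus $g$ restricts to the identity on a dense open subset of $X$, and since $X$ is separated this implies $g=\Id_X$. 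Hence the kernel of the action is trivial, i.e. the action of $\Gamma$ on $\PP(H^0(X,\CO_X(A))^{\vee})$ is faithful, which is the main assertion.

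For the concluding ``in particular'' I would argue as follows. The group $\Aut(X)$ is a group scheme locally of finite type over $\Bbbk$ (this is where projectivity of $X$ enters), so $\Aut(X;[A])$ is a subgroup scheme, and the action above is a homomorphism of group schemes $\Aut(X;[A])\to\PGL(H^0(X,\CO_X(A))^{\vee))}$. Faithfulness says it has trivial kernel; in characteristic $0$ the source is reduced, so the scheme-theoretic kernel is trivial as well, and an injective homomorphism of algebraic groups is a closed immersion. Therefore $\Aut(X;[A])$ is isomorphic to a closed subgroup of $\PGL(H^0(X,\CO_X(A))^{\vee})$, hence a linear algebraic group. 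The elementary part is the faithfulness itself, once equivariance is in hand; the step I expect to be the only one requiring genuine external input is this last one, namely representability of $\Aut(X)$ and the fact that an injective morphism of algebraic groups is a closed immersion.
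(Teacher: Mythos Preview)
Your proof is correct and follows essentially the same approach as the paper's, which is a one-line argument: an element acting trivially on $\PP(H^0(X,\CO_X(A))^{\vee})$ must, by equivariance of $\varphi_{|A|}$ and its birationality, act trivially on a dense open subset of $X$, hence on all of $X$. You have simply made the details explicit (invariance of the base locus, the dense open on which $\varphi_{|A|}$ is injective, and the representability input for the final clause), all of which the paper leaves to the reader.
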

\begin{proof}
If some element $g \in\Aut(X; [A])$ acts trivially on $\PP((H^0(X, \CO_X(A))^{\vee}))$, then by assumption it also acts trivially
on an open dense subset of $X$, hence on the whole $X$.
\end{proof}

Note that any multiple of the canonical class is invariant under the automorphism group~\mbox{$\Aut(X)$}
and, moreover, has a natural $\Aut(X)$-linearization. Applying Lemma~\xref{lemma:anticanonical-ring0}
and taking into account Remark~\xref{remark:need-extension},
we obtain the following result.

\begin{corollary}\label{corollary:anticanonical-ring}
Let $X$ be a normal projective variety.
Suppose that for some $m\in \Z$ \textup(either positive or negative\textup)
the map~\mbox{$\varphi_{|mK_X|}$} is birational onto its image. Then the action of the group $\Aut(X)$ on
$\P\big(H^0(X,\O_X(mK_X))^{\vee}\big)$
is faithful and lifts to an embedding
\begin{equation*}
\Aut(X) \hookrightarrow \GL(H^0(X,\CO_X(mK_X))^\vee).
\end{equation*}
In particular, $\Aut(X)$ is a linear algebraic group.
\end{corollary}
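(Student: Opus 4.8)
The statement to prove is Corollary~\ref{corollary:anticanonical-ring}, which follows almost immediately from Lemma~\ref{lemma:anticanonical-ring0} once one observes that powers of the canonical class carry a canonical linearization. Let me sketch the plan.

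\medskip

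\textbf{Plan of proof.} The idea is to reduce everything to Lemma~\ref{lemma:anticanonical-ring0} and Remark~\ref{remark:need-extension}. First I would note that the canonical sheaf $\CO_X(K_X)$ is intrinsically attached to the variety $X$ (for a normal variety it is $j_*\omega_{X_{\mathrm{sm}}}$ where $j$ is the inclusion of the smooth locus and $\omega_{X_{\mathrm{sm}}}$ is the dualizing sheaf of the smooth part), hence it — and therefore every tensor power $\CO_X(mK_X)$ — admits a canonical $\Aut(X)$-linearization: any automorphism $g$ of $X$ induces an isomorphism $g^*\CO_X(mK_X)\xrightarrow{\sim}\CO_X(mK_X)$, and these compose compatibly. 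In particular the class $[mK_X]\in\Cl(X)$ is $\Aut(X)$-invariant, so $\Aut(X)=\Aut(X;[mK_X])$.

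\medskip

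\textbf{Key steps.} (1) Invoke the canonical linearization above to conclude $\Aut(X;[mK_X]) = \Aut(X)$ and that the $\Aut(X)$-action on $\PP(H^0(X,\CO_X(mK_X))^\vee)$ lifts, by the functoriality of the linearization on global sections, to a linear action on $H^0(X,\CO_X(mK_X))^\vee$, i.e.\ to a homomorphism $\Aut(X)\to\GL(H^0(X,\CO_X(mK_X))^\vee)$ — this is exactly the mechanism described in Remark~\ref{remark:need-extension} (no passage to a central extension is needed here). (2) Apply Lemma~\ref{lemma:anticanonical-ring0} with $A = mK_X$: since by hypothesis $\varphi_{|mK_X|}$ is birational onto its image, the action of $\Aut(X;[mK_X])=\Aut(X)$ on $\PP(H^0(X,\CO_X(mK_X))^\vee)$ is faithful. (3) Combine (1) and (2): the lifted homomorphism $\Aut(X)\to\GL(H^0(X,\CO_X(mK_X))^\vee)$ composes with $\GL\to\PGL$ to the faithful projective action, hence is itself injective, giving the asserted embedding. (4) Finally, $\Aut(X)$ is then realized as a subgroup of $\GL_m(\Bbbk)$ which is closed (automorphisms preserving the geometry of $X$ form a Zariski-closed condition), so it is a linear algebraic group.

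\medskip

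\textbf{Expected main obstacle.} There is essentially no deep obstacle — the only point requiring a little care is the claim in step (1) that powers of the canonical class are \emph{canonically} linearized (as opposed to merely $\Aut(X)$-invariant as divisor classes), since it is precisely this that lets one bypass the central extension $\widetilde\Gamma$ and land directly in $\GL$ rather than $\PGL$. For a normal variety this follows from the intrinsic construction of $\CO_X(K_X)$ via the dualizing sheaf on the smooth locus, together with normality (codimension-$\ge 2$ extension); I would just cite this and move on. Everything else is a formal consequence of Lemma~\ref{lemma:anticanonical-ring0} and Remark~\ref{remark:need-extension}, so the proof is short.
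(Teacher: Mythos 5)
Your proof is correct and follows essentially the same route as the paper: the paper's argument is precisely that any multiple of $K_X$ carries a natural $\Aut(X)$-linearization, so that by Remark~\ref{remark:need-extension} the projective representation lifts to $\GL(H^0(X,\CO_X(mK_X))^\vee)$ without passing to a central extension, and faithfulness then follows from Lemma~\ref{lemma:anticanonical-ring0}. Your added justification of the linearization via the dualizing sheaf on the smooth locus and normality is exactly the point the paper leaves implicit.
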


\begin{corollary}\label{corollary:hypersurface}
Let $X\subset\P^N$ be a normal complete intersection
of dimension~\mbox{$\dim X\ge 3$} that is not contained in a hyperplane in~$\P^N$.
Then there is a natural embedding~\mbox{$\Aut(X)\hookrightarrow\PGL_{N+1}(\Bbbk)$}.
\end{corollary}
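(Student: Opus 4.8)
The plan is to apply Lemma~\ref{lemma:anticanonical-ring0} with the divisor $A = H$, where $H = \CO_X(1)$ is the hyperplane class restricted from $\P^N$. For this I need two things: first, that the complete linear system $|H|$ on $X$ has exactly $N+1$ sections, so that $\P\big(H^0(X,\CO_X(1))^{\vee}\big)$ is canonically the given $\P^N$ and $\varphi_{|H|}$ is precisely the closed immersion $X\hookrightarrow\P^N$ (in particular birational onto its image); and second, that the class $[H]$ is invariant under all of $\Aut(X)$, so that $\Aut(X)=\Aut(X;[H])$ and Lemma~\ref{lemma:anticanonical-ring0} applies verbatim.

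For the first point, write $X=V(f_1,\dots,f_c)\subset\P^N$ with $c=N-\dim X$ and $\deg f_i=d_i$. Since $X$ is contained in no hyperplane, none of the $f_i$ is linear, so $d_i\ge 2$ for all $i$. I would then twist the Koszul resolution of $\mathcal{I}_X\subset\CO_{\P^N}$ by $\CO_{\P^N}(1)$ and run the hypercohomology spectral sequence: its relevant $E_1$-terms are cohomology groups $H^q\big(\P^N,\CO_{\P^N}(1-d_{i_1}-\dots-d_{i_k})\big)$ with $1\le k\le c$, and the indices $q$ that can contribute to $H^0(\mathcal{I}_X(1))$ or $H^1(\mathcal{I}_X(1))$ lie in the range $1\le q\le c+1\le N-2$ (here the hypothesis $\dim X\ge 3$, hence $c\le N-3$, is used), where the intermediate cohomology of line bundles on $\P^N$ vanishes. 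Therefore $H^0(\P^N,\mathcal{I}_X(1))=H^1(\P^N,\mathcal{I}_X(1))=0$, so the restriction map $H^0(\P^N,\CO_{\P^N}(1))\to H^0(X,\CO_X(1))$ is an isomorphism; in particular $h^0(X,\CO_X(1))=N+1$ and $\varphi_{|H|}$ is the given embedding.

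For the second point, by the Grothendieck--Lefschetz theorem a normal complete intersection of dimension at least $3$ has $\Pic(X)=\Z\cdot[H]$. An automorphism of $X$ acts on $\Pic(X)\cong\Z$ by $\pm 1$ and must send the ample class $[H]$ to an ample class, hence acts trivially; thus $[H]$ is $\Aut(X)$-invariant and $\Aut(X)=\Aut(X;[H])$. (Alternatively, when $\sum d_i\ne N+1$ one can use adjunction, $K_X\sim(\sum d_i-N-1)H$, to reduce to Corollary~\ref{corollary:anticanonical-ring} applied to a suitable multiple of $K_X$; but invoking Grothendieck--Lefschetz handles all cases uniformly, including the Calabi--Yau case $\sum d_i=N+1$, which is exactly where an argument through $K_X$ alone breaks down.) Combining the two points, Lemma~\ref{lemma:anticanonical-ring0} gives that the $\Aut(X)$-action on $\P^N=\P\big(H^0(X,\CO_X(1))^{\vee}\big)$ is faithful, i.e. an embedding $\Aut(X)\hookrightarrow\PGL\big(H^0(X,\CO_X(1))^{\vee}\big)\cong\PGL_{N+1}(\Bbbk)$; by construction this is the map sending $g\in\Aut(X)$ to the unique projective transformation of $\P^N$ restricting to $g$, so it is natural.

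I do not expect a genuine obstacle here: the first point is a routine cohomological computation and the second is essentially a citation. If pressed, the only delicate issue is the precise scope of the Grothendieck--Lefschetz statement for \emph{normal} (rather than smooth) complete intersections, and — should one wish to avoid it — the Calabi--Yau complete intersections, where $K_X$ is trivial and one genuinely needs the structure of the Picard group to pin down the class $[H]$.
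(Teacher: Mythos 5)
Your proof is correct and follows essentially the same route as the paper's: both invoke the Grothendieck--Lefschetz theorem to get $\Pic(X)=\Z\cdot[H]$ (the paper cites Hartshorne, Corollary~IV.3.2) and then apply Lemma~\xref{lemma:anticanonical-ring0} to the invariant linear system $|H|$. The only difference is that you additionally verify, via the Koszul resolution, that the restriction map $H^0(\P^N,\CO_{\P^N}(1))\to H^0(X,\CO_X(1))$ is an isomorphism, so that $\varphi_{|H|}$ really is the given embedding into $\P^N$ --- a point the paper leaves implicit.
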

\begin{proof}
By Lefschetz theorem one has $\Pic(X)=\Z\cdot H$, where $H$ is a hyperplane
section (see e.\,g. \cite[Corollary~IV.3.2]{Hartshorne1970}).
Thus, the embedding $X \hookrightarrow \P^N$ is given by an invariant linear system
$|H|$, so the assertion follows from Lemma~\xref{lemma:anticanonical-ring0}.
\end{proof}

\subsection{Finiteness results}
\label{subsection:known-finiteness}

Let us recall several easy finiteness results for
automorphism groups of algebraic varieties.

\begin{lemma}
\label{lemma:non-uniruled}
If a linear algebraic group $G$ acts faithfully on a variety $X$ which is not ruled, then $G$ is finite.
\end{lemma}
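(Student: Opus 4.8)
The plan is to show that if $G$ is infinite then $X$ must be ruled, contradicting the hypothesis. First I would reduce to a one-parameter subgroup. A linear algebraic group is a variety, hence has only finitely many connected components, so if $G$ is infinite its identity component $G^{\circ}$ has positive dimension. Choosing a Borel subgroup of $G^{\circ}$ — a positive-dimensional connected solvable group, and therefore a semidirect product of a torus and a unipotent group — and selecting inside it either a nontrivial one-parameter subtorus or a one-parameter unipotent subgroup, I obtain a subgroup $H\subseteq G$ with $H\cong\GGm$ or $H\cong\GGa$. Since $G$ acts faithfully on $X$, so does $H$; in particular $H$ acts nontrivially. Thus it suffices to derive a contradiction from a nontrivial action of a group $H\cong\GGm$ or $H\cong\GGa$ on a non-ruled variety $X$.

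Next I would analyze the orbits. The fixed locus $X^{H}$ is closed, and it is a proper subset of $X$ because $H$ acts nontrivially. For $x\in X\setminus X^{H}$ the stabilizer $H_{x}$ is a proper closed subgroup of $H$, hence finite (trivial when $H\cong\GGa$, a group $\mumu_{n}$ of roots of unity when $H\cong\GGm$), so the orbit $H\cdot x\cong H/H_{x}$ is a rational curve and its closure in $X$ is a rational curve through $x$. This already shows that $X$ is uniruled; to obtain honest ruledness I would pass to a quotient.

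By Rosenlicht's theorem there is an $H$-invariant dense open subset $U\subseteq X$ admitting a geometric quotient $\pi\colon U\to V$ with $\dim V=\dim X-1$ and whose fibers are exactly the $H$-orbits. Shrinking $V$ I may assume $V$ is affine and, using generic flatness together with the fact that the generic fiber is a single copy of $H$, that $\pi$ is an $H$-torsor over $V$. Over an affine base both $\GGa$-torsors and $\GGm$-torsors become trivial after Zariski localization on the base, so after a further shrinking one has $U\cong H\times V$. Since $H$ is a dense open subset of $\P^{1}$, this yields a birational equivalence $X\sim\P^{1}\times V$, i.e.\ $X$ is ruled, contradicting the hypothesis. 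Hence $G$ is finite.

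The main obstacle is the last step — promoting "uniruled" to "ruled" by trivializing the orbit fibration generically. If in the present paper "ruled" is understood in the weaker sense of "uniruled", then the orbit computation of the second paragraph already suffices and nothing further is needed; otherwise one invokes the (standard) triviality of $\GGa$- and $\GGm$-torsors over an affine base, which is the only technical point.
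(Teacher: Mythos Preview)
Your proof is correct and follows the same approach as the paper: reduce to a one-parameter subgroup $H\cong\GGm$ or $\GGa$, observe that its nontrivial orbits cover an open set with rational curves, and conclude that $X$ is ruled. The paper's argument is a terse two-line version of exactly this, simply asserting the ruledness from the orbit decomposition; your Rosenlicht-plus-torsor-triviality step makes explicit what the paper leaves implicit.
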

\begin{proof}
If $G$ is not finite, it contains a subgroup isomorphic to $\GGm$ or $\GGa$.
An open subset of $X$ is covered by one-dimensional orbits of this subgroup, hence $X$ is ruled, which is a contradiction.
\end{proof}

\begin{corollary}\label{corollary:nef-K}
Let $X$ be a variety of Kodaira dimension $\kappa(X)\ge 0$.
Suppose that a linear algebraic group $G$ acts faithfully on $X$.
Then $G$ is finite.
\end{corollary}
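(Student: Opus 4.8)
The plan is to reduce the statement to Lemma~\ref{lemma:non-uniruled}, which already asserts that a linear algebraic group acting faithfully on a non-ruled variety is finite. So the only thing that needs to be checked is that a variety $X$ with $\kappa(X)\ge 0$ cannot be ruled, i.e.\ cannot be birational to $Z\times\P^1$ for any variety $Z$.

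First I would invoke the birational invariance of the plurigenera $P_m(X)=h^0(X,\CO_X(mK_X))$ (computed on a smooth projective model of $X$), so that the value $\kappa(X)$ depends only on the birational class of $X$. Then, if $X$ were ruled, say birational to $Z\times\P^1$, the adjunction formula $K_{Z\times\P^1}\sim \mathrm{pr}_1^*K_Z+\mathrm{pr}_2^*K_{\P^1}$ combined with the K\"unneth formula would give
\[
P_m(Z\times\P^1)=h^0(Z,\CO_Z(mK_Z))\cdot h^0(\P^1,\CO_{\P^1}(-2m))=0
\]
for every $m\ge 1$, since $-2m<0$. Hence $\kappa(Z\times\P^1)=-\infty$, and therefore $\kappa(X)=-\infty$, contradicting the hypothesis. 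Thus $X$ is not ruled, and Lemma~\ref{lemma:non-uniruled} applies directly: the linear algebraic group $G$ acts faithfully on the non-ruled variety $X$, so it is finite.

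There is essentially no real obstacle here; the one point deserving a word of care is that both Kodaira dimension and the notion of \emph{ruled} are birational in nature, so one should first pass to a smooth projective model of $X$ via resolution of singularities (available in characteristic~$0$) before quoting birational invariance of the plurigenera. With that understood the argument is immediate.
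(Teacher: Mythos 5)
Your proof is correct and follows the same overall strategy as the paper: reduce to Lemma~\ref{lemma:non-uniruled} by showing that a variety with $\kappa(X)\ge 0$ cannot be ruled. The only difference is in how that non-ruledness is justified: the paper observes that $|nK_X|\ne\varnothing$ for some $n>0$ and cites Miyaoka--Mori to conclude that $X$ is not even uniruled, whereas you prove the (weaker, but sufficient) statement that $X$ is not ruled by an elementary direct computation --- birational invariance of plurigenera plus the K\"unneth-type vanishing $H^0(Z\times\P^1,\CO(mK_{Z\times\P^1}))=0$ for $m\ge 1$. Your route is more self-contained and avoids quoting an external theorem; the paper's route is shorter and yields the stronger conclusion of non-uniruledness, though that extra strength is not used. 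Your remark about first passing to a smooth projective model (so that plurigenera and ruledness are compared in the right birational category) is the right point of care, and the argument is complete.
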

\begin{proof}
Since the linear system $|nK_X|$ is not empty for some $n>0$,
the variety $X$ is not uniruled (see \cite[Theorem 1]{MiyaokaMori}).
Thus we can apply Lemma~\xref{lemma:non-uniruled}.
\end{proof}

\begin{corollary}\label{corollary:general-type}
Let $X$ be a variety of general type. Then the group $\Aut(X)$ is finite.
\end{corollary}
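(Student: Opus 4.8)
The plan is to deduce the statement directly from Corollary~\ref{corollary:anticanonical-ring} and Corollary~\ref{corollary:nef-K}. Recall that by definition $X$ is of general type exactly when $\kappa(X)=\dim X$, so in particular $\kappa(X)\ge 0$; thus the Kodaira dimension hypothesis of Corollary~\ref{corollary:nef-K} is satisfied, and it only remains to check that $\Aut(X)$ is a linear algebraic group before applying it.

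To see linearity, first I would reduce to the case of a normal projective $X$: if $X$ is not normal, its normalization $\nu\colon\widetilde X\to X$ is again a projective variety of general type, and by the universal property of normalization every automorphism of $X$ lifts to $\widetilde X$, so that $\Aut(X)\hookrightarrow\Aut(\widetilde X)$; hence it is enough to treat normal $X$. For such $X$, the condition $\kappa(X)=\dim X$ means that the pluricanonical map $\varphi_{|mK_X|}$ is birational onto its image for some (indeed all sufficiently large) $m>0$ — this is one of the standard equivalent formulations of being of general type. Applying Corollary~\ref{corollary:anticanonical-ring} with this value of $m$, we obtain that $\Aut(X)$ embeds into $\GL\big(H^0(X,\CO_X(mK_X))^\vee\big)$, and in particular is a linear algebraic group.

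Finally, since $\Aut(X)$ is a linear algebraic group acting faithfully on the variety $X$ with $\kappa(X)\ge 0$, Corollary~\ref{corollary:nef-K} applies with $G=\Aut(X)$ and yields that $\Aut(X)$ is finite. The argument has no genuine obstacle; the only point deserving attention is ensuring that one is in the situation where Corollary~\ref{corollary:anticanonical-ring} can be invoked, namely a normal projective variety carrying a birational pluricanonical map, which is precisely what the reduction to the normalization together with the characterization of general type provide.
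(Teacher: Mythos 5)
Your proof is correct and follows exactly the route the paper takes: the paper's own proof is simply ``Apply Corollaries~\xref{corollary:anticanonical-ring} and~\xref{corollary:nef-K}.'' The extra care you take with the reduction to the normalization and with spelling out that general type gives a birational pluricanonical map is a reasonable elaboration of details the paper leaves implicit, but it is not a different argument.
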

\begin{proof}
Apply Corollaries~\xref{corollary:anticanonical-ring}
and~\xref{corollary:nef-K}.
\end{proof}

\begin{remark}
Actually, even the group of birational selfmaps of a variety of
general type is finite, since it coincides with the automorphism group of its canonical model.
\end{remark}

Another collection of finiteness results concerns hypersurfaces and complete intersections.

\begin{theorem}[{see \cite{MatsumuraMonsky}}]
\label{theorem:MatsumuraMonsky}
Let $X$ be a smooth hypersurface of degree $d\ge 3$ in $\P^N$, where
$N\ge 2$. Then the automorphism group of $X$ is finite
unless either $N=2$ and $d=3$, or $N=3$ and $d=4$.
\end{theorem}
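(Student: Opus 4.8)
The plan is to combine Corollary~\xref{corollary:hypersurface}, which embeds $\Aut(X)$ into $\PGL_{N+1}(\Bbbk)$ as soon as $N\ge 3$, with a finiteness criterion for the resulting linear action. So the first step is to reduce everything to the statement that the subgroup of $\PGL_{N+1}(\Bbbk)$ preserving a smooth hypersurface $\{F=0\}$ of degree $d$ is finite under the stated hypotheses. For $N=2$ this is a classical statement about smooth plane curves, so I would dispatch that case separately (a smooth plane curve of degree $d\ge 4$ has genus $\ge 3$, so its automorphism group is finite by Hurwitz, and $d=3$ is the genuine exception — an elliptic curve with its translations); the remaining interest is $N\ge 3$, where $\Aut(X)=\mathrm{PGL}$-stabilizer of $X$.

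Next I would argue that the stabilizer $G = \mathrm{Stab}_{\PGL_{N+1}}(\{F=0\})$ is a \emph{closed algebraic subgroup} of $\PGL_{N+1}(\Bbbk)$: it is the preimage of the point $[F]$ in $\P(\Sym^d(\Bbbk^{N+1})^\vee)$ under the natural algebraic action of $\PGL_{N+1}$ on that projective space, intersected with the (closed) condition that $F$ be preserved up to scalar — equivalently, $G$ is the stabilizer of a point, hence closed. Since $\Aut(X)$ is then a linear algebraic group, by Lemma~\xref{lemma:non-uniruled} it suffices to show $X$ is not ruled, \emph{or} to show $G$ has dimension $0$ directly. I would go the second route: $\dim G = 0$ forces $G$ finite because $G$ is a subgroup of an affine algebraic group and a positive-dimensional algebraic group contains a copy of $\GGa$ or $\GGm$, whose one-parameter orbits would make $X$ ruled (this is exactly Lemma~\xref{lemma:non-uniruled} again, but phrased on the group side).

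The heart of the matter — and the step I expect to be the main obstacle — is therefore showing $\dim G = 0$, i.e.\ that no nontrivial one-parameter subgroup of $\PGL_{N+1}(\Bbbk)$ stabilizes a smooth degree-$d$ hypersurface when $(N,d)$ avoids the two exceptional pairs. The standard argument analyzes a hypothetical $\GGm$-action: diagonalize it as $\lambda\cdot x_i = \lambda^{w_i} x_i$ with weights $w_i$, decompose $F=\sum_m F_m$ into its $\GGm$-eigencomponents, and use that $F$ being semi-invariant forces $F=F_{m_0}$ for a single weight $m_0$; one then shows that such a "weighted-homogeneous but not honestly homogeneous" $F$ must be singular — typically by exhibiting a common zero of all partials among the coordinate points or coordinate subspaces, using a counting/pigeonhole argument on the weights that breaks down precisely in the cases $N=2,d=3$ and $N=3,d=4$. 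The $\GGa$-case is handled similarly (or reduced to it) via a nilpotent one-parameter subgroup, e.g.\ using that its fixed-point scheme on $X$ plus the structure of $F$ again forces a singularity. I would cite \cite{MatsumuraMonsky} for the detailed weight bookkeeping rather than reproduce it, emphasizing only the reduction and the shape of the obstruction count, since that computation is exactly what the cited paper does and it is not short.
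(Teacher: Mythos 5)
The paper does not actually prove this statement: it is imported verbatim from \cite{MatsumuraMonsky} with only a citation, so there is no internal argument to compare yours against. Your outline is a fair reconstruction of the Matsumura--Monsky strategy (reduce to the projective stabilizer, show it is a closed algebraic subgroup of $\PGL_{N+1}(\Bbbk)$, and rule out one-parameter subgroups by a weight analysis of a semi-invariant equation), but you explicitly defer the weight bookkeeping --- which is the entire content of the theorem, including the identification of the exceptional pairs $(N,d)=(2,3)$ and $(3,4)$ --- back to \cite{MatsumuraMonsky}. As a self-contained proof this is therefore incomplete precisely where all the work lies; as a citation-plus-roadmap it is fine, and matches what the paper itself does.

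Two inaccuracies in your reduction are worth fixing. First, Corollary~\xref{corollary:hypersurface} requires $\dim X\ge 3$, i.e.\ $N\ge 4$ for a hypersurface, so it does not cover $N=3$; its proof uses the Lefschetz theorem on $\Pic$, which fails for surfaces in $\P^3$. Second, your blanket claim that for $N\ge 3$ one has $\Aut(X)=\mathrm{Stab}_{\PGL_{N+1}}(X)$ is false exactly in the exceptional case $N=3$, $d=4$: a smooth quartic surface is a K3 surface, which can have (infinitely many) automorphisms not preserving the hyperplane class, and this is the reason the case is exceptional. For $N=3$ and $d\ne 4$ the identification does hold, but it needs the argument that $K_X=(d-4)H$ with $d-4\ne 0$ and $\Pic(X)$ torsion-free forces every automorphism to preserve $H$ and hence to extend to $\P^3$; for $d=3$ one instead notes that a cubic surface is del Pezzo with $-K_X=H$. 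Spelling this out (or restricting the $\PGL$-reduction to $N\ge 4$ and treating $N=2,3$ ad hoc) would make the skeleton of your argument sound, but the theorem would still rest on the uncarried-out computation from the cited source.
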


There are many classification results
on automorphism groups of hypersurfaces of small degree,
in particular, cubic hypersurfaces (see \cite{Hosoh97},
\cite{Hosoh02}, \cite[\S9.5]{Dolgachev-book},
\cite{Adler78}, \cite{GAL11}, \cite{OY15}).
Also, Theorem~\xref{theorem:MatsumuraMonsky} has the following
recent generalization.

\begin{theorem}[{\cite[Theorem~3.1]{Benoist2013}}, see also \cite{ChenPanZhang-2015}]
\label{theorem:Benoist}
If $X \subset \PP^N$ is a smooth complete intersection of
dimension $\dim X\ge 3$
and codimension $\operatorname{codim}(X) \ge 2$ not contained in a hyperplane
in~$\P^N$, then $\Aut(X)$ is finite.
\end{theorem}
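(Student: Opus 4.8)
The plan is to first reduce to the Fano case using the finiteness results already at our disposal, and then to analyze one-parameter subgroups of $\Aut(X)$ directly.

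Write $X = X_{d_1,\dots,d_c}\subset\P^N$ with $c=\operatorname{codim}(X)\ge 2$ and $\dim X=N-c\ge 3$. Since $X$ is not contained in a hyperplane, no $d_i$ equals $1$, so $\sum_i d_i\ge 2c\ge 4$. By adjunction $K_X\sim\CO_X\big(\textstyle\sum_i d_i-N-1\big)$. If $\sum_i d_i\ge N+1$, then $K_X$ is effective or trivial, so $\kappa(X)\ge 0$; by Corollary~\xref{corollary:hypersurface} the group $\Aut(X)$ is a linear algebraic group, namely the closed subgroup of $\PGL_{N+1}(\Bbbk)$ stabilizing $X$, so Corollary~\xref{corollary:nef-K}---or Corollary~\xref{corollary:general-type} when $\sum_i d_i>N+1$---shows $\Aut(X)$ is finite. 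Hence from now on I may assume $\sum_i d_i\le N$, i.e. $X$ is Fano.

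Assume $\Aut(X)$ is infinite. By Corollary~\xref{corollary:hypersurface} it is a linear algebraic subgroup of $\PGL_{N+1}(\Bbbk)$, so $\Aut^0(X)$ is positive-dimensional and contains a nontrivial one-parameter subgroup $\lambda$, either a torus $\GGm$ or a unipotent $\GGa$. Such a subgroup lifts to $\SL_{N+1}(\Bbbk)$, giving a nontrivial action of $\lambda$ on $V=H^0(X,\CO_X(1))^\vee$---here I use that $H^0(\P^N,\CO(1))\xrightarrow{\ \sim\ }H^0(X,\CO_X(1))$ because $X$ is a nondegenerate complete intersection of positive dimension---under which the homogeneous ideal $I_X=(F_1,\dots,F_c)$ is invariant. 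Since each graded piece $(I_X)_d$ is a nonzero representation of $\lambda$, I can choose the generators $F_i$ to be semi-invariant: eigenvectors when $\lambda=\GGm$, and vectors fixed up to character along a suitable $\lambda$-stable filtration when $\lambda=\GGa$.

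The heart of the matter is then to exhibit a singular point of $X$, contradicting smoothness. The connected solvable group $\lambda$ fixes a point $p$ of the complete variety $X$; for $\lambda=\GGm$ one may diagonalize coordinates $x_0,\dots,x_N$ with weights $w_0\le\dots\le w_N$ (not all equal, since the action on $\P^N$ is nontrivial) and take $p$ to be an appropriate coordinate point. Each $F_i$ is homogeneous for the induced weight grading and vanishes at $p$, and the crucial combinatorial point is that the weight constraints severely restrict which monomials of the form $x_k^{\,d_i-1}x_l$ (those contributing to $dF_i|_p$) can occur; one deduces that the Jacobian of $(F_1,\dots,F_c)$ has rank $<c$ at $p$ once $c\ge 2$ and $N-c\ge 3$, so $X$ is singular at $p$. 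I expect this bookkeeping---a case analysis over all admissible weight vectors $(w_0,\dots,w_N)$ and degree vectors $(d_1,\dots,d_c)$ with $\sum d_i\le N$, together with the parallel, somewhat more delicate, unipotent case---to be the main obstacle; it is precisely here that $\operatorname{codim}(X)\ge 2$ and $\dim X\ge 3$ enter, ruling out the exceptional smooth hypersurfaces (the cubic surface and the quartic threefold) left open by Theorem~\xref{theorem:MatsumuraMonsky}. Equivalently, the whole Fano case can be packaged as the vanishing $H^0(X,T_X)=0$: via the Euler sequence, the normal bundle sequence $0\to T_X\to T_{\P^N}|_X\to\bigoplus_i\CO_X(d_i)\to 0$, and $H^1(X,\CO_X)=0$, this amounts to injectivity of the map $\mathfrak{sl}_{N+1}\to\bigoplus_i H^0(X,\CO_X(d_i))$, $\xi\mapsto\big(\xi\cdot F_i \bmod (I_X)_{d_i}\big)_i$, which is the cohomological incarnation of the same statement.
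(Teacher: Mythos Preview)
The paper does not give its own proof of this theorem; it simply cites Benoist~\cite{Benoist2013} (and~\cite{ChenPanZhang-2015}) and uses the result as a black box in Proposition~\xref{proposition:nearly-done}. So there is no paper-proof to compare against.

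Your overall strategy is the right one and is essentially Benoist's: reduce to the Fano range via adjunction and the easy finiteness results, then embed $\Aut(X)$ into $\PGL_{N+1}(\Bbbk)$ and kill off any positive-dimensional piece by analyzing a one-parameter subgroup acting on the homogeneous ideal. The reduction step is fine as written.

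But the proof has a genuine gap at the decisive point. You assert that ``the weight constraints severely restrict which monomials of the form $x_k^{d_i-1}x_l$ can occur'' and that ``one deduces that the Jacobian of $(F_1,\dots,F_c)$ has rank $<c$ at $p$'', and then immediately concede that this bookkeeping is ``the main obstacle'' you have not carried out. That is exactly where the content lives: for hypersurfaces this is the Matsumura--Monsky argument, and it already requires care; for $c\ge 2$ generators with possibly distinct degrees and weights, the interaction between the weight decompositions of the $F_i$ is what Benoist's paper is actually about. Nothing you have written forces the Jacobian rank to drop---for instance, your choice of fixed point $p$ is not specified, different coordinate points give different constraints, and in the unipotent case the semi-invariance structure is weaker still. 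The alternative packaging you mention, injectivity of $\mathfrak{sl}_{N+1}\to\bigoplus_i H^0(X,\CO_X(d_i))$, is a correct reformulation but is not any easier: it is the same statement, and you have not proved it either. As it stands this is a plausible outline, not a proof.
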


Another well-known finiteness result that we will need is as follows.
Recall that for any morphism $\phi\colon Y\to X$ there is a subgroup
$\Aut(Y/X)\subset \Aut(Y)$ that consists of all automorphisms whose action is fiberwise with respect to $\phi$;
we will refer to this group as \emph{the group of automorphisms of $Y$ over $X$}.

\begin{lemma}
\label{lemma:projectivization-stable-bundle}
Let $\CE$ be a simple vector bundle on a projective scheme~$X$. Then the group~\mbox{$\Aut(\PP_X(\CE)/X)$}
of the automorphisms of the projectivization $\PP_X(\CE)$ over $X$ is finite.
\end{lemma}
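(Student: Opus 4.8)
The plan is to reduce the statement to the standard fact that the group of global automorphisms of a projective scheme is a group scheme of finite type, and then to show that the subgroup $\Aut(\PP_X(\CE)/X)$ is affine and zero-dimensional, hence finite. First I would observe that any automorphism $\varphi$ of $\PP_X(\CE)$ over $X$ carries the relative tautological line bundle $\CO_{\PP_X(\CE)/X}(1)$ to a line bundle of the form $\CO_{\PP_X(\CE)/X}(1) \otimes \pi^*\CL$ for some line bundle $\CL$ on $X$, where $\pi\colon \PP_X(\CE) \to X$ is the projection; restricting to fibers (where $\varphi$ induces an automorphism of each $\PP^{r-1}$ that need not be trivial, only linear) shows the twist is by a genuine line bundle pulled back from $X$. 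Pushing forward, $\varphi$ induces an isomorphism $\CE \xrightarrow{\sim} \CE \otimes \CL$ of $\CO_X$-modules, and comparing determinants (or ranks and a Picard-group argument on the relevant connected component) one sees $\CL$ must be torsion, so after replacing $\Aut(\PP_X(\CE)/X)$ by a finite-index subgroup we may assume $\CL \cong \CO_X$.

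Next I would use that an isomorphism $\CE \xrightarrow{\sim} \CE$ of vector bundles on $X$ is an element of $\Hom(\CE,\CE)$ that is invertible; by simplicity of $\CE$ this means $\Hom(\CE,\CE) = \Bbbk$, so every such isomorphism is a nonzero scalar. A nonzero scalar acts trivially on the projectivization $\PP_X(\CE)$. Hence the subgroup of automorphisms over $X$ acting trivially on $\CO_{\PP_X(\CE)/X}(1)$ up to pullback from $X$ — which is the finite-index subgroup isolated in the previous step — is in fact trivial. Therefore $\Aut(\PP_X(\CE)/X)$ itself is finite.

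The main obstacle I anticipate is the bookkeeping in the first step: controlling the ambiguity by which $\varphi^*\CO_{\PP_X(\CE)/X}(1)$ differs from $\CO_{\PP_X(\CE)/X}(1)$ by a pullback from $X$, and showing that the resulting self-isomorphism $\CE \cong \CE \otimes \CL$ forces $\CL$ to be torsion. One clean way is to note that $\pi_*\CO_{\PP_X(\CE)/X}(1) = \CE^\vee$ (or $\CE$, depending on convention) is locally free of rank $r = \operatorname{rk}\CE$, and an isomorphism $\CE \cong \CE \otimes \CL$ gives $\CL^{\otimes r} \cong \det\CE \otimes (\det\CE)^{-1} \cong \CO_X$ by taking $r$-th exterior powers, so $\CL$ is $r$-torsion and the twisting accounts for at most an $r$-torsion ambiguity — in particular the whole group $\Aut(\PP_X(\CE)/X)$ maps to a finite group (the $r$-torsion of $\Pic(X)$, or rather its image) with trivial kernel, hence is finite. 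Assembling these pieces gives the claim without needing any hypothesis on $X$ beyond being a projective scheme.
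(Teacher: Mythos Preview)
Your argument is correct and is essentially a direct reconstruction of the result the paper cites from Grothendieck: the paper's own proof is simply the citation, together with the parenthetical observation that the group of line-bundle twists $\CL$ with $\CE \cong \CE \otimes \CL$ sits inside the torsion of $\Pic(X)$, which is exactly the structure you work out. One minor remark: your determinant computation gives $\CL^{\otimes r} \cong \CO_X$ (so $r$-torsion, with $r = \operatorname{rk}\CE$), whereas the paper's parenthetical says $2$-torsion --- this is because the applications in the paper are to rank-$2$ bundles, but your general bound is the correct one for arbitrary rank, and the finiteness of $r$-torsion in $\Pic(X)$ for a projective scheme (via the theorem of the base and finite-type-ness of $\Pic^0$) is standard.
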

\begin{proof}
This is Corollary to Proposition~2
in~\cite{Grothendieck1958geometrie}
(note also that the group denoted by~$\Gamma$ in~\cite{Grothendieck1958geometrie} is a subgroup in the $2$-torsion subgroup of $\Pic(X)$, hence is finite).
\end{proof}

\begin{corollary}
If $\CE$ is a simple vector bundle on a smooth curve $C$ of genus $g > 1$, then the
group $\Aut(\PP_C(\CE))$ is finite.
\end{corollary}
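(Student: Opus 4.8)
The plan is to exploit the fact that the $\PP^1$-bundle projection $\pi\colon\PP_C(\CE)\to C$ is canonically attached to the surface $\PP_C(\CE)$ once the base curve has positive genus, so that the whole automorphism group of $\PP_C(\CE)$ respects $\pi$ and fits into an extension of a subgroup of $\Aut(C)$ by the group of automorphisms over $C$.

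First I would record the uniqueness of the ruling. Since $g=g(C)>1\ge 1$, the curve $C$ admits no non-constant morphism from $\PP^1$; hence for any ruling of the surface $\PP_C(\CE)$ a general fiber $F\cong\PP^1$ maps to a point under $\pi$, so $F$ is contained in (and therefore equals) a fiber of $\pi$. Thus $\pi$ is the unique structure of a $\PP^1$-bundle on $\PP_C(\CE)$, and every $g\in\Aut(\PP_C(\CE))$ sends fibers of $\pi$ to fibers of $\pi$. Consequently $\pi\circ g$ factors through $\pi$, producing $\bar g\in\Aut(C)$ with $\pi\circ g=\bar g\circ\pi$, and $g\mapsto\bar g$ is a group homomorphism
\begin{equation*}
\Aut(\PP_C(\CE))\longrightarrow\Aut(C)
\end{equation*}
whose kernel is exactly the subgroup $\Aut(\PP_C(\CE)/C)$ of automorphisms acting fiberwise over $C$.

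Next I would invoke the two available finiteness inputs. The kernel $\Aut(\PP_C(\CE)/C)$ is finite by Lemma~\ref{lemma:projectivization-stable-bundle}, because $\CE$ is simple and $C$ is a projective scheme. The image of the homomorphism above lies in $\Aut(C)$, which is finite: a smooth projective curve of genus $g>1$ is of general type, so Corollary~\ref{corollary:general-type} applies (equivalently, this is the classical Hurwitz bound). An extension of a finite group by a finite group is finite, so $\Aut(\PP_C(\CE))$ is finite.

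I do not expect a serious obstacle; the only step requiring a word of care is the uniqueness of the ruling, and this is precisely where the hypothesis $g>1$ enters a second time (the statement genuinely fails for $g=0$, e.g.\ for $\PP^1\times\PP^1$), while for $g=1$ the ruling is still unique but $\Aut(C)$ need not be finite, so both places where we use the genus bound are essential.
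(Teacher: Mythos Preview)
Your proof is correct and follows essentially the same route as the paper's: establish the exact sequence $1\to\Aut(\PP_C(\CE)/C)\to\Aut(\PP_C(\CE))\to\Aut(C)$, then apply Lemma~\ref{lemma:projectivization-stable-bundle} to the kernel and Corollary~\ref{corollary:general-type} to the target. The paper simply asserts that the projection $\PP_C(\CE)\to C$ is canonical, whereas you spell out the uniqueness-of-ruling argument; your additional commentary on where the hypothesis $g>1$ is actually needed is accurate and clarifying.
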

\begin{proof}
Indeed, the morphism $\PP_C(\CE) \to C$ is canonical, hence there is an exact sequence
\begin{equation*}
1 \to \Aut(\PP_C(\CE)/C) \to \Aut(\PP_C(\CE)) \to \Aut(C).
\end{equation*}
The term on the left is finite by Lemma~\xref{lemma:projectivization-stable-bundle}, and the term on the right is finite since~\mbox{$g>1$}, see Corollary~\xref{corollary:general-type}.
Therefore $\Aut(\PP_C(\CE))$ is finite.
\end{proof}

\section{Finiteness for Fano threefolds}
\label{section:finiteness-for-Fanos}

In this section we prove finiteness of automorphism groups for most of smooth Fano threefolds of Picard rank 1.

\subsection{Faithfulness of action on a family of curves}
\label{subsection:faithfulness-general}

In this subsection we prove a general result on faithfulness of an automorphism group action on a Hilbert scheme of
curves of degree 2 with respect to the anticanonical class.
In the next subsections we apply it to Hilbert schemes of lines on Fano threefolds of index 2 and Hilbert schemes of conics on Fano threefolds of index 1.

Let $X$ be a smooth projective variety (of any dimension).
Let $S$ be an irreducible and reduced projective subscheme in a Hilbert scheme of curves on $X$, let
$\CC \subset S \times X$ be the corresponding family of curves, and let
\begin{equation*}
\xymatrix{
& \CC \ar[dl]_q \ar[dr]^p \\
S && X
}
\end{equation*}
be the corresponding diagram of projections.

\begin{theorem}\label{theorem:faithful}
Assume that $X$ is a smooth Fano variety of any dimension greater than~$1$ with~\mbox{$\Pic(X)\cong\mathbb{Z}$} and~$-K_X$ very ample.
Assume that
\begin{itemize}
\item
for general $s \in S$ the fiber $\CC_s$ of $q$ is a smooth rational curve and $\CC_s\cdot(-K_X) = 2$;
\item
the morphism
$p$ is dominant, not birational, and does not contract divisors.
\end{itemize}
Let $G \subset \Aut(X)$ be a nontrivial algebraic
subgroup that acts trivially on $S$.
Then the group $G$ is cyclic of order~$2$,
and a curve $C$ corresponding to a general point of $S$ is the preimage of a curve $C' \subset X' = X/G$ under the quotient map $\pi \colon X \to X'$.

Furthermore, if $\dim X$ is odd then $X'$ is smooth, and
if $\dim X$ is even, then $X'$ is either smooth or has one singular point of type $\frac12(1,\ldots,1)$.
In both cases the branch locus of $\pi$ in $X'$ is the union of a smooth anticanonical divisor $B$ and $\Sing(X')$.

Finally, $X'$ is a Fano variety with $\Pic(X') \cong \Z$, and the divisor $K_{X'}$ is divisible by $2$ in
the class group~\mbox{$\Cl(X')$}.
\end{theorem}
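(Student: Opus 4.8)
The plan is to prove, in this order, that $G\cong\Z/2$; that a general $C$ is a preimage; the structure of $\Sing(X')$ and of the branch locus; and finally the statements about $\Pic(X')$ and $K_{X'}$. Throughout put $\pi\colon X\to X':=X/G$ and write $V=H^0(X,-K_X)$.

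\emph{Step 1: $G$ is finite.} Since $G$ acts trivially on $S$ it preserves $\CC\subset S\times X$, acts on $\CC$ fibrewise over $S$, and $p$ is $G$-equivariant. Hence for a general point $x\in X$ and \emph{every} curve $C$ of the family through $x$ one has $Gx\subset C$ (an element $g$ fixes $C$ and moves $x$ to $gx$). If $G$ were infinite, then for general $x$ the orbit closure $\overline{Gx}$ would be a positive-dimensional subvariety contained in every such irreducible curve $C$, hence equal to it; thus a general point of $X$ would lie on a unique curve of the family, forcing $p$ to be birational — a contradiction.

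\emph{Step 2: $G\cong\Z/2$ and description of a general $C$.} For general $x$ one has $|Gx|=|G|$, and as $p$ is not birational there are two distinct curves $C,C'$ of the family through $x$; both contain $Gx$, so $|C\cap C'|\ge|G|$. In the embedding by $|{-}K_X|$ a general such curve is a smooth conic spanning a plane $\langle C\rangle$, and $G$ embeds into $\Aut(C)\cong\PGL_2(\Bbbk)$ (an element of the kernel fixes a general conic pointwise, hence a dense subset of $X$). Two distinct conics in projective space meet in at most $2$ points unless $\langle C\rangle=\langle C'\rangle$, in which case at most $4$; since a finite subgroup of $\PGL_2$ has general orbit of size $|G|$, this gives $|G|\le4$, and if $|G|\in\{3,4\}$ then all conics through a general point are coplanar. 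This is impossible: a common conic of two points makes their "conic planes" coincide, and as the conics form a covering family of $X$, chain-connectedness by this family would force a single plane to contain $X$, contradicting $\dim X>1$. Hence $G=\langle\sigma\rangle$ with $\sigma$ an involution; for general $C$ the map $\sigma|_C$ is a nontrivial involution of $\PP^1$, so $C\to C':=C/\sigma$ is a double cover of $\PP^1$ branched at the two $\sigma$-fixed points of $C$, and $\sigma$-stability of $C$ gives $\pi^{-1}(C')=C$, proving the preimage statement. This step — in particular excluding $|G|\in\{3,4\}$, for which after the bound $|G|\le4$ one must show $X$ is not swept out by coplanar conics using $\Pic(X)\cong\Z$ — is the heart of the matter.

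\emph{Step 3: fixed locus, singularities of $X'$, branch divisor.} The locus $\Fix(\sigma)$ is smooth, and along a component $Z$ of codimension $k$ the involution acts on the normal bundle only with weight $-1$, hence on the fibre of $\CO_X(-K_X)=\det T_X$ over $Z$ with weight $(-1)^k$. Write $V=V_+\oplus V_-$ for the $\sigma$-eigenspaces and consider $\varphi\colon X\hookrightarrow\PP(V^\vee)$. The divisorial part $\widetilde B$ of $\Fix(\sigma)$ meets a general conic $C$ transversally at its two $\sigma$-fixed points — because $T_aC$ lies in the weight $-1$ part of $T_aX$ and $T_a\widetilde B$ is its weight $+1$ part — so $\widetilde B\cdot C=2=-K_X\cdot C$ and therefore $\widetilde B\sim -K_X$; moreover $\varphi(\widetilde B)$ lies in the zero locus of $V_+$, which, $\widetilde B$ being already a full anticanonical divisor, forces $\dim V_+=1$, i.e. $\PP(V_+^\vee)$ is a point. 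Consequently a component of $\Fix(\sigma)$ of codimension $\ge2$ maps into $\PP(V_+^\vee)$ if its codimension is even — so it is that single point and $\dim X$ is even — while if its codimension is odd a weight computation in $T_{\varphi(Z)}\PP(V^\vee)$ bounds the normal weight $-1$ directions by $\dim V_+=1$, contradicting codimension $\ge2$. Hence $\Fix(\sigma)=\widetilde B\sqcup\{P_0\}$, where $P_0$ can occur only when $\dim X$ is even and then is unique; accordingly $X'$ has a single singular point of type $\frac12(1,\ldots,1)$ at $\pi(P_0)$ and is smooth otherwise, and the branch locus of $\pi$ is $B\cup\Sing(X')$ with $B:=\pi(\widetilde B)$ a smooth divisor.

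\emph{Step 4: $\Pic(X')$ and divisibility of $K_{X'}$.} From the ramification formula $K_X=\pi^*K_{X'}+\widetilde B$ and $\widetilde B\sim -K_X$ we get $\pi^*K_{X'}\sim 2K_X$, so $\pi^*(-K_{X'})\sim -2K_X$ is ample; since $\pi$ is finite, $-K_{X'}$ is ample and $X'$ is Fano, and comparing $\pi^*B=2\widetilde B$ with $\pi^*(-K_{X'})$ yields $B\sim -K_{X'}$. Next $\pi^*$ embeds $\Cl(X')\otimes\Q$ into $(\Cl(X)\otimes\Q)^\sigma=\Q$, so $\Cl(X')$ has rank $1$; as $K_{X'}$ is Cartier (the only singular point, of type $\frac12(1,\ldots,1)$ in even dimension, is Gorenstein) and $X'$ is Fano, $\Pic(X')\cong\Z$. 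Finally the $\Z/2$-cover $\pi$ is determined by a class $L\in\Cl(X')$ with $B\sim 2L$, and $B\sim -K_{X'}$, so $K_{X'}$ is divisible by $2$ in $\Cl(X')$.
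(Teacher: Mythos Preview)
Your Step~1 and Step~4 are fine, and Step~3 is essentially correct (though you never explain why the $\sigma$-fixed points on a general conic land in the \emph{divisorial} part of $\Fix(\sigma)$ rather than a higher-codimension component; this is exactly where the hypothesis that $p$ does not contract divisors enters, and you never invoke it).

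The real problem is Step~2. Your bound $|G|\le 4$ via intersection of conics is correct, but the exclusion of $|G|\in\{3,4\}$ is not a proof. You assert that the family of conics chain-connects $X$, so that the rational map $x\mapsto\Pi_x$ is constant and $X$ lies in a plane. A dominating family of rational curves does \emph{not} automatically chain-connect a variety, even a Fano variety with $\rho=1$; and even granting chain-connectedness of $X$, you would need the connecting chains to stay inside the open locus where $\Pi_x$ is defined. Your sentence ``chain-connectedness by this family would force a single plane to contain $X$'' sweeps all of this under the rug. This is not a minor omission: making it rigorous would require either a nontrivial argument with the rationally-connected quotient or a completely different idea.

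The paper avoids this entirely by a different and more robust route. Rather than bounding $|G|$ directly, it takes an arbitrary cyclic subgroup $G_0\subset G$ of order $n$, observes that its fixed locus on each conic consists of two points, and uses the hypothesis that $p$ contracts no divisors to conclude that the fixed locus in $X$ contains an anticanonical divisor $F$. This forces the eigenspace decomposition $V=V_0\oplus V_1$ with $\dim V_1=1$. Then, passing to the quotient $X\to X/G_0$ with branch divisor $B\sim nD$, one computes
\[
2 \;=\; C\cdot(-K_X)\;=\; C\cdot F \;=\; n\cdot\bigl(\pi(C)\cdot D\bigr),
\]
so $n\mid 2$ and every cyclic subgroup of $G$ has order at most $2$. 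Thus $G\cong\mumu_2^{\,r}$, and a short eigenspace count on $V$ (three distinct involutions would each have a one-dimensional complementary eigenspace, which is impossible) forces $r=1$. This divisibility argument handles $\Z/3$, $\Z/4$ and $\mumu_2\times\mumu_2$ uniformly and never needs to know whether the conics chain-connect $X$.
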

\begin{proof}
First let us show that $G$ is finite.
Since $G$ is an algebraic group, it is enough to show that the connected component $G^0$ of identity in $G$ is trivial.
Since $\CC_s$ is smooth for general $s \in S$ and $p$ is generically finite, a general point $x \in X$ does not lie on a reducible curve from the family.
Since $p$ is dominant and not birational, the fiber $p^{-1}(x)$ over a general point $x \in X$ consists of more than one point.
Thus for a general $x \in X$ there are two distinct irreducible curves $C_1$ and $C_2$ in the family $\CC$ that pass through~$x$.
Since the action of~$G^0$ on $S$ is trivial, both curves $C_i$ are $G^0$-invariant.
Hence~\mbox{$G^0\cdot x \subset C_1 \cap C_2$}.
Since~$C_1$ and~$C_2$ are distinct and irreducible, it follows that $G^0\cdot x$ is finite.
But $G^0$ is connected, hence~\mbox{$G^0\cdot x = x$}.
Thus, a general point of $X$ is fixed by $G^0 \subset \Aut(X)$, hence~$G^0$ is trivial.
This means that the group~$G$ is finite.

Now let $G_0 \subset G$ be a cyclic subgroup of order~\mbox{$n>1$}.
The action of~$G_0$ on~$\CC$ is fiberwise over~$S$.
Therefore, it has two fixed points on a general smooth
fiber of $q$, so
the fixed locus of $G_0$ in $\CC$ contains a divisor which intersects a general smooth fiber of $q$ at two distinct points.
Since the morphism $p \colon \CC \to X$ contracts no divisors, the fixed locus of~$G_0$ in~$X$ contains a divisor $F$ which intersects a general smooth curve from $\CC$ at two points.
Since~$\Pic (X)\cong \Z$, this means that $F \sim -K_X$. Put
\begin{equation*}
V = H^0(X,\CO(-K_X))^\vee,
\end{equation*}
so that $X \subset \P(V)$ is the anticanonical embedding.
The action of $G_0$ on $X$ induces an action on $V$ by Corollary~\xref{corollary:anticanonical-ring}.
The fixed divisor $F$ generates a hyperplane $V_0 \subset V$ and we have a direct sum decomposition
\begin{equation*}
V = V_0 \oplus V_1,
\end{equation*}
where $V_0$ and $V_1$ are eigenspaces for (a generator of) $G_0\cong\mumu_n$, and
$V_1$ is one-dimensional.
It follows that the fixed locus of $G_0$ on $\P(V)$ is $\P(V_0) \sqcup \P(V_1)$, and its fixed locus on $X$
is either $F = X \cap \P(V_0)$, or the union of $F$ with the point $P=\P(V_1) \in \P(V)$ corresponding to the one-dimensional
eigenspace $V_1 \subset V$ (if the point $P$ lies on $X$).

Let $X' = X/G_0$ be the quotient with $\pi\colon X \to X'$ being the projection.
If $P \in X$ then~\mbox{$P' = \pi(P)$} is a quotient singularity of type $\frac1n(1,\ldots,1)$ on $X'$ and $X'_0 = X' \setminus P'$ is smooth;
otherwise $X'$ is smooth and we set $X'_0 = X'$.
Put $X_0 = \pi^{-1}(X'_0)$ and $\pi_0 = \pi\vert_{X_0}$.
Since $\pi$ is a finite morphism, for any Weil divisor $R$ on $X'$ the pull-back $\pi^*R$ is a
well-defined $G_0$-invariant Weil divisor
(the closure of $\pi_0^{-1}(R\vert_{X'_0})$).
Furthermore, one has (see, e.g. \cite[1.7.5]{Fulton-Intersection-theory})
\begin{equation}
\label{equation-faithful-action-conics}
\Cl(X') = \Cl(X'_0) = \Pic(X'_0),\qquad \Cl(X')\otimes \Q=(\Cl(X)\otimes \Q)^{G_0}.
\end{equation}
Since $\pi_0\colon X_0 \to X'_0$ is a cyclic degree $n$ cover with ramification divisor $F$,
the class of the branch divisor~\mbox{$B=\pi_0(F) \subset X'_0$} is divisible by $n$ in $\Pic(X'_0)$,
see e.g. \cite[Theorem~1.2]{Wavrik1968}, so that $B \sim nD$ for some~\mbox{$D \in \Pic(X'_0)$} with $F \sim \pi_0^*D$.

Let $C$ be a smooth curve corresponding to a general point of $\CC$.
Then $C$ does not pass through $P$, since otherwise~\mbox{$p^{-1}(P) \subset \CC$} would be a divisor contracted by $p$.
Thus~\mbox{$C = \pi_0^*(\pi_0(C))$} and hence
\begin{equation}
\label{equation-faithful-action-conics-LD}
2  = C \cdot (-K_X) = C\cdot F = \pi_0^*(\pi_0(C)) \cdot \pi_0^*(D) = n\pi_0(C)\cdot D,
\end{equation}
so $n$ divides 2. Since $n>1$ by our assumption, we have $n = 2$ and $G_0 \cong \mumu_2$.
Furthermore, since $n = 2$ by Hurwitz formula we have
\begin{equation*}
\pi_0^*K_{X'_0} \sim K_{X_0} - F \sim -2F.
\end{equation*}
Applying $\pi_{0*}$ we obtain
\begin{equation}
\label{eq:k-divisible}
K_{X'_0} \sim -2D.
\end{equation}
Thus the divisor $K_{X'}$ is divisible by~2 in $\Cl(X')$.

If $\dim X$ is odd and~\mbox{$P \in X$}, the image of $K_{X'}$ in the local class group
$\Cl(X',P') \cong \mathbb{Z}/2\mathbb{Z}$ is the generator.
This gives a contradiction with~\eqref{eq:k-divisible} and thus shows that $P \not\in X$ when~$\dim X$ is odd, and hence $X' = X'_0$ is smooth.

Since $\pi\colon X \to X'$ is a double cover and $\pi^*K_{X'} \sim 2K_X$,
it follows that $-K_{X'}$ is ample, i.e.~$X'$ is a Fano variety.
By \eqref{equation-faithful-action-conics} we have $\rho(X') = 1$.

Finally, it remains to show that $G = G_0 \cong \mumu_2$.
We have already shown that any
nontrivial element in $G_0$ has order $2$ and acts on $V$ as an involution with eigenspaces of dimension $1$ and \mbox{$\dim V - 1$}.
It follows that
$G\cong \mumu_2^{r}$ for some $r$, and it remains to show that~\mbox{$r=1$}.
Suppose that $r>1$. Take two different involutions $\tau_1,\tau_2\in G$.
The action of the abelian group~\mbox{$G\cong \mumu_2^{r}$} on $V = H^0(X,\CO(-K_X))^\vee$ is diagonalizable.
Thus we may assume that the action of $\tau_1$ (respectively,~$\tau_2$) on $V$ is given by $\operatorname{\diag}(-1,1,\ldots,1)$
(respectively,~\mbox{$\operatorname{\diag}(1,-1,1,\ldots,1)$}).
Then the action of $\tau_1\circ \tau_2$ is given by $\operatorname{\diag}(-1,-1,1,\ldots,1)$.
If~$\dim V > 3$ the dimension of both eigenspaces is greater than 1, which contradicts the above observation.
On the other hand, the case $\dim V = 3$ is impossible, since then~$X \cong \P^2$ and has no curves of degree $2$ with respect to the anticanonical class.
\end{proof}

\subsection{Action on lines}
\label{subsection:act-on-lines}

We start with studying an action of the automorphism group of a smooth Fano threefold $Y$
of Picard rank $1$ and index~$2$ on its Hilbert scheme of lines.
Recall the notation introduced in \S\xref{subsection:lines-and-conics}. In particular, for an irreducible
component~\mbox{$\Sigma_0 \subset \Sigma(Y)$} of the Hilbert
scheme of lines on~$Y$ we denote by~$\CL_0(Y)$ the corresponding component of the universal line, so that we have the diagram~\eqref{diagram-universal-line}
Note that every component of~$\Sigma(Y)$ is generically reduced by Lemma~\ref{lemma-hilb-y-1}.

As we explained earlier, we are interested in proving faithfulness of the action of the automorphism group $\Aut(Y)$ on the Hilbert scheme $\Sigma(Y)$.
In the next lemma we consider an irreducible component $\Sigma_0$ of $\Sigma(Y)$ and the subgroup $\Aut_{\Sigma_0}(Y) \subset \Aut(Y)$ stabilizing~it.
Although for $\dd(Y) \ge 3$ the Hilbert scheme $\Sigma(Y)$ is irreducible by Proposition~\xref{hilb-lines-explicit},
for~\mbox{$\dd(Y) = 2$} the Hilbert scheme $\Sigma(Y)$ might a priori have several components. This is why we formulate the lemma in this form.

\begin{lemma}\label{lemma:Fano-lines}
Let $Y$ be a Fano threefold with $\rho(Y) = 1$, $\iota(Y) = 2$, and
assume that~$-K_Y$ is very ample, i.e.\ $2 \le \dd(Y) \le 5$.
Then the action of $\Aut_{\Sigma_0}(Y)$ on an irreducible component $\Sigma_0$ of the Hilbert scheme of lines on $Y$ is faithful.
In particular, the action of~\mbox{$\Aut(Y)$} on $\Sigma(Y)$ is faithful.
\end{lemma}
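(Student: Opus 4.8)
The plan is to derive the lemma from the general faithfulness criterion of Theorem~\xref{theorem:faithful}, applied with $X=Y$, with $S=\Sigma_0$ taken with its reduced scheme structure, and with $\CC=\CL_0(Y)$ the corresponding component of the universal line, so that the ambient diagram is~\eqref{diagram-universal-line}. First I would check that all hypotheses of Theorem~\xref{theorem:faithful} are met in this situation: $Y$ is a smooth Fano threefold with $\Pic(Y)\cong\Z$, and $-K_Y$ is very ample by assumption; by Lemma~\xref{lemma-hilb-y-1} the fiber of $q$ over a general point of $\Sigma_0$ is a smooth rational curve $L$, which satisfies $L\cdot(-K_Y)=2\,(L\cdot H)=2$ because $-K_Y\sim 2H$; and, again by Lemma~\xref{lemma-hilb-y-1}, the morphism $p\colon\CL_0(Y)\to Y$ is surjective (hence dominant), generically finite, not birational, and contracts no divisor. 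Once this is in place, the rest is book-keeping.

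Next I would let $G$ be the kernel of the action of $\Aut_{\Sigma_0}(Y)$ on $\Sigma_0$; this is a closed algebraic subgroup acting trivially on $\Sigma_0$, and it suffices to prove that $G$ is trivial. Assuming the contrary, Theorem~\xref{theorem:faithful} provides strong restrictions: $G\cong\mumu_2$; the quotient $Y'=Y/G$ is \emph{smooth} (because $\dim Y=3$ is odd) with $\Pic(Y')\cong\Z$; writing $\pi\colon Y\to Y'$ for the quotient map, one has $-K_{Y'}\sim 2D$ and $\pi^*D\sim -K_Y$ for a divisor class $D$ on $Y'$; and a line $L$ corresponding to a general point of $\Sigma_0$ equals $\pi^{-1}(\pi(L))$ with $\pi(L)\cdot D=1$, the latter from~\eqref{equation-faithful-action-conics-LD} with $n=2$.

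Finally I would rule this out numerically. From $\pi^*K_{Y'}\sim -2\pi^*D\sim 2K_Y$ and $\deg\pi=2$ one gets $K_{Y'}^3=4K_Y^3=-32\,\dd(Y)$, hence $\iota(Y')^3\,\dd(Y')=32\,\dd(Y)$. Since $K_{Y'}$ is divisible by $2$ in $\Cl(Y')=\Pic(Y')$, the index $\iota(Y')$ is even, so by the classification of Fano threefolds of Picard rank $1$ we have $\iota(Y')\in\{2,4\}$. If $\iota(Y')=2$, then $\dd(Y')=4\,\dd(Y)\ge 8$, which is impossible since a Fano threefold of Picard rank $1$ and index $2$ has degree at most $5$ (Table~\xref{table:Fanos-i-ge-2}). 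If $\iota(Y')=4$, then $Y'\cong\P^3$, so $D\sim\CO_{\P^3}(2)$ and $\pi(L)\cdot D$ is an even integer, contradicting $\pi(L)\cdot D=1$. I expect this last case to be the only slightly delicate point: the degree bound alone does not dispose of $\P^3$, and one must feed in the extra relation $\pi(L)\cdot D=1$ coming from Theorem~\xref{theorem:faithful}. In all cases we reach a contradiction, so $G$ is trivial and $\Aut_{\Sigma_0}(Y)$ acts faithfully on $\Sigma_0$. For the final assertion, an automorphism of $Y$ acting trivially on $\Sigma(Y)$ preserves every irreducible component $\Sigma_0$ and acts trivially on it, hence is the identity by what has just been shown.
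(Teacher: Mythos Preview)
Your proof is correct and follows essentially the same approach as the paper: verify the hypotheses of Theorem~\xref{theorem:faithful} via Lemma~\xref{lemma-hilb-y-1}, then use the numerical consequences of the resulting double cover $\pi\colon Y\to Y'$ to reach a contradiction. The only cosmetic difference is that the paper argues directly that $-K_{Y'}^3=32\,\dd(Y)\ge 64$ forces $Y'\cong\P^3$ (by inspection of Table~\xref{table:Fanos-i-ge-2}), whereas you first split into the cases $\iota(Y')\in\{2,4\}$ and eliminate $\iota(Y')=2$ by the degree bound; the concluding step ruling out $\P^3$ via the intersection number with the image curve is identical in substance.
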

\begin{proof}
Let~\mbox{$G \subset \Aut_{\Sigma_0}(Y)$} be the kernel of the action of the group $\Aut_{\Sigma_0}(Y)$ on $\Sigma_0$.
Suppose that $G$ is nontrivial.
Since $-K_Y$ is ample, the group $G$ is a linear algebraic group by Corollary~\xref{corollary:anticanonical-ring}.
By Lemma~\xref{lemma-hilb-y-1} the conditions of Theorem~\xref{theorem:faithful} are satisfied.
Since~\mbox{$\dim Y = 3$} is odd, we conclude that there is a double cover $\pi \colon Y \to Y'$ over a smooth Fano variety $Y'$ with $\rho(Y') = 1$.
Since the branch divisor $B \subset Y'$ is anticanonical, it follows that $K_Y \sim\frac12\pi^*K_{Y'}$, hence
\begin{equation*}
K_Y^3 = \frac18 \cdot 2\cdot K_{Y'}^3 = \frac14 K_{Y'}^3.
\end{equation*}
In particular,
one has
\begin{equation*}
-K_{Y'}^3 = -4K_Y^3 = 32\dd(Y) \ge 64.
\end{equation*}
This is only possible if $Y' \cong \P^3$ (see Table~\xref{table:Fanos-i-ge-2}).
On the other hand, by Theorem~\xref{theorem:faithful} for a line $L$ corresponding to a general point of $\Sigma_0$ we have $L = \pi^*L'$
for a curve $L' \subset Y'$, hence
\begin{equation}\label{eq:2-KY-L}
2 = -K_Y \cdot L = -\frac12 \pi^*K_{Y'} \cdot \pi^*L' = -K_{Y'} \cdot L'.
\end{equation}
The right side of~\eqref{eq:2-KY-L} is divisible by $\iota(Y') = \iota(\P^3) = 4$.
This contradiction shows that~$G$ is trivial, hence the action of $\Aut_{\Sigma_0}(Y)$ on $\Sigma_0$ is faithful.
\end{proof}

\begin{corollary}\label{corollary-aut-index-2}
Let $Y$ be a \textup(smooth\textup) Fano threefold with $\rho(Y) = 1$, $\iota(Y) = 2$, and~\mbox{$\dd(Y)\ge 3$}.
The following assertions hold:
\begin{itemize}
\item[(i)]
if $\dd(Y) = 3$ then
$\Aut(Y)\subset\Aut(S)$
for a smooth minimal surface $S$ of general type with irregularity $5$, geometric genus $10$, and canonical degree~\mbox{$K_{\Sigma(Y)}^2=45$};
\item[(ii)]
if $\dd(Y) = 4$ then
$\Aut(Y)\subset\Aut(S)$
for an abelian surface $S$;
\item[(iii)]
if $\dd(Y) = 5$ then
$\Aut(Y) \subset \Aut(\PP^2) \cong\PGL_3(\Bbbk)$.
\end{itemize}
In particular, for $\dd(Y) = 3$ and $\dd(Y) = 4$, the group $\Aut(Y)$ is finite.
\end{corollary}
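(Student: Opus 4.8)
The plan is to assemble the corollary directly from the faithfulness statement of Lemma~\ref{lemma:Fano-lines}, the explicit description of $\Sigma(Y)$ in Proposition~\ref{hilb-lines-explicit}, and the general finiteness results of~\S\ref{subsection:known-finiteness}.

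First I would record two preliminary observations. Since $\dd(Y)\ge 3$, the divisor $-K_Y$ is very ample, so the anticanonical map $\varphi_{|-K_Y|}$ is a closed embedding, in particular birational onto its image; hence by Corollary~\ref{corollary:anticanonical-ring} the group $\Aut(Y)$ is a linear algebraic group. Next, by Lemma~\ref{lemma:Fano-lines} the action of $\Aut(Y)$ on the Hilbert scheme of lines $\Sigma(Y)$ is faithful, so $\Aut(Y)$ embeds into $\Aut(\Sigma(Y))$.

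Now I would simply substitute the shape of $\Sigma(Y)$ given by Proposition~\ref{hilb-lines-explicit}. Taking $S=\Sigma(Y)$, the three cases of that proposition yield assertions~(i), (ii), and~(iii) verbatim, with $\Sigma(Y)\cong\PP^2$ and $\Aut(\PP^2)\cong\PGL_3(\Bbbk)$ in the last case. For the finiteness claim when $\dd(Y)\in\{3,4\}$ I would argue that $S=\Sigma(Y)$ is not ruled: it is a surface of general type when $\dd(Y)=3$, and an abelian surface (hence of Kodaira dimension $0$) when $\dd(Y)=4$. Since $\Aut(Y)$ is a linear algebraic group acting faithfully on $S$, Lemma~\ref{lemma:non-uniruled} forces $\Aut(Y)$ to be finite. (For $\dd(Y)=3$ one can alternatively just note that $\Aut(S)$ is already finite by Corollary~\ref{corollary:general-type}, so a fortiori $\Aut(Y)$ is.) No finiteness is claimed, nor does it hold, in the case $\dd(Y)=5$, where $\Sigma(Y)\cong\PP^2$ is rational.

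There is essentially no obstacle here: the statement is a bookkeeping consequence of results already established. The only point deserving attention is to invoke the very ampleness of $-K_Y$ — which holds precisely because $\dd(Y)\ge 3$ — in order to know that $\Aut(Y)$ is linear algebraic before applying Lemma~\ref{lemma:non-uniruled} in the case $\dd(Y)=4$; in the case $\dd(Y)=3$ this subtlety is avoided altogether by passing through $\Aut(S)$.
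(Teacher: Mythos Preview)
Your proposal is correct and follows essentially the same approach as the paper: invoke Lemma~\ref{lemma:Fano-lines} for faithfulness, Proposition~\ref{hilb-lines-explicit} for the description of $S=\Sigma(Y)$, and then deduce finiteness from the general results of~\S\ref{subsection:known-finiteness}. The paper phrases the finiteness for $\dd(Y)=4$ via Corollary~\ref{corollary:nef-K} rather than Lemma~\ref{lemma:non-uniruled} directly, but since the former is an immediate consequence of the latter this is not a genuine difference; your remark about needing Corollary~\ref{corollary:anticanonical-ring} to ensure $\Aut(Y)$ is linear algebraic in that case is exactly the point.
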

\begin{proof}
Assertions (i), (ii) and (iii) follow from Proposition~\xref{hilb-lines-explicit}
and Lemma~\xref{lemma:Fano-lines}.
Finiteness for $\dd(Y)=3$
follows from assertion~(i) and Corollary~\xref{corollary:general-type},
while finiteness for~\mbox{$\dd(Y)=4$} follows from assertion~(ii) and
Corollaries~\xref{corollary:anticanonical-ring}
and~\xref{corollary:nef-K}.
\end{proof}

An alternative proof of finiteness of the automorphism
group $\Aut(Y)$ for $\dd(Y)=3$ and~\mbox{$\dd(Y)=4$}
is by applying Theorems~\xref{theorem:MatsumuraMonsky} and~\xref{theorem:Benoist}.
For~\mbox{$\dd(Y)=5$} one actually has
\begin{equation*}
\Aut(Y)\cong\PGL_2(\Bbbk),
\end{equation*}
see Theorem~\xref{theorem-mu-5} below.

\begin{remark}
Besides its action on $\Sigma(Y)$, the automorphism group $\Aut(Y)$ also acts on the intermediate Jacobian $J(Y)$ of $Y$.
For $\dd(Y) = 3$ one can check that this action is faithful.
Indeed, by~\cite[\S5]{Beauville1982singularities} the intermediate Jacobian $J(Y)$ contains an $\Aut(Y)$-invariant theta divisor $\Theta \subset J(Y)$
which is equal to the image of the canonical Abel--Jacobi map
\begin{equation*}
\Sigma(Y) \times \Sigma(Y) \to J(Y),
\qquad
(L_1,L_2) \mapsto [L_1] - [L_2].
\end{equation*}
Moreover, $\Theta$ has a unique singular point $P \in \Theta$ (the image of the diagonal in $\Sigma(Y) \times \Sigma(Y)$)
and the exceptional divisor of the blow up of $\Theta$ at $P$ is $\Aut(Y)$-equivariantly isomorphic to $Y$.
The faithfulness of the action on $J(Y)$ follows immediately.

For $\dd(Y) = 4$ one can show that the group $\Aut(Y)$ contains a subgroup~$\Gamma$ of order $32$ that acts trivially on the corresponding pencil
of quadrics, and $\Gamma$ contains a subgroup~$\Gamma_0$ of order $16$
that acts trivially on the associated curve $B(Y)$ of genus $2$
mentioned in Remark~\xref{remark:hyperelliptic-curve}, and also on the intermediate Jacobian
of~$Y$.

Finally, for $\dd(Y) = 5$ the intermediate Jacobian of $Y$ is zero (see for instance~\cite[\S12.2]{Iskovskikh-Prokhorov-1999}).
\end{remark}

\subsection{Action on conics}
\label{subsection:act-on-conics}

Now we will analyze the action of the automorphism group of a smooth Fano threefold
$X$ of Picard rank $1$ and index $1$ on the Hilbert scheme $S(X)$ of conics on~$X$. We will
assume that $H \sim -K_X$ is very ample; in particular, this means that~\mbox{$\g(X) \ge 3$}.

As in the case of lines, we are interested in proving faithfulness of the action of the automorphism group $\Aut(X)$ on $S(X)$,
but by the same reason as in \S\xref{subsection:act-on-lines} we consider the action of the subgroup $\Aut_{S_0}(X)$ stabilizing an irreducible component $S_0$ of~$S(X)$.
Note that we know irreducibility of $S(X)$ for $\g(X) \ge 7$ (see Proposition~\xref{proposition:conics}),
but already for~$\g(X) = 6$ the scheme $S(X)$ can be reducible (see Remark~\xref{remark:hilb-conics-reducible}).

We start by discussing some cases
when the action of the subgroup $\Aut_{S_0}(X) \subset \Aut(X)$ on an irreducible component $S_0$ of $S(X)$
is not faithful.

\begin{lemma}\label{lemma:trivial-action-conics}
Let $Y$ be a Fano threefold with $\rho(Y) = 1$, $\iota(Y) = 2$, and~\mbox{$\dd(Y) \ge 2$}.
Let~\mbox{$\pi\colon X \to Y$} be a double cover branched in a smooth anticanonical divisor $B \subset Y$.
Then~$X$ is a smooth Fano threefold with $\rho(X)=1$, $\iota(X) = 1$, \mbox{$\g(X) = \dd(Y) + 1$},
and~$S(X)$ has an irreducible component $S_0$ such that the action of $\Aut_{S_0}(X)$ on it is not faithful.
\end{lemma}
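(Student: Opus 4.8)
The plan is to verify each claimed property of $X$ in turn, and then exhibit the bad component of $S(X)$ as the one pulled back from lines on $Y$.

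\textbf{Step 1: $X$ is a smooth Fano threefold with the stated invariants.}
Since $B \in |-K_Y| = |\iota(Y)H_Y| = |2H_Y|$ is smooth, the double cover $\pi\colon X \to Y$ branched in $B$ is smooth. By the standard formula for cyclic covers, $K_X \sim \pi^*\bigl(K_Y + \tfrac12 B\bigr) \sim \pi^*\bigl(K_Y + \tfrac12(-K_Y)\bigr) \sim \tfrac12\pi^*K_Y$, which is anti-ample, so $X$ is Fano. To compute the invariants: $\Pic(X)$ is generated by $H_X := \pi^*H_Y$ (this follows either from the Lefschetz-type argument via the anticanonical embedding, or simply because $\rho(X) = 1$ by the ramification formula and $\pi^*$ is injective with $\pi^*H_Y$ primitive — one checks primitivity from $\pi_*\pi^*H_Y = 2H_Y$ together with $K_X \sim -H_X$); thus $\rho(X) = 1$, $\iota(X) = 1$, and $H_X = -K_X$ with $\dd(Y) = H_Y^3$, so
\begin{equation*}
-K_X^3 = H_X^3 = (\pi^*H_Y)^3 = 2 H_Y^3 = 2\dd(Y),
\end{equation*}
whence $\g(X) = -\tfrac12 K_X^3 + 1 = \dd(Y) + 1$. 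Since $\dd(Y) \ge 2$ we get $\g(X) \ge 3$, so $-K_X = H_X$ is very ample (the case $\dd(Y) = 2$ gives $\g(X) = 3$ and $X$ is a quartic via $|H_X|$, still with very ample anticanonical class).

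\textbf{Step 2: construction of the component $S_0$.}
For a line $L' \subset Y$ (with respect to $H_Y$), consider $L := \pi^{-1}(L')$. Since $L' \cdot H_Y = 1$ and $L'$ is not contained in $B$ (a line on $Y$ cannot lie in an anticanonical divisor by degree reasons, or more carefully one argues that the generic line is disjoint from or transverse to $B$), the preimage $L$ satisfies $L \cdot H_X = \pi^*L' \cdot \pi^*H_Y / (\deg\pi)$-type bookkeeping: concretely $\pi_*L$ has degree $2$, i.e. $L$ is an $H_X$-conic on $X$. When $L'$ meets $B$ transversally in $L' \cdot B = L' \cdot (-K_Y) = 2$ points, $L = \pi^{-1}(L')$ is a smooth irreducible curve (a genuine smooth conic), and when $L' \subset B$... (excluded). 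This association $[L'] \mapsto [\pi^{-1}(L')]$ defines a morphism from $\Sigma(Y)$ (or from each of its components) into $S(X)$; I would let $S_0$ be the closure of its image on a component of $\Sigma(Y)$, or rather the irreducible component of $S(X)$ containing this image. By Lemma~\xref{lemma-hilb-y-1} every component of $\Sigma(Y)$ is two-dimensional, and by Lemma~\xref{lemma-hilb-x-2} every component of $S(X)$ is two-dimensional, so the map is generically finite onto a component $S_0$; in fact it is injective since $L'$ is recovered as $\pi(L)$, hence birational onto $S_0$.

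\textbf{Step 3: the Galois involution acts trivially on $S_0$.}
Let $\tau \in \Aut(X)$ be the Galois involution of $\pi$; then $\tau \in \Aut_{S_0}(X)$ since $\tau$ permutes the fibers of $\pi$ and hence preserves the family $\{\pi^{-1}(L')\}$. But for each $L' \subset Y$ the conic $C = \pi^{-1}(L')$ is $\tau$-invariant by construction ($\tau$ acts within the fiber direction). Therefore $\tau$ fixes every point of the dense subset of $S_0$ parametrizing such $C$, hence fixes $S_0$ pointwise, while $\tau \ne \mathrm{id}_X$. Thus the action of $\Aut_{S_0}(X)$ on $S_0$ has nontrivial kernel (containing $\langle\tau\rangle \cong \mumu_2$), i.e. it is not faithful.

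\textbf{Main obstacle.}
The delicate point is Step 2: controlling the behaviour of $\pi^{-1}(L')$ when $L'$ is tangent to, or meets $B$ at fewer than two distinct points — there $\pi^{-1}(L')$ becomes a non-reduced or singular conic — and, more importantly, confirming that the locus $\{\pi^{-1}(L')\}$ is genuinely an \emph{irreducible component} of $S(X)$ rather than a proper closed subset of some larger component. This is where the dimension count (two-dimensionality of both $\Sigma(Y)$ and every component of $S(X)$, from Lemmas~\xref{lemma-hilb-y-1} and~\xref{lemma-hilb-x-2}) does the essential work: since the image is two-dimensional and irreducible and sits inside a two-dimensional $S(X)$, its closure must be a whole component. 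I would also need the elementary fact that a line on $Y$ is not contained in the smooth anticanonical $B$, which follows from $\rho(Y) = 1$ together with the adjunction/degree bound already used in the paper for cones and planes.
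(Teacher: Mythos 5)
Your argument is essentially the paper's: pull lines on $Y$ back to conics on $X$, check that the image of $\Sigma(Y)$ in $S(X)$ is a union of irreducible components, and observe that the Galois involution fixes each conic $\pi^{-1}(L')$ and hence acts trivially on such a component. Steps 2 and 3 are sound, and your dimension count making ``the image is a whole component'' explicit is a detail the paper's own proof leaves implicit.

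Two of your supporting claims are wrong as stated, however. (i) In Step 1, primitivity of $\pi^*H_Y$ and $\rho(X)=1$ do not follow from $\pi_*\pi^*H_Y=2H_Y$ or from the ramification formula; that identity is perfectly consistent with $\pi^*H_Y$ being divisible by $2$ in $\Pic(X)$. The actual input is a Lefschetz-type theorem for cyclic covers branched along ample divisors: the paper invokes Cornalba's result that $\pi^*\colon H^2(Y,\Z)\to H^2(X,\Z)$ is an isomorphism, which gives $\rho(X)=1$ and $\iota(X)=1$ at once. (ii) The parenthetical assertion that for $\dd(Y)=2$ the threefold $X$ is a quartic with very ample $-K_X$ is false in general: if the smooth branch divisor $B$ is pulled back from a quadric in $\P^3$ under the double cover $Y\to\P^3$, then $\varphi_{|-K_X|}$ is $2:1$ onto a quadric in $\P^4$, i.e.\ $X$ is of type (b) in Table~\xref{table:Fanos-i-1} and $-K_X$ is not very ample. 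This breaks your appeal to Lemma~\xref{lemma-hilb-x-2} exactly in the boundary case $\dd(Y)=2$; to repair it there one can instead compute directly that $\CN_{C/X}\cong\CO_C\oplus\CO_C$ for $C=\pi^{-1}(L')$ with $L'$ a general ordinary line (the differential of $\pi$ identifies $\CN_{C/X}$ with $(\pi|_C)^*\CN_{L'/Y}$, both of degree~$0$), so that $S(X)$ is smooth of dimension $2$ along a dense subset of the image and its closure is a component for that reason.
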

\begin{proof}
By the Hurwitz formula one has
\begin{equation}\label{equationHurwitz}
-K_X\sim \pi^* H_Y,
\end{equation}
where $H_Y$ is the ample generator of the Picard group of $Y$ (so that $-K_Y\sim 2H_Y$).
Hence~$X$ is a (smooth) Fano threefold.
Furthermore, by \cite{Cornalba1981}
the pullback morphism \mbox{$\pi^* \colon H^2(Y,\Z) \to H^2(X,\Z)$} is an isomorphism, hence $\rho(X) = 1$ and $\Pic(X)$ is generated by $K_X$, i.e., $\iota(X) = 1$.
It follows easily that $\g(X) = \dd(Y) + 1$.

For every line $L \subset Y$ its preimage~\mbox{$\pi^{-1}(L) \subset X$} is a conic.
This defines a morphism $\Sigma(Y) \to S(X)$, whose image is a union of components of $S(X)$.
The Galois involution of the double cover is an automorphism of $X$ which acts trivially on
any such component $S_0$ of $S(X)$,
hence is contained in the kernel of $\Aut_{S_0}(X)$-action on $S_0$.
\end{proof}

\begin{example}\label{example:trivial-action-exotic-component}
Assume that $X$ is a quartic threefold with cones and $S_0 \subset S(X)$ is an exotic component (see Lemma~\xref{lemma:exotic-component}).
Then there may be a nontrivial subgroup in $\Aut(X)$ acting trivially on $S_0$. For example, assume that $X \subset \PP^4$ is the Fermat quartic,
consider a cone on $X$ described in Example~\xref{example:quartic-cone} and consider the action of the group $\mumu_4$ on~$\PP^4$
by the primitive character on the first two coordinates, and trivial on the last three coordinates. The equation of $X$ is preserved
by this action, hence $\mumu_4$ acts (faithfully) on~$X$. On the other hand, its action on the base $B$ of the cone is trivial,
hence so is its action on~\mbox{$S_0=\Hilb^2(B)$}.
However, it may act nontrivially on nilpotents (recall that the scheme structure on the component of $S(X)$ underlying $S_0$ is everywhere non-reduced).
\end{example}

In the next lemma we show that the situations of
Lemma~\xref{lemma:trivial-action-conics} and Example~\xref{example:trivial-action-exotic-component} are the only ones
when the action of the automorphism group $\Aut_{S_0}(X)$ on an irreducible component $S_0$ of $S(X)$ is not faithful,
at least in the case of a very ample~$-K_X$.

\begin{lemma}\label{lemma:Fano-conics}
Let $X$ be a Fano threefold with $\rho(X)=1$ and $\iota(X)=1$ such that~\mbox{$-K_X$} is very ample,
i.e. either $\g(X)\ge 4$, or $X$ is a quartic threefold.
If the action of $\Aut_{S_0}(X)$ on a non-exotic irreducible component $S_0$ of $S(X)$ is not faithful,
then~$X$ is a double cover of a smooth Fano threefold $Y$ with $\rho(Y) = 1$, $\iota(Y) = 2$,
and~\mbox{$\dd(Y)=\g(X)-1\ge 2$},
the irreducible component $S_0$ comes from $\Sigma(Y)$ as in Lemma~\xref{lemma:trivial-action-conics},
and the kernel of the action of $\Aut_{S_0}(X)$ on $S_0$ is generated by the Galois involution of the double cover.

In particular, for $\g(X) \ge 7$ the action of $\Aut(X)$ on $S(X)$ is faithful.
\end{lemma}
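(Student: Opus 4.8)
The plan is to apply Theorem~\ref{theorem:faithful} to $X$ and the non-exotic component $S_0$, and then to read off the structure of $X$ as a double cover from the conclusions of that theorem together with a little intersection-theoretic bookkeeping.

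First I would check that the hypotheses of Theorem~\ref{theorem:faithful} hold with $S = S_0$ and $\CC = \CC_0(X)$. We have $\Pic(X)\cong\Z$ with $-K_X$ very ample by assumption, and $\dim X = 3$. Since $S_0$ is non-exotic, Lemma~\ref{lemma-hilb-x-2} tells us that a conic $C$ corresponding to a general point of $S_0$ is smooth, hence isomorphic to $\P^1$ by Lemma~\ref{lemma:lines-conics}, and $(-K_X)\cdot C = 2$ because $C$ is a conic; the same lemma gives that $p\colon\CC_0(X)\to X$ is dominant, not birational, and contracts no divisors. Thus, letting $G\subset\Aut_{S_0}(X)$ be the kernel of the action on $S_0$ and assuming $G\ne 1$, Theorem~\ref{theorem:faithful} gives, using that $\dim X = 3$ is odd: $G\cong\mumu_2$; the quotient map $\pi\colon X\to X' = X/G$ is a double cover of a smooth Fano threefold $X'$ with $\Pic(X')\cong\Z$, branched along a smooth anticanonical divisor; $K_{X'}$ is divisible by $2$ in $\Pic(X')$; and the conic $C$ above equals $\pi^{-1}(C')$ for a curve $C'\subset X'$.

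Next I would identify $X'$. Write $-K_{X'}\sim 2D$ with $D\in\Pic(X')$; since $\Pic(X') = \Z H_{X'}$ and $D$ is ample, $D = kH_{X'}$ with $k\ge 1$. As $\pi$ is the double cover branched along $B\sim -K_{X'}\sim 2D$, its reduced ramification divisor satisfies $R\sim\pi^*D$, and Hurwitz then gives $-K_X\sim\pi^*D$. Applying the projection formula to the degree-$2$ map $\pi$ (with $\pi_*[C] = 2[C']$, since $\pi\vert_C\colon C\to C'$ has degree $2$) yields $2 = (-K_X)\cdot C = \pi^*D\cdot C = 2\,D\cdot C'$, whence $D\cdot C' = k\,(H_{X'}\cdot C') = 1$; since $C'$ is an irreducible curve and $H_{X'}$ is ample this forces $k = 1$, $D = H_{X'}$, and $H_{X'}\cdot C' = 1$. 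In particular $\iota(X') = 2$, so $Y := X'$ is a smooth Fano threefold with $\rho(Y) = 1$ and $\iota(Y) = 2$, and $C'$ is a line on $Y$. From $-K_X\sim\pi^*H_Y$ we get $2\g(X) - 2 = (-K_X)^3 = (\pi^*H_Y)^3 = 2H_Y^3 = 2\dd(Y)$, i.e.\ $\dd(Y) = \g(X) - 1$, which is $\ge 2$ since $-K_X$ very ample forces $\g(X)\ge 3$. As the general point of $S_0$ thus lies in the image of the morphism $\Sigma(Y)\to S(X)$, $[L]\mapsto[\pi^{-1}(L)]$, of Lemma~\ref{lemma:trivial-action-conics}, and $S_0$ is an irreducible component of $S(X)$, it is one of the components of that image described there. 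Finally $G\cong\mumu_2$ is by construction $\operatorname{Gal}(X/X')$, so its nontrivial element $\tau$ is the Galois involution of the double cover, and the kernel of the $\Aut_{S_0}(X)$-action on $S_0$, which is $G$ by definition, is generated by $\tau$.

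For the last assertion, if $\g(X)\ge 7$ then $S(X)$ is irreducible by Proposition~\ref{proposition:conics}, and it is non-exotic because exotic components occur only for quartic threefolds (Lemma~\ref{lemma:exotic-component}), which have $\g = 3$; hence $S_0 = S(X)$ and $\Aut_{S_0}(X) = \Aut(X)$. If the action were not faithful, the argument above would produce a smooth Fano threefold of index $2$ and degree $\g(X) - 1\ge 6$, contradicting $\dd\le 5$ for all such threefolds (Table~\ref{table:Fanos-i-ge-2}); hence the action is faithful. I expect the only delicate point to be keeping track of the factors of $2$ coming from the double cover in the intersection-number identities above; everything else is a direct transcription of Theorem~\ref{theorem:faithful} and Lemma~\ref{lemma-hilb-x-2}.
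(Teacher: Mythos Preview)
Your proof is correct and follows essentially the same approach as the paper's: both apply Theorem~\ref{theorem:faithful} via Lemma~\ref{lemma-hilb-x-2}, then use the intersection-number computation with the curve $C'$ to pin down $\iota(Y)=2$ and $\dd(Y)=\g(X)-1$. The only cosmetic difference is that the paper rules out $Y\cong\P^3$ by referencing the analogous argument in Lemma~\ref{lemma:Fano-lines}, whereas you compute $k=1$ directly; you are also a bit more explicit about why $S_0$ lies in the image of $\Sigma(Y)\to S(X)$ and about invoking irreducibility of $S(X)$ for $\g\ge 7$ via Proposition~\ref{proposition:conics}.
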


\begin{proof}
Let~\mbox{$G \subset \Aut_{S_0}(X)$} be the kernel of the action of the group $\Aut_{S_0}(X)$ on $S_0$.
Suppose that $G$ is nontrivial.
The group $G$ is a linear algebraic group by Corollary~\xref{corollary:anticanonical-ring}.

If~$S_0$ is not exotic then by Lemma~\xref{lemma-hilb-x-2} the conditions of Theorem~\xref{theorem:faithful} are satisfied.
Since~\mbox{$\dim X = 3$} is odd, we conclude that there is a double cover $\pi \colon X \to Y$ over a smooth Fano
variety~$Y$ with $\rho(Y) = 1$ (which corresponds to the variety $X'$ of Theorem~\xref{theorem:faithful}).
The index of $Y$ is even and $Y \not\cong \P^3$ (by the same reason as in the proof of Lemma~\xref{lemma:Fano-lines},
where we had a similar situation with the threefold $Y'$ instead of~$Y$),
hence~\mbox{$\iota(Y) = 2$}.
Since the branch divisor $B \subset Y$ is anticanonical, it follows
that~\mbox{$-K_X \sim \pi^*H_Y$}, where~$H_Y$ is the ample generator of the Picard group of $Y$ (so that $-K_Y\sim 2H_Y$).
Hence
\begin{equation*}
K_X^3 = \frac18 \cdot 2\cdot K_Y^3 = \frac14 K_Y^3,\quad \dd(Y) = -\frac18K_Y^3 = -\frac12K_X^3 = \g(X) - 1.
\end{equation*}
Since $\g(X) \ge 3$ it follows that $\dd(Y) \ge 2$.
On the other hand, since $\dd(Y) \le 5$, we have~\mbox{$\g(X) \le 6$}, so for $\g(X) \ge 7$ the action is faithful.
\end{proof}

We think that for $\g(X) \le 6$ the action of $\Aut(X)$ on $S(X)$ is still faithful.

\begin{corollary}\label{corollary:high-genus-Fanos-Aut}
Let $X$ be a \textup(smooth\textup) Fano threefold with $\rho(X) = 1$, $\iota(X) = 1$, and
genus $\g(X)\ge 7$.
The following assertions hold:
\begin{itemize}
\item[(i)]
if $\g(X) = 7$ then $\Aut(X) \subset \Aut(C)$ for a smooth irreducible curve $C$ of genus~$7$;
\item[(ii)]
if $\g(X) = 8$ then
$\Aut(X)\subset\Aut(S)$ for a minimal surface of general type
with irregularity $5$, geometric genus $10$, and canonical
degree~\mbox{$K_{S(X)}^2=45$};
\item[(iii)]
if $\g(X) = 9$ then
$\Aut(X) \subset \Aut(S)$ for a surface $S$ isomorphic to a projectivization of a simple rank $2$ vector bundle on a smooth irreducible curve of genus~$3$;
\item[(iv)]
if $\g(X) = 10$ then
$\Aut(X)\subset\Aut(S)$ for an abelian surface $S$;
\item[(v)]
if $\g(X) = 12$ then
$\Aut(X) \subset \Aut(\PP^2) \cong \PGL_3(\Bbbk)$.
\end{itemize}
In particular, if $7 \le \g(X) \le 10$, then $\Aut(X)$ is finite.
\end{corollary}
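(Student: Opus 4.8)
The plan is to read off every inclusion in the corollary directly from two results already at hand: the explicit description of the Hilbert scheme of conics $S(X)$ in Proposition~\ref{proposition:conics}, and the faithfulness of the $\Aut(X)$-action on $S(X)$ for $\g(X)\ge 7$ in Lemma~\ref{lemma:Fano-conics}. Under the hypothesis $\g(X)\ge 7$ the divisor $-K_X$ is very ample, so $\varphi_{|-K_X|}$ is an embedding and $\Aut(X)$ is a linear algebraic group by Corollary~\ref{corollary:anticanonical-ring}; moreover $S(X)$ is irreducible and $\Aut(X)$ embeds into $\Aut(S(X))$ by Lemma~\ref{lemma:Fano-conics}. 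Substituting the descriptions in Proposition~\ref{proposition:conics}(ii)--(v) then yields assertions~(ii)--(v) at once.

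Assertion~(i) needs one more step, since Proposition~\ref{proposition:conics}(i) identifies $S(X)$ with the symmetric square $\Sym^2(C)$ of a smooth genus $7$ curve $C$, whereas we want an embedding into $\Aut(C)$ itself. Here I would use that the curve $C$ produced by \cite[Theorem~6.3]{Kuznetsov-V12} is functorially attached to $X$, so that the action of $\Aut(X)$ on $X$ induces an action on $C$, which in turn induces the action on $S(X)\cong\Sym^2(C)$ via the tautological inclusion $\Aut(C)\hookrightarrow\Aut(\Sym^2(C))$. As this last action is faithful by Lemma~\ref{lemma:Fano-conics}, so is $\Aut(X)\to\Aut(C)$. (Alternatively one may invoke the classical fact that $\Aut(\Sym^2(C))\cong\Aut(C)$ for every curve of genus at least $3$, which makes the functoriality of $C$ unnecessary.)

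Next I would deduce finiteness of $\Aut(X)$ for $7\le\g(X)\le 10$. When $\g(X)\in\{7,8,10\}$ the variety on which $\Aut(X)$ acts faithfully — a smooth curve of genus $7$, a minimal surface of general type, or an abelian surface — is not ruled, so the linear algebraic group $\Aut(X)$ must be finite by Lemma~\ref{lemma:non-uniruled}; for $\g(X)=7$ one may equally note that $\Sym^2(C)$ is itself of general type and quote Corollary~\ref{corollary:general-type}. The case $\g(X)=9$ is the one where Lemma~\ref{lemma:non-uniruled} does not help, because there $S(X)$ is ruled; instead one uses that the ruling $S(X)\to C$ onto the genus $3$ curve is canonical, hence $\Aut(X)$-equivariant, which gives an exact sequence $1\to\Aut(S(X)/C)\to\Aut(S(X))\to\Aut(C)$ whose left term is finite by Lemma~\ref{lemma:projectivization-stable-bundle} (the defining rank $2$ bundle is \emph{simple}) and whose right term is finite because $C$ has genus $3>1$ (Corollary~\ref{corollary:general-type}); hence $\Aut(X)\subset\Aut(S(X))$ is finite.

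The genus $\g(X)=12$ case is intentionally left out of the finiteness statement: there $S(X)\cong\PP^2$ and $\Aut(X)$ only embeds into the infinite group $\PGL_3(\Bbbk)$, and such threefolds are studied separately in \S\ref{section:infinite}. The one point that is more than bookkeeping of earlier results — the likely main obstacle — is the descent in~(i) from $\Sym^2(C)$ to $C$, handled either by functoriality of the construction of \cite{Kuznetsov-V12} or by a short classical argument on symmetric squares of curves, together with the remark, in the $\g(X)=9$ case, that since the ruling is canonical one genuinely cannot apply Lemma~\ref{lemma:non-uniruled} and must invoke Lemma~\ref{lemma:projectivization-stable-bundle} instead.
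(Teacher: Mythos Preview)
Your proposal is correct and follows essentially the same route as the paper: faithfulness via Lemma~\ref{lemma:Fano-conics}, the case-by-case identification of $S(X)$ from Proposition~\ref{proposition:conics}, the reduction of (i) to $\Aut(C)$ via $\Aut(\Sym^2(C))\cong\Aut(C)$ (the paper cites \cite{Ran1986} for this), and finiteness via Corollary~\ref{corollary:general-type}, Lemma~\ref{lemma:non-uniruled}, and Lemma~\ref{lemma:projectivization-stable-bundle} in the appropriate cases. The only cosmetic difference is that for $\g(X)=9$ you spell out the exact sequence with $\Aut(C)$, whereas the paper tacitly relies on the corollary following Lemma~\ref{lemma:projectivization-stable-bundle}.
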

\begin{proof}
By Lemma~\xref{lemma:Fano-conics} the action of $\Aut(X)$ on the Hilbert scheme
$S(X)$ of conics on~$X$ is faithful. Thus assertions~(ii), (iii), (iv) and~(v)
are implied by assertions~(ii), (iii), (iv) and~(v) of Proposition~\xref{proposition:conics},
respectively. In case of assertion~(i) we also take into account an isomorphism
$\Aut\big(\Sym^2(C)\big)\cong\Aut(C)$,
see~\cite{Ran1986}.

Keeping in mind Corollary~\xref{corollary:general-type}, we see that finiteness
for $\g(X)=7$ and~$8$ is implied by assertions~(i) and~(ii)
respectively. If $\g(X)=9$, finiteness is implied by Lemma~\xref{lemma:projectivization-stable-bundle}
and assertion~(iii).
Finally, keeping in mind Corollary~\xref{corollary:anticanonical-ring} and
Lemma~\xref{lemma:non-uniruled} we see that finiteness for $\g(X)=10$ is implied by assertion~(iv).
\end{proof}

As we will see in~\S\xref{section:infinite} some Fano threefolds of index 1 with
$\g(X) = 12$ actually have an infinite automorphism group.

\subsection{Small degree and genus}
\label{subsection:finite-Fano}

As we have shown in Corollaries~\xref{corollary-aut-index-2}
and~\xref{corollary:high-genus-Fanos-Aut}, for Fano threefolds of index 2 and degree $3 \le \dd(Y) \le 4$, and
for Fano threefolds of index 1 and genus~\mbox{$7 \le \g(X) \le 10$} the automorphism groups are always finite.
In this subsection we show the same for smaller values of degree and genus.

We start with the cases when $H$ is not very ample.

\begin{lemma}\label{lemma:double-cover-finite}
Let $X$ be a Fano threefold of index $1$ or $2$ with $\rho(X) = 1$. If the ample generator~$H$ of~\mbox{$\Pic(X)$}
is not very ample then the group $\Aut(X)$ is finite.
\end{lemma}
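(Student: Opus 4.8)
The plan is to run through the four types of smooth Fano threefolds of Picard rank~$1$ for which $m_0 > 1$, namely, in the notation of Tables~\ref{table:Fanos-i-ge-2} and~\ref{table:Fanos-i-1}: (a) $\iota(X) = 2$, $\dd(X) = 2$; (b) $\iota(X) = 1$, $\g(X) = 2$; (c) $\iota(X) = 1$, $\g(X) = 3$ of type~(b); and (d) $\iota(X) = 2$, $\dd(X) = 1$. In each of these cases $X$ carries a canonical structure of a double cover $\pi\colon X \to W$, given by $\varphi_{|H|}$ in case~(a) and by the anticanonical morphism $\varphi_{|-K_X|}$ in cases~(b), (c), (d): here $W = \P^3$ in cases~(a) and~(b), with $\pi$ branched in a quartic, respectively sextic, surface; $W = Q \subset \P^4$ is a smooth quadric threefold in case~(c), with $\pi$ branched in the intersection of $Q$ with a quartic hypersurface; and $W \subset \P^6$ is the cone over the Veronese surface $v_2(\P^2)$ in case~(d), with $\pi$ branched in a cubic section of $W$ not passing through the vertex of the cone. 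These descriptions — in particular the fact that the relevant pluricanonical map is a two-to-one morphism onto its image rather than a birational map — are part of the classification of Fano threefolds, see~\cite[\S12.2]{Iskovskikh-Prokhorov-1999}.

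Since the class $H$ is $\Aut(X)$-invariant, the pair $(W,\pi)$ is canonically attached to $X$; hence every automorphism of $X$ descends to an automorphism of $W$ preserving the branch divisor $B \subset W$, and the Galois involution $\sigma$ of $\pi$ generates the kernel of the resulting homomorphism. We thus obtain an exact sequence
\begin{equation*}
1 \to \langle\sigma\rangle \to \Aut(X) \to \Aut(W; B),
\end{equation*}
so it is enough to prove that $\Aut(W;B)$ is finite. First, $\Aut(W;B)$ is a linear algebraic group: in each case $W$ is embedded in a projective space by the complete linear system of an $\Aut(W)$-invariant very ample class, so $\Aut(W)$ — and with it its closed subgroup $\Aut(W;B)$ — embeds into a projective linear group. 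Second, since $X$ is smooth the branch divisor $B$ is a smooth surface, and the action of $\Aut(W;B)$ on $B$ is faithful: the class of $B$ is a proper multiple of $\CO_W(1)$, so $B$ spans the ambient projective space of $W$, and therefore an automorphism of $W$ fixing $B$ pointwise must be the identity.

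Finally, by adjunction $K_B \sim (K_W + B)|_B$, and a direct computation of these classes gives $K_B \sim \CO_B$ in case~(a), so that $B$ is a $K3$ surface, and $K_B$ ample in cases~(b), (c), (d), so that $B$ is of general type. In all four cases $\kappa(B) \ge 0$, so by Corollary~\ref{corollary:nef-K} the linear algebraic group $\Aut(W;B)$, acting faithfully on $B$, is finite; hence so is $\Aut(X)$. The only case requiring genuine care — and the main obstacle — is case~(d), where $W$ is singular (with a positive-dimensional, partly unipotent, automorphism group) and $\varphi_{|-K_X|}$ is not birational onto its image, so that Corollary~\ref{corollary:anticanonical-ring} does not apply to $X$ directly. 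There one must check that $B$ really avoids the vertex of $W$, so that $B$ lies in the smooth locus of $W$ and is a smooth surface of general type, and that the linear-algebraicity and faithfulness claims above still go through for the Veronese cone; I expect this bookkeeping to be the only nontrivial part of the argument.
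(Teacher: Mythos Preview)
Your proposal is correct and follows essentially the same route as the paper: reduce to the double cover $\pi\colon X\to X'$, get the exact sequence $1\to\mumu_2\to\Aut(X)\to\Aut(X';B)$, and conclude by showing $\Aut(X';B)$ acts faithfully on the smooth surface $B$ with $\kappa(B)\ge 0$.

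The only notable difference is in the handling of your case~(d). The paper takes $X'=\P(1,1,1,2)$ rather than the Veronese cone, and then observes uniformly across all four cases that $2H'$ is very ample, giving $X'\subset\P(V)$ with $V=H^0(X',\CO_{X'}(2H'))^\vee$ and $\Aut(X')\subset\PGL(V)$; since $B$ is not contained in a hyperplane of $\P(V)$, the map $\Aut(X';B)\to\Aut(B;[2H'])$ is injective. This dissolves the ``bookkeeping'' you flag: the linear-algebraicity and faithfulness claims go through uniformly, and smoothness of $B$ is argued simply as ``fixed locus of an involution on a smooth variety''. Your Veronese-cone description is of course the image of $\P(1,1,1,2)$ under $|2H'|$, so the two presentations are equivalent; the paper's is just slightly cleaner in that it avoids singling out case~(d). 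Your explicit adjunction computation ($K_B$ trivial in case~(a), ample otherwise) is a bit more precise than the paper's ``$K_B$ nef by adjunction'', but both suffice for Corollary~\ref{corollary:nef-K}.
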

\begin{proof}
According to
Tables~\xref{table:Fanos-i-ge-2}
and~\xref{table:Fanos-i-1} all such varieties are double covers
\begin{equation*}
\varphi \colon X \to X',
\end{equation*}
where $X'$ is a Fano threefold with $\Cl(X')=\mathbb{Z}\cdot H'$ for some ample divisor $H'$ on $X'$.
In Table~\xref{table:double-covers} we list all possible situations.
The first column of Table~\xref{table:double-covers} lists the invariants of~$X$.
The second column describes $X'$; here
$\P(1,1,1,2)$ is the weighted projective space and~$Q$ is a smooth three-dimensional quadric.
The third column specifies the class in~\mbox{$\Pic(X')$} of the branch divisor $B \subset X'$ of the double cover $\varphi \colon X \to X'$
(note that in the case~\mbox{$X' = \P(1,1,1,2)$} the double cover is also branched over the singular point of $X'$).
\begin{table}[h]
\caption{Double covers}\label{table:double-covers}
\begin{tabular}{|c|c|c|}
\hline
invariants of $X$ & $X'$ & $B$ \\
\hline
$\iota(X) = 2,\ \dd(X) = 1$ &
$\P(1,1,1,2)$ & $6H'$ \\
\hline
$\iota(X) = 2,\ \dd(X) = 2$ & $\PP^3$ & $4H'$ \\
\hline
$\iota(X) = 1,\ \g(X) = 2$ & $\PP^3$ & $6H'$ \\
\hline
$\iota(X) = 1,\ \g(X) = 3$ & $Q$ & $4H'$ \\
\hline
\end{tabular}
\end{table}

The map $\varphi$ is anticanonical in all cases except when $\iota(X)=2$ and $\dd(X)=2$;
in the latter case it is defined by the complete linear system of the divisor $H$ such that~$2H\sim -K_X$.
In particular, $\varphi$ is equivariant with respect to the whole automorphism group $\Aut(X)$, hence we have a natural map
\begin{equation}\label{eq:AutXB}
\Aut(X) \to \Aut(X'; B)
\end{equation}
into the group of automorphisms of $X'$ preserving $B$.
Moreover, we have
\begin{equation*}
X \cong \operatorname{Spec}_{X'}\left(\CO_{X'} \oplus \CO_{X'}\left(-\textstyle{\frac12}B\right)\right),
\end{equation*}
where $\CO_{X'}\left(-\textstyle{\frac12}B\right)$ is the reflexive sheaf corresponding to the Weil divisor class $-\textstyle{\frac12}B$,
and the algebra structure is determined by the composition
\begin{equation*}
\CO_{X'}\left(-\textstyle{\frac12}B\right) \otimes \CO_{X'}\left(-\textstyle{\frac12}B\right) \xrightarrow{\ \ \ }
\CO_{X'}(-B) \xrightarrow{\ B\ } \CO_{X'}
\end{equation*}
with the canonical first map and with the second map given by the divisor $B$.
In particular, every automorphism of $X'$ that fixes $B$ induces an automorphism of $X$, hence
the map~\eqref{eq:AutXB}
is surjective.
Its kernel is clearly generated by the Galois involution of the double cover~$\varphi$,
hence is isomorphic to $\mumu_2$.

On the other hand, it is clear from Table~\ref{table:double-covers} that the divisor $2H'$ is very ample in all cases, so that
$X' \subset \P(V)$, where $V = H^0(X,\CO_{X'}(2H'))^\vee$, and $\Aut(X') \subset \PGL(V)$.
Furthermore,
$B$ is not contained in a hyperplane in $\P(V)$, hence
the natural map
\begin{equation*}
\Aut(X'; B) \to \Aut(B; [2H'])
\end{equation*}
into the group of automorphisms of $B$ preserving (the class in $\Pic(B)$ of) the restriction of $2H'$ to $B$
is injective.
Thus we have an exact sequence
\begin{equation*}
1 \to \mumu_2 \to \Aut(X) \to \Aut(B; [2H']).
\end{equation*}
It remains to notice that $B$ is smooth (as the fixed locus of an involution on a smooth variety $X$) and its canonical bundle
is nef by adjunction formula, hence $\Aut(B; [2H'])$ is finite
by Lemma~\xref{lemma:anticanonical-ring0} and Corollary~\xref{corollary:nef-K}.
\end{proof}

Now we will combine the above results with Theorems~\xref{theorem:MatsumuraMonsky} and Theorem~\xref{theorem:Benoist}.

\begin{proposition}\label{proposition:nearly-done}
If $Y$ is a \textup(smooth\textup) Fano threefold with $\rho(Y) = 1$, $\iota(Y) = 2$, and~\mbox{$\dd(Y) \le 4$}
then the group $\Aut(Y)$ is finite.
If $X$ is a \textup(smooth\textup) Fano threefold with~$\rho(X) = 1$, $\iota(X) = 1$, and~\mbox{$\g(X) \le 10$}
then the group $\Aut(X)$ is finite.
\end{proposition}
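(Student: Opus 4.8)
The plan is to run through the families of Tables~\xref{table:Fanos-i-ge-2} and~\xref{table:Fanos-i-1} subject to $\dd(Y)\le 4$ and $\g(X)\le 10$ and to observe that almost all of them are already covered. For $\iota(Y)=2$ with $\dd(Y)\in\{3,4\}$ finiteness of $\Aut(Y)$ is Corollary~\xref{corollary-aut-index-2}, and for $\iota(X)=1$ with $\g(X)\in\{7,8,9,10\}$ it is Corollary~\xref{corollary:high-genus-Fanos-Aut}. The threefolds for which the ample generator $H$ is not very ample --- namely $\iota(Y)=2$ with $\dd(Y)\in\{1,2\}$, $\iota(X)=1$ with $\g(X)=2$, and $\iota(X)=1$, $\g(X)=3$ with $X$ a double cover of a quadric --- are handled by Lemma~\xref{lemma:double-cover-finite}. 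Among the remaining threefolds with $-K_X$ very ample, the quartic threefold (the first option in the case $\iota(X)=1$, $\g(X)=3$) is covered by Theorem~\xref{theorem:MatsumuraMonsky}, while $\iota(X)=1$ with $\g(X)=4$ (an intersection of a quadric and a cubic in $\PP^5$) and $\g(X)=5$ (an intersection of three quadrics in $\PP^6$) are smooth complete intersections of codimension $\ge 2$ and dimension $3$ not contained in a hyperplane, so Theorem~\xref{theorem:Benoist} applies.

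The only family not reached by these tools is $\iota(X)=1$, $\g(X)=6$; here $-K_X$ is very ample, so $\Aut(X)$ is a linear algebraic group by Corollary~\xref{corollary:anticanonical-ring}, but $X$ is neither a complete intersection in $\PP^N$ nor, a priori, a double cover. I would treat it via Lemma~\xref{lemma:Fano-conics}, splitting into two cases. Suppose first that $X$ admits a structure of a double cover $\pi\colon X\to Y$ of the quintic del Pezzo threefold $Y$ (so that $\rho(Y)=1$, $\iota(Y)=2$, $\dd(Y)=5$) branched in a smooth $B\in|{-}K_Y|$; this is exactly the situation of Lemma~\xref{lemma:trivial-action-conics}. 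Then $B$, being the fixed locus of the Galois involution on the smooth threefold $X$, is smooth, and by adjunction $K_B\sim(K_Y+B)|_B\sim 0$, so $B$ is a K3 surface; moreover $H_Y|_B$ is very ample since $H_Y$ itself is very ample on $Y$. As in the proof of Lemma~\xref{lemma:double-cover-finite} (the double cover structure being intrinsic to $X$, the map $\pi$ is $\Aut(X)$-equivariant), one obtains an exact sequence $1\to\mumu_2\to\Aut(X)\to\Aut(Y;B)$ together with an injection $\Aut(Y;B)\hookrightarrow\Aut\big(B;[H_Y|_B]\big)$; by Lemma~\xref{lemma:anticanonical-ring0} the latter acts faithfully on $\PP\big(H^0(B,\CO_B(H_Y))^\vee\big)$, hence is a linear algebraic group acting faithfully on the non-ruled surface $B$, and is therefore finite by Lemma~\xref{lemma:non-uniruled}. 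Thus $\Aut(X)$ is finite.

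If, on the other hand, $X$ is not such a double cover, then by Lemma~\xref{lemma:Fano-conics} the action of $\Aut_{S_0}(X)$ on each irreducible component $S_0$ of $S(X)$ is faithful --- all components are non-exotic, since exotic components occur only on quartic threefolds --- so $\Aut(X)$ acts faithfully on the projective surface $S(X)$. A nontrivial identity component of the linear algebraic group $\Aut(X)$ would contain a copy of $\GGm$ or $\GGa$ whose orbits cover a dense open subset of $S(X)$ by rational curves, so finiteness of $\Aut(X)$ follows once we know that $S(X)$ is not uniruled; I would extract this from the known structure of the Hilbert scheme of conics of a genus $6$ threefold of Mukai type (cf.\ Remark~\xref{remark:hilb-conics-reducible} and the references cited there). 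An alternative route is to realize $X=W\cap Q$ inside the quintic del Pezzo fourfold $W=\Gr(2,5)\cap\PP^7$, to use that $W$ is the unique such fourfold containing $X$ so that $\Aut(X)$ embeds into the stabilizer of $[X]$ in the action of $\Aut(W)$ on the linear system of quadric sections of $W$, and to rule out by a parameter count that this stabilizer can be positive-dimensional for a smooth $X$.

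The main obstacle is this last subcase, a genus $6$ Fano threefold of Mukai type: every other family follows from the classification together with the results already proved, but genus $6$ lies outside both the complete-intersection and the double-cover frameworks, so controlling $\Aut(X)$ genuinely requires further input --- either a precise enough description of $S(X)$ (which may be reducible) to see that it is not uniruled, or an analysis of the action of $\Aut(W)$ on the quadric sections of the quintic del Pezzo fourfold.
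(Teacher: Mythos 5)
Your case division and the tools you invoke coincide with the paper's proof in every case except $\g(X)=6$: degrees $1,2$ and genera $2,3$ (non-very-ample case) via Lemma~\xref{lemma:double-cover-finite}, degrees $3,4$ via Corollary~\xref{corollary-aut-index-2}, the quartic via Theorem~\xref{theorem:MatsumuraMonsky}, genera $4,5$ via Theorem~\xref{theorem:Benoist}, and genera $7$--$10$ via Corollary~\xref{corollary:high-genus-Fanos-Aut}. You correctly identify genus $6$ as the case not reached by these tools; the paper disposes of it by citing \cite[Proposition~3.21(c)]{Debarre-Kuznetsov2015} rather than by an internal argument. Your treatment of the special (double-cover) genus-$6$ subcase is sound and is a reasonable self-contained substitute for part of that citation: the Mukai bundle is unique by Proposition~\xref{ux-unique}, so the map to $\Gr(2,5)$ and hence the cover $\pi\colon X\to Y$ is $\Aut(X)$-equivariant, the branch locus $B$ is a K3 surface spanning $\P^6$, and the chain $1\to\mumu_2\to\Aut(X)\to\Aut(Y;B)\hookrightarrow\Aut(B)$ together with Lemmas~\xref{lemma:anticanonical-ring0} and~\xref{lemma:non-uniruled} gives finiteness.

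The genuine gap is the ordinary genus-$6$ subcase, $X=\Gr(2,5)\cap\P^7\cap Q$, and neither of your two proposed routes closes it. For the first route, Lemma~\xref{lemma:Fano-conics} does give faithfulness of the $\Aut(X)$-action on the components of $S(X)$ here, but the required non-uniruledness of $S(X)$ is nowhere available in the paper: the explicit description of $S(X)$ stops at $\g\ge 7$ (Proposition~\xref{proposition:conics}), the paper explicitly warns that for $\g\le 6$ the scheme $S(X)$ may be singular or reducible, and ``the known structure of the Hilbert scheme of conics of a genus $6$ threefold'' is precisely the nontrivial external input (Logachev, Debarre--Kuznetsov) that you would be importing --- so this is a citation in disguise, not a proof. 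For the second route, a parameter count showing that the \emph{generic} stabilizer of $[X]$ in $\Aut(W)$ is finite does not rule out positive-dimensional stabilizers for special smooth members of the family; the paper stresses that one must control \emph{all} members of each deformation class, and indeed for $\g=12$ the generic member has finite automorphism group while special smooth members do not. So as written the proposal proves the statement in all cases except the ordinary genus-$6$ one, where it honestly records, but does not fill, the hole that the paper fills with \cite[Proposition~3.21(c)]{Debarre-Kuznetsov2015}.
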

\begin{proof}
First, let $Y$ be a Fano threefold with $\rho(Y) = 1$ and $\iota(Y) = 2$.
If~\mbox{$\dd(Y) \in \{1,2\}$}, then
the ample generator of $\Pic(Y)$ is not very ample, and
we apply Lemma~\xref{lemma:double-cover-finite}.
If~\mbox{$\dd(Y) \in \{3,4\}$}, then we apply Corollary~\xref{corollary-aut-index-2}.

Second, let $X$ be a Fano threefold with $\rho(X) = 1$ and $\iota(X) = 1$.
If $\g(X) = 2$ or~\mbox{$\g(X) = 3$} and~$-K_X$ is not very ample,
we apply Lemma~\xref{lemma:double-cover-finite}.
If $\g(X) = 3$ and~$-K_X$ is very ample then $X$ is a quartic in $\P^4$
and we apply Theorem~\xref{theorem:MatsumuraMonsky}.
If~\mbox{$\g(X)=4$} or~\mbox{$\g(X)=5$}, then $X$ is a complete intersection in a projective space
of multidegree~\mbox{$(2,3)$} and~\mbox{$(2,2,2)$} respectively,
and we apply Theorem~\xref{theorem:Benoist}.
If $\g(X) = 6$ we refer to~\cite[Proposition~3.21(c)]{Debarre-Kuznetsov2015}.
Finally, if~\mbox{$7 \le \g(X) \le 10$}, we apply Corollary~\xref{corollary:high-genus-Fanos-Aut}.
\end{proof}

To complete the proof of Theorem~\xref{theorem:Prokhorov}, we need to describe the automorphism groups of Fano threefolds of index 2 and degree 5,
and of index 1 and genus 12. This is done in the next section.

\section{Infinite automorphism groups}
\label{section:infinite}

We already know from Proposition~\xref{proposition:nearly-done} that the only Fano threefolds of Picard rank~$1$ and index 1 or 2
which can have infinite automorphism groups are the threefold $Y$ with~$\iota(Y) = 2$ and $\dd(Y) = 5$
(such threefold is actually unique up to isomorphism, see \cite[Theorem~II.1.1]{Iskovskikh-1980-Anticanonical} or \cite[3.3.1--3.3.2]{Iskovskikh-Prokhorov-1999}),
and some of the threefolds $X$ with~$\iota(X) = 1$ and $\g(X) = 12$.

\subsection{Fano threefolds of index 2 and degree 5}
\label{subsection:v5}

We start with a detailed description of the Fano threefold $Y$ with $\iota(Y) = 2$ and $\dd(Y) = 5$.
From the classification it is known that $Y$ is isomorphic to a linear section of the Grassmannian $\Gr(2,5)\subset \P^9$
by a subspace~\mbox{$\P^6\subset\P^9$} (see Table~\xref{table:Fanos-i-ge-2}).
For our purposes, however, the description of $Y$ suggested by Mukai and Umemura~\cite{Mukai-Umemura-1983} is more convenient.

Let
\begin{equation*}
M_d=\Sym^d (\Bbbk^{2})^\vee
\end{equation*}
be the space of binary forms of degree $d$.
We denote by $x$ and $y$ the elements of the standard basis of the vector space~\mbox{$(\Bbbk^2)^\vee$},
so that elements of $M_d$ are polynomials of degree~$d$ in~$x$ and~$y$.
The group $\GL_2(\Bbbk)$ acts naturally on the space $M_d$ by the rule
\begin{equation*}
\left(\begin{smallmatrix} a & b \\ c & d \end{smallmatrix}\right) \colon
x \mapsto ax + cy,
\qquad
y \mapsto bx + dy,
\end{equation*}
and induces an action of $\PGL_2(\Bbbk)$ on the projective space $\P(M_d)$. Consider the form
\begin{equation*}
\phi_6(x,y)=xy(x^4-y^4)\in M_6
\end{equation*}
and the corresponding point $[\phi_6]\in\P(M_6)\cong\P^6$.

\begin{theorem}[{\cite{Mukai-Umemura-1983}, see also~\cite[Proposition~7.1.10]{CheltsovShramov2016}}]
\label{theorem-mu-5}
The stabilizer of $[\phi_6]$ is the octahedral group
$$\Oct\cong\SS_4 \subset \PGL_2(\Bbbk),$$
and the closure of its orbit
\begin{equation*}
Y= \overline{\PGL_2(\Bbbk)\cdot [\phi_6]}\subset\P^6
\end{equation*}
is the smooth Fano threefold with $\rho(Y)=1$, $\iota(Y)=2$, and $\dd(Y)=5$
embedded by the ample generator of $\Pic(Y)$.
The automorphism group of $Y$ is $\Aut(Y) \cong \PGL_2(\Bbbk)$.
\end{theorem}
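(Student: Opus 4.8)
The plan is to determine the stabilizer of $[\phi_6]$ by a root‑configuration argument, then to identify the orbit closure $Y$ with the (unique) smooth Fano threefold $V_5$ with $\rho=1$, $\iota=2$, $\dd=5$ by realizing it $\SL_2$‑equivariantly inside $\Gr(2,5)$, and finally to read off $\Aut(Y)$.

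\emph{The stabilizer.} First I would observe that $\phi_6=xy(x^4-y^4)$ has six distinct roots on $\P^1$, namely $[1:0]$, $[0:1]$, $[1:\pm1]$, $[1:\pm\sqrt{-1}]$; over $\C$ these are the vertices of a regular octahedron on the Riemann sphere. Any $g\in\PGL_2(\Bbbk)$ fixing $[\phi_6]$ must permute this six‑element set, so it lies in the finite group $\Oct$ stabilizing the set; by the classification of finite subgroups of $\PGL_2(\Bbbk)$ in characteristic $0$ together with the evident order‑$4$ and order‑$3$ symmetries of the configuration, $\Oct\cong\SS_4$. Conversely the space of binary sextics vanishing simply at these six points is one‑dimensional, so every element of $\Oct$ merely rescales $\phi_6$ and hence fixes $[\phi_6]$. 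This gives $\mathrm{Stab}_{\PGL_2(\Bbbk)}([\phi_6])=\Oct\cong\SS_4$; since the stabilizer is finite, $Y$ is an irreducible projective threefold carrying a faithful $\PGL_2(\Bbbk)$‑action whose dense orbit is $\PGL_2(\Bbbk)/\Oct$, and in particular $\PGL_2(\Bbbk)\subseteq\Aut(Y)$.

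\emph{Identifying $Y$ with $V_5$.} Writing $M_d=\Sym^d(\Bbbk^2)^\vee$ as in the statement, I would use the $\SL_2$‑decomposition $\wedge^2 M_4\cong M_6\oplus M_2$ (the summands being spanned by the first and third transvectants of pairs of quartics): thus $\P^6=\P(M_6)$ is the unique $\SL_2$‑invariant $\P^6$ inside $\P(\wedge^2 M_4)=\P^9$, and $V_5':=\Gr(2,M_4)\cap\P(M_6)$ is an $\SL_2$‑invariant codimension‑$3$ linear section of $\Gr(2,5)$ sitting inside $\P^6=\P(M_6)$. After checking that $V_5'$ is smooth and irreducible of dimension $3$, the classification of Fano threefolds identifies it with the unique Fano threefold with $\rho=1$, $\iota=2$, $\dd=5$ embedded by the ample generator of its Picard group. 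It then remains to show $Y=V_5'$, and here I would decompose the small representations under $\Oct\cong\SS_4$: $M_4$ contains a unique two‑dimensional subrepresentation $A_0$, and $\wedge^2 A_0$ is the sign character, which occurs with multiplicity one in $\wedge^2 M_4$ and lies in the $M_6$‑summand. Hence $[A_0]\in\Gr(2,M_4)$ lies on $V_5'$ and, viewed in $\P(M_6)$, is the unique $\Oct$‑fixed point; since $[\phi_6]$ is also $\Oct$‑fixed, $[\phi_6]=[A_0]\in V_5'$. As $V_5'$ is closed and $\PGL_2(\Bbbk)$‑invariant, $Y\subseteq V_5'$, and equality follows from irreducibility and equality of dimensions. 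This yields $Y\cong V_5$ with the asserted invariants and embedding.

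\emph{The automorphism group and the main obstacle.} The inclusion $\PGL_2(\Bbbk)\subseteq\Aut(Y)$ was obtained above. For the converse I would use that $\Aut(Y)$ is a linear algebraic group by Corollary~\xref{corollary:anticanonical-ring} and acts faithfully on $\Sigma(Y)\cong\P^2$ by Lemma~\xref{lemma:Fano-lines} and Proposition~\xref{hilb-lines-explicit}(iii), so $\Aut(Y)\hookrightarrow\PGL_3(\Bbbk)$; inside $\PGL_3(\Bbbk)$ the subgroup $\PGL_2(\Bbbk)$ appears through its unique faithful $3$‑dimensional representation, with image the conic stabilizer $\PSO_3(\Bbbk)$, a maximal connected self‑normalizing subgroup. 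The only connected subgroup strictly larger than $\PSO_3(\Bbbk)$ is $\PGL_3(\Bbbk)$ itself, which is excluded since it would make $Y$ almost‑homogeneous under $\PGL_3(\Bbbk)$ and hence, given $\rho(Y)=1$ and $\dd(Y)=5$, impossible; therefore $\Aut(Y)^0=\PSO_3(\Bbbk)$ and then $\Aut(Y)\subseteq N_{\PGL_3(\Bbbk)}(\PSO_3(\Bbbk))=\PSO_3(\Bbbk)$, so $\Aut(Y)\cong\PGL_2(\Bbbk)$; alternatively one may simply cite~\cite[Theorem~II.1.1]{Iskovskikh-1980-Anticanonical} for both the uniqueness of $V_5$ and the computation of its automorphism group. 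The main obstacle is the middle step: verifying that the $\SL_2$‑invariant section $V_5'$ is smooth and irreducible (and not one of the singular degenerations of codimension‑$3$ linear sections of $\Gr(2,5)$), and matching up the $\Oct$‑fixed loci on both sides so as to place $[\phi_6]$ on $V_5'$ — once the relevant small $\SL_2$‑ and $\SS_4$‑modules are decomposed, everything else is bookkeeping.
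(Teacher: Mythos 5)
The paper does not prove Theorem~\xref{theorem-mu-5}; it quotes it from \cite{Mukai-Umemura-1983} (see also \cite[Proposition~7.1.10]{CheltsovShramov2016}), so there is no internal argument to compare against. Your sketch of the first two assertions is essentially the original Mukai--Umemura route, and the representation theory checks out: the six roots of $\phi_6$ form the octahedral configuration $\{0,\infty,\pm1,\pm\sqrt{-1}\}$ and the space of sextics with these simple roots is a line, giving the stabilizer $\Oct\cong\SS_4$; one has $\Lambda^2 M_4\cong M_6\oplus M_2$ as $\SL_2$-modules, $M_4$ restricted to $\SS_4$ is the sum of the $2$-dimensional and the standard representation (so $A_0$ is unique), $\Lambda^2 A_0$ is the sign character, which occurs exactly once in $\Lambda^2 M_4$ and lies in the $M_6$-summand, and it is the only one-dimensional constituent of $M_6\vert_{\SS_4}$ --- so $[\phi_6]=[\Lambda^2 A_0]$ is indeed forced. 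You are also right that the remaining verification, smoothness and irreducibility of the invariant section $\Gr(2,M_4)\cap\P(M_6)$, is where the real content of the cited proofs lies (Mukai and Umemura do it by exhibiting the three-orbit decomposition and checking along the closed orbits); flagging it is honest, but it does have to be carried out.

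The step that is genuinely incomplete as written is the exclusion of $\Aut(Y)^0\cong\PGL_3(\Bbbk)$: ``almost-homogeneous under $\PGL_3$, hence impossible given $\rho=1$ and $\dd=5$'' is an assertion, not an argument (note that $\PGL_3(\Bbbk)$ does act with a dense orbit on threefolds, e.g.\ on the flag variety, so some actual input about $Y$ is needed). A concrete fix within your framework: if $\Aut(Y)^0$ were all of $\PGL_3(\Bbbk)$ it would act transitively on $\Sigma(Y)\cong\P^2$, whereas the locus of special lines is a nonempty proper closed $\Aut(Y)$-invariant subset --- nonempty because $p\colon\CL(Y)\to Y$ is a finite surjection of degree $3$ between simply connected smooth threefolds and hence must ramify, and proper because a general line is ordinary by Lemma~\xref{lemma-hilb-y-1}. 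Alternatively, citing \cite[Theorem~II.1.1]{Iskovskikh-1980-Anticanonical} as you suggest is in effect what the paper does. One more caution: to avoid circularity you must take $\Sigma(Y)\cong\P^2$ from \cite{Furushima1989a} or \cite{Iskovskikh-1980-Anticanonical}, not from the equivariant identification $\Sigma(Y)\cong\P(M_2)$ in \S\xref{subsection:v5}, which is itself derived from Theorem~\xref{theorem-mu-5}.
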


We will need a description of the $\PGL_2(\Bbbk)$-orbits on $Y$ (see \cite[Lemma 1.5]{Mukai-Umemura-1983}).
For this we need to introduce notation for the standard connected subgroups in $\PGL_2(\Bbbk)$.
We denote by
\begin{itemize}
\item $B_2 \subset \PGL_2(\Bbbk)$ the standard Borel subgroup (upper triangular matrices),
\item $U_2 \subset \PGL_2(\Bbbk)$ the standard unipotent subgroup (upper triangular matrices with units on the diagonal), and
\item $T_2 \subset \PGL_2(\Bbbk)$ the standard torus (diagonal matrices).
\end{itemize}

The orbit decomposition of $Y$ is
\begin{equation*}
Y = \Orb_3(Y) \sqcup \Orb_2(Y) \sqcup \Orb_1(Y)
\end{equation*}
with $\Orb_k(Y)$ standing for the unique $\PGL_2(\Bbbk)$-orbit of dimension $k$; explicitly
\begin{alignat*}{2}
\Orb_3(Y) & = \PGL_2(\Bbbk)\cdot [\phi_6] && \cong \PGL_2(\Bbbk)/\Oct,\\
\Orb_2(Y) & = \PGL_2(\Bbbk)\cdot [xy^5] && \cong \PGL_2(\Bbbk)/T_2,\\
\Orb_1(Y) & = \PGL_2(\Bbbk)\cdot [x^6] && \cong \PGL_2(\Bbbk)/B_2.
\end{alignat*}
It is clear from this description that
$\Orb_{1}(Y)$ is a normal rational sextic curve, and that
\begin{equation*}
\oOrb_2(Y) = \Orb_2(Y) \sqcup \Orb_1(Y)
\end{equation*}
is the image of $\PP^1 \times \PP^1 = \PP(M_1) \times \PP(M_1)$ under the map
\begin{equation*}
\nu \colon \PP(M_1) \times \PP(M_1) \xrightarrow{\qquad} \PP(M_6),
\qquad (f,g) \mapsto f^5g.
\end{equation*}
Geometrically, $\oOrb_2(Y)$ is the tangential scroll of $\Orb_1(Y)$, i.e., the surface swept by tangent lines to the twisted sextic curve $\Orb_1(Y)$,
and $\nu$ is its normalization morphism (for more details, see for instance~\cite[Lemma~7.2.2]{CheltsovShramov2016}).

\begin{remark}
\label{remark:curves-oorb2}
It follows from the above description that any irreducible curve of degree at most~5 contained in $\oOrb_2(Y)$ is
either a line~$\nu(\{f\} \times \PP(M_1))$ (as we show below these are special lines on~$Y$, as defined in~\S\xref{subsection:lines-and-conics}),
or is the image of the normal rational quintic
\begin{equation}\label{eq:z:mu}
Z_{\mathrm{MU}} = \nu(\PP(M_1) \times \{x\}) = \overline{B_2 \cdot [xy^5] } \subset Y
\end{equation}
under the $\PGL_2(\Bbbk)$-action.
The reason for the notation in~\eqref{eq:z:mu} will become clear later.
And meanwhile, just note that $Z_{\mathrm{MU}}$ is preserved by the Borel subgroup $B_2 \subset \PGL_2(\Bbbk)$.
\end{remark}

Recall that by Proposition~\ref{hilb-lines-explicit}(iii) the Hilbert scheme of lines $\Sigma(Y)$ is isomorphic to~$\P^2$.
In fact, we have a $\PGL_2(\Bbbk)$-equivariant isomorphism
\begin{equation*}
\Sigma(Y) \cong \P(M_2)
\end{equation*}%
see~\cite[Theorem~I]{Furushima1989a} or \cite[Proposition~2.20]{Sanna}.
Below we describe explicitly lines on~$Y$ corresponding to points of $\P(M_2)$.
Note that any pair of points $f,g \in \P(M_1)$ gives a point $fg \in \P(M_2)$,
and, if $f \ne g$, a point $fg(f^4 - g^4) \in \Orb_3(Y) \subset Y$.

\begin{lemma}
\label{lemma:lines-v5-explicit}
Every line on $Y$ can be written in one of the following two forms:
\begin{equation*}
\begin{aligned}
L_{fg} &= \{ fg(s_1f^4 - s_2g^4) \in Y \}_{(s_1:s_2) \in \P^1}, \qquad&& \text{for $f,g\in\P(M_1)$, $f \ne g$},\\
L_{f^2} &= \{ f^5(s_1x + s_2y) \}_{(s_1:s_2) \in \P^1}, && \text{for $f\in\P(M_1)$}.
\end{aligned}
\end{equation*}
\end{lemma}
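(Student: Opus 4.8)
The plan is to deduce the lemma from the $\PGL_2(\Bbbk)$-equivariant isomorphism $\Sigma(Y)\cong\P(M_2)$ recalled above, by a short orbit analysis. A nonzero binary quadratic form is, up to scalar, either a product $fg$ of two distinct linear forms or a square $f^2$, so $\P(M_2)$ is the disjoint union of the conic $\mathcal C=\{[f^2]\mid f\in\P(M_1)\}$ and its complement. Since $\PGL_2(\Bbbk)$ is $2$-transitive on $\P(M_1)\cong\P^1$, each of these two loci is a single $\PGL_2(\Bbbk)$-orbit; as the isomorphism $\Sigma(Y)\cong\P(M_2)$ is equivariant, it therefore suffices to identify the line on $Y$ corresponding to one representative point in each orbit and then transport the answer along the group. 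I will use freely that $\PGL_2(\Bbbk)=\Aut(Y)$ acts on $\Sigma(Y)$ by $\gamma\cdot[L]=[\gamma(L)]$, so that transporting a point of $\Sigma(Y)$ means applying $\gamma$ to the corresponding subscheme of $Y$.

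For the orbit $\mathcal C$ I would work with the point $[x^2]$. The unipotent radical $U_2\subset B_2$ acts on $M_2=\Sym^2 (\Bbbk^{2})^\vee$ through a single Jordan block, so $\langle x^2\rangle$ is the only $U_2$-stable line in $M_2$ and $[x^2]$ is the unique $B_2$-fixed point of $\P(M_2)$; hence $\Sigma(Y)$ has a unique $B_2$-fixed point, and it corresponds to $[x^2]$. On the other hand, by Remark~\xref{remark:curves-oorb2} the subscheme $L_{x^2}=\nu(\{x\}\times\P(M_1))=\{\,x^5(s_1x+s_2y)\,\}$ is contained in $\oOrb_2(Y)\subset Y$, is a line of $\P(M_6)$ (its points are $[s_1x^6+s_2x^5y]$), and is $B_2$-invariant because $B_2$ preserves $\langle x\rangle\subset M_1$ and acts on all of $M_1$. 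Therefore $[L_{x^2}]$ is the $B_2$-fixed point of $\Sigma(Y)$, i.e.\ the one corresponding to $[x^2]$. Now for arbitrary $f\in M_1\setminus\{0\}$ pick $\gamma\in\PGL_2(\Bbbk)$ with $\gamma\cdot x\in\Bbbk^\times f$; then $\gamma\cdot[x^2]=[f^2]$, and since $\gamma$ acts invertibly on $M_1$ one gets $\gamma\cdot L_{x^2}=\{\,f^5h\mid h\in M_1\,\}=L_{f^2}$. This proves the claim for every point of $\mathcal C$; in particular $[y^2]$ corresponds to $L_{y^2}$.

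For the open orbit I would use the point $[xy]$. The torus $T_2$ acts on $M_2$ with three distinct weights, hence has exactly three fixed points $[x^2]$, $[xy]$, $[y^2]$ on $\P(M_2)$; correspondingly $\Sigma(Y)$ has exactly three $T_2$-fixed points, two of which are $[L_{x^2}]$ and $[L_{y^2}]$ by the previous paragraph. The subscheme $L_{xy}=\{\,xy(s_1x^4-s_2y^4)\,\}=\{\,s_1x^5y-s_2xy^5\,\}$ is a line of $\P(M_6)$ through $[\phi_6]=[x^5y-xy^5]$, and since the $T_2$-orbit of $[\phi_6]$ is dense in $L_{xy}$ while $Y$ is closed and $T_2$-invariant, one has $L_{xy}\subset Y$; thus $[L_{xy}]\in\Sigma(Y)$ is $T_2$-fixed and, comparing supports of its points in the monomial basis of $M_6$, distinct from $[L_{x^2}]$ and $[L_{y^2}]$. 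Hence $[L_{xy}]$ is the third $T_2$-fixed point of $\Sigma(Y)$, so it corresponds to $[xy]$. Finally, for distinct $f,g\in\P(M_1)$ choose $\gamma\in\PGL_2(\Bbbk)$ with $\gamma\cdot x\in\Bbbk^\times f$ and $\gamma\cdot y\in\Bbbk^\times g$; then $\gamma\cdot[xy]=[fg]$ and $\gamma\cdot L_{xy}=\{\,fg(s_1f^4-s_2g^4)\,\}=L_{fg}$ (absorbing scalars into $(s_1:s_2)$ and using $L_{fg}=L_{gf}$), which settles the open orbit. Since every point of $\P(M_2)\cong\Sigma(Y)$ lies in one of the two orbits, this proves the lemma. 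The steps that are not pure transport of structure are the verifications that $L_{x^2}$ and $L_{xy}$ really lie on $Y$ and the fixed-point bookkeeping that pins the correspondence down, and I expect the latter to be the delicate point.
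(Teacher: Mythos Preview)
Your proof is correct. The paper takes a shorter and more geometric route: it observes directly that $L_{fg}$ and $L_{f^2}$ are lines on $Y$, then distinguishes the two families by how they meet $\oOrb_2(Y)$ (namely, $L_{fg}$ meets it in the two points $[f^5g]$ and $[fg^5]$, while $L_{f^2}$ is contained in it), concludes that the two families lie in different $\PGL_2(\Bbbk)$-orbits of $\Sigma(Y)$, and finishes by noting that each family is $\PGL_2(\Bbbk)$-stable and that $\Sigma(Y)\cong\P(M_2)$ has exactly two orbits. You replace the $\oOrb_2(Y)$-criterion with a fixed-point analysis under $B_2$ and $T_2$ to pin down which point of $\P(M_2)$ corresponds to which explicit line. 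The paper's approach is faster for the lemma as stated, but yours actually proves the sharper assertion (used notationally in the subsequent corollary) that the equivariant isomorphism $\P(M_2)\cong\Sigma(Y)$ is precisely $[fg]\mapsto[L_{fg}]$, not merely that it carries orbits to orbits.
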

\begin{proof}
It is clear that both $L_{fg}$ and $L_{f^2}$ are lines on $Y$.
The first intersects $\oOrb_2(Y)$ at two points $fg^5$ and $f^5g$, while the second is contained in $\oOrb_2(Y)$.
Thus, they correspond to points of different $\PGL_2(\Bbbk)$-orbits in $\Sigma(Y)$.
It remains to recall that $\Sigma(Y) \cong \P(M_2)$
contains just two $\PGL_2(\Bbbk)$-orbits, and to notice that the images of the lines $L_{fg}$ and $L_{f^2}$
under the action of $\PGL_2(\Bbbk)$ are lines of the same form.
\end{proof}

From the description of Lemma~\ref{lemma:lines-v5-explicit} it is easy to obtain the following result.
Recall that $\CL(Y)$ denotes the universal line on $Y$, and $q \colon \CL(Y) \to \Sigma(Y)$, $p \colon \CL(Y) \to Y$ denote its natural projections.

\begin{corollary}[{cf. \cite[1.2.1(3)]{Iliev1994a} and \cite[Corollary~2.24]{Sanna}}]
\label{corollary:V5-lines-description}
The set $q(p^{-1}([\varphi]))$ of lines on $Y$ passing through a point $[\varphi] \in Y$
can be described as follows:
\begin{equation*}
q(p^{-1}([\varphi])) =
\begin{cases}
\{[fg], [f^2 - g^2], [f^2 + g^2]\}, & \text{if $\varphi = fg(f^4 - g^4) \in \Orb_3(Y)$,}\\
\{[fg], [f^2]\}, & \text{if $\varphi = f^5g \in \Orb_2(Y)$,}\\
\{[f^2]\}, & \text{if $\varphi = f^6 \in \Orb_1(Y)$,}\\
\end{cases}
\end{equation*}
The ramification divisor of the map $p \colon \CL(Y) \to Y$ is the union of lines $L_{f^2}$ for $f \in \P(M_1)$.
In other words, any line $L_{fg}$ with $f \ne g$ is ordinary and any line $L_{f^2}$ is special.
\end{corollary}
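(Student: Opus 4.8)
The plan is to reduce the whole statement, by $\PGL_2(\Bbbk)$-equivariance, to a computation over one representative point in each of the three orbits $\Orb_3(Y)$, $\Orb_2(Y)$, $\Orb_1(Y)$, combining Lemma~\ref{lemma:lines-v5-explicit} with elementary factorization of binary forms. By that lemma every line is of the form $L_{fg}$ with $f\neq g$ or of the form $L_{f^2}$; a point $[\psi]\in Y$ lies on $L_{f'g'}$ precisely when $f'g'$ divides $\psi$ and $\psi/(f'g')$ is proportional to $s_1(f')^4-s_2(g')^4$ for some $(s_1:s_2)$, and it lies on $L_{f'^2}$ precisely when $(f')^5$ divides $\psi$; moreover $L_{f'^2}\subset\oOrb_2(Y)$, so no line of the second type passes through a point outside $\oOrb_2(Y)$. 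Taking the representatives $[x^6]\in\Orb_1(Y)$ and $[xy^5]\in\Orb_2(Y)$ from the orbit description recalled above, the divisibility constraints at once force the linear forms $f'$ and $g'$ and yield the lists $\{[f^2]\}$ and $\{[fg],[f^2]\}$ respectively.

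The core of the first assertion is the case $[\phi_6]=[xy(x^4-y^4)]\in\Orb_3(Y)$. Since $[\phi_6]\notin\oOrb_2(Y)$, only lines $L_{f'g'}$ occur, and then $f'g'$ must be a product of two of the six distinct linear factors $x,\,y,\,x-y,\,x+y,\,x-iy,\,x+iy$ of $\phi_6$ (with $i^2=-1$). Choosing a coordinate on $\P^1$ in which the roots of $f'$ and $g'$ are $0$ and $\infty$, the condition for the remaining quartic factor to be of the required shape $s_1(f')^4-s_2(g')^4$ with $s_1s_2\neq 0$ (the case $s_1s_2=0$ lands in $\oOrb_2(Y)$, hence is excluded) is that the other four roots form a $\mumu_4$-orbit; one checks this is independent of the chosen coordinate, so it depends only on the unordered pair of selected factors. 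These six roots are the vertices of a regular octahedron acted on by $\Oct\cong\SS_4=\mathrm{Stab}([\phi_6])$ (Theorem~\ref{theorem-mu-5}), and by $\Oct$-symmetry it suffices to test one antipodal pair, say $\{0,\infty\}$ (the remaining roots $1,-1,i,-i$ literally form a $\mumu_4$-orbit, so this works), and one ``edge'' pair, say $\{0,1\}$ (a direct check shows it fails). Hence exactly the three pairs of opposite vertices give lines, and translating them back into binary forms produces precisely $[fg]$, $[f^2-g^2]$, $[f^2+g^2]$ through $[\phi_6]=[fg(f^4-g^4)]$.

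For the statement on the ramification of $p$, I would invoke the proof of Lemma~\ref{lemma-hilb-y-1}: the ramification locus $R(p)\subset\CL(Y)$ is the $q$-preimage of the locus of special lines, and $p$ is unramified along any ordinary line. Since ``ordinary'' versus ``special'' is a $\PGL_2(\Bbbk)$-invariant condition on $\Sigma(Y)\cong\P(M_2)$, it is determined by which of the two orbits — the conic of squares $[f^2]$ or its complement — consists of special lines. Over a general point of $Y$, which lies in $\Orb_3(Y)$, the fiber of $p$ consists of the three \emph{distinct} lines found above, all of type $L_{fg}$ with $f\neq g$; as $p$ is finite of degree $3$ (the remark following Lemma~\ref{lemma-hilb-y-1}) it is unramified there, hence along every $L_{fg}$, so all such lines are ordinary. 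Over a point of $\Orb_2(Y)$, however, the fiber of $p$ has only two points while $\deg p=3$; the ordinary line $L_{fg}$ through that point contributes length $1$, so $p$ is ramified along the other line $L_{f^2}$, which is therefore special. Consequently the special lines are exactly the $L_{f^2}$ and $R(p)=\bigcup_{f}L_{f^2}$; and since $\Sigma(Y)\cong\P^2$ is smooth, Corollary~\ref{corollary:hilbert-smoothness} shows no line is $2$-special, so each $L_{f^2}$ is in fact $1$-special.

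The step I expect to cause the most friction is the octahedral bookkeeping in the $\Orb_3(Y)$ case: phrasing ``the remaining four roots form a $\mumu_4$-orbit'' in a coordinate-free way, and extracting cleanly from the $\Oct$-symmetry that exactly the three pairs of opposite vertices satisfy it; the rest is short direct computation and an appeal to the already established facts about $p$.
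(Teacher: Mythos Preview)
Your proof is correct. The paper does not give a proof of this corollary at all; it merely says that the result ``is easy to obtain'' from the explicit description of lines in Lemma~\ref{lemma:lines-v5-explicit}, and leaves the verification to the reader. Your proposal carries out precisely that verification: reduce by $\PGL_2(\Bbbk)$-equivariance to one representative per orbit, translate membership on a line into a divisibility condition on binary sextics, and exploit the octahedral symmetry of the six roots of $\phi_6$ to cut the $\Orb_3$ case down to two checks (one antipodal pair and one edge pair). The argument for ordinariness of $L_{fg}$ and specialness of $L_{f^2}$ via the degree-$3$ finite map $p$ and the identification of $R(p)$ in the proof of Lemma~\ref{lemma-hilb-y-1} is also sound; the implicit flatness of $p$ (needed to say every fibre has length exactly $3$) follows from miracle flatness since both $\CL(Y)$ and $Y$ are smooth.

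Two small remarks. First, your closing sentence about $1$-special versus $2$-special goes slightly beyond what the corollary asserts, but it is correct and already covered by Proposition~\ref{hilb-lines-smooth}. Second, your worry about making the ``remaining four roots form a $\mumu_4$-orbit'' condition coordinate-free is harmless: the M\"obius transformations sending the chosen pair to $\{0,\infty\}$ differ only by $t\mapsto\lambda t$ and $t\mapsto 1/t$, both of which preserve the property of being a $\mumu_4$-coset in $\Bbbk^\times$, so the condition is intrinsic to the unordered pair.
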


The three points $[fg]$, $[f^2 - g^2]$, and $[f^2 + g^2]$ parameterizing three lines through a general point of $Y$
correspond to the three axes of an octahedron.

\subsection{Fano threefolds of index 1 and genus 12}
\label{subsection:v22}

There is a similar example of a Fano threefold $X$ with $\rho(X) = 1$, $\iota(X) = 1$, and~\mbox{$\g(X) = 12$},
that was also found by Mukai and Umemura.
Consider the form
\begin{equation*}
\phi_{12}(x,y)=xy(x^{10} +11 x^5y^5+y^{10}) \in M_{12}
\end{equation*}
and the point
\begin{equation*}
\upsilon= (\phi_{12}, 1) \in \PP(M_{12}\oplus M_0) \cong \PP^{13}.
\end{equation*}

\begin{theorem}[\cite{Mukai-Umemura-1983}]
\label{theorem-mu-22}
The stabilizer of $[\upsilon]$ is the icosahedral group
$$\Icos\cong\A_5 \subset \PGL_2(\Bbbk),$$
and the closure of its orbit
\begin{equation*}
X^{\mathrm{MU}} = \overline{\PGL_2(\Bbbk)\cdot [\upsilon]}\subset \P^{13}
\end{equation*}
is a smooth anticanonically embedded Fano threefold with $\rho(X^{\mathrm{MU}})=1$, $\iota(X^{\mathrm{MU}})=1$, and~\mbox{$\g(X^{\mathrm{MU}})=12$}.
The automorphism group of $X^{\mathrm{MU}}$ is $\Aut(X^{\mathrm{MU}}) \cong \PGL_2(\Bbbk)$.
\end{theorem}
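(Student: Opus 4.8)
The plan is to prove the statement in three steps: determine the stabilizer of $[\upsilon]$ in $\PGL_2(\Bbbk)$, identify $X^{\mathrm{MU}}$ with the Fano threefold of genus $12$, and compute $\Aut(X^{\mathrm{MU}})$.

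First I would compute the stabilizer. The form $\phi_{12}$ is, up to scalar, the classical degree-$12$ icosahedral invariant, i.e.\ the product of the $12$ linear forms vanishing on an $\A_5$-orbit of $12$ points on $\P^1$; concretely the order-$5$ substitution $x\mapsto\zeta x$, $y\mapsto\zeta^{-1}y$ (with $\zeta$ a primitive fifth root of unity) fixes $\phi_{12}$, and together with suitable elements of orders $2$ and $3$ one obtains a copy $\Icos\cong\A_5\subset\PGL_2(\Bbbk)$ stabilizing $[\phi_{12}]\in\P(M_{12})$. Since the binary icosahedral group $2.\A_5\subset\SL_2(\Bbbk)$ is perfect there is no scalar ambiguity, and since $M_0$ is the trivial module, $\Icos$ fixes $[\upsilon]$. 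Conversely $\mathrm{Stab}([\upsilon])\subseteq\mathrm{Stab}([\phi_{12}])$, and the latter consists of the automorphisms of $\P^1$ preserving the reduced degree-$12$ divisor $\{\phi_{12}=0\}$; a nontrivial connected subgroup of $\PGL_2(\Bbbk)$ fixes only finitely many points of $\P(M_{12})$, all of them $\PGL_2(\Bbbk)$-translates of monomials, and $[\phi_{12}]$ is not such a point, so $\mathrm{Stab}([\phi_{12}])$ is finite; being a finite subgroup of $\PGL_2(\Bbbk)$ containing $\A_5$, it equals $\A_5$, since $\A_5$ is a maximal (hence self-normalizing) finite subgroup of $\PGL_2(\Bbbk)$. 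Thus $\mathrm{Stab}([\upsilon])=\Icos\cong\A_5$, the orbit $\PGL_2(\Bbbk)\cdot[\upsilon]\cong\PGL_2(\Bbbk)/\A_5$ is $3$-dimensional, and $X^{\mathrm{MU}}$ is an irreducible projective threefold.

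Next I would show that $X^{\mathrm{MU}}$ is a smooth Fano threefold with $\rho=1$, $\iota=1$, $\g=12$, anticanonically embedded; this is the content of \cite{Mukai-Umemura-1983}, and I would follow their scheme. One first describes the boundary $X^{\mathrm{MU}}\setminus(\PGL_2(\Bbbk)\cdot[\upsilon])$ by degenerating $[\upsilon]$ along one-parameter subgroups: the standard torus produces the point $[(x^{11}y,0)]$, with $2$-dimensional orbit $\PGL_2(\Bbbk)/T_2$, and a further unipotent degeneration produces $[(x^{12},0)]$, with $1$-dimensional orbit $\PGL_2(\Bbbk)/B_2\cong\P^1$ in the closure of the former; the only $\PGL_2(\Bbbk)$-fixed point of $\P^{13}$, namely $[(0,1)]$, does not lie on $X^{\mathrm{MU}}$, because $\phi_{12}$ is $\SL_2(\Bbbk)$-stable (all its roots are simple), so $0$ is not a limit of points of its $\SL_2(\Bbbk)$-orbit. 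Since $\Sing(X^{\mathrm{MU}})$ is a proper $\PGL_2(\Bbbk)$-invariant closed subset, it is a union of orbit closures of dimension $\le 2$, so by $\PGL_2(\Bbbk)$-homogeneity of each boundary orbit it suffices to verify smoothness at one point of each, e.g.\ at $[(x^{11}y,0)]$ and $[(x^{12},0)]$; this comes down to a local computation showing the Zariski tangent space there has dimension $3$. I expect this local smoothness check, together with the precise determination of the orbit structure, to be the main obstacle; an alternative that yields smoothness at once is to realize $X^{\mathrm{MU}}$ as the output of a double-projection link from the degree-$5$ index-$2$ threefold $Y$ of Theorem~\ref{theorem-mu-5}, as developed in \S\ref{subsection:v22} (cf.\ Proposition~\ref{proposition-v22-v5}). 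Granting smoothness, one checks that $X^{\mathrm{MU}}$ is Fano with $\rho(X^{\mathrm{MU}})=1$ and that the embedding $X^{\mathrm{MU}}\subset\P^{13}$ is anticanonical (in particular $-K_{X^{\mathrm{MU}}}^3=22$ and $h^0(X^{\mathrm{MU}},-K_{X^{\mathrm{MU}}})=14=\dim(M_{12}\oplus M_0)$); Table~\ref{table:Fanos-i-1} then forces $\iota(X^{\mathrm{MU}})=1$ and $\g(X^{\mathrm{MU}})=12$.

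Finally I would compute $\Aut(X^{\mathrm{MU}})$. By construction $\PGL_2(\Bbbk)\hookrightarrow\Aut(X^{\mathrm{MU}})$, and since $-K_{X^{\mathrm{MU}}}$ is very ample, $\Aut(X^{\mathrm{MU}})$ is a linear algebraic group acting faithfully on $H^0(X^{\mathrm{MU}},-K_{X^{\mathrm{MU}}})^\vee$ by Corollary~\ref{corollary:anticanonical-ring}. The infinitesimal computation $H^0(X^{\mathrm{MU}},T_{X^{\mathrm{MU}}})\cong\mathfrak{sl}_2$ of \cite{Mukai-Umemura-1983} gives $\dim\Aut(X^{\mathrm{MU}})=3$, hence $\Aut(X^{\mathrm{MU}})^0=\PGL_2(\Bbbk)$. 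Since $\Aut(X^{\mathrm{MU}})$ normalizes its identity component and every automorphism of the algebraic group $\PGL_2(\Bbbk)$ is inner, conjugation yields a homomorphism $\Aut(X^{\mathrm{MU}})\to\PGL_2(\Bbbk)$ whose kernel centralizes $\PGL_2(\Bbbk)$; an element of this kernel acts $\PGL_2(\Bbbk)$-equivariantly on $X^{\mathrm{MU}}$, hence $\PGL_2(\Bbbk)$-equivariantly on the open orbit $\PGL_2(\Bbbk)/\A_5$, and since $\A_5$ is self-normalizing in $\PGL_2(\Bbbk)$ it must act trivially there, hence on all of $X^{\mathrm{MU}}$. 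Therefore $\Aut(X^{\mathrm{MU}})=\PGL_2(\Bbbk)$.
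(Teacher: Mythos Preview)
The paper does not give its own proof of this theorem: it is stated with attribution to \cite{Mukai-Umemura-1983} and used as a black box. Your sketch is essentially a summary of the Mukai--Umemura argument, so there is nothing to compare it against within this paper.

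That said, your outline is sound. The stabilizer computation is correct, and your argument for $\Aut(X^{\mathrm{MU}})=\PGL_2(\Bbbk)$ via self-normalization of $\A_5$ in $\PGL_2(\Bbbk)$ is clean. You correctly flag the smoothness verification at the boundary orbits as the substantive step; this is indeed where the work in \cite{Mukai-Umemura-1983} lies, and there is no shortcut here within the paper's framework. Your proposed alternative, to obtain $X^{\mathrm{MU}}$ via the double projection link from $(Y,Z_{\mathrm{MU}})$ as in Theorem~\ref{theorem:double-projection}, is not quite independent: the paper uses Theorem~\ref{theorem-mu-22} to identify the threefold of Example~\ref{example-MU} with $X^{\mathrm{MU}}$ (see the proof of Theorem~\ref{theorem:Prokhorov-g-12-finite}), so invoking the link to establish smoothness of $X^{\mathrm{MU}}$ would be circular unless you first prove directly that the $B_2$-equivariant threefold produced from $Z_{\mathrm{MU}}$ has a $\PGL_2(\Bbbk)$-action, which amounts to the same local computation.
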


Note, however, that $X^{\mathrm{MU}}$ is just a single variety from a six-dimensional family of Fano threefolds of this type.
One of descriptions of other Fano threefolds of index 1 and genus~12 is based on the double projection method.

\begin{theorem}[{\cite{Iskovskikh1989}}, {\cite{Prokhorov-1992b}}, {\cite[Theorem 4.3.7]{Iskovskikh-Prokhorov-1999}}]
\label{theorem:double-projection}
The following assertions hold:
\begin{itemize}
\item[(i)]
Let $X$ be a smooth Fano threefold with $\rho(X) = 1$, $\iota(X) = 1$, and $\g(X) = 12$, and let $L \subset X$ be a line.
Then the linear system $|H_X-2L|$, where $H_X$ is the ample generator of $\Pic(X)$, defines a birational map of $X$
onto the smooth Fano threefold~$Y$ with $\rho(Y)=1$, $\iota(Y)=2$, and $\dd(Y)=5$.
\item[(ii)]
Let $Y$ be the smooth Fano threefold with $\rho(Y) = 1$, $\iota(Y)=2$, and $\dd(Y)=5$, and let $Z\subset Y \subset \P^6$ be a normal rational quintic curve.
Then the linear system~\mbox{$|3H_Y-2Z|$}, where $H_Y$ is the ample generator of $\Pic(Y)$, defines a
birational map of~$Y$ onto a smooth Fano threefold $X$ with $\rho(X)=1$, $\iota(X)=1$, and~\mbox{$\g(X)=12$}.
\end{itemize}
The constructions of~{\rm(i)} and~{\rm(ii)} are mutually inverse, and the corresponding birational transformation between $X$ and $Y$ can be described by a diagram
\begin{equation}\label{equation(1)}
\vcenter{
\xymatrix{
&&
X' \ar[dl]_{\sigma_X} \ar@{-->}[rr]^-{\upchi} && Y' \ar[dr]^{\sigma_Y}
\\
L \ar@{^{(}->}[r] &
X \ar@{-->}[rrrr]^-\xi &&&& Y &
Z \ar@{_{(}->}[l]
}
}
\end{equation}
where the morphism $\sigma_X$ is the blow up of $L$, the morphism $\sigma_Y$ is the blow up of $Z$,
and the upper dashed arrow $\upchi$ is a flop.
\end{theorem}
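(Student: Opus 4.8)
The plan is to realise the birational transformation $\xi$ of~(i) as a two-ray game on the blow-up of the line $L$, to read off diagram~\eqref{equation(1)} from that game, and then to identify $Y$ and $Z$ by intersection-theoretic bookkeeping; assertion~(ii) and the mutual-inverseness will follow from the symmetry of the resulting class relations. (This is essentially the argument of~\cite{Iskovskikh1989}, \cite{Prokhorov-1992b} and~\cite[Theorem~4.3.7]{Iskovskikh-Prokhorov-1999}, which I only sketch.) First I would form $\sigma_X\colon X'\to X$, the blow-up of $L$, with exceptional divisor $E$; writing $h=\sigma_X^*H_X$ one has $\rho(X')=2$, $-K_{X'}=h-E$, and the strict transform of $|H_X-2L|$ has class $h-2E=-K_{X'}-E$. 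By Lemma~\ref{lemma:normal-bundles} the bundle $\CN_{L/X}$ is $\CO_L\oplus\CO_L(-1)$ or $\CO_L(1)\oplus\CO_L(-2)$, so $E$ is the Hirzebruch surface $\mathbb{F}_1$ or $\mathbb{F}_3$, and in either case $h^2E=0$, $hE^2=-1$, $E^3=1$, hence $(-K_{X'})^3=18$. The first extremal ray of $\overline{\NE}(X')$ is contracted by $\sigma_X$; for the second, note that for a line $\ell\subset X$ meeting $L$ the strict transform $\tilde\ell\subset X'$ satisfies $h\cdot\tilde\ell=E\cdot\tilde\ell=1$, so $-K_{X'}\cdot\tilde\ell=0$ while $(h-2E)\cdot\tilde\ell=-1$. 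Since $X$ contains no planes or cones by Lemma~\ref{lemma:cones}(i), only finitely many lines meet $L$, and one deduces that $-K_{X'}$ is nef and that the contraction $\psi\colon X'\to W$ of the second ray is \emph{small}, contracting exactly the curves $\tilde\ell$.

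Next I would flop these curves, $\upchi\colon X'\dashrightarrow Y'$. They are smooth rational curves with normal bundle $\CO(-1)^{\oplus2}$ or $\CO\oplus\CO(-2)$ in the smooth weak-Fano threefold $X'$, so $Y'$ is again smooth with $\rho(Y')=2$ and $(-K_{Y'})^3=18$. The game continues with one more extremal contraction $\sigma_Y\colon Y'\to Y$ on the opposite side, and I would show that $\sigma_Y$ is \emph{divisorial}, contracting a prime divisor $E_Z$ onto a smooth curve $Z$ in a smooth projective threefold $Y$ with $\rho(Y)=1$: here one uses that $-K_{Y'}$ is big and nef together with Mori's classification of extremal contractions of smooth threefolds, ruling out a fibration, a contraction to a point, or a contraction onto a singular base by restricting $-K_{Y'}$ to the exceptional locus. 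By construction $\sigma_Y$ is defined by the (now base-point-free) strict transform of $|H_X-2L|$, so $\xi=\sigma_Y\circ\upchi\circ\sigma_X^{-1}$ is the map of~(i), and $\sigma_Y^*H_Y$ is the class on $Y'$ of that strict transform.

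To identify $Y$ and $Z$ I would compare canonical classes across the flop, which preserves $-K$ and $\Pic$: this gives $\sigma_Y^*H_Y=h-2E$ and, from $-K_{Y'}=\iota(Y)\,\sigma_Y^*H_Y-E_Z$, forces $\iota(Y)=2$ and $E_Z=h-3E$ --- the cases $\iota(Y)=1$ and $\iota(Y)\ge3$ being impossible because $E_Z$ must be an effective prime divisor realising a discrepancy-$1$ contraction to a curve. A $2$-jet computation along $L$ gives $\dim|H_X-2L|=6$, so $Y$, being non-degenerate in $\P^6$ and embedded by $|H_Y|$, has $\dd(Y)=5$; thus $Y$ is \emph{the} Fano threefold with $\rho(Y)=1$, $\iota(Y)=2$, $\dd(Y)=5$, and the remaining numerical relations (equivalently, the standard change of the triple-intersection form under the flop) force $Z$ to be a smooth rational curve of $H_Y$-degree $5$, i.e.\ a normal rational quintic. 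Inverting the class relations on $Y'$ yields
\begin{equation*}
\sigma_X^*H_X=3\,\sigma_Y^*H_Y-2E_Z,\qquad E=\sigma_Y^*H_Y-E_Z,
\end{equation*}
so the strict transform of $|-K_X|=|H_X|$ corresponds to $|3H_Y-2Z|$; this proves~(ii) and exhibits the two constructions as mutually inverse, the whole picture being diagram~\eqref{equation(1)}.

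The main obstacle is the analysis of the two-ray game in the two middle steps: proving that $-K_{X'}$ is nef (equivalently, that the second ray is $K$-trivial rather than $K$-positive), that $\psi$ is small of flopping type rather than divisorial, and, after the flop, that $\sigma_Y$ is a divisorial contraction onto a \emph{smooth} curve in a \emph{smooth} Fano threefold --- rather than a conic bundle, a del Pezzo fibration, a contraction to a point, or a contraction onto a singular variety. This requires local computations near $L$ (with the special lines, where $E\cong\mathbb{F}_3$, treated separately) and near the finitely many lines meeting $L$, and uses in an essential way that $X$ contains no cones or linear projections of the Veronese surface (Lemma~\ref{lemma:cones}). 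Once the geometry of the game is pinned down, identifying $Y$ with the quintic del Pezzo threefold and $Z$ with a rational quintic is the routine bookkeeping sketched above.
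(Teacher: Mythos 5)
The paper does not prove Theorem~\ref{theorem:double-projection}: it is quoted verbatim from \cite{Iskovskikh1989}, \cite{Prokhorov-1992b} and \cite[Theorem~4.3.7]{Iskovskikh-Prokhorov-1999}, so there is no internal proof to compare against. Your sketch is the standard double-projection/two-ray-game argument from exactly those sources, and the bookkeeping you do checks out: $(-K_{X'})^3=(h-E)^3=22-3-1=18$, the strict transforms of lines meeting $L$ are the $K$-trivial, $(h-2E)$-negative curves, $\dim|H_X-2L|=13-7=6$ gives $\dd(Y)=5$, and the class relations $\sigma_Y^*H_Y=h-2E$, $E_Z=h-3E$ invert to $\sigma_X^*H_X=3\sigma_Y^*H_Y-2E_Z$, $E=\sigma_Y^*H_Y-E_Z$, which is the content of the diagram and of Remark~\ref{remark:contracted-divisors}. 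You are also right to avoid computing $(h-2E)^3$ on $X'$, since the flop changes the cubic form on $\Pic$.

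Three caveats. First, Lemma~\ref{lemma:normal-bundles} alone gives $\CN_{L/X}\cong\CO_L(a)\oplus\CO_L(-1-a)$ for \emph{some} $a\ge 0$; the dichotomy $E\cong\F_1$ or $\F_3$ requires excluding $a\ge 2$, which for genus $12$ is true but is an input from the references (the paper's own route to it, Corollary~\ref{corollary:sigma-x-gorenstein}, already uses this theorem), and it matters for the local analysis of the game along $E$, not just for the numerology. Second, assertion~(ii) for an \emph{arbitrary} normal rational quintic $Z\subset Y$ does not follow formally from the class relations derived on side~(i): those only treat pairs $(Y,Z)$ that arise from some $(X,L)$. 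One must either run the two-ray game starting from the blow-up of $Z$ in $Y$ and show $|3H_Y-2Z|$ behaves as claimed, or prove that every such $Z$ occurs; "by symmetry" is too quick here. Third, as you yourself flag, the substance of the theorem is the verification that $-K_{X'}$ is nef, that the second contraction of $X'$ is small of flopping type, and that after the flop the opposite extremal contraction is divisorial onto a smooth curve in a smooth threefold; your proposal names these steps but defers them entirely to the cited papers, so it is a correct and well-organized roadmap rather than a self-contained proof.
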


\begin{remark}\label{remark:contracted-divisors}
In the above diagram, the map $\xi \colon X\dashrightarrow Y$ contracts a divisor which is a unique member of the linear system $|H_X-3L|$.
Similarly, the map $\xi^{-1} \colon Y\dashrightarrow X$ contracts a divisor which is a unique member of the linear system $|H_Y-Z|$.
\end{remark}

We denote by $E_L \subset X'$ and $E_Z \subset Y'$ the exceptional divisors of the blowups $\sigma_X$ and~$\sigma_Y$.

\begin{lemma}
\label{lemma:flopping-locus}
The flopping locus of the map $\upchi$ is the union of strict transforms of lines on $X$ intersecting $L$,
and of the exceptional section of the divisor $E_L$ if the line $L$ is special.
The flopping locus of the map $\upchi^{-1}$ is the union of strict transforms of bisecants of $Z$ on $Y$.
\end{lemma}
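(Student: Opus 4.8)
The plan is to recognise $\upchi$ and $\upchi^{-1}$ as flops governed by the anticanonical classes of $X'$ and $Y'$ and then read the flopping loci off from intersection numbers. Since $\sigma_X$ is the blow-up of the smooth curve $L$ and $-K_X\sim H_X$, one has $-K_{X'}\sim\sigma_X^*H_X-E_L$; similarly $-K_{Y'}\sim 2\sigma_Y^*H_Y-E_Z$, using $-K_Y\sim 2H_Y$. First I would check that $-K_{X'}$ is nef: the Mori cone $\overline{\NE}(X')$ is two-dimensional, one extremal ray being generated by the class $\ell$ of a fibre of $E_L\to L$ (with $K_{X'}\cdot\ell=-1$, contracted by $\sigma_X$) and the other being the $K_{X'}$-trivial ray contracted by the flopping contraction of $\upchi$ (which exists by Theorem~\ref{theorem:double-projection}); hence $-K_{X'}$ is nef, and it is big since $(-K_{X'})^3=(-K_{Y'})^3=18$ (this equality is forced by crepancy of the flop, and is also a useful numerical check). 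Consequently $-K_{X'}$ is the nef class defining the flopping contraction $X'\to W$ of $\upchi$, so the flopping locus of $\upchi$ is exactly the union of the irreducible curves $C\subset X'$ with $-K_{X'}\cdot C=0$; the same holds for $\upchi^{-1}$ with $-K_{Y'}$ in place of $-K_{X'}$.

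Next I would exhibit the obvious such curves. If $\widetilde{L'}$ is the strict transform of a line $L'\ne L$ on $X$ meeting $L$, then $E_L\cdot\widetilde{L'}=1$ (distinct lines meet in a scheme of length $\le 1$), so $-K_{X'}\cdot\widetilde{L'}=H_X\cdot L'-1=0$. For a curve inside $E_L$: since $\CN_{L/X}\cong\CO_L(a)\oplus\CO_L(-1-a)$ with $a\ge 0$, the divisor $E_L$ is the Hirzebruch surface $\mathbb{F}_{1+2a}$, and a computation in $\Pic(E_L)$ gives $-K_{X'}\cdot s=1-a$ on its section $s$ with $s^2=-(1+2a)$, while $-K_{X'}$ is strictly positive on every other curve contained in $E_L$. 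Nefness of $-K_{X'}$ then forces $a\le 1$ (so $X$ has no $2$-special lines), and $s$ lies in the flopping locus exactly when $a=1$, i.e.\ when $L$ is special. Running the same bookkeeping on $Y'$ shows that the strict transform of a line $L'$ on $Y$ is flopped iff $E_Z\cdot\widetilde{L'}=2$, i.e.\ iff $L'$ is a bisecant of $Z$ (a tangent line of $Z$ being a degenerate bisecant, which corresponds under $\upchi$ to the exceptional section of $E_L$ for a special $L$), and that $E_Z$ contributes nothing else.

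Finally I would rule out all other flopped curves. Let $C\subset X'$ be irreducible with $-K_{X'}\cdot C=0$ and $C\not\subset E_L$, put $\Gamma=\sigma_X(C)$ and $d=H_X\cdot\Gamma\ge 1$; nefness of $-K_{X'}$ forces $E_L\cdot C=d$. For $d=1$ this says precisely that $\Gamma$ is a line meeting $L$. For $d\ge 2$ one has $E_L\cdot C\le d$, with equality — analysed by restricting a general hyperplane through $L$ to $\Gamma$ — forcing $\Gamma$ and $L$ to span a plane $\Pi$; but then $\Gamma\cup L\subset\Pi\cap X$ would be a plane curve of degree $d+1\ge 3$, which is impossible because $X$ is cut out by quadrics in its anticanonical embedding (Mukai), so that $\Pi\cap X$, being one-dimensional, has degree $\le 2$. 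On the $Y$ side the analogous statement is more direct: for irreducible $\Gamma\subset Y$ of degree $d\ge 2$, the fact that $Z$ is a rational normal quintic (whose linear sections lie in linearly general position) gives $E_Z\cdot\widetilde\Gamma\le\operatorname{length}\bigl(Z\cap\langle\Gamma\rangle\bigr)\le\dim\langle\Gamma\rangle+1\le d+1<2d$, whence $-K_{Y'}\cdot\widetilde\Gamma>0$; this, together with the explicit description of lines on $Y$ in \S\ref{subsection:v5}, matches up the two sides. The main obstacle is this last step on the $X$-side — excluding plane curves of degree $\ge 3$ on $X$: with the input that $X$ is an intersection of quadrics it is immediate, but in the absence of that input it requires a separate short argument about curves of small degree on $X$.
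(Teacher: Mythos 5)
Your overall strategy is sound and genuinely different from the paper's: you identify the flopping locus of $\upchi$ (resp.\ $\upchi^{-1}$) with the set of irreducible curves on which the nef class $-K_{X'}$ (resp.\ $-K_{Y'}$) vanishes, and then classify those curves by intersection theory on the two sides independently. The paper instead simply cites \cite[Proposition~4.3.1]{Iskovskikh-Prokhorov-1999} for the $X$-side and then \emph{transfers} the answer across the flop: a flopped curve $C \subset Y'$ comes from a flopping curve $C_X \subset X'$ with $(\sigma_X^*H_X - 2E_L)\cdot C_X = -1$, so by the sign change of intersection numbers under a flop $\sigma_Y^*H_Y\cdot C = 1$; hence $\sigma_Y(C)$ is a line, and $K_{Y'}\cdot C=0$ then forces $E_Z\cdot C=2$. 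Your $X'$-side analysis is essentially complete: the two-ray nefness argument, the computation $-K_{X'}\cdot s = 1-a$ on the exceptional section of $E_L\cong\mathbb F_{2a+1}$, and the projection-from-$L$ exclusion of curves of degree $\ge 2$ all check out, modulo the input that $X$ is cut out by quadrics in its anticanonical embedding, which you correctly flag and which is available for $\g=12$.

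The genuine gap is on the $Y'$-side, where your ``same bookkeeping'' does not close. First, curves inside $E_Z$ are not actually ruled out: writing $\CN_{Z/Y}\cong\CO(b)\oplus\CO(8-b)$, the same adjunction computation you performed on $E_L$ gives $-K_{Y'}\cdot s_Z = 10-\max(b,8-b)$ on the exceptional section $s_Z$ of $E_Z$, and nefness of $-K_{Y'}$ only yields $\max(b,8-b)\le 10$; the case $\CN_{Z/Y}\cong\CO(-2)\oplus\CO(10)$, in which $s_Z$ would be a flopping curve and the lemma would fail, is not excluded by anything you say. (It can be excluded, e.g.\ because both summands of $\CN_{Z/Y}$ inject into $\CN_{Z/\PP^6}\cong\CO(7)^{\oplus 4}\oplus\CO(5)$ and hence have degree at most $7$ --- but that step is missing.) Second, your inequality $E_Z\cdot\widetilde\Gamma\le\operatorname{length}\bigl(Z\cap\langle\Gamma\rangle\bigr)$ is false in general: at a point of $Z$ where $\Gamma$ is singular, the left-hand side counts the branches of $\Gamma$ and can strictly exceed the length of the scheme-theoretic intersection. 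Both problems disappear if, once the $X'$-side is finished, you adopt the paper's transfer argument instead of re-running the two-ray game on $Y'$: every flopped curve has $\sigma_Y^*H_Y$-degree equal to $1$, which simultaneously rules out all curves in $E_Z$ (whose $\sigma_Y^*H_Y$-degree is $0$ or a multiple of $5$) and all curves of degree $\ge 2$ on $Y$.
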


\begin{proof}
The first assertion can be found in~\cite[Proposition~4.3.1]{Iskovskikh-Prokhorov-1999}.
For the second assume that $C \subset Y'$ is a flopping curve of $\upchi^{-1}$ and let $C_X \subset X'$ be the corresponding flopped curve.
Then either $\sigma_X(C_X)$ is a line meeting $L$ or $C_X$ is the exceptional section of $E_L$.
Therefore, one has
\begin{equation*}
(\sigma_X^* H_X-2 E_L)\cdot C_X= -1.
\end{equation*}
By the construction of flops \cite{Kollar-flops} we have $\sigma_Y^* H_Y \cdot C = 1$.
Therefore, $\sigma_Y(C)$ is a line on~$Y$.
Since $K_{Y'}\cdot C = 0$, it is a bisecant of $Z$.
\end{proof}

One can also show that the flopping curves of the map $\upchi$ are disjoint and have normal bundles
of the form $\O_{\P^1}(-1)\oplus\O_{\P^1}(-1)$ or $\O_{\P^1}\oplus\O_{\P^1}(-2)$, see \cite{Cutkosky1989},
and therefore, near each flopping curve, the flop $\upchi$ is given by Reid's pagoda \cite{Reid1983}.

\begin{remark}
\label{remark:functoriality}
The construction of Theorem~\ref{theorem:double-projection} is functorial: an isomorphism between pairs $(X_1,L_1)$ and $(X_2,L_2)$ induces
an isomorphism of the associated diagrams~\eqref{equation(1)}, and hence an isomorphism of the corresponding pairs $(Y,Z_1)$ and $(Y,Z_2)$.
Conversely, an isomorphism of pairs $(Y,Z_1)$ and $(Y,Z_2)$ induces in the same way an isomorphism of pairs $(X_1,L_1)$ and $(X_2,L_2)$ associated with them.
In particular, if the pair $(Y,Z)$ corresponds to a pair $(X,L)$ then
\begin{equation}
\label{eq:aut-xl-yz}
\Aut(X; L) \cong \Aut(Y; Z),
\end{equation}
where $\Aut(X; L) \subset \Aut(X)$ and $\Aut(Y; Z) \subset \Aut(Y)$ are the subgroups preserving $L$ and~$Z$ respectively.
In particular, if $G \subset \Aut(X)$ and $L$ is $G$-invariant, then $G \subset \PGL_2(\Bbbk)$ and $Z$ is $G$-invariant.
Conversely, if $Z$ is stabilized by a subgroup $G \subset \PGL_2(\Bbbk)$ then $G$ acts faithfully on $X$ and preserves the line $L$.
\end{remark}

Denote by $\Sigma^0_L(X) \subset \Sigma(X)$ the open subscheme of the Hilbert scheme of lines on $X$ that parameterizes lines
which intersect neither $L$, nor any other line intersecting $L$.
Similarly, denote by
\begin{equation*}
\Sigma_Z(Y) = q(p^{-1}(Z)) \subset \Sigma(Y)
\end{equation*}
the closed subscheme of $\Sigma(Y)$ parameterizing lines intersecting a normal rational quintic~$Z$, and let
$\Sigma^0_Z(Y) \subset \Sigma_Z(Y)$ be its open subscheme that parameterizes lines which are
neither bisecants of $Z$, nor intersect any bisecant of $Z$.

\begin{lemma}
\label{lemma:sigma-0-iso}
The scheme $\Sigma^0_L(X)$ is an open dense subscheme of $\Sigma(X)$, and
the map $L' \mapsto \xi(L')$ is a rational map $\Sigma(X) \dashrightarrow \Sigma_Z(Y)$
inducing an isomorphism $\Sigma^0_L(X) \cong \Sigma^0_Z(Y)$.
\end{lemma}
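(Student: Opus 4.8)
The plan is to analyze the birational map $\xi \colon X \dashrightarrow Y$ on the level of lines, using the description of the flopping loci in Lemma~\ref{lemma:flopping-locus} together with the functoriality statement of Remark~\ref{remark:functoriality}. First I would observe that $\Sigma^0_L(X)$ is open in $\Sigma(X)$ by definition, and it is dense because the locus of lines meeting $L$ is at most one-dimensional (any irreducible component of $\Sigma(X)$ is a curve by Lemma~\ref{lemma-hilb-x-1}, so one should check that not every line on $X$ meets $L$; this follows, e.g., because $p \colon \CL_0(X) \to X$ does not contract divisors by Lemma~\ref{lemma-hilb-x-2}, so the lines meeting a fixed line sweep out a proper closed subset of $X$, hence form a proper closed subset of each component of $\Sigma(X)$). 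Similarly one checks $\Sigma^0_Z(Y)$ is open and dense in $\Sigma_Z(Y)$.

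The heart of the argument is that over $\Sigma^0_L(X)$ the map $\xi$ is an isomorphism onto its image, and sends lines to lines meeting $Z$. Indeed, a line $L' \subset X$ with $[L'] \in \Sigma^0_L(X)$ is disjoint from the locus $\sigma_X(\Exc(\upchi)) \cup L$ by Lemma~\ref{lemma:flopping-locus} (the flopping locus of $\upchi$ consists of strict transforms of lines meeting $L$ together with the exceptional section of $E_L$; and $L$ itself is blown up by $\sigma_X$). Hence the strict transform of $L'$ in $X'$ avoids $\Exc(\sigma_X) = E_L$ and the flopping locus, so it maps isomorphically to a curve in $Y' \setminus (\Exc(\sigma_Y) \cup \Exc(\upchi^{-1}))$, and its image $\xi(L') \subset Y$ is a curve disjoint from $Z$-related flopping data. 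A degree computation as in the proof of Lemma~\ref{lemma:flopping-locus}, using $\sigma_Y^*H_Y \cdot \widetilde{L'} = (\sigma_X^*H_X - 2E_L)\cdot \widetilde{L'} = H_X \cdot L' = 1$ (the term $E_L \cdot \widetilde{L'}$ vanishes since $L'$ is disjoint from $L$), shows $\xi(L')$ is a line on $Y$; and since $K_{Y'}\cdot \widetilde{L'} = (\sigma_X^*K_X + E_L)\cdot\widetilde{L'} = K_X\cdot L' = -1 \ne 0$, the strict transform $\widetilde{L'}$ is not a flopping curve, so one further checks $\xi(L')$ meets $Z$: this is where I use that $E_Z \cdot (\text{strict transform in } Y')$ records the intersection with $Z$, and the numerical relation $3H_Y - 2Z$ pulls back along $\xi^{-1}$ to $H_X - \text{(multiple of }L)$ forces $\xi(L') \cdot Z = 1$. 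Thus $\xi$ restricts to a morphism $\Sigma^0_L(X) \to \Sigma_Z(Y)$ landing in $\Sigma^0_Z(Y)$, the latter because $\xi(L')$ avoids the $\upchi^{-1}$-flopping locus which by Lemma~\ref{lemma:flopping-locus} is exactly the strict transforms of bisecants of $Z$.

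The inverse is constructed symmetrically: for $[L''] \in \Sigma^0_Z(Y)$, the line $L''$ meets $Z$ transversally in one point, is not a bisecant of $Z$, and meets no bisecant of $Z$; so its strict transform in $Y'$ meets $E_Z$ in one point, avoids $\Exc(\upchi^{-1})$, hence maps isomorphically through the flop and down $\sigma_X$ to a line $\xi^{-1}(L'') \subset X$ that by the same degree bookkeeping avoids $L$ and every line meeting $L$, i.e. defines a point of $\Sigma^0_L(X)$. Functoriality of the construction, or simply the fact that $\upchi$ and $\upchi^{-1}$ are mutually inverse away from their flopping loci, shows these two assignments are inverse to each other, giving the claimed isomorphism $\Sigma^0_L(X) \cong \Sigma^0_Z(Y)$; and scheme-theoretically the identification of tangent spaces follows since both Hilbert schemes have the expected local structure along these open loci (smoothness of $\Sigma(X)$ at ordinary lines, which is the generic situation here, by Corollary~\ref{corollary:hilbert-smoothness}). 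The main obstacle I anticipate is making the "meets $Z$ in exactly one point / is not a bisecant" bookkeeping fully rigorous scheme-theoretically rather than just on points — i.e. verifying that $\xi$ is an isomorphism of schemes (not merely a bijection) between these open subschemes, which requires either checking it is étale via the tangent space computation above, or invoking the functoriality of Theorem~\ref{theorem:double-projection} in families over $\Sigma^0_L(X)$.
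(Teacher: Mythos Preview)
Your approach is essentially the paper's, but there are two concrete problems.

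First, your density argument for $\Sigma^0_L(X)$ is both miscited and incomplete. Lemma~\ref{lemma-hilb-x-2} concerns conics, not lines, and in any case ``$p$ does not contract divisors'' does not by itself imply that the set of lines meeting $L$ is a proper closed subset of each component of $\Sigma(X)$. The paper's argument is much simpler: by Lemma~\ref{lemma:flopping-locus}, the lines meeting any fixed line $L_0$ are precisely the flopping curves of the double projection from $L_0$, hence there are only \emph{finitely many} of them. Applying this to $L$ and then to each of the finitely many lines meeting $L$ shows that the complement of $\Sigma^0_L(X)$ in $\Sigma(X)$ is finite; since every component of $\Sigma(X)$ is one-dimensional, density follows.

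Second, your claim that ``similarly one checks $\Sigma^0_Z(Y)$ is open and dense in $\Sigma_Z(Y)$'' is false, and the paper explicitly warns against it right after the lemma: $\Sigma_Z(Y)$ can have an entire irreducible component consisting of lines meeting both $Z$ and a bisecant of $Z$ (this actually occurs for the special quintics, see Lemma~\ref{lemma:sigma-z-y}), and then $\Sigma^0_Z(Y)$ misses that component entirely. Fortunately the lemma does not assert this density, so the error is extraneous rather than fatal, but the ``similarly'' is unjustified.

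For the intersection $\xi(L')\cdot Z = 1$, where you are vague, the paper simply uses the divisor identification coming from Remark~\ref{remark:contracted-divisors}: the unique member of $|H_X - 3L|$ is contracted onto $Z$, so on the blowup one has the numerical identity $E_Z \cdot \xi(L') = (\sigma_X^*H_X - 3E_L)\cdot L' = 1$ directly, parallel to your computation of $H_Y\cdot\xi(L')$. The rest of your argument, including the inverse construction and the mutual inversity, matches the paper; your scheme-theoretic worries at the end are not addressed in the paper either, which simply declares the morphisms ``evidently mutually inverse.''
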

\begin{proof}
By Lemma~\ref{lemma:flopping-locus}
any line intersecting a given line $L_0$ on $X$ is a flopping line for the double projection from $L_0$,
hence the number of such lines is finite.
Since any component of $\Sigma(X)$ is one-dimensional (see Lemma~\ref{lemma-hilb-x-1}),
it follows that $\Sigma^0_L(X) \subset \Sigma(X)$ is dense.

If $L'$ corresponds to a point of $\Sigma^0_L(X)$, the map $\xi$ is regular on $L'$.
Since
\begin{equation*}
H_Y \cdot \xi(L') = (H_X - 2E_L) \cdot L' = 1
\qquad\text{and}\qquad
E_Z \cdot \xi(L') = (H_X - 3E_L) \cdot L' = 1,
\end{equation*}
it follows that $\xi(L')$ is a line, and intersects $Z$ at one point.
Moreover, $\xi(L')$ does not intersect bisecants of~$Z$, since $L'$ does not intersect flopping lines.
Hence $\xi(L')$ corresponds to a point of $\Sigma^0_Z(Y)$.
Thus, the map $\xi$ is well defined on an open subscheme $\Sigma^0_L(X)$ as a map $\Sigma^0_L(X) \to \Sigma^0_Z(Y)$.

Conversely, if $L'$ corresponds to a point of $\Sigma^0_Z(Y)$, the map $\xi^{-1}$ is regular on $L'$,
and a computation similar to the above shows that $\xi^{-1}(L')$ is a line on $X$.
This defines a morphism $\Sigma^0_Z(Y) \to \Sigma_L^0(X)$.
The two morphisms are evidently mutually inverse.
\end{proof}

In the above lemma we do not claim that $\Sigma^0_Z(Y)$ is dense in $\Sigma_Z(Y)$.
In fact, $\Sigma_Z(Y)$ can have a component consisting of lines
meeting both $Z$ and a bisecant of $Z$ (cf.\ Lemma~\ref{lemma:sigma-z-y})
and then $\Sigma^0_Z(Y)$ is contained in the complement of this component.

\begin{corollary}
\label{corollary:sigma-x-gorenstein}
The Hilbert scheme of lines $\Sigma(X)$
is a Gorenstein curve.
\end{corollary}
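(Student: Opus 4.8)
The plan is to prove Gorenstein-ness locally: for every point $[L]\in\Sigma(X)$ I would show that the complete local ring $\widehat{\CO}_{\Sigma(X),[L]}$ is a complete intersection. Since $\g(X)=12$, the ample generator $H\sim -K_X$ is very ample, so Lemma~\xref{lemma:lines-conics} applies and every line $L\subset X$ is a locally complete intersection subscheme isomorphic to $\PP^1$. Hence, by the standard deformation theory of Hilbert schemes (\cite{FGA}, or \cite[Theorem~I.2.8]{Kollar-1996-RC}), the tangent space to $\Sigma(X)$ at $[L]$ is $H^0(L,\CN_{L/X})$, the obstruction space is $H^1(L,\CN_{L/X})$, and the complete local ring $\widehat{\CO}_{\Sigma(X),[L]}$ is a quotient of a power series ring $\Bbbk[[x_1,\dots,x_N]]$ with $N=h^0(L,\CN_{L/X})$ by an ideal generated by at most $M=h^1(L,\CN_{L/X})$ elements, each with no constant and no linear term.

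The key numerical input is Lemma~\xref{lemma:normal-bundles}: one has $\CN_{L/X}\cong\CO_L(a)\oplus\CO_L(-1-a)$ for some integer $a\ge 0$, so $N=h^0(L,\CN_{L/X})=a+1$ while $M=h^1(L,\CN_{L/X})=a=N-1$. Thus $\widehat{\CO}_{\Sigma(X),[L]}$ is a quotient of an $N$-dimensional regular local ring by an ideal generated by at most $N-1$ elements, so by Krull's height theorem its dimension is at least $1$; on the other hand it equals $1$ by Lemma~\xref{lemma-hilb-x-1}. Comparing these, the ideal must in fact be generated by exactly $N-1$ elements, and these form part of a system of parameters, hence a regular sequence, in the Cohen--Macaulay (regular) local ring $\Bbbk[[x_1,\dots,x_N]]$. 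Consequently $\widehat{\CO}_{\Sigma(X),[L]}$ is a complete intersection, in particular Gorenstein. Running this over all $[L]$ gives that $\Sigma(X)$ is a Gorenstein curve.

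I do not expect any serious obstacle. The only step requiring care is the input from deformation theory, namely that the germ of the Hilbert scheme embeds into its Zariski tangent space with the number of defining equations controlled by the obstruction space $H^1(L,\CN_{L/X})$; this is precisely the cited result and is legitimate here because lines on $X$ are lci. After that the argument is pure commutative algebra. As an optional remark, for $\g(X)=12$ the inclusion $\CN_{L/X}\hookrightarrow\CN_{L/\PP^{13}}\cong\CO_L(1)^{\oplus 12}$ forces $a\le 1$, so $\widehat{\CO}_{\Sigma(X),[L]}$ is in fact either $\Bbbk[[t]]$ or a planar curve singularity $\Bbbk[[x,y]]/(f)$; but this refinement is not needed for the statement.
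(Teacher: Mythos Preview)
Your argument is correct and is a genuinely different route from the paper's proof.

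The paper proves the corollary by covering $\Sigma(X)$ by the open sets $\Sigma^0_L(X)$ and invoking the isomorphism $\Sigma^0_L(X)\cong\Sigma^0_Z(Y)$ of Lemma~\xref{lemma:sigma-0-iso} obtained from the double projection (Theorem~\xref{theorem:double-projection}); then it observes that $p^{-1}(Z)\subset\CL(Y)$ is a locally complete intersection (because $Z\subset Y$ is a smooth curve and $p$ is finite), hence Gorenstein, and that $q$ is an isomorphism over $\Sigma^0_Z(Y)$. So the Gorenstein property is imported from the explicit geometry of the associated index~$2$ threefold.

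Your proof is more elementary and in fact more general: it only uses Lemma~\xref{lemma:normal-bundles} (giving $h^0(\CN_{L/X})-h^1(\CN_{L/X})=1$), Lemma~\xref{lemma-hilb-x-1} (one-dimensionality), and the standard obstruction bound, so it applies verbatim to any Fano threefold with $\rho(X)=1$, $\iota(X)=1$ and $-K_X$ very ample, not just $\g(X)=12$. The paper's approach, by contrast, is tied to genus $12$ but buys more: since $\Sigma^0_Z(Y)$ sits inside $\Sigma(Y)\cong\PP^2$, one immediately gets that $\Sigma(X)$ has only planar singularities, and the explicit identification with a subscheme of $\PP^2$ is exactly what is exploited later in Lemma~\xref{lemma:sigma-z-y} and Proposition~\xref{proposition:lines-special-v22}. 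Your optional remark that $a\le 1$ recovers the planarity, but the concrete link to $\Sigma_Z(Y)$ is what the paper ultimately needs.
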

\begin{proof}
As it was mentioned in the proof of Lemma~\ref{lemma:sigma-0-iso}, the number of lines in $X$ intersecting a given line $L$ is finite.
This means that the open subschemes $\Sigma^0_L(X)$ form an open covering of $\Sigma(X)$.
So, by Lemma~\ref{lemma:sigma-0-iso} it is enough to prove that $\Sigma^0_Z(Y)$ is Gorenstein.

Since $Z$ is a smooth curve, it is a locally complete intersection in $Y$.
Since the map $p\colon \CL(Y) \to Y$ is finite, the scheme $p^{-1}(Z) \subset \CL(Y)$ is also a locally complete intersection,
and since $\CL(Y)$ is smooth, we conclude that $p^{-1}(Z)$ is a Gorenstein curve.
It remains to notice that the map $q\colon p^{-1}(Z) \to \Sigma_Z(Y)$ is an isomorphism over $\Sigma^0_Z(Y)$, hence the latter is also Gorenstein.
\end{proof}

The above argument also shows that the curve $\Sigma(X)$ has only planar singularities.

\subsection{Special Fano threefolds of genus $12$}
\label{subsection:v22-special}

In this section we construct some examples of Fano threefolds $X$ of genus 12 with infinite automorphism groups, and after that
we show that all $X$ with infinite automorphism groups are covered be these examples.

By Remark~\ref{remark:functoriality} to produce an example of such $X$,
it is enough to find a normal rational quintic $Z$ stabilized by an infinite subgroup of $\Aut(Y) \cong \PGL_2(\Bbbk)$.
Recall the notation for subgroups $B_2$, $U_2$, and $T_2$ of $\PGL_2(\Bbbk)$ introduced in~\S\ref{subsection:v5}.

\begin{example}
\label{example-MU}
Let $Z = Z_{\mathrm{MU}} \subset Y$ be the quintic of Remark~\ref{remark:curves-oorb2}.
The corresponding Fano threefold of index 1 and genus 12 has a faithful action of the subgroup $B_2 \subset \PGL_2(\Bbbk)$.
In Theorem~\ref{theorem:Prokhorov-g-12-finite} we prove it is the Mukai--Umemura threefold $X^{\mathrm{MU}}$ of Theorem~\ref{theorem-mu-22}.
\end{example}

\begin{example}[{\cite{Prokhorov-1990c}}] \label{example-V22-a}
The curve
\begin{equation}\label{eq:z:a}
Z_{\mathrm{a}} = \overline{ U_2 \cdot [\phi_6]}\subset Y\subset\P(M_6)
\end{equation}
is a normal rational quintic curve preserved by the subgroup $U_2 \subset \PGL_2(\Bbbk)$.
We have
\begin{equation*}
Z_{\mathrm{a}} \cap \Orb_3(Y) = U_2 \cdot [\phi_6] \cong \mathbb{A}^1,
\qquad
Z_{\mathrm{a}} \cap \Orb_2(Y) = \varnothing,
\qquad
Z_{\mathrm{a}} \cap \Orb_1(Y) = [x^6].
\end{equation*}
We denote by $X^{\mathrm{a}}$ the Fano threefold of index 1 and genus 12
corresponding to the quintic~$Z_{\mathrm{a}}$ via the construction of Theorem~\xref{theorem:double-projection}.
\end{example}

\begin{example}[{\cite{Prokhorov-1990c}}]\label{example-V22-m}
For every parameter $u \in \Bbbk$ put
\begin{equation}
\label{eq:normal-quintic}
\phi_{6,u}(x,y) =
\left(\begin{smallmatrix}
1 & u \\ 0 & 1
\end{smallmatrix}\right)
\cdot \phi_6 =
x(ux+y)(x^4 - (ux+y)^4).
\end{equation}
Clearly, one has
\begin{equation*}
[\phi_{6,u}] \in U_2 \cdot[\phi_6] \subset \Orb_3(Y) \subset Y.
\end{equation*}
Expanding the right side of~\eqref{eq:normal-quintic} we get
\begin{equation*}
\phi_{6,u}(x,y) = u(1-u^4)x^6 + (1 - 5u^4)x^5y - 10u^3x^4y^2 - 10u^2x^3y^3 - 5ux^2y^4 - xy^5.
\end{equation*}
If all the coefficients of this polynomial are non-zero, i.e., if
\begin{equation}\label{eq:uu15u1}
u(u^4-1)(5u^4-1) \ne 0,
\end{equation}
the closure of the $T_2$-orbit of $\phi_{6,u}$
\begin{equation}\label{eq:z:m}
Z_{\mathrm{m}}(u) = \overline{T_2 \cdot [\phi_{6,u}]}
\end{equation}
is a normal rational quintic curve preserved by the subgroup $T_2 \subset \PGL_2(\Bbbk)$.
If~\eqref{eq:uu15u1} fails the orbit closure is either the line $L_{xy}$ (if $u = 0$),
or a normal rational quartic curve (if $u^4 = 1$), or a singular rational quintic curve (if $u^4 = 1/5$).

We have
\begin{align*}
Z_{\mathrm{m}}(u) \cap \Orb_3(Y) &= T_2 \cdot [\phi_{6,u}] \cong \mathbb{A}^1 \setminus \{0\},\\
Z_{\mathrm{m}}(u) \cap \Orb_2(Y) &= [xy^5],\\
Z_{\mathrm{m}}(u) \cap \Orb_1(Y) &= [x^6].
\end{align*}%
We denote by $X^{\mathrm{m}}(u)$ the Fano threefold of index 1 and genus 12 corresponding to
the quintic $Z_{\mathrm m}(u)$ via the construction of Theorem~\xref{theorem:double-projection}.
\end{example}

In what follows we refer to varieties $X^{\mathrm{MU}}$, $X^{\mathrm a}$, and $X^{\mathrm m}(u)$ defined in Theorem~\ref{theorem-mu-22}
and Examples~\ref{example-V22-a} and~\ref{example-V22-m} as \emph{special} Fano threefolds of genus $12$.
According to Remark~\ref{remark:functoriality} and in view of the construction of the curves $Z_{\mathrm{MU}}$, $Z_{\mathrm{a}}$, and $Z_{\mathrm{m}}(u)$
we have $B_2 \subset \Aut(X^{\mathrm{MU}})$, $U_2 \subset \Aut(X^{\mathrm{a}})$, and $T_2 \subset \Aut(X^{\mathrm{m}}(u))$.
We already know that $\Aut(X^{\mathrm{MU}})$ is actually much bigger.
In~\S\ref{subsection:aut-explicit} we will show that the other two groups are slightly bigger as well.

\begin{remark}
The construction of varieties $X^{\mathrm{MU}}$ and $X^{\mathrm{a}}$ does not depend on any parameter, so these are single varieties.
On the contrary, the construction of $X^{\mathrm{m}}$ depends on the parameter~$u$.
In fact, this is not quite precise.
Indeed, let $\upzeta$ be a primitive fourth root of unity.
It is easy to see that the polynomials $\phi_{6,u}(x,y)$ and $\phi_{6,\upzeta u}(x,\upzeta y)$ are proportional,
hence the $T_2$-orbits of $[\phi_{6,u}]$ and $[\phi_{6,\upzeta u}]$ coincide.
Thus $Z_{\mathrm{m}}(u) = Z_{\mathrm{m}}(\upzeta u)$ and~\mbox{$X^{\mathrm{m}}(u) \cong X^{\mathrm{m}}(\upzeta u)$}.
So the space of parameters for the family of varieties $X^{\mathrm{m}}(u)$ is
\begin{equation*}
\Big(\PP_u^1 \setminus \{0, \sqrt[4]{1}, \sqrt[4]{1/5}, \infty\} \Big)/\mumu_4 =
\PP_{u^4}^1 \setminus \{0,1,1/5,\infty\},
\end{equation*}
with $u^4$ being a coordinate.
\end{remark}

The next lemma shows that the quintics $Z_{\mathrm{MU}}$, $Z_{\mathrm{a}}$, and $Z_{\mathrm{m}}(u)$
exhaust all rational normal quintic curves in $Y$ with an infinite stabilizer inside $\PGL_2(\Bbbk)$.

\begin{lemma}
\label{lemma:z:classification}
Assume that $Z \subset Y$ is a rational normal quintic curve, invariant with respect to a non-trivial connected solvable algebraic group $B \subset \PGL_2(\Bbbk)$.
Then $Z$ is conjugate under the action of $\Aut(Y) = \PGL_2(\Bbbk)$ to one of the curves $Z_{\mathrm{MU}}$, $Z_{\mathrm{a}}$, and~$Z_{\mathrm{m}}(u)$
described by~\eqref{eq:z:mu}, \eqref{eq:z:a}, or~\eqref{eq:z:m}.
\end{lemma}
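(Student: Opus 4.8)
The plan is to work with the Mukai--Umemura model $Y = \overline{\PGL_2(\Bbbk)\cdot[\phi_6]}\subset\P(M_6)$ of Theorem~\ref{theorem-mu-5} and to analyze orbit closures of one-parameter subgroups acting on the irreducible $\PGL_2(\Bbbk)$-module $M_6$. Up to conjugacy the only nontrivial connected solvable subgroups of $\PGL_2(\Bbbk)$ are $U_2$, $T_2$, and the Borel $B_2 = T_2\ltimes U_2$; since $U_2\subset B_2$, a $B_2$-invariant curve is $U_2$-invariant, so it suffices to treat $B = U_2$ and $B = T_2$. In either case $B$ cannot act trivially on $Z\cong\P^1$, because the fixed locus of $U_2$ on $\P(M_6)$ is the single point $[x^6]$ and the fixed locus of $T_2$ consists of the seven coordinate points $[x^ay^{6-a}]$, neither of which contains a curve; hence $B$ acts on $Z$ with a dense orbit, and $Z = \overline{B\cdot[v]}$ for a general point $[v]\in Z$. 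What remains is to decide for which $[v]\in Y$ the closure $\overline{B\cdot[v]}$ is a rational normal quintic, and to sort these into $\PGL_2(\Bbbk)$-conjugacy classes.

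For $B = U_2 = \GGa$ the group acts on $M_6$ as a single nilpotent Jordan block (with cyclic vector $y^6$); writing $e$ for the nilpotent operator, an elementary computation gives that $\overline{U_2\cdot[v]}$ spans $\P\big(\langle v, ev,\dots,e^kv\rangle\big)$ and is the rational normal curve of degree $k$ there, where $k$ is the largest integer with $e^kv\ne 0$. Thus $\overline{U_2\cdot[v]}$ is a rational normal quintic exactly when $v\in\ker(e^6)\setminus\ker(e^5)$, and then $Z$ spans $\P(\ker(e^6))$, so $[v]$ lies on the hyperplane section $S_0 := Y\cap\P(\ker(e^6))$, namely the vanishing of the $y^6$-coordinate. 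Since $\rho(Y)=1$ and $Y$ contains no surface of degree $<\dd(Y)=5$, the surface $S_0$ is reduced and irreducible (a del Pezzo surface of degree $5$), and, being $B_2$-invariant, necessarily singular. Using $S_0 = Y\cap\{(\,\cdot\,)_{y^6}=0\}$, the orbit decomposition $Y = \Orb_3(Y)\sqcup\oOrb_2(Y)$, and the normalization $\nu\colon\P(M_1)\times\P(M_1)\to\oOrb_2(Y)$, one shows that $S_0$ has exactly four $B_2$-orbits: the dense orbit $B_2\cdot[\phi_6] = S_0\cap\Orb_3(Y)$ (its $B_2$-stabilizer is $B_2\cap\Oct$, hence finite), the fixed point $[x^6]$, and the two punctured curves $\nu(\{x\}\times\P(M_1))\setminus\{[x^6]\}$ and $Z_{\mathrm{MU}}\setminus\{[x^6]\}$, whose union is $S_0\cap\oOrb_2(Y)$. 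Consequently: if $[v]$ lies in the dense orbit then $Z$ is $B_2$-conjugate to $\overline{U_2\cdot[\phi_6]} = Z_{\mathrm a}$ (see~\eqref{eq:z:a}); if $[v]\in Z_{\mathrm{MU}}\setminus\{[x^6]\}$ then $\overline{U_2\cdot[v]} = Z_{\mathrm{MU}}$ (see~\eqref{eq:z:mu} and Remark~\ref{remark:curves-oorb2}); and if $[v]$ lies on the line $\nu(\{x\}\times\P(M_1))$ then $\overline{U_2\cdot[v]}$ is that line, not a quintic. So $Z$ is conjugate to $Z_{\mathrm a}$ or to $Z_{\mathrm{MU}}$.

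For $B = T_2 = \GGm$ the group acts on $M_6$ diagonally with the seven distinct weights $-6,-4,\dots,6$, so $\overline{T_2\cdot[v]}$ is a rational normal quintic precisely when the weights occurring in $v$ are six consecutive ones (equivalently, all but the top or all but the bottom weight); after applying the swap $x\leftrightarrow y$, which normalizes $T_2$, one may assume the weight support of $v$ is $\{-4,\dots,6\}$, so again $[v]\in S_0$ and moreover $[v]$ avoids the remaining coordinate hyperplanes. These genericity conditions rule out $[v]\in\nu(\{x\}\times\P(M_1))$ and $[v]=[x^6]$, leaving $[v]\in(S_0\cap\Orb_3(Y))\cup(Z_{\mathrm{MU}}\setminus\{[x^6]\})$; in the second case $Z = Z_{\mathrm{MU}}$, so assume $[v]=g\cdot[\phi_6]$ with $g\in\PGL_2(\Bbbk)$. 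Then $Z = g\cdot\overline{T'\cdot[\phi_6]}$, where $T' = g^{-1}T_2 g$ is a maximal torus, and by the weight analysis applied to $T'$ this is a rational normal quintic exactly when precisely one of the two fixed points of $T'$ on $\P^1$ is a root of the octahedral form $\phi_6 = xy(x^4-y^4)$, subject to non-vanishing of the intermediate weight components. Since $\Oct = \operatorname{Stab}_{\PGL_2(\Bbbk)}([\phi_6])$ permutes the six roots of $\phi_6$ (the vertices of an octahedron) transitively, one conjugates that root to $[1:0]$, after which $T'$ is determined by its second fixed point $[1:u']$ up to the cyclic stabilizer $\mumu_4 = \operatorname{Stab}_{\Oct}([1:0])$ acting by $u'\mapsto\upzeta u'$; comparing with $Z_{\mathrm m}(u) = \overline{T_2\cdot[\phi_{6,u}]}$ as in~\eqref{eq:normal-quintic}--\eqref{eq:z:m} (where $[\phi_{6,u}]$ is obtained from $[\phi_6]$ by a unipotent and the non-vanishing conditions become exactly~\eqref{eq:uu15u1}), this shows $Z$ is $\PGL_2(\Bbbk)$-conjugate to $Z_{\mathrm m}(u')$ for a suitable $u'$. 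Combining the two cases proves the lemma.

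The main difficulty will be the orbit analysis of $S_0$ in the $U_2$-case: a priori the $U_2$-orbit closures inside $S_0$ form a one-parameter family, and the content of the argument is that they collapse to only two $\PGL_2(\Bbbk)$-conjugacy classes. This rests on showing that $B_2$ acts transitively on $S_0\cap\Orb_3(Y)$ — which holds even though the hypersurface $\{g\in\PGL_2(\Bbbk):(g\phi_6)_{y^6}=0\}$ is reducible, because its irreducible components are the six left $B_2$-cosets lying over the six roots of $\phi_6$ and they are permuted transitively by the right $\Oct$-action, so their image in $\Orb_3(Y)=\PGL_2(\Bbbk)/\Oct$ is a single $B_2$-orbit — together with pinning down the two boundary curves. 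In the $T_2$-case the delicate point is the $\mumu_4$-bookkeeping coming from the vertex stabilizer in $\Oct$, which is precisely what makes the modulus of the family $Z_{\mathrm m}(u)$ be $u^4$ rather than $u$.
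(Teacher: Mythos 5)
Your proof is correct, and at its core it runs on the same engine as the paper's: reduce to $B\in\{U_2,T_2,B_2\}$ up to conjugacy, observe that a degree drop from $6$ to $5$ for the orbit closure of $[\varphi]=[fg(f^4-g^4)]$ is equivalent to divisibility by $x$ (your condition $v\in\ker(e^6)\setminus\ker(e^5)$, resp.\ the consecutive-weight condition), and use the transitive action of $\Oct=\mathrm{Stab}([\phi_6])$ on the six roots to normalize. The packaging differs in three places. First, the paper disposes of $B=B_2$ separately and more cheaply: every point of a $B_2$-invariant $\P^1$ has a positive-dimensional stabilizer, which forces $Z\subset\oOrb_2(Y)$ and hence $Z$ conjugate to $Z_{\mathrm{MU}}$ by Remark~\ref{remark:curves-oorb2}; you instead fold $B_2$ into the $U_2$ case and recover $Z_{\mathrm{MU}}$ as the boundary branch of your four-orbit analysis of $S_0=Y\cap\{(\cdot)_{y^6}=0\}$. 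Second, your transitivity statement for $B_2$ on $S_0\cap\Orb_3(Y)$ (the six $B_2$-cosets permuted by the right $\Oct$-action) is exactly the paper's step ``rewrite $\varphi=fg(f^4-g^4)$ with $f=x$ using $\Oct$, obtaining $\phi_{6,u,v}=b\cdot\phi_6$ with $b\in B_2$'' in group-theoretic clothing; your version costs a little more (the explicit orbit stratification of the hyperplane section) but makes the geometry of $S_0$ visible. Third, in the $T_2$ case the paper avoids conjugating the torus altogether: once $\varphi=\phi_{6,u,v}$, the point $[\phi_{6,u,v}]$ already lies in the $T_2$-orbit of $[\phi_{6,u}]$, so $\overline{T_2\cdot[\varphi]}=Z_{\mathrm{m}}(u)$ on the nose; your ``moving torus'' argument with $T'=g^{-1}T_2g$ and the $\mumu_4$-bookkeeping reaches the same conclusion but does extra work (the $\mumu_4$ identification $u\mapsto\upzeta u$ is only needed later, to describe the moduli of the family $X^{\mathrm{m}}(u)$, not for this lemma). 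No gaps; just be aware that the one assertion you leave as ``an elementary computation'' (that $\overline{U_2\cdot[v]}$ is the rational normal curve of degree $k$ spanned by $v,ev,\dots,e^kv$) is genuinely carrying the $U_2$ case and deserves a line of proof.
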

\begin{proof}
Since the subgroup $B \subset \PGL_2(\Bbbk)$ is conjugate to one of the subgroups $B_2$, $T_2$, or $U_2$
discussed in~\S\xref{subsection:v5},
we can assume without loss of generality that $B$ is one of these subgroups. Let us consider these cases one-by-one.

First, assume that $B = B_2$. Since $Z \cong \PP^1$, every point of $Z$
has a nontrivial one-dimensional stabilizer in $B$, hence $Z \subset \oOrb_2(Y)$.
By Remark~\ref{remark:curves-oorb2} $Z$ is conjugate to~$Z_{\mathrm{MU}}$.

Moreover, the quintics conjugate to $Z_{\mathrm{MU}}$ are the only smooth rational quintics contained in~\mbox{$\oOrb_2(Y)$},
so from now on we may assume that $Z \not\subset \oOrb_2(Y)$.

An arbitrary point of the open orbit~\mbox{$\Orb_3(U)$} can be written as $[\varphi]$, where
\begin{equation*}
\varphi = fg(f^4 - g^4),
\end{equation*}
and $f$, $g$ are linear forms, so we may assume that $Z$ is the closure of the $B$-orbit
of~$[\varphi]$.

Now, assume that $B = U_2$. For general $f$ and $g$ the closure of the $U_2$-orbit of $[\varphi]$ is a curve of degree $6$.
For it to have degree 5, it is necessary for $\varphi$ to be divisible by $x$.
Conjugating by an element of $\Aut(Y)_\varphi \cong \Oct$, we may assume that $f = x$
(up to a scalar multiple). But then (again up to a scalar multiple) $\varphi$ should be equal to
\begin{equation*}
\phi_{6,u,v}(x,y) = x(ux+vy)(x^4 - (ux+vy)^4)
 = \left(\begin{smallmatrix}
1 & u \\ 0 & v
\end{smallmatrix}\right)
\cdot \phi_6,
\qquad u \in \Bbbk,\ v \in \Bbbk^\times.
\end{equation*}
Such point is obtained from $[\phi_6]$ by a $B_2$-action. But the group $B_2$ normalizes the
subgroup~\mbox{$U_2 \subset \PGL_2(\Bbbk)$},
hence $Z$ is conjugate to the closure of the $U_2$-orbit of $[\phi_6]$, i.e.
to~$Z_{\mathrm{a}}$.

Finally, assume that $B = T_2$.
Again, for general $f$ and $g$ the closure of the $T_2$-orbit of $[\varphi]$ has degree 6, and the degree is smaller if and only if $\varphi$ is divisible by $x$ or $y$.
Conjugating by an element of $\Aut(Y)_\varphi \cong \Oct$ we again may assume that $f= x$, i.e.\ $\varphi = \phi_{6,u,v}$.
But this point is in the $T_2$-orbit of $[\phi_{6,u}]$, hence $Z$ is conjugate to~$Z_{\mathrm{m}}(u)$.
\end{proof}

Now finally, we can classify all Fano threefolds of index 1 and genus 12 with infinite automorphism groups,
which is the first main result of this section.

\begin{theorem}[{see \cite{Prokhorov-1990c}}]
\label{theorem:Prokhorov-g-12-finite}
Let $X$ be a \textup(smooth\textup) Fano threefold with $\rho(X) = 1$, $\iota(X) = 1$, and $\g(X) = 12$.
Then the automorphism group of $X$ is finite unless $X$ is a special Fano threefold of genus~$12$.
More precisely, if $X$ admits a faithful action of the group $B_2$, then $X \cong X^{\mathrm{MU}}$ is
the Mukai--Umemura threefold described in Theorem~\xref{theorem-mu-22} and mentioned in Example~\xref{example-MU}.
Otherwise, either $X \cong X^{\mathrm{a}}$ or $X \cong X^{\mathrm{m}}(u)$, see Examples~\xref{example-V22-a} and~\xref{example-V22-m}.
%
\end{theorem}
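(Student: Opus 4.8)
The plan is to reduce the statement entirely to the classification of rational normal quintic curves on $Y$ with infinite stabilizer, which was carried out in Lemma \ref{lemma:z:classification}. First I would suppose that $\Aut(X)$ is infinite; since $X$ is a smooth Fano threefold, $-K_X$ is very ample and by Corollary \ref{corollary:anticanonical-ring} the group $\Aut(X)$ is a linear algebraic group. As it is infinite, it contains a non-trivial connected algebraic subgroup, and hence (after conjugation inside a Borel) a non-trivial connected solvable subgroup, which up to conjugacy we may take to be one of $B_2$, $T_2$, or $U_2$ — but at this stage all we need is that $\Aut(X)$ contains \emph{some} non-trivial connected algebraic subgroup $G$. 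The key point is that $G$ must preserve a line on $X$: by Lemma \ref{lemma-hilb-x-1} every irreducible component of $\Sigma(X)$ is a (projective) curve, the group $G$ acts on the finite set of irreducible components and hence a finite-index (so the whole) connected part stabilizes each component $\Sigma_0$, and a connected solvable group acting on the complete curve $\Sigma_0$ has a fixed point by the Borel fixed point theorem. Thus there is a line $L\subset X$ with $G\subset\Aut(X;L)$.

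Next I would invoke the double projection machinery. By Theorem \ref{theorem:double-projection} the pair $(X,L)$ produces the Fano threefold $Y$ with $\iota(Y)=2$, $\dd(Y)=5$ together with a normal rational quintic curve $Z\subset Y$, and by the functoriality statement of Remark \ref{remark:functoriality} (specifically \eqref{eq:aut-xl-yz}) we get $\Aut(X;L)\cong\Aut(Y;Z)\subset\Aut(Y)\cong\PGL_2(\Bbbk)$. In particular $G$ embeds into $\PGL_2(\Bbbk)$ and $Z$ is $G$-invariant. Since $G$ is a non-trivial connected algebraic subgroup of $\PGL_2(\Bbbk)$, it contains a non-trivial connected \emph{solvable} subgroup $B$, and $Z$ is $B$-invariant as well. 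Now Lemma \ref{lemma:z:classification} applies: $Z$ is $\PGL_2(\Bbbk)$-conjugate to one of $Z_{\mathrm{MU}}$, $Z_{\mathrm a}$, or $Z_{\mathrm m}(u)$. Running the inverse double projection (again Theorem \ref{theorem:double-projection} and Remark \ref{remark:functoriality}), the threefold $X$ is therefore isomorphic to one of $X^{\mathrm{MU}}$, $X^{\mathrm a}$, or $X^{\mathrm m}(u)$; that is, $X$ is a special Fano threefold of genus $12$. This proves the first assertion: if $\Aut(X)$ is infinite then $X$ is special.

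For the refined statement, suppose $X$ admits a faithful action of $B_2$. Arguing as above, $B_2$ preserves some line $L$, so via \eqref{eq:aut-xl-yz} we obtain $B_2\hookrightarrow\Aut(Y;Z)\subset\PGL_2(\Bbbk)$ with $Z$ a $B_2$-invariant rational normal quintic. In the proof of Lemma \ref{lemma:z:classification} the case $B=B_2$ forces every point of $Z$ to have a non-trivial one-dimensional stabilizer, hence $Z\subset\oOrb_2(Y)$, and by Remark \ref{remark:curves-oorb2} the only smooth rational quintics in $\oOrb_2(Y)$ are the $\PGL_2(\Bbbk)$-translates of $Z_{\mathrm{MU}}$; therefore $(Y,Z)$ is conjugate to $(Y,Z_{\mathrm{MU}})$ and $X\cong X^{\mathrm{MU}}$ is the Mukai--Umemura threefold of Theorem \ref{theorem-mu-22}. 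If $X$ is special but does not admit a faithful $B_2$-action, then it cannot be $X^{\mathrm{MU}}$ (which does carry such an action since $B_2\subset\PGL_2(\Bbbk)\cong\Aut(X^{\mathrm{MU}})$), so $X\cong X^{\mathrm a}$ or $X\cong X^{\mathrm m}(u)$ for some admissible $u$. The main obstacle in this argument is the uniqueness/classification input of Lemma \ref{lemma:z:classification}, but that is already established in the excerpt; the remaining steps — the Borel fixed point argument producing an invariant line, and the transport of group actions through the double projection diagram — are formal once Theorem \ref{theorem:double-projection} and Remark \ref{remark:functoriality} are in hand.
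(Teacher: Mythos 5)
Your proposal is correct and follows essentially the same route as the paper: take a nontrivial connected solvable subgroup of $\Aut^0(X)$, use the Borel fixed-point theorem on the projective curve $\Sigma(X)$ to produce an invariant line, transport the action through the double projection of Theorem~\xref{theorem:double-projection} via Remark~\xref{remark:functoriality} to get an invariant quintic $Z\subset Y$, and conclude with Lemma~\xref{lemma:z:classification}. The identification of the threefold built from $Z_{\mathrm{MU}}$ with the Mukai--Umemura threefold is also handled the same way in the paper (by noting that $X^{\mathrm{MU}}$ carries a $B_2$-action, forcing its associated quintic to be conjugate to $Z_{\mathrm{MU}}$).
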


\begin{proof}
Let $B$ denote a maximal solvable (Borel) subgroup of the connected component~\mbox{$\Aut^0(X)$} of identity in the group~\mbox{$\Aut(X)$}.
The group $\Aut(X)$ is finite if and only if $B$ is trivial.
The group $B$ acts on the Hilbert scheme $\Sigma(X)$ of lines on $X$, and by the fixed-point theorem~\mbox{\cite[Theorem~VIII.21.2]{Humphreys1975}}
there exists a $B$-invariant line~\mbox{$L\subset X$}.
Then by Remark~\ref{remark:functoriality}
the associated curve~$Z$ is $B$-invariant and by Lemma~\xref{lemma:z:classification} the curve~$Z$ is $\Aut(Y)$-conjugate
to one of the curves $Z_{\mathrm{MU}}$, $Z_{\mathrm{a}}$, or $Z_{\mathrm{m}}(u)$, hence $X$ is isomorphic
to one of the special threefolds of Example~\ref{example-MU}, \ref{example-V22-a}, or~\ref{example-V22-m}.

Moreover, if $X$ is the Mukai--Umemura threefold of Theorem~\ref{theorem-mu-22} then $B = B_2$,
hence the corresponding quintic is $B_2$-invariant, hence is conjugate to $Z_{\mathrm{MU}}$.
Therefore, the threefold of Example~\ref{example-MU} is the Mukai--Umemura threefold.
%
\end{proof}

To complete the proof of Theorem~\ref{theorem:Prokhorov} it remains to describe explicitly
the automorphism groups of the threefolds $X^{\mathrm{a}}$ and $X^{\mathrm{m}}(u)$.
We do this in the next subsection.

\begin{remark}
One can also use the approach of Theorem~\xref{theorem:Prokhorov-g-12-finite} to establish finiteness of the automorphism group
of an arbitrary smooth Fano threefold $X$ with $\rho(X)=1$, $\iota(X)=1$, and $7\le \g(X)\le 10$,
see \cite{Prokhorov-1990c} for details.
\end{remark}

\subsection{Explicit automorphisms groups}
\label{subsection:aut-explicit}

The main ingredient in the explicit description of the automorphisms groups of $X = X^{\mathrm{a}}$ and $X = X^{\mathrm{m}}(u)$
is the description of the Hilbert scheme $\Sigma(X)$ of lines on $X$.
For this
we use Lemma~\ref{lemma:sigma-0-iso} relating it to $\Sigma_Z(Y)$,
where $Z$ is the corresponding quintic curve.
Accordingly, we start by describing $\Sigma_Z(Y)$.
We include the case $Z = Z_{\mathrm{MU}}$ for completeness.

\begin{lemma}
\label{lemma:sigma-z-y}
If $Z = Z_{\mathrm{MU}}$, $Z = Z_{\mathrm{a}}$, or $Z = Z_{\mathrm{m}}(u)$,
then the curve $\Sigma_Z(Y)$ is a plane quintic curve which can be described by the following picture:
\begin{equation*}
\arraycolsep = 3em
\begin{array}{ccc}
\begin{tikzpicture}[xscale = .7, yscale = .8]
\draw[very thick, double] (0,0) ellipse (3em and 10ex);
\draw[thick] (-1.7,1.8) -- (0,1.8) node [above] {$P$} --(1.7,1.8) node [right] {$\ell$};
\draw[fill] (0,1.8) circle (.3em);
\end{tikzpicture}
&
\begin{tikzpicture}[xscale = .7, yscale = .8]
\draw[thick] (0,0) ellipse (3em and 10ex);
\draw[thick] (0,.52) ellipse (2em and 7ex);
\draw[thick] (-1.7,1.8) -- (0,1.8) node [above] {$P$} --(1.7,1.8) node [right] {$\ell$};
\draw[fill] (0,1.8) circle (.2em);
\end{tikzpicture}
&
\begin{tikzpicture}[xscale = .7, yscale = .8]
\draw[thick] (0,0) ellipse (3em and 10ex);
\draw[thick] (0,0) ellipse (2em and 10ex);
\draw[thick] (-1.7,1.8) -- (0,1.8) node [above] {$P$} --(1.7,1.8) node [right] {$\ell$};
\draw[fill] (0,1.8) circle (.2em);
\end{tikzpicture}
\\
\Sigma_{Z_{\mathrm{MU}}}(Y) &
\Sigma_{Z_{\mathrm{a}}}(Y) &
\Sigma_{Z_{\mathrm{m}}(u)}(Y)
\end{array}
\end{equation*}
In other words, $\Sigma_Z(Y)$ is the union of a line $\ell$ and two conics \textup{(}or a double conic, in the Mukai--Umemura case\textup{)}
tangent to $\ell$ at a certain point $P \in \Sigma(Y)$.
\end{lemma}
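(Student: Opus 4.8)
The plan is to use the $\PGL_2(\Bbbk)$-equivariant isomorphism $\Sigma(Y) \cong \P(M_2)$ recalled above and compute $\Sigma_Z(Y)$ as the locus of binary quadratic forms $\ell \in M_2$ whose corresponding line on $Y$ meets $Z$. Since $Z$ is preserved by a connected solvable group $B$ (equal to $B_2$, $U_2$, or $T_2$ in the three cases), the subscheme $\Sigma_Z(Y)\subset\P(M_2)$ is $B$-invariant, which already forces it to be a union of $B$-orbit closures; my first task is to identify these. A convenient way to organize the computation is via the incidence: a line $L_{\ell}$ (in the notation of Lemma~\ref{lemma:lines-v5-explicit}) meets $Z$ iff $Z$ intersects the plane $\langle L_\ell\rangle \subset \P(M_6)$, so I would write down the $2$-plane spanned by each type of line and intersect it with the explicit parametrization $t \mapsto \phi_{6,\cdot}(t)$ of $Z$ from Examples~\ref{example-V22-a} and~\ref{example-V22-m} (and $\nu(\P(M_1)\times\{x\})$ for $Z_{\mathrm{MU}}$). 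This produces a polynomial condition on $\ell\in M_2$, and the first step is to check that the degree of this condition, i.e.\ $\deg \Sigma_Z(Y)$ inside $\P(M_2)\cong\P^2$, equals $5$ — this should follow from a Chern class / intersection-theoretic count on the universal line $\CL(Y)$, using $p^*[Z]$ and the fact that $\deg Z = 5$ together with $p$ being finite of degree $3$ (Proposition~\ref{hilb-lines-explicit}(iii)), so that $\Sigma_Z(Y) = q_*p^*Z$ has the expected class $5\cdot(\text{line in }\P^2)$ minus contributions from bisecants.

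Next I would identify the components. In all three cases $Z$ passes through the point $[x^6]\in\Orb_1(Y)$, and by Corollary~\ref{corollary:V5-lines-description} the only line through a point of $\Orb_1(Y)$ is the special line $L_{f^2}$ with $f=x$, i.e.\ $L_{x^2}$; this gives a distinguished point of $\Sigma_Z(Y)$, which I claim is the marked point $P = [x^2]\in\P(M_2)$ in the picture. The line $\ell$ in the picture is the $\PGL_2$-translate of the pencil of special lines $L_{f^2}$ that meet $Z$; since $Z_{\mathrm a}$ and $Z_{\mathrm m}(u)$ each meet $\oOrb_2(Y)$ exactly at $[x^6]$ (resp.\ at $[x^6]$ and $[xy^5]$), and since $L_{f^2}\subset\oOrb_2(Y)$, the locus of $f$ with $L_{f^2}\cap Z\neq\varnothing$ is cut out inside the conic $\{[f^2] : f\in\P(M_1)\}\subset\P(M_2)$, which is the "tangent line" picture. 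The remaining component — the one or two conics, or the double conic in the Mukai–Umemura case — is the closure of the $B$-orbit of a general ordinary line meeting $Z$ transversally, and its description as a conic in $\P(M_2)$ follows from the explicit $B$-action on the spanning planes; the residual $\{ \text{quintic} \} = \{\text{line}\}\cup\{\text{two conics}\}$ degree bookkeeping ($5 = 1 + 2\cdot 2$) is then the consistency check, and the tangency at $P$ comes from the fact that $L_{x^2}$ is the unique line meeting $Z$ "at infinity" and lies on every component.

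The main obstacle I expect is distinguishing the three cases cleanly — in particular proving that for $Z_{\mathrm a}$ one gets two \emph{distinct} conics tangent to $\ell$ at $P$, for $Z_{\mathrm m}(u)$ two \emph{distinct nested} conics tangent at $P$, and for $Z_{\mathrm{MU}}$ a genuine \emph{double} conic (scheme-theoretically nonreduced). The reducedness/nonreducedness is the delicate point: for $Z_{\mathrm{MU}}$ the stabilizer $B_2$ is bigger and every point of $Z_{\mathrm{MU}}$ already has a one-dimensional stabilizer (it lies in $\oOrb_2(Y)$, cf.\ Remark~\ref{remark:curves-oorb2}), which collapses the two conic components onto one another; I would make this precise by computing the tangent space to $\Sigma_Z(Y)$ at a general point of the conic and showing it is $2$-dimensional exactly in the Mukai–Umemura case, e.g.\ by exhibiting $H^0$ of the relevant normal bundle or by a direct local computation of the defining quintic in suitable affine coordinates on $\P(M_2)$. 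The splitting of the residual conic locus into $Z_{\mathrm a}$ versus $Z_{\mathrm m}(u)$ — one versus two distinct tangent conics — should then fall out from whether the $B$-orbit of the general ordinary line meeting $Z$ is a single conic ($U_2$ unipotent: the orbit closure of a point is irreducible and I must check it has degree $2$, absorbing all of the residual degree $4$) or breaks into two ($T_2$: the two distinct conics correspond to the two ways a $T_2$-invariant quintic meets a general line, matching the two fixed points of $T_2$ on $\P(M_1)$).
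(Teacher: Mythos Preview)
Your approach contains a concrete error in the identification of the line component $\ell$. You claim $\ell$ arises from the special lines $L_{f^2}$ meeting $Z$, but this cannot be right: for $Z = Z_{\mathrm{a}}$ one has $Z_{\mathrm{a}} \cap \oOrb_2(Y) = \varnothing$ and $Z_{\mathrm{a}} \cap \Orb_1(Y) = \{[x^6]\}$, so the \emph{only} special line meeting $Z_{\mathrm{a}}$ is $L_{x^2}$, contributing a single point $P = [x^2]$ to $\Sigma_{Z_{\mathrm{a}}}(Y)$, not a line. (The locus $\{[f^2] : f \in \P(M_1)\} \subset \P(M_2)$ is itself a conic, not $\ell$.) Your proposed distinction between the $Z_{\mathrm{a}}$ and $Z_{\mathrm{m}}(u)$ cases --- ``one versus two distinct tangent conics'' --- is likewise incorrect: both have a line plus \emph{two} distinct conics; the difference lies in how the two conics meet each other (a single tangency of order~4 at $P$ for $Z_{\mathrm{a}}$ versus two simple tangencies at $P$ and a second point for $Z_{\mathrm{m}}(u)$).

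The paper's argument is far more direct and sidesteps all of this. The key is Corollary~\ref{corollary:V5-lines-description}: through any point $[fg(f^4-g^4)] \in \Orb_3(Y)$ there pass exactly three lines, corresponding to $[fg]$, $[f^2-g^2]$, $[f^2+g^2] \in \P(M_2)$. In particular $q(p^{-1}([\phi_6])) = \{[xy],\,[x^2-y^2],\,[x^2+y^2]\}$. Since $Z$ is the closure of a $B$-orbit of a single point of $\Orb_3(Y)$ (with $B = U_2$, $T_2$, or $B_2$), the curve $\Sigma_Z(Y) = q(p^{-1}(Z))$ is simply the union of the closures of the $B$-orbits of these three points in $\P(M_2)$, together with the (already contained) contribution from boundary points of $Z$. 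One then writes down these three orbit closures by hand: the $B$-orbit of $[xy]$ is the line $\ell = \{[x(s_1x+s_2y)]\}$, while the $B$-orbits of $[x^2 \pm y^2]$ are the two conics $\gamma'$, $\gamma''$. The tangency configuration at $P = [x^2]$ is read off directly from these parametrizations; no intersection-theoretic degree count is needed.

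For $Z_{\mathrm{MU}}$ the nonreducedness is also not a subtle tangent-space computation. Every point of $Z_{\mathrm{MU}} \subset \oOrb_2(Y)$ has the form $[x(s_1x+s_2y)^5]$, and Corollary~\ref{corollary:V5-lines-description} gives only two lines through such a point: those corresponding to $[x(s_1x+s_2y)] \in \ell$ and $[(s_1x+s_2y)^2] \in \gamma_{\mathrm{MU}}$. The latter are special lines, and since $q^{-1}(\gamma_{\mathrm{MU}})$ is the ramification divisor of $p$, the component of $\Sigma_{Z_{\mathrm{MU}}}(Y)$ supported on $\gamma_{\mathrm{MU}}$ is automatically everywhere nonreduced.
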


\begin{proof}
To describe $\Sigma_Z(Y)$ we use consecutively Corollary~\ref{corollary:V5-lines-description}.

First, put $Z = Z_{\mathrm{a}} = \overline{U_2 \cdot [\phi_6]}$.
Clearly, one has
\begin{equation*}
q(p^{-1}([\phi_6])) = \{ [xy], [x^2-y^2], [x^2+y^2] \},
\end{equation*}
Hence
\begin{align*}
q(p^{-1}(U_2 \cdot [\phi_6])) &= (U_2 \cdot [xy]) \cup (U_2 \cdot [x^2 - y^2]) \cup (U_2 \cdot [x^2 + y^2]) =
\\
&= \{x(sx + y)\} \cup \{ x^2 - (sx + y)^2\} \cup \{ x^2 + (sx + y)^2\},
\end{align*}
where $s \in \Bbbk$.
The point at the boundary of $Z_{\mathrm{a}}$ is $[x^6]$ and $q(p^{-1}([x^6])) = [x^2]$.
We see that~$\Sigma_{Z_{\mathrm{a}}}(Y) = q(p^{-1}(Z_{\mathrm{a}}))$ is the union of a line
\begin{equation*}
\ell = \{ x(s_1x + s_2 y) \},
\end{equation*}
and two conics
\begin{equation*}
\gamma'_{\mathrm{a}} = \{ (s_1^2 - s_2^2)x^2 - 2s_1s_2xy - s_1^2y^2 \},
\qquad
\gamma''_{\mathrm{a}} = \{ (s_1^2 + s_2^2)x^2 + 2s_1s_2xy + s_1^2y^2 \},
\end{equation*}
tangent to it (and tangent to each other with multiplicity~4) at the point $P = [x^2]$.

If $Z = Z_{\mathrm{m}}(u) = \overline{T_2 \cdot [\phi_{6,u}]}$, then
\begin{equation*}
q(p^{-1}([\phi_{6,u}])) = \{ [x(ux + y)], [x^2-(ux + y)^2], [x^2+(ux + y)^2] \},
\end{equation*}
hence
\begin{align*}
q(p^{-1}(T_2 \cdot [\phi_{6,u}])) &= (T_2 \cdot [x(ux + y)]) \cup (T_2 \cdot [x^2 - (ux + y)^2]) \cup (T_2 \cdot [(x^2 + (ux + y)^2])=
\\
&= \{x(ux + ty)\} \cup \{ x^2 - (ux + ty)^2\} \cup \{ x^2 + (ux + ty)^2\},
\end{align*}
where $t \in \Bbbk^\times$.
The points at the boundary of $Z_{\mathrm{m}}(u)$ are $[x^6]$ and $[xy^5]$, and we have~\mbox{$q(p^{-1}([x^6])) = [x^2]$} and $q(p^{-1}([xy^5])) = \{[xy], [y^2]\}$.
Thus, $\Sigma_{Z_{\mathrm{m}}}(Y) = q(p^{-1}(Z_{\mathrm{m}}(u)))$ is the union of the line $\ell$ (the same line as in the case of $Z = Z_{\mathrm{a}}$)
and two conics
\begin{equation*}
\gamma'_{\mathrm{m}}(u) = \{ s_1^2(1 - u^2)x^2 - 2s_1s_2uxy - s_2^2y^2 \},
\quad
\gamma''_{\mathrm{m}}(u) = \{ s_1^2(1 + u^2)x^2 + 2s_1s_2uxy + s_2^2y^2 \},
\end{equation*}
tangent to~$\ell$ at the point $P = [x^2]$, and also tangent to each other with multiplicity~2 at the points~$[x^2]$ and~$[y^2]$ respectively.

Finally, if $Z = Z_{\mathrm{MU}} = \{x(s_1x + s_2y)^5\}_{(s_1:s_2) \in \P^1}$, then
\begin{equation*}
\Sigma_{Z_{\mathrm{MU}}(Y)} = q(p^{-1}(Z_{\mathrm{MU}})) = \{ x(s_1x + s_2y) \} \cup \{ (s_1x + s_2y)^2 \}.
\end{equation*}
Thus, $\Sigma_{Z_{\mathrm{MU}}}(Y)$ is the union of the line $\ell$ (the same line again) and of the conic
\begin{equation*}
\gamma_{\mathrm{MU}} = \{ (s_1x + s_2y)^2 \}
\end{equation*}
tangent to~$\ell$ at the point $P = [x^2]$.
Since the lines parameterized by this conic are special, its preimage $q^{-1}(\gamma_{\mathrm{MU}})$ is the ramification divisor of $p \colon \CL(Y) \to Y$,
hence the component of $\Sigma_{Z_{\mathrm{MU}}}(Y)$ underlying the conic $\gamma_{\mathrm{MU}}$ is everywhere non-reduced.

Summarizing, we can write
\begin{equation*}
\Sigma_{Z}(Y) =
\begin{cases}
\ell \cup 2\gamma_{\mathrm{MU}}, 					& \text{if $Z = Z_{\mathrm{MU}}$},\\
\ell \cup \gamma'_{\mathrm{a}} \cup \gamma''_{\mathrm{a}}, 		& \text{if $Z = Z_{\mathrm{a}}$},\\
\ell \cup \gamma'_{\mathrm{m}}(u) \cup \gamma''_{\mathrm{m}}(u), 	& \text{if $Z = Z_{\mathrm{m}}(u)$}.
\end{cases}
\end{equation*}
This completes the proof of the lemma.
\end{proof}

Another observation that we need is the following.
Denote by $\langle Z  \rangle$ the linear span of the quintic $Z$.
It is a hyperplane in $\P(M_6) = \P^6$.

\begin{lemma}
\label{lemma:bisecant}
Let $Z = Z_{\mathrm{MU}}$, $Z = Z_{\mathrm{a}}$, or $Z = Z_{\mathrm{m}}(u)$ and
\begin{equation*}
F = Y \cap \langle Z \rangle.
\end{equation*}
Then $F$ is a non-normal quintic surface whose normalization is the Hirzebruch surface~$\F_3$.
The normalization map $\F_3 \to F$
glues the exceptional section with one fiber of $\F_3$ into the line $L_{x^2} = \Sing(F)$.
The line $L_{x^2}$ is the unique bisecant of $Z$ and corresponds to the distinguished point $P \in \Sigma_Z(Y)$.
Any line on $Y$ intersecting both $Z$ and $L_{x^2}$ is the image of the fiber of $\F_3$; these lines are parameterized by the component $\ell \subset \Sigma_Z(Y)$.
\end{lemma}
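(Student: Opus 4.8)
The plan is to identify $F$ explicitly, exhibit it as the image of a natural Hirzebruch surface under the projection $p$ from the universal line, and read off every assertion from that description.

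\emph{The surface $F$.} Writing binary sextics in the monomial basis of $M_6$, one checks from the formulas in Remark~\ref{remark:curves-oorb2} and Examples~\ref{example-V22-a} and~\ref{example-V22-m} that every point of each of $Z_{\mathrm{MU}}$, $Z_{\mathrm a}$, $Z_{\mathrm m}(u)$ has vanishing $y^6$-coordinate; since $Z$ is a rational normal quintic it spans a $\P^5$, hence $\langle Z\rangle=\P(x\cdot M_5)=\{[\psi]\in\P(M_6)\mid x\mid\psi\}$ is one and the \emph{same} hyperplane in all three cases. Therefore $F=Y\cap\langle Z\rangle=\{[\psi]\in Y\mid x\mid\psi\}$ is a single surface, and since $F\in|H_Y|$ with $H_Y$ the ample generator of $\Pic(Y)$ it is reduced, irreducible, and of degree $H_Y^3=5$. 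By Lemma~\ref{lemma:lines-v5-explicit} the lines of $Y$ contained in $F$ are exactly $L_{x^2}$ and the pencil $L_{xg}$ for $[g]\in\P(M_1)$, $[g]\neq[x]$; in $\Sigma(Y)=\P(M_2)$ these are parametrized by the line $\ell=\P(x\cdot M_1)$, with the point $P=[x^2]$ corresponding to $L_{x^2}$ (this reproves the description of the component $\ell\subset\Sigma_Z(Y)$ from Lemma~\ref{lemma:sigma-z-y}).

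\emph{$F$ as the image of a Hirzebruch surface.} Let $R$ be the restriction of the universal line $\CL(Y)$ over $\ell$. Since $q\colon\CL(Y)\to\Sigma(Y)\cong\P^2$ is a $\P^1$-bundle (Lemma~\ref{lemma-hilb-y-1}, $\Sigma(Y)$ being irreducible by Proposition~\ref{hilb-lines-explicit}), $R$ is a Hirzebruch surface $\F_n$ over $\ell\cong\P^1$. The morphism $\nu:=p|_R\colon\F_n\to Y$ has image in $F$ by the previous step, contracts no fibre (each maps isomorphically to a line of $Y$), and is generically injective because a general point of $F$ lies in $\Orb_3(Y)$ and, by Corollary~\ref{corollary:V5-lines-description}, on a unique line of the pencil $\ell$; moreover $\nu^*H_Y$ meets every irreducible curve of $\F_n$ positively (otherwise $F$ would be a cone, impossible since every point of $Y$ lies on at most three lines), so $\nu$ is finite. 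Hence $\nu$ is the normalization of $F$.

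\emph{Pinning down $n=3$ and the gluing.} On $\F_n$ write $C_0$ for the negative section and $f$ for a fibre. Then $\nu^*H_Y=C_0+\alpha f$ (coefficient $1$ on $C_0$ since $\nu^*H_Y\cdot f=1$), with $\alpha\geq n$ by nefness; adjunction $K_F=-H_Y|_F$ and the conductor formula $\nu^*K_F=K_{\F_n}+\mathfrak c$ with $\mathfrak c\geq0$ give $\mathfrak c=C_0+(n+2-\alpha)f$, so $\alpha\leq n+2$; and $2\alpha-n=(\nu^*H_Y)^2=\deg F=5$. These leave $n\in\{1,3\}$. Now the section $[g]\mapsto\bigl([L_{xg}],[x^5g]\bigr)$ of $R\to\ell$ has image a section $S=C_0+kf$ with $k\geq0$ which $\nu$ carries isomorphically onto $L_{x^2}$, so $\nu^*H_Y\cdot S=1$, forcing $k=(n-3)/2$ and hence $n\geq3$. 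Thus $n=3$, $k=0$, $S=C_0$; the fibre $f_0$ over $P$ is likewise carried isomorphically onto $L_{x^2}$; the conductor is $\mathfrak c=C_0+f_0$, so $F$ is non-normal with $\Sing(F)=\nu(\mathfrak c)=L_{x^2}$, and $\nu$ glues the exceptional section $C_0$ and the single fibre $f_0$ along $L_{x^2}$, meeting over $\nu(C_0\cap f_0)=[x^6]$.

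\emph{Bisecants and the remaining statements.} A secant or tangent line of $Z$ lies in $\langle Z\rangle$, hence in $F$, so is $L_{x^2}$ or some $L_{xg}$; and $L_{x^2}$ is the tangent line of $Z$ at $[x^6]$ (for $Z_{\mathrm{MU}}$ this is the tangent of the rational normal quintic, and the other cases follow by the analogous local computation), while each $L_{xg}$ meets $Z$ transversally in a single point (on $\F_3$, its fibre meets the section that is the strict transform of $Z$, of class $C_0+4f$, once, away from the conductor). So $L_{x^2}$ is the unique bisecant, and $[L_{x^2}]=P\in\Sigma_Z(Y)$. Finally, a line of $Y$ meeting both $Z$ and $L_{x^2}$ meets $F\supset Z\cup L_{x^2}$ in length $\geq 2$ unless it lies in $F$, and if it passes through $Z\cap L_{x^2}=\{[x^6]\}$ it equals $L_{x^2}$ (since $[x^6]\in\Orb_1(Y)$ lies on a unique line); hence it is one of $L_{x^2},L_{xg}$, i.e.\ the image of a fibre of $\F_3$, parametrized by $\ell\subset\Sigma_Z(Y)$. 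The main obstacle is the intersection-theoretic bookkeeping isolating $n=3$: both $\F_1$ and $\F_3$ are \emph{a priori} consistent with being the normalization of a non-normal quintic surface singular along a line, and separating them needs the explicit section $S$ together with the fact that $\nu$ carries it isomorphically onto $L_{x^2}$.
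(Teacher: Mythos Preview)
Your proof is correct and takes a genuinely different route from the paper's. The paper identifies the hyperplane $\langle Z\rangle$ exactly as you do, observes that $F$ is swept out by the pencil of lines $L_{x(s_1x+s_2y)}$ through $L_{x^2}$, and then invokes the main result of \cite{FurushimaTada1989} as a black box to conclude that $F$ is non-normal with Hirzebruch-surface normalization; the dichotomy $\F_1$ versus $\F_3$ is then settled by quoting that $L_{x^2}$ is a \emph{special} line (Corollary~\ref{corollary:V5-lines-description}). You instead realise the normalization concretely as the restriction $R=\CL(Y)|_\ell$ of the universal line and pin down $n=3$ by pure intersection theory on $\F_n$ --- the conductor formula, the degree equation $2\alpha-n=5$, and the explicit section $[g]\mapsto[x^5g]$ landing on $L_{x^2}$. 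This is more self-contained (no external reference needed) and makes the gluing $C_0\cup f_0\to L_{x^2}$ completely explicit; the paper's route is shorter but opaque at the key step. One small point: your three displayed constraints ($\alpha\ge n$, $\alpha\le n+2$, $2\alpha-n=5$) formally allow $n=5$ as well; you should use the \emph{strict} inequality $\alpha>n$, which you already have from finiteness of $\nu$, to exclude it (alternatively, $n=5$ forces $k=1$, which is not a section class on $\F_5$).

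For the bisecant and final assertions the two arguments are essentially the same --- both lift to $\F_3$, identify the strict transform $\tilde Z\sim C_0+4f$, and use $\tilde Z\cdot f=1$. Your treatment of the last claim is in fact slightly more careful than the paper's: you separately handle the degenerate case of a line through $Z\cap L_{x^2}=\{[x^6]\}$ by invoking that $[x^6]\in\Orb_1(Y)$ lies on a unique line, whereas the paper simply asserts that any line meeting both $Z$ and $L_{x^2}$ lies in $\langle Z\rangle$. One phrase to tighten: when you say the fibre $f_g$ meets $\tilde Z$ ``away from the conductor'', this needs the observation that the unique point $\tilde Z\cap C_0$ lies over $[x^6]$ (since $\nu(\tilde Z\cap C_0)\in Z\cap L_{x^2}=\{[x^6]\}$), hence equals $C_0\cap f_0$, so for $g\neq x$ the point $\tilde Z\cap f_g$ is indeed off $C_0\cup f_0$.
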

\begin{proof}
In all three cases the linear span $\langle Z \rangle$ is the hyperplane
\begin{equation*}
\langle x^6, x^5y, x^4y^2,x^3y^3, x^2y^4, xy^5 \rangle \subset \P(M_6),
\end{equation*}
so~$F$ is the corresponding hyperplane section of $Y$.
In particular, it is a quintic surface.
The line $L_{x^2}$ is contained in the hyperplane, hence also in $F$, and the same is true for any line~$L_{x(s_1x + s_2y)}$ parameterized by $\ell \subset \Sigma_Z(Y)$.
The lines $L_{x^2}$ and $L_{x(s_1x + x_2y)}$ meet at the point~$[x^5(s_1x+s_2y)]$, so the surface $F$ is swept out by secants of $L_{x^2}$.
Therefore, applying the main result of~\cite{FurushimaTada1989} we conclude that $F$ is non-normal and its normalization is a Hirzebruch surface.
Moreover, since the line~$L_{x^2}$ is special (see Corollary~\ref{corollary:V5-lines-description}), the normalization of~$F$ is~$\F_3$ and the normalization map
glues the exceptional section of $\F_3$ with a fiber.

Let $s$ denote the class of the exceptional section of $\F_3$, and $f$ the class of a fiber.
Since the fibers of $\F_3$ are mapped to lines on $Y$, and since the image of $\F_3$ is a quintic surface, the map $\F_3 \to F \to \langle Z \rangle \cong \P^5$
is given by an incomplete linear subsystem in $|s + 4f|$.
Let us check that the curve $Z$ is also the image of a member of the same linear system.
Indeed, $Z$ is a smooth quintic curve, and $|s + 4f|$ is the only linear system that contains integral curves of degree 5 with respect to $s + 4f$.

Now, we can check the last two assertions of the lemma.
Any bisecant of $Z$ is contained in the linear span $\langle Z \rangle$, hence in the surface $F$.
Therefore, it is the image of a fiber of $\F_3$.
Since $(s+4f) \cdot f = 1$, the image of a fiber intersects $Z$ in a single point and the intersection is transversal,
unless this is the fiber that is glued with the exceptional section.
This shows that $L_{x^2}$ is the unique bisecant of $Z$.
Finally, any line intersecting both $Z$ and $L_{x^2}$ is also contained in $\langle Z \rangle$, hence lies on $F$, hence is the image of a fiber of $\F_3$.
And as we have seen above, these lines are parameterized by $\ell$.
\end{proof}

Lemma~\ref{lemma:bisecant} allows to describe the flopping locus of the birational transformation $\upchi$
in the diagram~\eqref{equation(1)}.

\begin{proposition}\label{proposition:flopping-loci}
Let $X = X^{\mathrm{MU}}$, $X = X^{\mathrm{a}}$, or $X = X^{\mathrm{m}}(u)$ and $Z \subset Y$ is the corresponding quintic curve.
The flopping loci of the birational transformation $\upchi$
in the diagram~\eqref{equation(1)} is the exceptional section of the exceptional divisor $E_L \subset  X'$,
and the strict transform of the unique bisecant $L_{x^2}$ of $Z$ in $Y'$.
In particular, the line $L \subset X$ is special and does not intersect any other line on $X$.
\end{proposition}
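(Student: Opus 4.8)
The plan is to read off the flopping locus of $\upchi^{-1}$ directly from Lemmas~\ref{lemma:flopping-locus} and~\ref{lemma:bisecant}, transfer the conclusion to $\upchi$ using the general fact that a flop is an isomorphism in codimension one and matches its (finitely many) flopping curves bijectively with those of its inverse, and then identify the resulting curve on $X'$. By Lemma~\ref{lemma:flopping-locus} the flopping locus of $\upchi^{-1}$ is the union of the strict transforms in $Y'$ of the bisecants of~$Z$, and by Lemma~\ref{lemma:bisecant} the curve $Z$ has exactly one bisecant, the line $L_{x^2}=\Sing(Y\cap\langle Z\rangle)$. Hence the flopping locus of $\upchi^{-1}$ is the strict transform of $L_{x^2}$ in $Y'$, a single irreducible curve, which is the second assertion of the statement; and correspondingly the flopping locus of $\upchi$ is a single irreducible curve $C\subset X'$.

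By Lemma~\ref{lemma:flopping-locus} this curve $C$ is either the strict transform of a line on $X$ meeting~$L$ (in which case no exceptional section can occur, so $L$ is ordinary) or the exceptional section of $E_L$ (in which case $L$ is special). Since there is only one flopping curve, it is enough to prove that $L$ is special: then the exceptional section of $E_L$ is already a flopping curve, so it must be~$C$, and moreover no line other than $L$ can meet~$L$, for by Lemma~\ref{lemma:flopping-locus} each such line would produce a second flopping curve. For $X=X^{\mathrm{MU}}$ the specialness of $L$ is already known, since all lines on the Mukai--Umemura threefold are special (Remark~\ref{remark:nonreduced-sigma}). In general I would argue as follows. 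By Remark~\ref{remark:contracted-divisors} the map $\xi^{-1}$ contracts the unique member of $|H_Y-Z|$, namely the hyperplane section $F=Y\cap\langle Z\rangle$ (unique because $Z$ spans the hyperplane $\langle Z\rangle$); since $\sigma_X$ contracts only $E_L$, onto the curve $L$, the strict transform $\widetilde F\subset Y'$ of $F$ corresponds under $\upchi$ to the divisor $E_L$. The flopping curve $\widetilde{L_{x^2}}$ lies on $\widetilde F$, because the bisecant $L_{x^2}$ lies in $\langle Z\rangle$ and hence in $F$. Plugging in the explicit structure of $F$ from Lemma~\ref{lemma:bisecant} --- a non-normal quintic, ruled over $L$ by the lines parameterized by $\ell$, with normalization $\F_3$ and with the special line $L_{x^2}$ as its singular locus in the role of the $(-3)$-section --- one identifies $E_L$ as a Hirzebruch surface whose negative section has self-intersection at most~$-3$. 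On the other hand $E_L\cong\PP(\CN^\vee_{L/X})$, and $\CN_{L/X}\cong\CO_L(a)\oplus\CO_L(-1-a)$ with $a\ge 0$ by Lemma~\ref{lemma:normal-bundles}, so the negative section of $E_L$ has self-intersection $-(1+2a)$; comparing, $a\ge 1$, i.e.\ $L$ is special.

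The main obstacle is exactly this identification of $E_L$: one has to control the non-normality of $F$ along $L_{x^2}$ and to understand how the flop $\upchi$ --- which near its flopping curves is a Reid pagoda, the flopping curves having normal bundle $\CO_{\PP^1}\oplus\CO_{\PP^1}(-2)$ rather than $\CO_{\PP^1}(-1)^{\oplus2}$ --- modifies $\widetilde F$ in a neighbourhood of $\widetilde{L_{x^2}}$ before it is blown down to~$E_L$. A possible alternative, avoiding the local structure of the flop, is to combine the isomorphism $\Sigma^0_L(X)\cong\Sigma^0_Z(Y)=\Sigma_Z(Y)\setminus\ell$ of Lemma~\ref{lemma:sigma-0-iso} with the picture of $\Sigma_Z(Y)$ in Lemma~\ref{lemma:sigma-z-y}: the conic components of $\Sigma_Z(Y)$ all pass through the distinguished point $P$, which corresponds to the bisecant $L_{x^2}$, so $\Sigma(X)$ arises from $\Sigma^0_Z(Y)$ by adjoining a point at which the conic branches come together, which forces $\Sigma(X)$ to be singular at $[L]$ and hence $L$ to be special by Corollary~\ref{corollary:hilbert-smoothness}(ii); but making the phrase ``come together'' precise --- i.e.\ that both conic branches limit to $[L]$, rather than to some other lines meeting~$L$ --- is the crux in this approach as well. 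Everything else is routine bookkeeping with Lemmas~\ref{lemma:flopping-locus} and~\ref{lemma:bisecant}.
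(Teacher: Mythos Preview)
Your overall architecture matches the paper: use Lemma~\ref{lemma:bisecant} to see that $Z$ has the unique bisecant $L_{x^2}$, invoke Lemma~\ref{lemma:flopping-locus} to conclude that $\upchi^{-1}$ has a single flopping curve, deduce the same for $\upchi$, and then argue that this forces the lone flopping curve on $X'$ to be the exceptional section of $E_L$, whence $L$ is special and meets no other line. The divergence is in the last step, and here your proof has a genuine gap that you yourself flag: you never actually establish that $L$ is special. Your proposed route---transporting the $\F_3$ structure of the normalization of $F$ across the flop to read off the self-intersection of the negative section of $E_L$---would require controlling the flop locally near $\widetilde{L_{x^2}}$, and your alternative via the singularity of $\Sigma(X)$ at $[L]$ would require knowing in advance that the conic branches of $\Sigma_Z(Y)$ limit to $[L]$ rather than to some other point; neither is carried out.

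The paper closes this gap by a short degree argument that avoids both difficulties. Suppose $L$ were ordinary. Then $E_L\cong\F_1$, its exceptional section is not a flopping curve, and the map $\sigma_Y\circ\upchi$ is regular along a general fiber $f$ of $E_L\to L$. Since $\sigma_Y^*H_Y$ corresponds to $\sigma_X^*H_X-2E_L$ and $\sigma_X^*H_X\cdot f=0$, $E_L\cdot f=-1$, the image of $f$ in $Y$ has $H_Y$-degree $2$, i.e.\ is an irreducible conic lying on $F=Y\cap\langle Z\rangle$. But by Lemma~\ref{lemma:bisecant} the normalization of $F$ is $\F_3$ mapped by $|s+4f|$, and a direct check shows no irreducible curve class on $\F_3$ has degree $2$ with respect to $s+4f$ (the only classes $as+bf$ with $a+b=2$ are $2f$, $s+f$, $2s$, all reducible). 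This contradiction gives specialness of $L$ without ever analysing the flop locally. You were one observation away: instead of trying to identify $E_L$ globally from $F$, look at what a general ruling of $E_L$ would have to become inside $F$.
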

\begin{proof}
By Lemma~\ref{lemma:flopping-locus} the flopping locus in $Y'$ consists of strict transforms of bisecants of~$Z$.
So, by Lemma~\ref{lemma:bisecant} there is a unique flopping curve for $\upchi^{-1}$.
Consequently, the same is true for the map~$\upchi$.

On the other hand, the surface $F$ is the image of the exceptional divisor $E_L$.
Since its normalization is isomorphic to $\F_3$, it follows that $L$ is a special line on $X$.
Indeed, otherwise $E_L\cong \F_1$ and the map $\sigma_Y \circ \upchi$ is regular near a general fiber
of $E_L$. Then the image of fibers must be irreducible conics on $F$.
On the other hand, $F$ does not contain irreducible conics.
Hence, the exceptional section of the exceptional divisor is in the flopping locus.
But as we have already shown, the flopping locus consists of a single curve.
This means that no other line on $X$ intersects $L$.
\end{proof}

Combining the above assertions we obtain the following

\begin{proposition}
\label{proposition:lines-special-v22}
The Hilbert scheme of lines on a special Fano threefold $X$ of genus~$12$ has the following description:
\begin{itemize}
\item If $X = X^{\mathrm{MU}}$ then $\Sigma(X)$ is a smooth rational curve with an non-reduced scheme structure.
\item If $X = X^{\mathrm{a}}$ then $\Sigma(X)$ is the union of two rational curves glued at a point $P$, such that $\Sing(\Sigma(X)) = P$.
\item If $X = X^{\mathrm{m}}(u)$ then $\Sigma(X)$ is the union of two smooth rational curves glued at two simple tangency points $P$ and $P'$.
\end{itemize}
\end{proposition}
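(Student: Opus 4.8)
The plan is to deduce the structure of $\Sigma(X)$ from the description of $\Sigma_Z(Y)$ obtained in Lemma~\ref{lemma:sigma-z-y}, where $Z = Z_{\mathrm{MU}}$, $Z = Z_{\mathrm{a}}$ or $Z = Z_{\mathrm{m}}(u)$ is the normal rational quintic on $Y$ associated with $X$ by the construction of Theorem~\ref{theorem:double-projection}. By Proposition~\ref{proposition:flopping-loci} the line $L \subset X$ built into that construction is special and is met by no other line on $X$; hence the open subscheme $\Sigma^0_L(X) \subset \Sigma(X)$ equals $\Sigma(X) \setminus \{[L]\}$, and it is dense because every component of $\Sigma(X)$ is a curve by Lemma~\ref{lemma-hilb-x-1}. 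By Lemma~\ref{lemma:sigma-0-iso} there is an isomorphism $\Sigma^0_L(X) \cong \Sigma^0_Z(Y)$, and by Lemma~\ref{lemma:bisecant} the unique bisecant of $Z$ is the line corresponding to the distinguished point $P$ of Lemma~\ref{lemma:sigma-z-y}, while the lines meeting both $Z$ and this bisecant are exactly those parameterized by the component $\ell$. Therefore $\Sigma^0_Z(Y) = \Sigma_Z(Y) \setminus \ell$, and since each conic component of $\Sigma_Z(Y)$ meets $\ell$ only at $P$ (being tangent there), this equals the union of the conic components of $\Sigma_Z(Y)$ with the point $P$ removed.

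Next I would run through the three cases using Lemma~\ref{lemma:sigma-z-y}. For $X = X^{\mathrm{MU}}$ one gets $\Sigma^0_L(X) \cong 2\gamma_{\mathrm{MU}} \setminus \{P\}$, a nowhere reduced scheme supported on $\gamma_{\mathrm{MU}} \setminus \{P\} \cong \mathbb{A}^1$. For $X = X^{\mathrm{a}}$ the conics $\gamma'_{\mathrm{a}}$ and $\gamma''_{\mathrm{a}}$ meet only at $P$, so $\Sigma^0_L(X)$ is a disjoint union of two copies of $\mathbb{A}^1$. For $X = X^{\mathrm{m}}(u)$ the conics $\gamma'_{\mathrm{m}}(u)$ and $\gamma''_{\mathrm{m}}(u)$ meet at $P$ and at a further point $P'$, with a simple tangency at $P'$; since $P'$ parameterizes a line that is not a bisecant of $Z$, it lies in $\Sigma^0_Z(Y)$, so $\Sigma^0_L(X)$ is the union of two copies of $\mathbb{A}^1$ meeting at the single point $P'$, where $\Sigma(X)$ carries that simple tangency. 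Moreover, for $Z = Z_{\mathrm{a}}$ and $Z = Z_{\mathrm{m}}(u)$ the conic components of $\Sigma_Z(Y)$ are reduced, so $\Sigma(X)$ is reduced away from $[L]$, whereas for $Z = Z_{\mathrm{MU}}$ the corresponding component is nowhere reduced.

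It then remains to reattach the point $[L]$. The case analysis above shows that $\Sigma^0_L(X)$ has no complete irreducible component; since $\Sigma^0_L(X) = \Sigma(X) \setminus \{[L]\}$ is the complement of a single point in a curve with no isolated points, the closure in $\Sigma(X)$ of each irreducible component of $\Sigma^0_L(X)$ is obtained by adjoining exactly $[L]$, so every irreducible component of $\Sigma(X)$ passes through $[L]$; in particular $\Sigma(X)$ is connected and the asserted gluing at $[L]$ holds. For $X^{\mathrm{a}}$ this already shows that $\Sigma(X)$ is the union of two rational curves meeting only at $[L]$, and since away from $[L]$ the scheme $\Sigma(X)$ is a disjoint union of two copies of $\mathbb{A}^1$ it follows that $\Sing(\Sigma(X)) = \{[L]\}$. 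For $X^{\mathrm{MU}}$ one gets that $\Sigma(X)_{\mathrm{red}}$ is $\mathbb{A}^1$ completed by one point, i.e.\ a rational curve, and that $\Sigma(X)$ is nowhere reduced: it is nowhere reduced on the dense open $\Sigma^0_L(X)$, hence also at the point $[L]$, which by Corollary~\ref{corollary:hilbert-smoothness}(ii) is moreover a singular point since $L$ is special. For $X^{\mathrm{m}}(u)$ one gets that $\Sigma(X)$ is the union of two rational curves meeting at $[L]$ and at $P'$.

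The main obstacle is to pin down the local structure of $\Sigma(X)$ at the point $[L]$: one must show that $\Sigma(X)_{\mathrm{red}} \cong \P^1$ is globally smooth in the Mukai--Umemura case, and that for $X^{\mathrm{m}}(u)$ the two branches at $[L]$ are smooth and meet with a \emph{simple} tangency (so that $[L]$ and $P'$ are singularities of the same type). By Corollary~\ref{corollary:sigma-x-gorenstein} the singularity of $\Sigma(X)$ at $[L]$ is a planar Gorenstein one, so the point is to compute its $\delta$-invariant. I would attack this in one of two ways. The first is to analyze the flop $\upchi$ in the diagram~\eqref{equation(1)} near its exceptional locus: by Proposition~\ref{proposition:flopping-loci} the exceptional divisor $E_L$ of $\sigma_X$ is isomorphic to $\F_3$, and tracing, via $\sigma_Y^{-1}$ and $\upchi^{-1}$, how a line of the family $\gamma'_{\mathrm{m}}(u)$ (respectively $\gamma''_{\mathrm{m}}(u)$) degenerates as its point tends to $P$ along the quintic $Z$ should identify the two branches of $\Sigma(X)$ at $[L]$ together with their order of contact. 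The second, more bookkeeping-heavy route is to compute the arithmetic genus $p_a(\Sigma(X))$, which coincides with the genus of the smooth curve $\Sigma(X)$ for a general Fano threefold of genus $12$, and then to use the $\delta$-invariant formula for Gorenstein curves, together with the data already obtained (two smooth rational components, a simple tangency at $P'$, and a single remaining singular point at $[L]$), to extract $\delta_{[L]}$. Finally, in the Mukai--Umemura case the global smoothness of $\Sigma(X)_{\mathrm{red}}$ can alternatively be deduced from symmetry: $\Sigma(X^{\mathrm{MU}})_{\mathrm{red}}$ is a rational curve carrying a nontrivial action of $\Aut(X^{\mathrm{MU}}) \cong \PGL_2(\Bbbk)$ (nontrivial since $\PGL_2(\Bbbk)$ is simple), and a singular rational curve admits no nontrivial $\PGL_2(\Bbbk)$-action, so $\Sigma(X^{\mathrm{MU}})_{\mathrm{red}} \cong \P^1$.
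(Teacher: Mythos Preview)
Your reduction $\Sigma(X)\setminus\{[L]\}\cong\Sigma_Z(Y)\setminus\ell$ via Proposition~\ref{proposition:flopping-loci}, Lemma~\ref{lemma:sigma-0-iso} and Lemma~\ref{lemma:bisecant}, together with the case analysis based on Lemma~\ref{lemma:sigma-z-y}, is exactly the paper's argument, and it already settles the $X^{\mathrm a}$ case completely.

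Where you diverge is at the ``main obstacle'': the local structure of $\Sigma(X)$ at the added point $[L]$ in the Mukai--Umemura and multiplicative cases. The paper does not compute a $\delta$-invariant or trace the flop; instead it plays the same game a second time from the line $L'$ corresponding to the other distinguished point~$P'$ (any other point in the Mukai--Umemura case, the second singular point in the multiplicative case). The stabilizer of $L'$ in $\Aut(X)$ still contains a nontrivial connected solvable group (namely $T_2$ in the multiplicative case, a Borel in the Mukai--Umemura case), so by Lemma~\ref{lemma:z:classification} the quintic $Z'$ associated to $(X,L')$ is again conjugate to one of the standard curves. Applying your own isomorphism $\Sigma(X)\setminus\{[L']\}\cong\Sigma_{Z'}(Y)\setminus\ell'$ now exhibits the local structure of $\Sigma(X)$ at $P=[L]$ directly, and it coincides with the already-known structure at $P'$. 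This is both shorter and self-contained compared with your two proposed routes, neither of which you carry out; the genus/$\delta$-invariant method in particular would require an independent computation of $p_a(\Sigma(X))$ that is not available at this point in the paper.

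Your alternative argument for the Mukai--Umemura case via the $\PGL_2(\Bbbk)$-action on $\Sigma(X)_{\mathrm{red}}$ is valid, but the justification ``nontrivial since $\PGL_2(\Bbbk)$ is simple'' is a non sequitur; you need a reason the action is nontrivial, e.g.\ that $\Aut(X;L)=\Aut(Y;Z_{\mathrm{MU}})=B_2\ne\PGL_2(\Bbbk)$ by~\eqref{eq:aut-xl-yz}, so $[L]$ is not fixed by all of $\PGL_2(\Bbbk)$.
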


\begin{warning}
One can actually prove that in the case $X = X^{\mathrm{a}}$ the components of~$\Sigma(X)$ are smooth and the point $P$ is their tangency point of multiplicity 4.
Moreover, in fact in all cases considered in Proposition~\ref{proposition:lines-special-v22} the Hilbert scheme $\Sigma(X)$
has a natural structure of a plane quartic (see also Remark~\ref{remark-V22-VSP}),
which is either a double conic, or a union of two conics with a single common point, or a union of two conics with two tangency points,
so the picture below is adequate.
However, we do not need all these facts, and the proof that we have in mind requires going in too much details, so we skip it.
\end{warning}

The next picture shows how $\Sigma(X)$ looks:
\begin{equation*}
\arraycolsep = 4em
\begin{array}{ccc}
\begin{tikzpicture}[xscale = .7, yscale = .8]
\draw[very thick, double] (0,0) ellipse (3em and 10ex);
\draw (0,1.8) node [below] {$P$};
\draw (0,-1.8) node [above] {$P'$};
\draw[fill] (0,1.8) circle (.3em);
\draw[fill] (0,-1.8) circle (.3em);
\end{tikzpicture}
&
\begin{tikzpicture}[xscale = .7, yscale = .8]
\draw[thick] (0,0) ellipse (3em and 10ex);
\draw[thick] (0,.52) ellipse (2em and 7ex);
\draw (0,1.8) node [below] {$P$};
\draw[fill] (0,1.8) circle (.2em);
\end{tikzpicture}
&
\begin{tikzpicture}[xscale = .7, yscale = .8]
\draw[thick] (0,0) ellipse (3em and 10ex);
\draw[thick] (0,0) ellipse (2em and 10ex);
\draw (0,1.8) node [below] {$P$};
\draw (0,-1.8) node [above] {$P'$};
\draw[fill] (0,1.8) circle (.2em);
\draw[fill] (0,-1.8) circle (.2em);
\end{tikzpicture}
\\
\Sigma(X^{\mathrm{MU}}) &
\Sigma(X^{\mathrm{a}}) &
\Sigma(X^{\mathrm{m}}(u))
\end{array}
\end{equation*}

\begin{proof}
Let $L$ be the line on $X$ obtained from the pair $(Y,Z)$ by the construction of Theorem~\ref{theorem:double-projection}(ii).
By Proposition~\ref{proposition:flopping-loci} and Lemmas~\ref{lemma:sigma-0-iso} and~\ref{lemma:bisecant} we have an isomorphism
\begin{equation*}
\Sigma(X) \setminus \{[L]\} = \Sigma_L^0(X) \cong \Sigma_Z^0(Y) = \Sigma_Z(Y) \setminus \ell.
\end{equation*}
Thus, $\Sigma(X)$ is a one-point compactification of $\Sigma_Z(Y) \setminus \ell$.
Using the description of~$\Sigma_Z(Y)$ given in Lemma~\ref{lemma:sigma-z-y} we deduce all assertions of the proposition,
except for the local description of $\Sigma(X)$ at the point $P$ (corresponding to the line $L$) in the Mukai--Umemura and multiplicative cases.
For this we can argue as follows.
First, replace the line $L$ with the line $L'$ corresponding to the point $P'$ (any other point in the Mukai--Umemura case,
and the other singular point in the multiplicative case) and consider the quintic curve~$Z' \subset Y$ associated with the pair $(X,L')$.
By Lemma~\ref{lemma:z:classification} we conclude that~$Z'$ is conjugate to~$Z_{\mathrm{m}}(u')$,
for some $u'$ possibly different from $u$, with respect to the $\Aut(Y)$-action,
so it follows that
the local behavior of $\Sigma(X)$ at $P$ is the same as at $P'$.
\end{proof}

Now we are ready to prove the second main result of this section.

\begin{proposition}
\label{proposition:automorphisms-special-v22}
The automorphism groups of special Fano threefolds of genus~$12$ are the following:
\begin{equation*}
\Aut(X^{\mathrm{MU}}) \cong \PGL_2(\Bbbk),
\qquad
\Aut(X^{\mathrm{a}}) \cong \GG_{\mathrm a} \rtimes \mumu_4,
\qquad
\Aut(X^{\mathrm{m}}(u)) \cong \GG_{\mathrm m} \rtimes \mumu_2.
\end{equation*}
\end{proposition}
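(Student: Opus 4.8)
The plan is to treat the three threefolds separately. For $X^{\mathrm{MU}}$ there is nothing to prove: the isomorphism $\Aut(X^{\mathrm{MU}})\cong\PGL_2(\Bbbk)$ is part of Theorem~\ref{theorem-mu-22}. For $X=X^{\mathrm a}$ and $X=X^{\mathrm m}(u)$ the strategy is to read off from the description of $\Sigma(X)$ in Proposition~\ref{proposition:lines-special-v22} an $\Aut(X)$-stable configuration of lines, and then to transport the stabilizer computation to $Y$ by means of the functoriality isomorphism $\Aut(X;L_0)\cong\Aut(Y;Z_0)$ of~\eqref{eq:aut-xl-yz} (Remark~\ref{remark:functoriality}), where $Z_0\subset Y$ is the normal rational quintic attached to a line $L_0\subset X$ by the double projection of Theorem~\ref{theorem:double-projection}. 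Throughout we use that $\Aut(X)$ acts on $\Sigma(X)$ and preserves its singular locus.

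For $X=X^{\mathrm a}$: By Proposition~\ref{proposition:lines-special-v22}, $\Sigma(X^{\mathrm a})$ is a union of two rational curves meeting at one point $P$ with $\Sing(\Sigma(X^{\mathrm a}))=\{P\}$, and from the proof of that proposition $P=[L]$, where $L$ is the line attached to $(Y,Z_{\mathrm a})$. Hence every automorphism of $X^{\mathrm a}$ fixes $P$, so $\Aut(X^{\mathrm a})=\Aut(X^{\mathrm a};L)\cong\Aut(Y;Z_{\mathrm a})$, and it remains to compute the stabilizer of $Z_{\mathrm a}=\overline{U_2\cdot[\phi_6]}$ in $\Aut(Y)=\PGL_2(\Bbbk)$. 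Since $[x^6]$ is the unique point of $Z_{\mathrm a}$ lying in the closed orbit $\Orb_1(Y)$, it is fixed by $\Aut(Y;Z_{\mathrm a})$, which is therefore contained in the Borel subgroup $B_2=\mathrm{Stab}_{\PGL_2}([x^6])$. Writing a general element of $B_2$ as $\tau\nu$ with $\tau\in T_2$, $\nu\in U_2$, and using that $T_2$ normalizes $U_2$, one gets $\tau\nu\cdot Z_{\mathrm a}=\overline{U_2\cdot(\tau[\phi_6])}$; thus $\tau\nu$ preserves $Z_{\mathrm a}$ if and only if $\tau[\phi_6]\in U_2\cdot[\phi_6]$, and since $\mathrm{Stab}_{\PGL_2}([\phi_6])=\Oct$ while the only upper-triangular elements of $\Oct$ are diagonal of order dividing $4$, this happens exactly when $\tau\in\mumu_4$. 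Therefore $\Aut(Y;Z_{\mathrm a})=U_2\rtimes\mumu_4$ with $\mumu_4$ acting on $U_2\cong\GG_{\mathrm a}$ by scalar multiplication, i.e.\ $\Aut(X^{\mathrm a})\cong\GG_{\mathrm a}\rtimes\mumu_4$.

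For $X=X^{\mathrm m}(u)$: By Proposition~\ref{proposition:lines-special-v22}, $\Sigma(X^{\mathrm m}(u))$ is a union of two smooth rational curves glued at two points, $\Sing(\Sigma(X^{\mathrm m}(u)))=\{P,P'\}$, and we may take $P=[L]$ with $L$ the line attached to $(Y,Z_{\mathrm m}(u))$. Since $\Aut(X^{\mathrm m}(u))$ permutes $\{P,P'\}$, there is an exact sequence $1\to\Aut(X^{\mathrm m}(u);L)\to\Aut(X^{\mathrm m}(u))\to\mathbb Z/2$. For the kernel, $\Aut(X^{\mathrm m}(u);L)\cong\Aut(Y;Z_{\mathrm m}(u))$, and the latter fixes both $[x^6]\in\Orb_1(Y)$ and $[xy^5]\in\Orb_2(Y)$ (the two boundary points of $Z_{\mathrm m}(u)$), hence lies in $T_2$; as $T_2$ manifestly preserves $Z_{\mathrm m}(u)=\overline{T_2\cdot[\phi_{6,u}]}$, this gives $\Aut(X^{\mathrm m}(u);L)\cong T_2\cong\GG_{\mathrm m}$. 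To see that the map to $\mathbb Z/2$ is onto, let $L'$ be the line corresponding to $P'$ and $Z'\subset Y$ the normal rational quintic attached to $(X^{\mathrm m}(u),L')$; since the connected group $\GG_{\mathrm m}=\Aut(X^{\mathrm m}(u);L)$ fixes $P'$ as well, it preserves $L'$, so $\Aut(Y;Z')\cong\Aut(X^{\mathrm m}(u);L')$ contains a one-dimensional torus, and by Lemma~\ref{lemma:z:classification} $Z'$ is $\PGL_2(\Bbbk)$-conjugate to one of $Z_{\mathrm{MU}}$, $Z_{\mathrm a}$, $Z_{\mathrm m}(u')$. The first two are excluded because $\Sigma(X^{\mathrm{MU}})$, $\Sigma(X^{\mathrm a})$, $\Sigma(X^{\mathrm m}(u))$ are pairwise non-isomorphic (they have, respectively, one irreducible component, two components meeting at one point, and two components meeting at two points), so $X^{\mathrm{MU}}$, $X^{\mathrm a}$, $X^{\mathrm m}(u)$ are pairwise non-isomorphic; hence $Z'$ is conjugate to some $Z_{\mathrm m}(u')$, and one checks that $Z_{\mathrm m}(u')$ is then $\PGL_2(\Bbbk)$-conjugate to $Z_{\mathrm m}(u)$ itself (equivalently, $u'^4=u^4$; cf.\ the remark after Example~\ref{example-V22-m}). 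Consequently $(Y,Z')\cong(Y,Z_{\mathrm m}(u))$, and by functoriality there is $\psi\in\Aut(X^{\mathrm m}(u))$ carrying $L'$ to $L$, hence $P'$ to $P$; so $\psi\notin\Aut(X^{\mathrm m}(u);L)$ and the sequence above is short exact with $\mathbb Z/2$ on the right. Finally, $\psi$ interchanges the two fixed points of $\GG_{\mathrm m}=\Aut(X^{\mathrm m}(u);L)$ on $\Sigma(X^{\mathrm m}(u))$, so conjugation by $\psi$ inverts $\GG_{\mathrm m}$; checking in addition that $\psi$ may be taken to be an involution yields $\Aut(X^{\mathrm m}(u))\cong\GG_{\mathrm m}\rtimes\mumu_2$.

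The main obstacle is the surjectivity of $\Aut(X^{\mathrm m}(u))\to\mathbb Z/2$, that is, the existence of an automorphism of $X^{\mathrm m}(u)$ interchanging the two special lines $L$ and $L'$. This automorphism does not descend to $Y$ (we have just seen that $\Aut(Y;Z_{\mathrm m}(u))=T_2$ contains no such involution), so one is forced to argue through the functoriality of the double-projection diagram together with Lemma~\ref{lemma:z:classification}; the real work is in verifying the conjugacy criterion $Z_{\mathrm m}(u')\sim Z_{\mathrm m}(u)\iff u'^4=u^4$ (equivalently, the rigidity $X^{\mathrm m}(u)\cong X^{\mathrm m}(u')\Rightarrow u^4=u'^4$), and then in pinning down that the resulting extension of $\mathbb Z/2$ by $\GG_{\mathrm m}$ is the split one $\GG_{\mathrm m}\rtimes\mumu_2$ rather than a nonsplit extension or the direct product $\GG_{\mathrm m}\times\mumu_2$.
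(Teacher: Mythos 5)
Your handling of $X^{\mathrm{MU}}$, of $X^{\mathrm{a}}$, and of the kernel $\Aut(X^{\mathrm m}(u);L)\cong\GG_{\mathrm m}$ coincides with the paper's proof: both arguments pass through the action of $\Aut(X)$ on $\Sing(\Sigma(X))$, the isomorphism $\Aut(X;L)\cong\Aut(Y;Z)$ of Remark~\ref{remark:functoriality}, and a stabilizer computation in $\PGL_2(\Bbbk)$; your reduction to $B_2$ (resp.\ $T_2$) via the boundary points $[x^6]$ and $[xy^5]$ of the quintic is a clean repackaging of the paper's computation with the octahedral stabilizer $\Oct$.

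The genuine gap is in the surjectivity of $\Aut(X^{\mathrm m}(u))\to\mumu_2$. You correctly show that the quintic $Z'$ attached to the second line $L'$ is invariant under a one-dimensional torus, hence conjugate to some $Z_{\mathrm m}(u')$ by Lemma~\ref{lemma:z:classification}, so that $X^{\mathrm m}(u)\cong X^{\mathrm m}(u')$. But to produce an automorphism swapping $L$ and $L'$ you need $Z'$ conjugate to $Z_{\mathrm m}(u)$ itself, and for this you assert $u'^4=u^4$ supported only by ``one checks'' and a pointer to the remark after Example~\ref{example-V22-m} --- which proves only the easy implication $u'^4=u^4\Rightarrow Z_{\mathrm m}(u')=Z_{\mathrm m}(u)$. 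What you actually need is the rigidity statement $X^{\mathrm m}(u)\cong X^{\mathrm m}(u')\Rightarrow u^4=u'^4$ (or, directly, that the quintic of the second singular point of $\Sigma(X^{\mathrm m}(u))$ returns the parameter $u$). This is nowhere established, does not follow formally from anything in the paper, and is not obviously easier than the claim it is meant to prove: a priori the assignment $u^4\mapsto u'^4$ could be a nontrivial involution of the parameter space $\PP^1_{u^4}\setminus\{0,1,1/5,\infty\}$, which would be perfectly consistent with $\Aut(X^{\mathrm m}(u))=\GG_{\mathrm m}$. The paper does not take this route at all: it imports the extra involution from \cite[Proposition~5.1]{Dinew-Kapustka-Kapustka-2015}, with the caveats and the alternative construction indicated in Remark~\ref{remark-V22-VSP}. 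A secondary point: even granting surjectivity, your $\psi$ only satisfies $\psi^2\in\GG_{\mathrm m}$ with $\psi$ acting by inversion, and since $H^2(\mumu_2,\GG_{\mathrm m})$ for the inversion action is nontrivial, the extension could a priori be nonsplit; so ``checking that $\psi$ may be taken to be an involution'' is not a formality but exactly what the explicit involution of the cited reference supplies.
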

\begin{proof}
The first isomorphism is given by Theorem~\ref{theorem-mu-22}, so we concentrate on the other two.
The group~$\Aut(X)$ acts on the Hilbert scheme $\Sigma(X)$, and as a consequence on the set~$\Sing(\Sigma(X))$,
which by Proposition~\ref{proposition:lines-special-v22} is a single point in the case $X = X^{\mathrm a}$ and a two-point set in the case~$X = X^{\mathrm m}(u)$.
Let $L \subset X$ be the line corresponding to the singular point~$P$ of $\Sigma(X)$,
and let $\Aut(X; L) \subset \Aut(X)$ be the subgroup that preserves~$L$.
Then we have an equality
\begin{equation*}
\Aut(X^{\mathrm a}) = \Aut(X^{\mathrm a}; L),
\end{equation*}
and an exact sequence
\begin{equation}
\label{eq:aut-xm-sequence}
1 \to \Aut(X^{\mathrm m}(u); L) \to \Aut(X^{\mathrm m}(u)) \to \mumu_2,
\end{equation}
where the group $\mumu_2$ is considered as the group of permutations of the set $\{P,P'\}$.

On the other hand, we have an isomorphism~\eqref{eq:aut-xl-yz}.
A simple computation shows that
\begin{equation*}
\Aut(X^{\mathrm a}; L) \cong {} \Aut(Y; Z_{\mathrm a}) = U_2 \rtimes \mumu_4 \cong \GG_{\mathrm a} \rtimes \mumu_4.
\end{equation*}
Indeed, if $g \in \Aut(Y)$ preserves $Z_{\mathrm a} = \overline{U_2 \cdot [\phi_6]}$,
then $g([\phi_6]) =[\phi_{6,u}]$ for some $u \in U_2$,
hence~$u^{-1}g$ is an element of the stabilizer $\Oct$ of the point $[\phi_6]$ that preserves $U_2$.
Therefore, we have $u^{-1}g \in \mumu_4$, where $\mumu_4$ is the subgroup of the octahedral group fixing one of the octahedron axes.
It is generated by the element
\begin{equation*}
\tau=
\left(\begin{smallmatrix}
\upzeta & 0\\ 0&1
\end{smallmatrix}\right)
\in \PGL_2(\Bbbk),
\end{equation*}
where $\upzeta$ is a fourth root of unity, and we finally get the required description of $\Aut(X^{\mathrm{a}})$.

Similarly,
\begin{equation*}
\Aut(X^{\mathrm m}(u); L) \cong {} \Aut(Y; Z_{\mathrm m}(u)) = T_2 \cong \GG_{\mathrm m}.
\end{equation*}
Indeed, if $g \in \Aut(Y)$ preserves $Z_{\mathrm m}(u) = \overline{T_2 \cdot [\phi_{6,u}]}$, then $g([\phi_{6,u}]) = t([\phi_{6,u}])$ for some~\mbox{$t \in T_2$},
hence $t^{-1}g$ is an element of the stabilizer $u \cdot \Oct \cdot u^{-1}$ of the point $[\phi_{6,u}]$ (where we consider $u$ as an element of $U_2$) that preserves $T_2$.
In other words,
\begin{equation*}
t^{-1} g \in (u \cdot \Oct \cdot u^{-1}) \cap (T_2 \cup w(T_2)),
\end{equation*}
where $w$ is the nontrivial element of the Weyl group ${\mathfrak S}_2$ of $\PGL_2(\Bbbk)$.
But for $u$ satisfying the inequality of Example~\ref{example-V22-m} the intersection on the
right hand side is trivial, and we finally see that $g \in T_2$.

To conclude
we note that by~\cite[Proposition~5.1]{Dinew-Kapustka-Kapustka-2015}
the group $\Aut(X^{\mathrm{m}}(u))$ contains an extra involution
hence~\mbox{$\Aut(X^{\mathrm{m}}(u)) \ne \Aut(X^{\mathrm{m}}(u); L)$},
so the second map in~\eqref{eq:aut-xm-sequence} is surjective, and we get the required description of $\Aut(X^{\mathrm{m}}(u))$.
\end{proof}

\begin{remark}
\label{remark-V22-VSP}
According to S.\,Mukai \cite{mukai1989biregular} (see also \cite{Mukai-1992}, \cite{Schreyer2001}) any
Fano threefold $X$ with $\rho(X)=1$, $\iota(X)=1$, and $\g(X)=12$
can be realized as a variety of sums of powers that parameterizes
polar hexagons of a plane quartic curve $\mathfrak{C}$ (see \cite[\S5]{Mukai-1992} for a definition and~\cite{Dinew-Kapustka-Kapustka-2015} for some details),
and the Hilbert scheme $\Sigma(X)$ is the \emph{Scorza transform} of
$\mathfrak{C}$, (see~\cite[\S7]{DolgachevKanev} for a definition).
Unfortunately, a complete proof of these facts is not yet published,
while the construction of the extra involution in~\cite{Dinew-Kapustka-Kapustka-2015} relies on them.

To establish the existence of an involution in $\Aut(X^{\mathrm{m}}(u))\setminus \GG_{\mathrm m}$,
independent of the above Mukai's results,
one can use another equivariant Sarkisov link similar to~\eqref{equation(1)},
see~\cite{Kuznetsov-Prokhorov-Gm} for details.
\end{remark}

Propositions~\xref{proposition:nearly-done} and~\xref{proposition:automorphisms-special-v22}
together with Theorem~\ref{theorem:Prokhorov-g-12-finite} (and a classification of smooth Fano threefolds of Picard rank $1$, see~\cite[\S12.2]{Iskovskikh-Prokhorov-1999})
give a proof of Theorem~\xref{theorem:Prokhorov}.


\appendix

\section{Some standard results on conics}
\label{appendix:new}

In this section we collect some well-known results about conics. We refer to~\S\xref{subsection:lines-and-conics} for our notation and conventions.

\subsection{Conics on surfaces}

For any variety $Z\subset\P^N$ we denote by $\Sigma(Z)$ and $S(Z)$ the Hilbert schemes of lines
and conics contained in $Z$, respectively, see~\S\xref{subsection:lines-and-conics}.

\begin{lemma}
\label{lemma:many-reducible-conics}
Let $Z\subset\P^N$ be an integral variety such that $\dim\Sigma(Z)\le 1$.
Suppose that there is an irreducible two-dimensional closed subset $S_0\subset S(Z)$ such that
a general point of $S_0$ corresponds to a reducible \textup(reduced\textup) conic.
Then either $Z$ contains a cone over a curve $B$ and~\mbox{$S_0\cong\Sym^2(B)$},
or $Z$ contains a smooth quadric surface.
\end{lemma}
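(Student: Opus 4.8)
The plan is to analyze the family of reducible conics parameterized by $S_0$ by looking at its two "rulings." Since a general point of $S_0$ corresponds to a reduced reducible conic $C = L_1 \cup L_2$, associating to such a point the unordered pair $\{[L_1], [L_2]\}$ of its components defines a rational map $S_0 \dashrightarrow \Sym^2(\Sigma(Z))$. First I would observe that this map cannot be dominant onto a two-dimensional subset sitting over a pair of distinct general points of $\Sigma(Z)$: if it were, then through a general pair of lines $L_1, L_2$ on $Z$ there would be a reducible conic $L_1 \cup L_2$, which forces $L_1$ and $L_2$ to meet; but a general pair of lines in a one-dimensional family $\Sigma(Z)$ need not meet (and one gets a contradiction with a parameter count, using $\dim \Sigma(Z) \le 1$). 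Hence the generic component $L_1 = L_1(s)$ of the conic $C_s$ varies in at most a one-dimensional family as $s$ ranges over $S_0$, and likewise for $L_2$; by symmetry of the construction, after possibly restricting to the (reduced) locus where this holds, there is a curve $B \subset \Sigma(Z)$ (an irreducible component of the image of $S_0$ under "take a component of the conic") such that a general conic in $S_0$ is a union $L_{b_1} \cup L_{b_2}$ of two lines from the family $\{L_b\}_{b \in B}$, with the map $S_0 \dashrightarrow \Sym^2(B)$ generically finite; counting dimensions, $\dim B = 1$ and this map is birational, so $S_0 \cong \Sym^2(B)$ birationally.

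Next I would pin down the geometry of the surface $W \subset Z$ swept out by the lines $\{L_b\}_{b \in B}$. Since a general pair $L_{b_1}, L_{b_2}$ from this family meets (because $L_{b_1} \cup L_{b_2}$ is a conic, hence connected), any two general members of the ruling intersect. A standard fact about a one-parameter family of lines in $\P^N$, any two general members of which meet, is that the family is either a pencil of lines through a fixed point (a plane pencil — but then the surface is a plane, which I would absorb into the "quadric" case only if it literally lies on a quadric, so this degenerate possibility must be excluded or shown not to occur under $\dim\Sigma(Z)\le 1$), or all the lines pass through one common point $P$ (so $W$ is a cone with vertex $P$ over a curve $B'$ birational to $B$), or all the lines lie in a common plane, or $W$ is a smooth quadric surface with $B$ one of its two rulings. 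I would invoke the classical classification of such ruled surfaces (a ruled surface in $\P^N$ whose general rulings pairwise meet is a cone, a plane, or a quadric) to reduce to exactly the two stated alternatives. In the cone case, $W$ is a cone over a curve $B$ and the conics in $S_0$ are precisely the unions of two rulings, giving $S_0 \cong \Sym^2(B)$ as schemes (not merely birationally) after identifying $B$ with the base of the cone; in the quadric case $W$ is a smooth quadric surface contained in $Z$.

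The main obstacle I anticipate is making the "two general members meet $\Rightarrow$ cone, plane, or quadric" step precise and handling the degenerate intermediate possibilities cleanly — in particular, ruling out (or correctly disposing of) the case where the lines $L_b$ sweep out a plane, and making sure the identification $S_0 \cong \Sym^2(B)$ is an isomorphism of schemes rather than just a birational equivalence. For the first point I would either cite the standard classification of scrolls/ruled surfaces in projective space or argue directly: if the lines do not all pass through a point and do not all lie in a plane, then the incidence correspondence forces, for the generic line $L_b$, infinitely many other members of the family to meet it, which (since each meets $L_b$ in one point and $L_b \cong \P^1$) lets one reconstruct $W$ as the image of a $\P^1$-bundle over $B$ embedded by a line bundle of fiber-degree $1$, and the self-intersection/adjunction constraints coming from "general rulings meet" force $W$ to be $\P^1 \times \P^1$ embedded as a quadric. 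For the scheme-theoretic isomorphism, I would argue that over the cone the assignment $(b_1, b_2) \mapsto L_{b_1} \cup L_{b_2}$ is a morphism $\Sym^2(B) \to S(Z)$ with image $S_0$ and construct its inverse using the two points where a reducible conic meets the vertex-complement, checking it is a bijection on tangent spaces via the normal bundle computations; this is routine but should be stated, and I would expect it to be the most technical part of the write-up.
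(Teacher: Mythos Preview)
Your setup has a genuine gap: you assume that both components $L_1, L_2$ of a general reducible conic in $S_0$ lie in a \emph{single} irreducible curve $B \subset \Sigma(Z)$. But the image of $S_0 \dashrightarrow \Sym^2(\Sigma(Z))$ may instead dominate a product $\Sigma_1 \times \Sigma_2$ of two \emph{distinct} irreducible components of $\Sigma(Z)$, and this is exactly the case that produces the smooth quadric. Indeed, if both components come from one irreducible $B$ and any two general members of $\{L_b\}$ meet, you can only get the cone (or plane) conclusion: two lines in a single ruling of a smooth quadric are disjoint, so your trichotomy ``cone, plane, or quadric'' for the surface swept by a single $B$ is internally inconsistent --- under your own hypothesis the quadric case cannot occur.

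The paper's proof separates these two cases from the start. If both components come from one irreducible $\Sigma_0$, fix two distinct lines $L', L'' \in \Sigma_0$; they meet at a point $P$ and span a plane $\Pi$, and every other general member meets both, hence passes through $P$ (giving the cone over $B = \Sigma_0$) or lies in $\Pi$ --- but the latter forces $\Pi \subset Z$ and $\dim \Sigma(Z) \ge 2$, a contradiction. This disposes of your plane worry in one line. If instead the components come from distinct $\Sigma_1, \Sigma_2$, take two general (hence disjoint, else reduce to the first case) lines $L', L'' \in \Sigma_1$; every line in $\Sigma_2$ meets both, so $\Sigma_2$ embeds in $L' \times L'' \cong \P^1 \times \P^1$, and since lines in $\Sigma_2$ are pairwise disjoint (again, else reduce to the first case) both projections $\Sigma_2 \to L'$, $\Sigma_2 \to L''$ are bijective, forcing the surface swept by $\Sigma_2$ to be a smooth quadric. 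No classification of scrolls or adjunction computation is needed. Your scheme-theoretic concerns about $S_0 \cong \Sym^2(B)$ are also overblown for how the lemma is actually used.
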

\begin{proof}
Since a general point of $S_0$ corresponds to a reducible conic, one of the two possibilities occur:
either $\Sigma(Z)$ has a one-dimensional irreducible component $\Sigma_0$ such that any two lines corresponding to
its points meet each other, or $\Sigma(Z)$ has two one-dimensional irreducible components
$\Sigma_1$ and $\Sigma_2$ such that every line corresponding to a point of $\Sigma_1$ meets
every line corresponding to a point of $\Sigma_2$.

Suppose that the first possibility occurs.
Take two different lines $L'_0$ and $L''_0$ corresponding to the points of $\Sigma_0$.
They intersect at a point, say~$P$, and span a plane, say $\Pi$.
A general line corresponding to a point of $\Sigma_0$ intersects both $L'_0$ and $L''_0$.
Therefore either it passes through~$P$ (hence these lines
sweep a cone that gives the first option listed in the assertion of the lemma with $B = \Sigma_0$),
or it is contained in $\Pi$ (hence these lines sweep the plane $\Pi$, in which case $\Sigma(Z)$
is two-dimensional, which is a contradiction).

Now suppose that the second possibility occurs.
Take two general lines $L'_1$ and $L''_1$ corresponding to points of $\Sigma_1$.
We may assume that they do not intersect each other (otherwise we are in the situation considered above).
Lines in~$\P^N$ that intersect both $L'_1$ and~$L''_1$ are then parameterized by $L'_1 \times L''_1$,
so we can consider the curve $\Sigma_2$ as a curve in~$L'_1 \times L''_1$.
But since as before we may assume that the lines parameterized by $\Sigma_2$ do not intersect each other,
therefore the projections of the curve $\Sigma_2$ to the factors of $L'_1 \times L''_1$ are bijective,
hence lines in $\Sigma_2$ sweep a smooth quadric surface.
\end{proof}

The following classically known result whose proof can be obtained by
combining the results of~\cite{Castelnuovo1894} and~\cite{Segre21}
is very useful.

\begin{lemma}
\label{lemma:lines-conics-plane}
Let $Z \subset \PP^N$ be an integral surface.
Assume that $\dim S(Z) \ge 2$ and $Z$ is not a cone.
Then $Z$ is the Veronese surface
$$v_2(\P^2)\subset\P^5,$$
or its linear \textup(regular or rational\textup) projection.
In particular, one has
$\deg Z\le 4$.

Moreover, if $\dim \Sigma(Z) \ge 1$
then $Z$ is a cubic scroll
\begin{equation*}
\PP_{\PP^1}\big(\CO(-1) \oplus \CO(-2)\big)\subset\P^4,
\end{equation*}
or its linear projection, so that
$\deg Z\le 3$.
\end{lemma}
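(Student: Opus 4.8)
Here is the plan. After replacing $\P^N$ by the linear span of $Z$ we may assume $Z$ is nondegenerate. If $\dim\Sigma(Z)\ge 2$ then through a general point of $Z$ there is a positive‑dimensional family of lines, which therefore sweep out a plane, so $Z$ is a plane and hence a linear projection of $v_2(\P^2)$; so from now on $\dim\Sigma(Z)\le 1$. Fix a two‑dimensional irreducible component $S_0\subset S(Z)$, and consider the universal conic $\CC_0\subset S_0\times Z$ together with its two projections $q\colon\CC_0\to S_0$ and $p\colon\CC_0\to Z$. Since $\dim\CC_0=3>2=\dim Z$, the morphism $p$ is surjective with positive‑dimensional general fibre, so a general point of $Z$ lies on a one‑parameter family of conics, and in particular these conics cover $Z$.

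The first thing I would do is dispose of the case where a general member of $S_0$ is a reducible or non‑reduced conic. In that case the components (respectively, the reduced line) of the members of $S_0$ form a family of lines on $Z$ of positive dimension; combined with $\dim\Sigma(Z)\le 1$ this is exactly the hypothesis of Lemma~\ref{lemma:many-reducible-conics}, which then tells us that $Z$ is either a cone over a curve — excluded by assumption — or a smooth quadric surface in $\P^3$. It remains to observe that a smooth quadric surface is the image of $v_2(\P^2)$ under the linear projection from a secant line of the Veronese (and simultaneously the image of $\PP_{\PP^1}(\CO(-1)\oplus\CO(-2))$ under projection from a point of the scroll), which settles both assertions of the lemma in this case.

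So I may assume a general member of $S_0$ is a smooth conic. The assignment $C\mapsto\langle C\rangle$, sending a conic to the plane it spans, has finite fibres — a plane not contained in $Z$ meets $Z$ in a plane curve of degree at most $\deg Z$, which contains only finitely many conics — so $Z$ carries a two‑dimensional family of plane sections whose general member is a smooth conic. At this point I would invoke the classical classification of nondegenerate surfaces admitting a two‑dimensional family of plane sections, obtained by combining \cite{Castelnuovo1894} and \cite{Segre21}: each such surface is $v_2(\P^2)\subset\P^5$, a rational normal scroll, the plane $\P^2$, a cone, or a linear (regular or rational) projection of one of these. Since the general plane section of $Z$ has degree $2$ we get $\deg Z\le 4$, and, excluding cones, the only possibilities left are $v_2(\P^2)$ and its linear projections; this proves the first assertion. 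For the ``moreover'' part one adds the hypothesis $\dim\Sigma(Z)\ge1$, so $Z$ is ruled by a one‑dimensional family of lines; on the scroll branch of the classification (the only one with such a line family that is not a cone or a plane) the requirement that the plane sections be conics pins $Z$ down to the cubic scroll $\PP_{\PP^1}(\CO(-1)\oplus\CO(-2))\subset\P^4$ or a further linear projection of it, of degree at most $3$. Equivalently, writing $Z=\phi(v_2(\P^2))$, a line on $Z$ is the image of a conic $\gamma\subset\P^2$ with $\dim\phi(v_2(\gamma))=1$, and requiring a one‑parameter family of such $\gamma$ forces the centre of $\phi$ into exactly the position producing the cubic scroll and its projections.

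The main obstacle I anticipate is the third step: quoting the Castelnuovo–Segre classification in a usable form and isolating within it the sublocus where a general plane section is a conic and $Z$ is not a cone. If one prefers to stay self‑contained, the natural alternative is an induction on $N$: projecting $Z$ from a general point $x\in Z$ gives a nondegenerate surface $Z_x\subset\P^{N-1}$ of degree $\deg Z-1$ which still carries a two‑dimensional family of conics, while the one‑parameter family of conics through $x$ projects to a one‑parameter family of lines on $Z_x$; the delicate points there are showing that $\pi_x$ is birational onto $Z_x$ (this is where ``$Z$ is not a cone'' enters) and keeping track of the cases in which $Z_x$ itself degenerates to a cone, after which one reconstructs $Z$ from $Z_x$ using the line family and the degree bound.
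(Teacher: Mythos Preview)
Your outline is essentially correct but follows a different route from the paper, and has one mis-statement worth flagging. What Castelnuovo and Segre classify are surfaces carrying a two-dimensional family of \emph{conics}, not ``plane sections''; your detour through $C\mapsto\langle C\rangle$ is unnecessary, and the claim that the intersection $\langle C\rangle\cap Z$ is itself a conic is unjustified (it certainly contains $C$, but for $\deg Z>2$ it may be strictly larger). Once you invoke Segre's theorem in its correct form the first assertion follows. Your treatment of the ``moreover'' clause is, however, only a sketch: saying that the scroll branch ``pins $Z$ down to the cubic scroll'' requires an argument you have not supplied.

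The paper instead gives a self-contained proof avoiding the classical black box. After the same preliminary reductions (your steps~1--3), it splits on whether $\dim\Sigma(Z)=1$ or $\Sigma(Z)$ is finite. In the first case it passes to the normalization of the universal line, a Hirzebruch surface $\F_e$, observes that the existence of a movable rational curve dominating the base forces the base to be $\PP^1$, identifies the map to $\PP^N$ as given by a subsystem of $|s+nf|$ with $n>e$, writes a general conic class as $as+bf$, and solves $(as+bf)(s+nf)=2$ together with irreducibility and movability constraints to force $a=b=1$, $e\in\{0,1\}$, $n=e+1$; this yields the quadric or the cubic scroll directly and proves the ``moreover'' clause by explicit computation. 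In the second case it fixes a general smooth point $z$ (lying on no line), uses a result of Koll\'ar to see that the evaluation map from the family of conics through $z$ is birational onto $Z$, shows that such conics are determined by their tangent direction at $z$ by analyzing the blow-up of $Z$ at $z$, and then invokes a theorem of Fusi to realize $Z$ as the birational image of $\PP^2$ under a linear subsystem of $|\CO_{\PP^2}(2)|$. This buys independence from the nineteenth-century literature and, crucially, a clean and complete treatment of the ruled case that your proposal leaves vague.
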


\begin{proof}
We use the ideas and methods from the proofs of \cite[Theorem~3.4.1]{Russo2016}
and~\mbox{\cite[Theorem~3.4.4]{Russo2016}}.

First, consider the Hilbert scheme of lines $\Sigma(Z)$ and the diagram~\eqref{diagram-universal-line}.
Note that the map $p \colon \CL(Z) \to Z$ cannot have fibers of positive dimension, since otherwise $Z$ would be
a cone. In particular, this implies that $\dim \Sigma(Z) \le 1$.

Let $S_0$ be an irreducible component of $S(Z)$ such that $\dim S_0 \ge 2$.
Note that a smooth quadric can be represented as a linear projection
of a cubic scroll, and thus also as a projection of the Veronese surface.
Therefore, by Lemma~\xref{lemma:many-reducible-conics}
we can assume that a general point of $S_0$ corresponds to a smooth conic.

Suppose that $\dim\Sigma(Z) = 1$.
Let $\Sigma_0\subset \Sigma(Z)$ be a one-dimensional irreducible component and let
$q\colon\CL_0(Z)\to \Sigma_0$ be the corresponding family of lines.
Thus we have a finite surjective morphism
\begin{equation*}
p\colon \CL_0(Z)\to Z\subset \PP^N.
\end{equation*}
If $p$ is not birational then there is a two-dimensional family of pairs of intersecting lines in $\Sigma_0$, which means that
general lines $L_1$ and $L_2$ corresponding to points of $\Sigma_0$ meet each other.
This gives on $Z$ a two-dimensional family of reducible conics and by Lemma~\ref{lemma:many-reducible-conics} implies that $Z$ is a smooth quadric.
So, we may assume that $p$ is birational.

Let $B$ be the normalization of $\Sigma_0$ and $Z_0 = B \times_{\Sigma_0} \CL_0(Z)$ the pullback to $B$ of the universal family over $\Sigma_0$.
Then $Z_0$ is a ruled surface over $B$.
Since $p$ is birational, the preimage in $Z_0$ of a general conic in $Z$ is a rational curve that projects nontrivially to $B$. Hence $B$ is rational and so
\begin{equation*}
Z_0 \cong \PP_{\PP^1}(\CO \oplus \CO(-e)),
\end{equation*}
for some $e \ge 0$.
Denote by $s$ the class of the exceptional section of $Z_0$ and by $f$ the class of the fiber.
The map $Z_0 \to Z \hookrightarrow \PP^N$ is given by a subsystem of the linear
system~\mbox{$|s + nf|$} for some integer $n$.
Note that $n \ge e$
(otherwise the linear system has base points on the
exceptional section), and if $n=e$ then the image of $Z_0$
is a cone. Thus one has $n>e$.
Assume further that
(the preimage on $Z_0$ of) the class of a non-degenerate conic $C\subset Z$
corresponding to a general point of $S_0$
is $as + bf$ for some~\mbox{$a,b \in \Z$}. Then we have
\begin{equation*}
2 = (as + bf)(s+nf) = -ae + b + an = b + (n-e)a.
\end{equation*}
On the other hand, since $C$ is irreducible and movable, we see that~\mbox{$b\ge ea\ge 0$}.
Moreover, we have $a\neq 0$ and $b\neq 0$ because $\dim S(Z) \ge 2$.
Taking all this into account we get
\begin{equation*}
a = b=1,\quad 0 \le e \le 1,\quad n = e + 1.
\end{equation*}
If $e = 0$ we conclude that $Z$ is a linear projection of a quadric~\mbox{$\PP^1\times \PP^1 \subset \PP^3$},
and if $e = 1$ we conclude that $Z$ is a linear projection of the cubic scroll, i.e., of the surface
\begin{equation*}
\PP_{\PP^1}\big(\CO \oplus \CO(-1)\big)\cong \PP_{\PP^1}\big(\CO(-1) \oplus \CO(-2)\big)
\end{equation*}
embedded into $\PP^4$ via the linear system~\mbox{$|s+2f|$}.
Both can also be represented as linear projections of the Veronese surface.
This finishes the proof in the case when~\mbox{$\dim \Sigma(Z) \ge 1$}.

From now on we assume that $\Sigma(Z)$ is at most finite.
Choose a sufficiently general point~\mbox{$z \in Z$}.
Then $z$ is a smooth point of $Z$ that does not lie on a line.
Therefore, all conics passing through $z$ are irreducible,
and in particular smooth at $z$.

Let $S_z$ be the subscheme of $S(Z)$ parameterizing the conics that pass through the point~$z$, and $q_z\colon \CC_z \to S_z$ be the corresponding universal family.
Since all conics of $S_z$ are smooth at $z$, the point $z$
defines a section $\Pi\subset\CC_z$ of the fibration~$q_z$.

Let $p_z\colon \CC_z\to Z$ be the tautological morphism.
By~\cite[Proposition~V.3.7.5]{Kollar-1996-RC} this morphism is birational.
Clearly, $\Pi$ is the scheme-theoretical preimage of $z$.
Since~$\Pi$ is a Cartier divisor on $\CC_z$, the map $p_z$ lifts to a map $\tilde{p}_z \colon \CC_z \to \tilde{Z}$, where
$\sigma\colon\tilde{Z}\to Z$ is the blow up of the point $z$.
Thus we have a commutative diagram
\begin{equation*}
\xymatrix{
& \CC_z \ar@{->}[r]^{\tilde{p}_z} \ar@{->}[rd]_{p_z} \ar[dl]^{q_z} &
\tilde{Z}\ar@{->}[d]^{\sigma}\\
S_z && Z
}
\end{equation*}
cf.\ the proof of~\cite[Theorem~2.1]{Fusi15}.

Since the morphism $\tilde{p}_z$ is birational, and the surface $\tilde{Z}$ is normal (and even smooth) in a neighborhood of the exceptional divisor $E$ of $\sigma$, the morphism
\begin{equation*}
\tilde{p}_z\vert_{\Pi}\colon\Pi\to E
\end{equation*}
is birational as well. Since $E\cong\P^1$ is a smooth curve,
this implies that $\tilde{p}_z\vert_{\Pi}$ is actually an isomorphism,
which means that a conic contained in $Z$ and passing through $z$
is uniquely defined by its tangent direction.
Now~\cite[Theorem~2.5]{Fusi15}
implies that there is a birational (possibly biregular) map
\begin{equation*}
\zeta\colon \P^2\dasharrow Z\subset\P^N
\end{equation*}
defined by a linear subsystem of $|\CO_{\P^2}(2)|$.
This means that $Z$ is a Veronese surface or its linear projection.
\end{proof}

\begin{corollary}
\label{corollary:lines-conics-plane-1}
Let $Z$ be an integral surface and let $H$ be a Cartier divisor on $Z$ such that~\mbox{$2H$} is very ample.
Let $\Sigma(Z)$ be the Hilbert scheme of $H$-lines.
Suppose that $\dim \Sigma(Z) \ge 2$. Then $Z \cong \P^2$ and $H$ is the class of a line.
\end{corollary}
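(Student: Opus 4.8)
The plan is to pass to the very ample system $|2H|$, which turns an $H$-line into a smooth conic, and then to quote Lemma~\ref{lemma:lines-conics-plane}.

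First I would embed $Z\hookrightarrow\P^N$ by the complete linear system $|2H|$, so that $\CO_Z(1)=\CO_Z(2H)$. By Lemma~\ref{lemma:lines-conics}(i) every $H$-line $L\subset Z$ satisfies $L\cong\P^1$ and $\CO_Z(H)|_L\cong\CO_L(1)$, hence $\CO_Z(1)|_L=\CO_Z(2H)|_L\cong\CO_L(2)$; thus in this embedding $L$ is a smooth conic. Consequently there is a natural monomorphism $\Sigma(Z)\to S(Z)$, the identity on underlying subschemes, where $S(Z)$ denotes the Hilbert scheme of conics contained in $Z\subset\P^N$; in particular $\dim S(Z)\ge\dim\Sigma(Z)\ge 2$. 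Next I would note that $Z$ contains no line of $\P^N$: if $\ell\subset Z$ were such a line then $1=(2H)\cdot\ell=2(H\cdot\ell)$, which is impossible because $H$ is Cartier on the integral surface $Z$, so $H\cdot\ell\in\Z$. In particular $Z$ is not a cone, since a cone contains its rulings, and these are lines.

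Now Lemma~\ref{lemma:lines-conics-plane} applies: since $\dim S(Z)\ge 2$ and $Z$ is not a cone, $Z$ is the Veronese surface $v_2(\P^2)\subset\P^5$ or a linear projection of it, and in particular $\deg Z\le 4$. On the other hand $\deg Z=(2H)^2=4H^2$, and since $2H$ is very ample the divisor $H$ is ample, while $H$ is Cartier, so $H^2$ is a positive integer; hence $\deg Z=4H^2\ge 4$. Combining the two inequalities gives $H^2=1$ and $\deg Z=4$, which forces the linear projection of the Veronese to be an isomorphism. Therefore $Z\cong\P^2$ and $\CO_Z(2H)\cong\CO_{\P^2}(2)$, so $\CO_Z(H)\cong\CO_{\P^2}(1)$ and $H$ is the class of a line.

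The bookkeeping steps (translating an $H$-line into a conic with respect to $2H$, and computing $\deg Z$) are routine; the delicate points are the exclusion of cones, where integrality of $Z$ and the fact that $H$ is Cartier are exactly what make $Z$ contain no lines, and extracting $Z\cong\P^2$ from the conclusion of Lemma~\ref{lemma:lines-conics-plane}, which a priori only yields a linear projection of the Veronese --- here the identity $\deg Z=4H^2$ together with the bound $\deg Z\le 4$ is what pins things down.
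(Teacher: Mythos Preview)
Your argument is correct and mirrors the paper's: both pass to the $|2H|$-embedding so that $H$-lines become conics, use Cartierness of $H$ to exclude lines in the image (hence cones), and invoke Lemma~\ref{lemma:lines-conics-plane}. The only difference is cosmetic --- you add the degree count $\deg Z=(2H)^2=4H^2\ge 4$ to pin down $H^2=1$ and force the projection of the Veronese to be an isomorphism, whereas the paper states that conclusion directly.
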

\begin{proof}
Clearly, one has $\Sigma(Z) \subset S(v_2(Z))$, where $v_2$ is the embedding of $Z$ given by the linear system $|2H|$, hence $\dim S(v_2(Z)) \ge 2$.
Moreover, $v_2(Z)$ contains no lines, hence is not a cone. Therefore, by Lemma~\xref{lemma:lines-conics-plane} it is an isomorphic projection of $v_2(\P^2)$.
This means that $Z \cong \P^2$ and $H$ is the class of a line.
\end{proof}

\subsection{Normal bundles of degenerate conics}
\label{subsection:degenerate-conics}

Below we describe a relation between a normal bundle of a reducible or non-reduced conic $C$ on a smooth projective variety $X$,
and normal bundles of the irreducible components of $C$, or the normal bundle of~$C_{\red}$, respectively.
Then we apply this description to conics on Fano threefolds of index~1.

\begin{lemma}\label{lemma:normal-reducible-conic-general}
Assume that $C = L_1 \cup L_2$ is a reducible conic on $X$, i.e.\ $L_1$ and $L_2$ are two distinct lines intersecting at a point $P$.
Then there are exact sequences
\begin{equation*}
0 \to \CN_{C/X} \to \CN_{C/X}\vert_{L_1} \oplus \CN_{C/X}\vert_{L_2} \to \CN_{C/X}\vert_P \to 0,
\end{equation*}
and
\begin{equation*}
0 \to \CN_{L_i/X} \to \CN_{C/X}\vert_{L_i} \to \CO_P \to 0, \quad i=1,2.
\end{equation*}
\end{lemma}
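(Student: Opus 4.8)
The plan is to derive both sequences from elementary facts about the node $P = L_1 \cap L_2$, after first noting that a reducible conic $C = L_1 \cup L_2$ is a local complete intersection in the smooth variety $X$ (hence $\CN_{C/X}$ is locally free of rank $\dim X - 1$): this is clear away from $P$, and near $P$ one can choose formal (or \'etale) local coordinates $x_1,\dots,x_n$ on $X$ in which
\begin{equation*}
C=\{x_1x_2=x_3=\dots=x_n=0\},\qquad L_1=\{x_2=\dots=x_n=0\},\qquad L_2=\{x_1=\dots=x_n=0\},
\end{equation*}
since the two lines span a plane through $P$, so that $C$ is cut out by the $n-1$ functions $x_1x_2,x_3,\dots,x_n$.

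For the first sequence, I would start from the standard short exact sequence of $\CO_C$-modules
\begin{equation*}
0\to\CO_C\to\CO_{L_1}\oplus\CO_{L_2}\to\CO_P\to 0
\end{equation*}
expressing that $\CO_C$ is the fiber product $\CO_{L_1}\times_{\CO_P}\CO_{L_2}$ (valid because $C$ is nodal with branches $L_1,L_2$ meeting in the reduced point $P$), and then tensor it over $\CO_C$ with the locally free sheaf $\CN_{C/X}$. Exactness is preserved, and since $\CN_{C/X}\otimes_{\CO_C}\CO_{L_i}=\CN_{C/X}\vert_{L_i}$ and $\CN_{C/X}\otimes_{\CO_C}\CO_P=\CN_{C/X}\vert_P$, this is exactly the first claimed sequence.

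For the second sequence, fix $i$ and use the inclusion of ideal sheaves $I_C\subset I_{L_i}$ in $\CO_X$. Since $I_C^2\subset I_{L_i}^2$ and $fs\in I_{L_i}^2$ for $f\in I_{L_i}$, $s\in I_C$, the induced map $I_C/I_C^2\to I_{L_i}/I_{L_i}^2$ factors through a natural morphism $\beta_i\colon\CN^\vee_{C/X}\vert_{L_i}\to\CN^\vee_{L_i/X}$. Away from $P$ this is an isomorphism (there $C$ and $L_i$ coincide); at $P$, in the local model and the obvious bases of these free $\CO_{L_i}$-modules, $\beta_1$ is the diagonal matrix $\operatorname{diag}(x_1,1,\dots,1)$ (because $\overline{x_1x_2}\mapsto x_1\overline{x_2}$), which is injective with cokernel $\CO_P$. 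Hence one gets a short exact sequence
\begin{equation*}
0\to\CN^\vee_{C/X}\vert_{L_i}\to\CN^\vee_{L_i/X}\to\CO_P\to 0,
\end{equation*}
and I would dualize it by applying $\CHom_{\CO_{L_i}}(-,\CO_{L_i})$. Using that the two outer terms are locally free on $L_i$ (so their $\CExt^1$ into $\CO_{L_i}$ vanish), that $\CHom_{\CO_{L_i}}(\CO_P,\CO_{L_i})=0$, and that $\CExt^1_{\CO_{L_i}}(\CO_P,\CO_{L_i})\cong\CO_P$ (read off from the Koszul resolution $0\to\CO_{L_i}\xrightarrow{x_1}\CO_{L_i}\to\CO_P\to 0$), the long exact sequence collapses to $0\to\CN_{L_i/X}\to\CN_{C/X}\vert_{L_i}\to\CO_P\to 0$, as desired.

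The main point requiring care is this last dualization: one must check that $\beta_i$ is genuinely injective and keep precise track of the $\CExt$ terms and the connecting map, but because $L_i\cong\P^1$ is smooth and the local picture at $P$ is completely explicit, this reduces to a short computation. A minor point is to confirm that away from $P$ everything is the ``obvious'' restriction/dual of the conormal sequences, which is immediate since there $C=L_i$.
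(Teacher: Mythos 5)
Your proof is correct. The first sequence is obtained exactly as in the paper: tensoring the fiber-product sequence $0\to\CO_C\to\CO_{L_1}\oplus\CO_{L_2}\to\CO_P\to 0$ with the locally free sheaf $\CN_{C/X}$. For the second sequence both arguments reduce to the same intermediate statement, namely the exactness of $0\to\CN^\vee_{C/X}\vert_{L_i}\to\CN^\vee_{L_i/X}\to\CO_P\to 0$, followed by dualization over $\CO_{L_i}$; the difference is only in how that conormal sequence is established. You do it by an explicit computation in \'etale-local coordinates at the node, where the map becomes $\operatorname{diag}(x_1,1,\dots,1)$, which makes injectivity and the identification of the cokernel with $\CO_P$ completely transparent (and has the side benefit of exhibiting the lci structure of $C$ explicitly). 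The paper instead works globally: the snake lemma gives $\Coker(I_C\to I_{L_1})\cong\CO_{L_2}(-P)$, hence an exact sequence $0\to I_C\to I_{L_1}\to\CO_{L_2}(-P)\to 0$, which is tensored with $\CO_{L_1}$; the potential $\Tor_1(\CO_{L_1},\CO_{L_2}(-P))$ contribution is discarded because a torsion sheaf admits no nonzero map to the locally free sheaf $\CN^\vee_{C/X}\vert_{L_1}$. The global route avoids choosing coordinates but leaves the final dualization implicit, whereas you correctly spell it out, checking $\CHom_{\CO_{L_i}}(\CO_P,\CO_{L_i})=0$ and $\CExt^1_{\CO_{L_i}}(\CO_P,\CO_{L_i})\cong\CO_P$ via the Koszul resolution on the smooth curve $L_i$ and observing that the higher $\CExt$ of the locally free outer terms vanish; this is the one step in either version that genuinely needs the bookkeeping you provide. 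Either argument is complete; yours is more elementary and self-contained, the paper's is shorter and coordinate-free.
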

\begin{proof}
The first exact sequence can be written for any vector bundle on $C$;
it is obtained by taking the tensor product (over $\CO_C$) of this bundle with the canonical exact sequence
\begin{equation*}
0 \to \CO_C \to \CO_{L_1} \oplus \CO_{L_2} \to \CO_P \to 0,
\end{equation*}
where all maps are just restrictions.

To establish the second sequence recall that
\begin{equation*}
\Coker(I_C \to I_{L_1}) \cong \Ker(\CO_C \to \CO_{L_1}) \cong \CO_{L_2}(-P)
\end{equation*}
by the Snake Lemma. Thus
the natural embedding of the ideal sheaves $I_C \subset I_{L_1}$ extends to an exact sequence
\begin{equation}\label{eq:sequence-reducible-conic}
0 \to I_C \to I_{L_1} \to \CO_{L_2}(-P) \to 0.
\end{equation}
Note that
\begin{equation*}
I_C \otimes \CO_{L_1} \cong (I_C \otimes \CO_C) \otimes_{\CO_C} \CO_{L_1} \cong \CN_{C/X}^\vee\vert_{L_1},
\qquad
\CO_{L_1} \otimes \CO_{L_2}(-P) \cong \CO_P,
\end{equation*}
and $\Tor_1(\CO_{L_1},\CO_{L_2}(-P))$ is a torsion sheaf on $L_1$.
Therefore, tensoring the exact sequence~\eqref{eq:sequence-reducible-conic}
with~$\CO_{L_1}$, we deduce an exact sequence
\begin{equation*}
\ldots \to \Tor_1(\CO_{L_1},\CO_{L_2}(-P)) \to \CN_{C/X}^\vee\vert_{L_1} \to \CN_{L_1/X}^\vee \to \CO_P \to 0.
\end{equation*}
The sheaf $\CN_{C/X}^\vee$ as well as its restriction to $L_1$ is locally free,
hence any morphism to it from a torsion sheaf $\Tor_1(\CO_{L_1},\CO_{L_2}(-P))$ is zero.
Thus we have an exact triple
\begin{equation*}
0 \to \CN_{C/X}^\vee\vert_{L_1} \to \CN_{L_1/X}^\vee \to \CO_P \to 0.
\end{equation*}
Dualizing it, we obtain the required exact sequence for $L_1$.
The sequence for $L_2$ can be obtained in a similar way.
\end{proof}

\begin{corollary}\label{corollary:normal-reducible-conic-3fold}
If $X$ is a Fano threefold of index $1$ and $C \subset X$ is a reducible conic, the Euler characteristic of the normal bundle equals $\chi(\CN_{C/X}) = 2$.
\end{corollary}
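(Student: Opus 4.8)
The plan is to read off both Euler characteristics from the two exact sequences supplied by Lemma~\ref{lemma:normal-reducible-conic-general} and to use additivity of the Euler characteristic in short exact sequences. Since $X$ is smooth of dimension $3$ and $C$ is a locally complete intersection curve (Lemma~\ref{lemma:lines-conics}), the sheaf $\CN_{C/X}$ is locally free of rank $2$ on all of $C$, including at the node $P$; hence its fibre $\CN_{C/X}\vert_P$ is a length-$2$ skyscraper, so $\chi(\CN_{C/X}\vert_P) = 2$. Taking Euler characteristics in
\begin{equation*}
0 \to \CN_{C/X} \to \CN_{C/X}\vert_{L_1} \oplus \CN_{C/X}\vert_{L_2} \to \CN_{C/X}\vert_P \to 0
\end{equation*}
then gives $\chi(\CN_{C/X}) = \chi(\CN_{C/X}\vert_{L_1}) + \chi(\CN_{C/X}\vert_{L_2}) - 2$.

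Next I would compute the two remaining terms. From the exact sequence $0 \to \CN_{L_i/X} \to \CN_{C/X}\vert_{L_i} \to \CO_P \to 0$ of Lemma~\ref{lemma:normal-reducible-conic-general} one gets $\chi(\CN_{C/X}\vert_{L_i}) = \chi(\CN_{L_i/X}) + 1$. Since $L_i$ is a line on a Fano threefold of index $1$, Lemma~\ref{lemma:normal-bundles} says $\CN_{L_i/X} \cong \CO_{L_i}(a) \oplus \CO_{L_i}(-1-a)$ for some $a \ge 0$, whence $\chi(\CN_{L_i/X}) = (a+1) + (-a) = 1$, independently of $a$ (so it does not matter whether $L_i$ is ordinary or special). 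Therefore $\chi(\CN_{C/X}\vert_{L_i}) = 2$ for $i = 1,2$, and substituting into the formula above yields $\chi(\CN_{C/X}) = 2 + 2 - 2 = 2$.

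There is no real obstacle here once Lemma~\ref{lemma:normal-reducible-conic-general} is available; the computation is purely formal. The only point that deserves a moment's care is the claim that $\CN_{C/X}$ is locally free of rank $2$ even at the singular point $P$ of $C$ — this is what makes $\CN_{C/X}\vert_P$ have length exactly $2$ — and it is precisely the content of $C$ being a local complete intersection, which holds under the standing hypothesis that $-K_X = H$ is very ample by Lemma~\ref{lemma:lines-conics}.
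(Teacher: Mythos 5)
Your proof is correct and follows essentially the same route as the paper: both arguments take Euler characteristics in the two exact sequences of Lemma~\xref{lemma:normal-reducible-conic-general}, use $\chi(\CN_{L_i/X})=1$ from Lemma~\xref{lemma:normal-bundles} together with $\chi(\CN_{C/X}\vert_P)=2$, and arrive at $(1+1)+(1+1)-2=2$. Your extra remark that local freeness of $\CN_{C/X}$ at the node (from $C$ being a local complete intersection) is what guarantees $\CN_{C/X}\vert_P\cong\CO_P^{\oplus 2}$ is a point the paper uses implicitly; otherwise the two proofs coincide.
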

\begin{proof}
We use the sequences of Lemma~\ref{lemma:normal-reducible-conic-general}.
By Lemma~\xref{lemma:normal-bundles} the Euler characteristic of~$\CN_{L_i/X}$ equals 1.
Since the Euler characteristic of $\CO_P$ is also 1, and that of $\CN_{C/X}\vert_P \cong \CO_P^{\oplus 2}$ equals 2,
we deduce that the Euler characteristic of $\CN_{C/X}$ equals $(1 + 1) + (1 + 1) - 2 = 2$.
\end{proof}

The case of a non-reduced conic is a bit more complicated.

\begin{lemma}\label{lemma:normal-nonreduced-conic-general}
Assume that $C$ is a non-reduced conic on a smooth projective variety $X$ and $C_\red = L$. Then there are exact sequences
\begin{equation*}
0 \to \CN_{C/X}\vert_L(-1) \to \CN_{C/X} \to \CN_{C/X}\vert_L \to 0,
\end{equation*}
and
\begin{equation*}
0 \to \CO_L(1) \to \CN_{L/X} \to \CN_{C/X}\vert_L \to \CO_L(2) \to 0.
\end{equation*}
\end{lemma}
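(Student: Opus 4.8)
The plan is to read both sequences off the chain of ideal sheaves $I_C\subset I_L\subset\CO_X$. Recall the standing facts: $C$ is a locally complete intersection curve in the smooth variety $X$, so $\CN_{C/X}$ and $\CN^\vee_{C/X}$ are locally free on $C$; $L\cong\PP^1$ (Lemma~\ref{lemma:lines-conics}); and $I_L/I_C\cong\CO_L(-1)$ by definition of a non-reduced conic, which in particular forces $I_L\cdot I_L\subset I_C$ (the ideal $I_L/I_C$ of $L$ inside $\CO_C$ has square zero), so that $\CN^\vee_{L/C}=I_L/I_C\cong\CO_L(-1)$. For the first sequence I would tensor the tautological short exact sequence $0\to I_L/I_C\to\CO_C\to\CO_L\to0$ of $\CO_C$-modules with the $\CO_C$-flat sheaf $\CN_{C/X}$; flatness keeps it left exact, and since $I_L/I_C$ is an $\CO_L$-module isomorphic to $\CO_L(-1)$ one has $\CN_{C/X}\otimes_{\CO_C}(I_L/I_C)\cong\CN_{C/X}\vert_L(-1)$, which gives exactly $0\to\CN_{C/X}\vert_L(-1)\to\CN_{C/X}\to\CN_{C/X}\vert_L\to0$.

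For the second sequence I would invoke the right-exact conormal sequence of the closed immersions $L\subset C\subset X$, namely $\CN^\vee_{C/X}\vert_L\xrightarrow{\ \alpha\ }\CN^\vee_{L/X}\to\CN^\vee_{L/C}\to0$, and substitute $\CN^\vee_{L/C}\cong\CO_L(-1)$. Then $\operatorname{coker}\alpha\cong\CO_L(-1)$ has rank $1$, the image of $\alpha$ has rank $\dim X-2$, and $\Ker\alpha\subset\CN^\vee_{C/X}\vert_L$ is a rank-$1$ subsheaf of a vector bundle on the smooth curve $L\cong\PP^1$, hence a line bundle $\CO_L(m)$. To pin down $m$ I would take alternating determinants in the resulting four-term exact sequence $0\to\CO_L(m)\to\CN^\vee_{C/X}\vert_L\to\CN^\vee_{L/X}\to\CO_L(-1)\to0$, getting $\CO_L(m)\cong\det(\CN^\vee_{C/X}\vert_L)\otimes(\det\CN^\vee_{L/X})^{-1}\otimes\CO_L(-1)$. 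Adjunction for the l.c.i.\ immersions $L\subset X$ and $C\subset X$ gives $\det\CN^\vee_{L/X}\cong\omega_L^{-1}\otimes\omega_X\vert_L$ and $\det(\CN^\vee_{C/X}\vert_L)\cong\omega_C^{-1}\vert_L\otimes\omega_X\vert_L$, so the $\omega_X\vert_L$ factors cancel and $\CO_L(m)\cong\omega_L\otimes\omega_C^{-1}\vert_L(-1)$; thus $m=\deg_L\omega_L-\deg_L(\omega_C\vert_L)-1$. Here $\deg_L\omega_L=-2$, while $\deg_L(\omega_C\vert_L)=\tfrac12\deg_C\omega_C$, as one sees by restricting the filtration $0\to\CM\vert_L(-1)\to\CM\to\CM\vert_L\to0$ of a line bundle $\CM$ on the double line $C$ and comparing Euler characteristics, and $\deg_C\omega_C=-2\chi(\CO_C)=-2$ by Serre duality on the Gorenstein curve $C$ together with $\chi(\CO_C)=1$ for an $H$-conic; hence $\deg_L(\omega_C\vert_L)=-1$ and $m=-2$.

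It then remains to dualize the four-term exact sequence $0\to\CO_L(-2)\to\CN^\vee_{C/X}\vert_L\to\CN^\vee_{L/X}\to\CO_L(-1)\to0$ over $\CO_L$: splitting it into two short exact sequences of vector bundles on $L$ and applying $\CHom_{\CO_L}(-,\CO_L)$ to each (which stays short exact, all sheaves being locally free), then splicing, yields $0\to\CO_L(1)\to\CN_{L/X}\to\CN_{C/X}\vert_L\to\CO_L(2)\to0$, as required. The tensoring and dualizing are routine; the one delicate point, and the main obstacle, is the identification $\Ker\alpha\cong\CO_L(-2)$, i.e.\ getting the twist right. The determinant computation above does this cleanly; alternatively one can work in \'etale-local coordinates with $L=\{x_1=\dots=x_{n-1}=0\}$ and $I_C=(x_2,\dots,x_{n-1},x_1^2)$, in which $\Ker\alpha$ is locally generated by the class of $x_1^2$, and a transition-function check (a local generator of $I_L$ projecting to a generator of $I_L/I_C\cong\CO_L(-1)$ carries weight $-1$, so its square carries weight $-2$) gives the same conclusion.
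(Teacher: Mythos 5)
Your proof is correct, and for the most part it follows the same route as the paper: the first sequence is obtained identically (tensoring $0\to\CO_L(-1)\to\CO_C\to\CO_L\to 0$ with the locally free $\CN_{C/X}$), and for the second you arrive at the same four-term exact sequence $0\to\CO_L(-2)\to\CN^\vee_{C/X}\vert_L\to\CN^\vee_{L/X}\to\CO_L(-1)\to0$ and dualize it in the same way. The one step you single out as delicate --- identifying the kernel as $\CO_L(-2)$ --- is where the two arguments genuinely diverge. The paper tensors $0\to I_C\to I_L\to\CO_L(-1)\to 0$ with $\CO_L$ and reads off the full $\Tor$ long exact sequence; since the leftmost map $\Lambda^{n-1}\CN^\vee_{L/X}(-1)\to\Lambda^{n-1}\CN^\vee_{C/X}\vert_L$ is a map of line bundles that must be an isomorphism, it obtains the determinant relation $\det(\CN^\vee_{C/X}\vert_L)\cong\det(\CN^\vee_{L/X})(-1)$ directly, with no appeal to adjunction or to any intrinsic invariants of $C$. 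You instead compute both determinants via adjunction for the l.c.i.\ embeddings $L\subset X$ and $C\subset X$, reducing the twist to $\deg_L(\omega_C\vert_L)$, which you then evaluate by Serre duality and an Euler-characteristic count on the double line; all of these steps check out ($I_L^2\subset I_C$ does follow from $I_L/I_C$ being an $\CO_L$-module, $\chi(\CM)=2\deg_L(\CM\vert_L)+\chi(\CO_C)$ for a line bundle $\CM$ on $C$, and $\deg_C\omega_C=-2\chi(\CO_C)=-2$ for the Gorenstein curve $C$). Your route buys a conceptual interpretation of the twist $-2$ in terms of $\omega_L\otimes\omega_C^{-1}\vert_L(-1)$, at the cost of invoking duality theory on the non-reduced curve $C$; the paper's $\Tor$ computation is more self-contained and stays entirely inside the ideal-sheaf formalism. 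Either way the local-coordinate cross-check you sketch at the end confirms the answer.
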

\begin{proof}
Again, the first exact sequence can be written for any vector bundle on $C$:
it is obtained by taking the tensor product (over $\CO_C$) of this bundle with the canonical exact sequence
\begin{equation*}
0 \to \CO_L(-1) \to \CO_C \to \CO_L \to 0.
\end{equation*}
To establish the second sequence note that analogously to the reducible case the natural embedding of the ideal sheaves $I_C \subset I_L$ extends to an exact sequence
\begin{equation*}
0 \to I_C \to I_L \to \CO_L(-1) \to 0
\end{equation*}
Tensoring it with $\CO_L$ and taking into account that
\begin{align*}
\Tor_p(I_C,\CO_L) \cong \Lambda^{p+1}\CN_{C/X}^\vee\vert_L,&\qquad
\Tor_p(I_L,\CO_L) \cong \Lambda^{p+1}\CN_{L/X}^\vee,\\
\Tor_p(\CO_L(-1),\CO_L) &\cong \Lambda^{p}\CN_{L/X}^\vee(-1)
\end{align*}
for all $p$, we deduce an exact sequence
\begin{multline*}
0 \to \Lambda^{n-1}\CN_{L/X}^\vee(-1) \to \Lambda^{n-1}\CN_{C/X}^\vee\vert_L \to \Lambda^{n-1}\CN_{L/X}^\vee \to \dots \\
\ldots \to \Lambda^2\CN_{L/X}^\vee \to \CN_{L/X}^\vee(-1) \to \CN_{C/X}^\vee\vert_L \to \CN_{L/X}^\vee \to \CO_L(-1) \to 0,
\end{multline*}
where $n = \dim X$. Since all conormal sheaves are locally free of rank $n-1$, it follows that the first map in the sequence is an isomorphism, hence
\begin{equation*}
\det(\CN_{C/X}^\vee\vert_L) \cong \det(\CN_{L/X}^\vee)(-1).
\end{equation*}
Therefore, the kernel of the map $\CN_{C/X}^\vee\vert_L \to \CN_{L/X}^\vee$, which is a line bundle on $L$, is isomorphic to
\begin{equation*}
\det(\CN_{C/X}^\vee\vert_L) \otimes \det(\CN_{L/X}) \otimes \CO_L(-1) \cong \CO_L(-2).
\end{equation*}
As a result we get an exact sequence
\begin{equation*}
0 \to \CO_L(-2) \to \CN_{C/X}^\vee\vert_L \to \CN_{L/X}^\vee \to \CO_L(-1) \to 0.
\end{equation*}
The required exact sequence is obtained by dualization.
\end{proof}

\begin{corollary}\label{corollary:normal-nonreduced-conic-3fold}
If $X$ is a Fano threefold of index $1$ and $C \subset X$ is a non-reduced conic, the Euler characteristic of the normal bundle equals $\chi(\CN_{C/X}) = 2$.
\end{corollary}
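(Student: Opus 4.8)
The plan is to deduce the claim purely formally from the two exact sequences of Lemma~\ref{lemma:normal-nonreduced-conic-general} together with Lemma~\ref{lemma:normal-bundles} and additivity of the Euler characteristic in exact sequences. Write $L = C_{\red}$, so that $L \cong \PP^1$, and recall from Lemma~\ref{lemma:lines-conics} that $C$ is a locally complete intersection in $X$, hence $\CN_{C/X}$ and its restriction to $L$ are locally free.

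First I would compute $\chi(\CN_{C/X}\vert_L)$ from the four-term sequence
\begin{equation*}
0 \to \CO_L(1) \to \CN_{L/X} \to \CN_{C/X}\vert_L \to \CO_L(2) \to 0.
\end{equation*}
Since $X$ has index $1$, Lemma~\ref{lemma:normal-bundles} gives $\CN_{L/X} \cong \CO_L(a) \oplus \CO_L(-1-a)$, so $\chi(\CN_{L/X}) = 1$. Using $\chi(\CO_L(1)) = 2$ and $\chi(\CO_L(2)) = 3$ and additivity of $\chi$ along the sequence, we get
\begin{equation*}
\chi(\CN_{C/X}\vert_L) = \chi(\CN_{L/X}) - \chi(\CO_L(1)) + \chi(\CO_L(2)) = 1 - 2 + 3 = 2.
\end{equation*}
Since $\CN_{C/X}\vert_L$ is a rank $2$ bundle on $L \cong \PP^1$, its degree equals $\chi(\CN_{C/X}\vert_L) - 2 = 0$, whence $\CN_{C/X}\vert_L(-1)$ has degree $-2$ and therefore $\chi(\CN_{C/X}\vert_L(-1)) = (-2) + 2 = 0$.

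Then I would feed this into the first sequence of Lemma~\ref{lemma:normal-nonreduced-conic-general},
\begin{equation*}
0 \to \CN_{C/X}\vert_L(-1) \to \CN_{C/X} \to \CN_{C/X}\vert_L \to 0,
\end{equation*}
and conclude
\begin{equation*}
\chi(\CN_{C/X}) = \chi(\CN_{C/X}\vert_L(-1)) + \chi(\CN_{C/X}\vert_L) = 0 + 2 = 2,
\end{equation*}
as required. There is no serious obstacle here: all inputs are already established. The only point requiring a little care is to treat $\CN_{C/X}\vert_L$ as a vector bundle on $\PP^1$, so that its Euler characteristic pins down its degree and hence the Euler characteristic of its twist by $\CO_L(-1)$; this is precisely where the locally complete intersection property (Lemma~\ref{lemma:lines-conics}) of the non-reduced conic $C$ is used.
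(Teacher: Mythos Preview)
Your proof is correct and essentially identical to the paper's own argument: both use the two exact sequences of Lemma~\ref{lemma:normal-nonreduced-conic-general}, compute $\chi(\CN_{C/X}\vert_L)=1-2+3=2$ from the four-term sequence, deduce $\chi(\CN_{C/X}\vert_L(-1))=0$ from the fact that $\CN_{C/X}\vert_L$ is rank~2, and then add to get $\chi(\CN_{C/X})=2$. Your exposition is slightly more detailed in justifying the twist step via the degree, but there is no substantive difference.
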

\begin{proof}
We use the sequences of Lemma~\ref{lemma:normal-nonreduced-conic-general}.
By Lemma~\xref{lemma:normal-bundles} the Euler characteristic of $\CN_{L/X}$ equals 1.
Since the Euler characteristic of $\CO_L(1)$ and $\CO_L(2)$ equals 2 and 3 respectively, it follows that $\chi(\CN_{C/X}\vert_L) = 1 - 2 + 3 = 2$.
Since $\CN_{C/X}\vert_L$ is a vector bundle of rank 2, we have $\chi(\CN_{C/X}\vert_L(-1)) = \chi(\CN_{C/X}\vert_L) - 2 = 0$.
Therefore, the Euler characteristic of $\CN_{C/X}$ equals $2 + 0 = 2$.
\end{proof}

\section{Lines and conics on Fano threefolds}
\label{section-Lines-and-conics}

Throughout this section $X$ is a Fano threefold of index 1 and $Y$ is a Fano threefold of index 2
(both with Picard rank 1). We denote by $H_X$ the ample generator of~\mbox{$\Pic(X)$}, and by $L_X$ and $P_X$
the classes of a line and a point on $X$ in the corresponding Chow or cohomology groups.
The bounded derived category of coherent sheaves on $X$ is denoted
by~\mbox{$\D^b(X)$}, see~\cite{kuznetsov2014icm} for a recent survey.

The main goal of this section is to prove Theorem~\xref{theorem:S-vs-Sigma}.
Besides, we give more detailed proofs of some facts, used in~\cite{kuznetsov2009derived}.
As before, we use notation and conventions of~\S\xref{subsection:lines-and-conics}.

We start by recalling Mukai's results that describe Fano threefolds of index 1
and genus~\mbox{$g \ge 6$}
as complete intersections in homogeneous varieties.
Then we discuss the structure of derived categories of some
Fano threefolds, and define subcategories~\mbox{$\CA_X \subset \D^b(X)$}
and~\mbox{$\CB_Y \subset \D^b(Y)$} that contain the most essential geometric information about $X$ and $Y$.
In particular,
we show that the Hilbert schemes of lines $\Sigma(Y)$ and conics $S(X)$ can be
identified with certain moduli spaces of objects in these categories. Finally,
we show that this identification gives an isomorphism $S(X) \cong \Sigma(Y)$
for appropriate pairs $(X,Y)$.

\subsection{Fano threefolds as complete intersections in homogeneous varieties}

For each smooth Fano
threefold $X$ with $\rho(X)=1$, $\iota(X)=1$,
and even genus~\mbox{$\g(X) \ge 6$} Mukai constructed in~\cite{mukai1989biregular} a stable vector bundle $E$ on $X$ of rank~2 with the following properties.

\begin{theorem}[\cite{mukai1989biregular,Mukai-1992}]\label{theorem:mukai}
Let $X$ be a Fano threefold with $\rho(X)=1$, $\iota(X)=1$, and even genus~\mbox{$\g \ge 6$}.
Then there is a stable globally generated vector bundle $E$
of rank~$2$ on~$X$ with
\begin{equation*}
c_1(E) = H_X,
\qquad
c_2(E) = \left(1 + \frac{\g}{2}\right)L_X,
\end{equation*}
and
\begin{equation}\label{eq:h-ex}
\dim H^0(X,E) = 2 + \frac{\g}{2},
\qquad
H^1(X,E) = H^2(X,E) = H^3(X,E) = 0.
\end{equation}
Moreover, if $S \subset X$ is a very general anticanonical divisor,
then the restriction~\mbox{$E_S = E\vert_S$} is stable and globally generated with
\begin{equation}\label{eq:h-es}
\dim H^0(S,E_S) = 2 + \frac{\g}{2},
\qquad
H^1(S,E_S) = H^2(S,E_S) = 0.
\end{equation}
\end{theorem}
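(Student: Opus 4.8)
\emph{Overall strategy.} The statement has two layers: the existence of a rigid, stable, globally generated rank~$2$ bundle $E$ on $X$ with the prescribed Chern classes and cohomology, and the good behaviour of its restriction to a very general anticanonical $K3$ surface. The plan is to construct the bundle on such a $K3$ surface first, lift it to $X$, and then transfer the cohomology computation from the surface to the threefold.

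\emph{Construction on a $K3$ surface.} Fix a very general $S\in|-K_X|$; then $S$ is a smooth $K3$ surface, $H_S:=H_X|_S$ is a primitive polarisation with $H_S^2=\dd(X)=2\g-2$, and by Noether--Lefschetz $\Pic(S)=\mathbb{Z}H_S$. A general curve $C\in|H_S|$ has genus $\g$, and since the Brill--Noether number of a pencil of degree $\g/2+1$ on a curve of genus $\g$ vanishes, $C$ carries a complete base-point-free $g^1_{\g/2+1}$; the associated Lazarsfeld--Mukai bundle $E_S$ is a rank~$2$ bundle on $S$ with $c_1(E_S)=H_S$, $c_2(E_S)=\g/2+1$, sitting in $0\to E_S^\vee\to H^0(C,A)\otimes\CO_S\to i_*A\to 0$ and its dual. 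Its Mukai vector $v=(2,H_S,\g/2)$ satisfies $\langle v,v\rangle=H_S^2-2\g=-2$; since $v$ is primitive, the moduli space of Gieseker-stable sheaves with Mukai vector $v$ on $(S,H_S)$ is a single reduced point. Hence $E_S$ is the unique such sheaf, it is $\mu$-stable (so locally free, being reflexive on a surface), and it is rigid: $\Ext^1_S(E_S,E_S)=0$. Stability gives $H^0(S,E_S^\vee)=0$, hence $H^2(S,E_S)=0$; chasing the Lazarsfeld--Mukai sequence and its dual, together with $H^0(C,A)$ and $H^1(C,A)$ computed by Riemann--Roch, yields $H^1(S,E_S)=0$ and $\dim H^0(S,E_S)=\g/2+2=\chi(E_S)$. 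Finally, for $(C,A)$ chosen generically $E_S$ is globally generated, equivalently $H^1(S,E_S\otimes\mathscr{I}_x)=0$ for all $x\in S$, which is a standard consequence of rigidity.

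\emph{Lifting to $X$.} Let $\mathscr{S}\subset X\times|-K_X|$ be the universal anticanonical divisor, with projections $\rho\colon\mathscr{S}\to X$ (a $\mathbb{P}^{\g}$-bundle) and $\sigma\colon\mathscr{S}\to|-K_X|$ (the universal family of surfaces). Over the open locus $U\subset|-K_X|$ of smooth surfaces with $\Pic=\mathbb{Z}H_S$, the relative moduli space of $\sigma$-fibrewise stable sheaves with Mukai vector $(2,H_S,\g/2)$ is, by the previous step, the identity section of $\mathscr{S}|_U\to U$; it therefore carries a universal rank~$2$ bundle $\mathscr{E}$ on $\mathscr{S}|_U$ restricting fibrewise to the bundles $E_S$. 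One then shows that $\mathscr{E}$ descends along the projective bundle $\rho$ to a reflexive rank~$2$ sheaf $E$ on $X$ (the reflexive hull of its direct image), and that $E$ is in fact locally free: $E|_S\cong E_S$ for every $S$ in a dense open subset of $|-K_X|$, so the non-locally-free locus of $E$, a closed set of codimension $\ge 3$, must be empty. Since $c_1(E)|_S=H_S$ and $\Pic(X)=\mathbb{Z}H_X$ we get $c_1(E)=H_X$; since $c_2(E)\cdot H_X=c_2(E_S)=\g/2+1$ and $H^4(X,\mathbb{Z})=\mathbb{Z}L_X$ we get $c_2(E)=(1+\g/2)L_X$. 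Stability of $E$ follows from that of $E_S$ on a general $S$, since a saturated destabilising subsheaf would restrict to one. (Alternatively one may use the classification to realise $X$ as a complete intersection in a homogeneous variety $\Omega$ with $E$ the restriction of a homogeneous rank~$2$ bundle, and deduce the Chern classes and cohomology by a computation on $\Omega$; but stability still goes through the $K3$.) The crux of the whole argument is this lifting step: producing a genuine vector bundle on all of $X$ out of the fibrewise-unique $K3$ bundles, where the delicate points are the descent along $\rho$ over the non-general locus of $|-K_X|$ and the local freeness of the descended reflexive sheaf; rigidity of $E_S$ is exactly what makes the relative moduli space finite, reduced and unramified over $|-K_X|$.

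\emph{Cohomology on $X$ and restriction.} From $0\to E(-H_X)\to E\to E_S\to 0$, together with $H^i(S,E_S)=0$ for $i>0$, the one nontrivial input needed is $H^2(X,E(-H_X))=0$ — this follows from a Le Potier-type vanishing, using that $E$ is ample (which again comes out of the construction, $E$ being the bundle realising Mukai's embedding of $X$), or from the fact that $E$ is arithmetically Cohen--Macaulay. Since $E^\vee\cong E(-H_X)$, Serre duality on $X$ makes $H^i(X,E(-H_X))$ self-dual under $i\mapsto 3-i$, so $H^2(X,E(-H_X))=0$ also gives $H^1(X,E(-H_X))=0$, and together with stability ($H^0(X,E(-H_X))=H^0(X,E^\vee)=0$) the long exact sequence of the restriction triple immediately yields $H^i(X,E)=0$ for $i=1,2,3$ and $\dim H^0(X,E)=\dim H^0(S,E_S)=\g/2+2$. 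Global generation of $E$ propagates from that of $E_S$ through the same sequence. Finally, by construction the restriction of $E$ to a very general $S$ is again the spherical bundle of the first step, hence stable, globally generated, with the cohomology recorded in the statement.
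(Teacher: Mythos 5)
Your overall architecture coincides with the paper's (and with Mukai's, whom the paper simply cites for the hard steps): construct a rigid stable bundle $E_S$ on a very general anticanonical K3 section $S$, extend it to $X$, transfer the cohomology across the restriction sequence, and deduce stability of $E$ from stability of $E_S$. The K3 part of your argument is correct and is exactly Mukai's: the Lazarsfeld--Mukai bundle of a $g^1_{\g/2+1}$ on a general $C\in|H_S|$ has Mukai vector $(2,H_S,\g/2)$ of square $-2$, whence rigidity, uniqueness, stability and the cohomology \eqref{eq:h-es}. The paper itself does not reprove any of this; it quotes \cite[Theorem~3]{mukai1989biregular} for the surface, the end of \cite[\S2]{mukai1989biregular} for the extension to $X$, and \cite{Mukai-1992} for global generation and \eqref{eq:h-ex}, and only supplies the (same) one-line stability argument for $E$ that you give.

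There are, however, two genuine gaps where you attempt to replace the citations by proofs. First, the extension step: your relative-moduli-plus-descent scheme is only a strategy. Descending the fibrewise bundles $E_S$ along the projection $\mathscr{S}|_U\to X$, whose fibres are merely open subsets of $\P^{\g}$, requires canonical identifications of $(E_S)_x$ for the various $S$ through $x$ (the moduli point is reduced but the objects have automorphisms $\GGm$, so there is a gluing/Brauer issue), plus control over the non-general locus; you explicitly flag these as ``the crux'' but do not resolve them, so the existence of $E$ on $X$ is not actually established. Second, and more decisively wrong: to pass from \eqref{eq:h-es} to \eqref{eq:h-ex} you need $H^2(X,E(-H_X))=0$, and you propose to get it from Le Potier vanishing ``using that $E$ is ample.'' But $E$ is never ample: it is globally generated with $c_1(E)\cdot L=1$ for any line $L\subset X$, so $E\vert_L\cong\CO_L\oplus\CO_L(1)$ has a trivial quotient, and lines always exist on $X$. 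The alternative appeal to $E$ being arithmetically Cohen--Macaulay is just a restatement of the vanishing to be proved. Note that the paper runs this implication in the opposite direction: in Lemma~\xref{lemma:e-exc} the vanishing $H^\udot(X,E(-H_X))=0$ is \emph{deduced} from \eqref{eq:h-ex} and \eqref{eq:h-es} (both taken from Mukai) via the injectivity of $H^0(X,E)\to H^0(S,E_S)$ and the equality of dimensions. So either supply an independent proof of \eqref{eq:h-ex} (as in \cite{Mukai-1992}) or of $H^{1}(X,E(-H_X))=H^2(X,E(-H_X))=0$; as written, this step does not close.
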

\begin{proof}
Let $S \subset X$ be a very general hyperplane section of $X$ in the anticanonical embedding.
Then $S$ is smooth and $\Pic(S)$ is generated by the restriction $H_S$ of $H_X$ to $S$
by Noether--Lefschetz theorem (see \cite[Theorem~3.33]{Voisin2007b}).
In~\cite[Theorem~3]{mukai1989biregular} a stable globally generated vector bundle $E_S$ of rank~2 with $c_1(E_S) = H_S$ and $c_2(E_S) = 1 + \g/2$ is constructed
such that~\eqref{eq:h-es} holds, and at the end of \cite[\S2]{mukai1989biregular}
it is explained that it extends
to a vector bundle $E$ on $X$. In~\cite{Mukai-1992} it is shown
that $E$ is globally generated and~\eqref{eq:h-ex} holds. Stability of $E$ easily follows
from the stability of $E_S$ (a destabilizing subsheaf in $E$ would restrict to a destabilizing subsheaf of $E_S$).
\end{proof}

We call a bundle with the properties described in Theorem~\xref{theorem:mukai} a \emph{Mukai bundle}. It induces a map into a Grassmannian
\begin{equation*}
X \to \Gr\left(2,\frac{\g}{2} + 2\right)
\end{equation*}
such that $E$ is isomorphic to the pullback of the dual tautological bundle.
In~\cite{Mukai-1992} Mukai shows
that
\begin{itemize}
\item for $\g = 6$ the map $X \to \Gr(2,5)$ is either a closed embedding and the image is a complete intersection of two hyperplanes and a quadric,
or a double cover onto a linear section of codimension 3 branched in an intersection with a quadric
(see~\cite[Theorem~2.16]{Debarre-Kuznetsov2015} for an alternative proof);
\item for $\g = 8$ the map $X \to \Gr(2,6)$ is a closed embedding whose image is a transverse linear section of codimension 5;
\item for $\g = 10$ the map $X \to \Gr(2,7)$ is a closed embedding into the homogeneous space $\mathrm{G}_2/P \subset \Gr(2,7)$
of the group $\mathrm{G}_2$ (i.e., the simple algebraic group with Dynkin diagram of type $G_2$)
by a maximal parabolic subgroup $P \subset \mathrm{G}_2$, and the image is a transverse linear section of $\mathrm{G}_2/P$
of codimension 2.
\end{itemize}
For $g = 12$ one can show that the map $X \to \Gr(2,8)$ is a closed embedding, but a description of the image is not known.
For us in this case, only existence and properties of this vector are essential.

\begin{remark}\label{remark:factorization}
In fact, Mukai proves that for any factorization $\g = r \cdot s$ of the genus there is a nice stable vector bundle of rank $r$ on $X$.
In this way he also constructs an embedding of the genus 9 threefold into the symplectic
Lagrangian Grassmannian $\LGr(3,6)$, and with an additional trick an embedding of the genus 7 threefold into the orthogonal Lagrangian Grassmannian $\mathrm{OGr}(5,10)$.
For us it is important that the factorization~\mbox{$12 = 3 \cdot 4$} allows to construct a pair of vector bundles of rank 3 and 4 on a threefold of genus 12.
They correspond to the embedding of such threefold into $\Gr(3,7)$ studied by Mukai.
\end{remark}

We restrict to the case of even genus and factorization $\g = 2 \cdot (\g/2)$ as described
in Theorem~\xref{theorem:mukai}.
We prove some additional properties of
Mukai bundles in this case.
First, we show that a Mukai bundle is unique.

\begin{proposition}\label{ux-unique}
Let $X$ be a Fano threefold with $\rho(X)=1$, $\iota(X)=1$, and even genus~\mbox{$\g(X) \ge 6$}.
Let $E_1$, $E_2$ be two globally generated stable vector bundles on $X$ of rank~$2$ with~\mbox{$c_1 = H_X$} and~\mbox{$c_2 = \left(1 + {\g(X)}/{2}\right)L_X$}.
Then $E_1 \cong E_2$ and
\begin{equation}\label{eq:ext1-e}
\Ext^1(E_i,E_j(-1)) = 0.
\end{equation}
\end{proposition}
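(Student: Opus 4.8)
The plan is to prove uniqueness and the $\Ext^1$-vanishing together, by a standard argument using stability and Riemann--Roch. First I would record the numerical data. Since $E_i$ has rank $2$, $c_1(E_i) = H_X$ and $c_2(E_i) = (1 + \g/2)L_X$ with $\g = 2\g(X)/2$, the bundle $E_i^\vee$ has $c_1 = -H_X$ and $c_2 = (1 + \g/2)L_X$, so $E_i^\vee \cong E_i(-1)$ because $E_i$ has rank $2$ and hence $E_i^\vee \cong E_i \otimes \det(E_i)^{-1}$. Consequently $\CHom(E_i, E_j) \cong E_i^\vee \otimes E_j \cong E_i(-1) \otimes E_j$, and the Euler characteristic $\chi(E_i^\vee \otimes E_j)$ can be computed by Riemann--Roch on the threefold $X$ from the Chern data and $K_X = -H_X$; I expect this to come out equal to $2$ (the same value one gets from $\chi(\CHom(E,E))$ for the Mukai bundle itself, using~\eqref{eq:h-ex} type vanishing). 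Similarly I would compute $\chi(E_i^\vee \otimes E_j(-1)) = \chi(E_i(-1)\otimes E_j(-1))$; I expect this to vanish.

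Next I would prove the vanishing statements that pin down the cohomology. By Serre duality on $X$ (with $K_X = -H_X$) one has $\Ext^k(E_i, E_j(-1)) \cong \Ext^{3-k}(E_j(-1), E_i \otimes \CO_X(-H_X))^\vee = \Ext^{3-k}(E_j, E_i(-1))^\vee$, so the $\Ext$-groups $\Ext^k(E_i,E_j(-1))$ and $\Ext^{3-k}(E_j,E_i(-1))$ are dual. For $k = 0$: a nonzero map $E_i \to E_j(-1)$ would, by stability of $E_i$ and $E_j(-1)$ (both are stable with slopes $\mu(E_i) = H_X/2$ and $\mu(E_j(-1)) = -H_X/2 < \mu(E_i)$), be impossible — a nonzero morphism between stable bundles forces $\mu(\text{source}) \le \mu(\text{target})$. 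Hence $\Hom(E_i, E_j(-1)) = 0$, and dually $\Ext^3(E_i, E_j(-1)) = 0$. This already gives part of~\eqref{eq:ext1-e}; it remains to kill $\Ext^1$ and $\Ext^2$. Since $\chi(E_i^\vee \otimes E_j(-1)) = 0$ and $\Ext^0 = \Ext^3 = 0$, we get $\dim\Ext^1(E_i,E_j(-1)) = \dim\Ext^2(E_i,E_j(-1))$, and by the Serre duality above $\Ext^2(E_i,E_j(-1)) \cong \Ext^1(E_j,E_i(-1))^\vee$, so the two are interchanged under swapping $i \leftrightarrow j$; this symmetry alone is not enough. The cleanest route is to prove directly that $H^\bullet(X, E_i \otimes E_j(-1)) = 0$ in all degrees: degrees $0$ and $3$ are done, and for degrees $1,2$ I would use the known exact sequences / resolutions for $E_i$ coming from the Mukai-bundle construction (for instance the tautological sequence $0 \to E_i^\vee \to H^0(E_i)^\vee \otimes \CO_X \to E_i \to 0$ restricted appropriately, together with the vanishing~\eqref{eq:h-ex} for $E_i$ and its twists) to compute $H^1$ and $H^2$ of $E_i \otimes E_j(-1)$ in terms of cohomology of $E_j(-1)$, $E_j(-2)$ and their tensor products with the trivial bundle, all of which vanish by stability (negative slope) and by~\eqref{eq:h-ex}. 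This yields~\eqref{eq:ext1-e}.

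Finally, for the isomorphism $E_1 \cong E_2$: since $\chi(E_1^\vee \otimes E_2) = 2$ and by stability $\Hom(E_1, E_2) $ injects (a nonzero map between stable bundles of equal slope and equal rank is an isomorphism, as its kernel and image would destabilize), it suffices to produce a nonzero element of $\Hom(E_1,E_2)$. By Serre duality $\Ext^3(E_1,E_2) \cong \Hom(E_2, E_1(-1))^\vee = 0$ (slope argument as above), and $\Ext^1(E_1,E_2) \cong \Ext^2(E_2,E_1(-1))^\vee$, $\Ext^2(E_1,E_2) \cong \Ext^1(E_2,E_1(-1))^\vee$, both of which vanish by~\eqref{eq:ext1-e} (applied with the roles of the two bundles swapped). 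Hence $\dim\Hom(E_1,E_2) = \chi(E_1^\vee\otimes E_2) = 2 \ne 0$, so a nonzero morphism exists; being a map between stable bundles of the same slope and rank, it is an isomorphism. The main obstacle I anticipate is the explicit cohomology vanishing $H^1(X, E_i\otimes E_j(-1)) = H^2(X,E_i\otimes E_j(-1)) = 0$: one needs either a sufficiently detailed hold on the Mukai bundle (resolutions pulled back from the Grassmannian, or Bott-type vanishing on $\Gr(2,\g/2+2)$ and its linear/double-cover sections) or a careful bookkeeping with the tautological sequences and stability. Everything else is Riemann--Roch, Serre duality, and the elementary slope inequalities for stable bundles.
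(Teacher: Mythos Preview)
Your proposal contains a Serre duality error that undermines the numerics. On $X$ one has $K_X = -H_X$, so
\[
\Ext^3(E_i, E_j(-1)) \cong \Hom(E_j(-1), E_i(-1))^\vee \cong \Hom(E_j, E_i)^\vee,
\]
which is \emph{not} zero (once the proposition is proved it is one-dimensional). Hence $\chi(E_i, E_j(-1)) = -1$, not $0$, and likewise $\chi(E_i, E_j) = 1$, not $2$. Your plan to extract $\Ext^1 = \Ext^2 = 0$ from the Euler characteristic together with the vanishing of $\Ext^0$ and $\Ext^3$ therefore collapses, and the argument becomes circular: controlling $\Ext^{1,2}(E_i,E_j(-1))$ via Serre duality leads back to $\Ext^{1,2}(E_j,E_i)$.

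Your fallback direct route also fails. The sequence $0 \to E_i^\vee \to H^0(E_i)^\vee \otimes \CO_X \to E_i \to 0$ does not exist: the kernel of the evaluation map has rank $\dim H^0(E_i) - 2 = \g/2 \ge 3$, not $2$. And the vanishings of~\eqref{eq:h-ex} are established only for Mukai's specifically constructed bundle, whereas here $E_i$ is an arbitrary bundle with the stated numerical invariants; you cannot invoke those vanishings as input.

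The paper avoids all of this by restricting to a very general K3 hyperplane section $S$. There $\chi(E_1\vert_S, E_2\vert_S) = 2$, and since $K_S = 0$, Serre duality forces a nonzero map in one direction; Maruyama's restriction theorem gives stability of $E_i\vert_S$, so $E_1\vert_S \cong E_2\vert_S$. The restriction sequence then yields either $\Hom(E_1,E_2) \ne 0$ (and we are done by stability on $X$) or $\Ext^1(E_1,E_2(-1)) \ne 0$. In the latter case the nontrivial extension $F$ is checked, via Hoppe's criterion, to be semistable of rank $4$ with $c_1(F) = 0$ and discriminant $\Delta(F) = -8(\g - 4) < 0$, contradicting Bogomolov's inequality. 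This single contradiction simultaneously proves uniqueness and~\eqref{eq:ext1-e}; no separate cohomology computation for $E_i \otimes E_j(-1)$ is needed.
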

\begin{proof}
Let $S \subset X$ be a very general hyperplane section of $X$ in the anticanonical
embedding.
Then $S$ is a smooth K3 surface with $\Pic(S) = \Z\cdot H_X\vert_S$ by Noether--Lefschetz theorem (see \cite[Theorem~3.33]{Voisin2007b}).
By Riemann--Roch theorem one has~\mbox{$\chi(E_1\vert_S,E_2\vert_S) = 2$}.
It follows that either there is a nontrivial map $E_1\vert_S \to E_2\vert_S$, or by Serre duality a map in the opposite direction.
But by Maruyama Theorem~\cite{maruyama1981boundedness} the restrictions $E_1\vert_S$ and $E_2\vert_S$ are stable for general~$S$,
hence such a map has to be an isomorphism. It follows that in both cases we have $E_1\vert_S \cong E_2\vert_S$ for a very general~$S$.
Now applying~\mbox{$\Hom(E_1,-)$} to the exact sequence
\begin{equation*}
0 \to E_2(-H_X) \to E_2 \to E_2\vert_S \to 0
\end{equation*}
we deduce that either $\Hom(E_1,E_2) \ne 0$, and then
$E_1 \cong E_2$ by stability, or
\begin{equation*}
\Ext^1(E_1,E_2(-H_X)) \ne 0.
\end{equation*}
So, it remains to check that the latter is impossible (this will also prove~\eqref{eq:ext1-e}).

Assume on the contrary that there is a nontrivial extension
\begin{equation}\label{eq:e-f-e}
0 \to E_2(-H_X) \to F \to E_1 \to 0.
\end{equation}
Let us show that $F$ is semistable. Since
\begin{equation*}
c_1(F) = c_1(E_1) + c_1(E_2) - 2H_X = 0,
\end{equation*}
by Hoppe's criterion (\cite{Hoppe-1984})
it is enough to check that
\begin{equation*}
\Hom(\CO(H_X),F) = 0, \quad \Hom(\CO(H_X),\Lambda^2F) = 0, \quad \Hom(\CO(H_X),\Lambda^3F) = 0.
\end{equation*}
Since $\Lambda^3F \cong F^\vee$, the first and the last of these vanishings are
clear by~\eqref{eq:e-f-e} and stability of~$E_1$ and~$E_2$. For the
second vanishing note that $\Lambda^2F$ has a three step filtration with factors being
\begin{equation*}
\Lambda^2 (E_2(-H_X)) \cong \CO(-H_X),\quad
E_2(-H_X)\otimes E_1 \cong E_2^\vee\otimes E_1,
\quad
\Lambda^2E_1 \cong \CO(H_X).
\end{equation*}
Again, by stability
there are no maps from $\CO(H_X)$ to the first two factors, hence any map
\begin{equation*}
\CO(H_X) \to \Lambda^2F
\end{equation*}
splits off the last factor $\Lambda^2E_1 \cong \CO(H_X)$.
Then the composition
\begin{equation*}
F^\vee(H_X) \to F^\vee\otimes\Lambda^2F \to F
\end{equation*}
of the embedding $\CO(H_X) \to \Lambda^2F$ tensored with $F^\vee$ and the canonical contraction morphism
gives a morphism $F^\vee(H_X) \to F$ such that the composition
\begin{equation*}
E_1^\vee(H_X) \to F^\vee(H_X) \to F \to E_1
\end{equation*}
is an isomorphism. So, it splits off $E_1$ in the extension~\eqref{eq:e-f-e}.
This proves that $F$ is semistable.

On the other hand,
the discriminant $\Delta(F)$ of $F$ is
\begin{equation*}
\Delta(F) = 8c_2(F) = 8\left(2\left(1+\frac{\g(X)}{2}\right)-(2\g(X)-2)\right)L_X = -8(\g(X)-4)L_X,
\end{equation*}
so semistability of $F$ contradicts Bogomolov's inequality~\cite[Theorem~3.4.1]{huybrechts2010geometry}.
\end{proof}

\begin{remark}
For $\g(X) = 4$ there may be two non-isomorphic Mukai bundles. Indeed a Fano threefold $X$ of genus $\g = 4$ is a complete intersection of a quadric
and a cubic in~$\P^5$, see Table~\xref{table:Fanos-i-1}.
If the quadric is smooth then it is isomorphic to $\Gr(2,4)$ and thus carries
two tautological subbundles. Their restrictions to~$X$ give two non-isomorphic bundles of the type
discussed in Proposition~\xref{ux-unique}. Note that the vector bundle $F$ defined as an extension~\eqref{eq:e-f-e} is trivial in this case.
\end{remark}

Another fact that is useful for the discussion of derived categories is acyclicity of the Mukai bundle.

\begin{lemma}\label{lemma:e-exc}
Let $X$ be a Fano threefold with $\rho(X)=1$, $\iota(X)=1$, and even genus~\mbox{$\g(X)\ge 6$}.
If $E$ is the Mukai bundle on $X$ then $H^\udot(X,E(-H_X)) = 0$ and~\mbox{$\Ext^\udot(E,E) = \Bbbk$}.
\end{lemma}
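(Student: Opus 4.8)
The plan is to reduce both vanishings to standard cohomology computations for twists of $E$ and $\Sym^2 E$. Since $E$ has rank $2$ with $\det E\cong\CO_X(H_X)$, one has $E^\vee\cong E(-H_X)$ and a splitting
$E^\vee\otimes E\cong\CO_X\oplus\mathcal{G}$,
where $\mathcal{G}=(\Sym^2 E)(-H_X)$ is the traceless summand; note that $\mathcal{G}^\vee\cong\mathcal{G}$ and $\mathcal{G}(-H_X)=(\Sym^2 E)(-2H_X)$. As $X$ is Fano, $H^{>0}(X,\CO_X)=0$, and as $E$ is stable it is simple, so $\Ext^0(E,E)=\Bbbk$; hence $\Ext^\udot(E,E)=\Bbbk$ is equivalent to $H^\udot(X,\mathcal{G})=0$. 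Throughout I would fix a very general $S\in|H_X|=|{-}K_X|$; by Theorem~\ref{theorem:mukai} it is a smooth K3 surface, $E_S:=E|_S$ is stable and globally generated with the cohomology~\eqref{eq:h-es}, and $E_S^\vee\cong E_S(-H_S)$.

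The key input is that, by Proposition~\ref{ux-unique}, $\Ext^1(E,E(-H_X))=H^1(X,(E^\vee\otimes E)(-H_X))=0$; combined with $H^1(X,\CO_X(-H_X))=H^1(X,K_X)=H^2(X,\CO_X)^\vee=0$, this gives $H^1(X,\mathcal{G}(-H_X))=0$. From here: by Serre duality $\Ext^2(E,E)=H^2(X,\mathcal{G})\cong H^1(X,\mathcal{G}(-H_X))^\vee=0$; again by Serre duality $\Ext^3(E,E)=H^3(X,\mathcal{G})\cong H^0(X,(\Sym^2 E)(-2H_X))^\vee$, which vanishes because $\Sym^2 E$ is $\mu$-semistable (characteristic $0$, $E$ stable) of positive slope $H_X^3$, so its twist by $-2H_X$ has negative slope and no nonzero sections; and $\Ext^1(E,E)=H^1(X,\mathcal{G})$ vanishes since restricting $0\to\mathcal{G}(-H_X)\to\mathcal{G}\to\mathcal{G}|_S\to 0$ and using $H^1(X,\mathcal{G}(-H_X))=0$ gives an injection $H^1(X,\mathcal{G})\hookrightarrow H^1(S,\mathcal{G}|_S)$, while $\mathcal{G}|_S=(\Sym^2 E_S)(-H_S)$ is a direct summand of $E_S^\vee\otimes E_S$, so it is enough to see $\Ext^1_S(E_S,E_S)=0$. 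For this: $E_S$ is stable on a K3, hence simple, so $\dim\Hom_S(E_S,E_S)=\dim\Ext^2_S(E_S,E_S)=1$ by Serre duality, while Riemann--Roch gives $\chi_S(E_S,E_S)=2$ --- the Mukai vector of $E_S$ being $v=(2,H_S,\g/2)$ with $\langle v,v\rangle=H_S^2-2\g=(2\g-2)-2\g=-2$ --- whence $\Ext^1_S(E_S,E_S)=0$. This proves $\Ext^\udot(E,E)=\Bbbk$.

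For $H^\udot(X,E(-H_X))=H^\udot(X,E^\vee)=0$: a nonzero map $E\to\CO_X$ would realize a rank-one quotient of the $\mu$-semistable bundle $E$ inside $\CO_X$, which is impossible since $\mu(E)=\tfrac12 H_X^3>0$; hence $H^0(X,E^\vee)=0$, and then $H^3(X,E^\vee)\cong H^0(X,E(-H_X))^\vee=H^0(X,E^\vee)^\vee=0$ by Serre duality. For the middle degrees, note $E(-2H_X)\cong E^\vee\otimes\omega_X$, so $H^i(X,E(-2H_X))=\Ext^i(E,\omega_X)\cong H^{3-i}(X,E)^\vee$, which by~\eqref{eq:h-ex} vanishes for $i<3$; in particular $H^1(X,E(-2H_X))=0$. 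Restricting $0\to E(-2H_X)\to E(-H_X)\to E_S(-H_S)\to 0$ and using both $H^1(X,E(-2H_X))=0$ and $H^1(S,E_S(-H_S))=H^1(S,E_S^\vee)=H^1(S,E_S)^\vee=0$ (Serre duality on the K3 $S$ together with~\eqref{eq:h-es}) forces $H^1(X,E(-H_X))=0$ from the long exact sequence; finally $H^2(X,E(-H_X))\cong H^1(X,E(-H_X))^\vee=0$ by Serre duality, completing the argument.

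The Serre-duality bookkeeping and the slope estimates are routine; the only genuinely nontrivial step, and the main obstacle, is the vanishing $\Ext^1_S(E_S,E_S)=0$ --- the rigidity of the restricted Mukai bundle on the general K3 section --- which hinges on the Mukai-vector identity $\langle v(E_S),v(E_S)\rangle=-2$ and on the stability of $E_S$ provided by Theorem~\ref{theorem:mukai}. One should also be careful to establish $H^1(X,\mathcal{G}(-H_X))=0$ first, since both the $\Ext^1$ and the $\Ext^2$ computations rely on it.
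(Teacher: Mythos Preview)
Your argument is correct. Both your proof and the paper's use Proposition~\ref{ux-unique} (the vanishing $\Ext^1(E,E(-H_X))=0$) as the essential input, but the paper takes a more direct route at two points.

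For $H^\udot(X,E(-H_X))=0$ the paper uses the sequence $0\to E(-H_X)\to E\to E_S\to 0$ itself rather than its $(-H_X)$-twist: stability gives $H^0(X,E(-H_X))=0$, so $H^0(X,E)\hookrightarrow H^0(S,E_S)$, and since both sides have dimension $2+\g/2$ while all higher cohomology vanishes by~\eqref{eq:h-ex} and~\eqref{eq:h-es}, the long exact sequence kills everything at once. Your version works too, but requires the extra Serre-duality steps and the computation of $H^1(X,E(-2H_X))$.

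For $\Ext^1(E,E)=0$ the paper avoids the K3 surface entirely: after establishing $\Ext^0=\Bbbk$, $\Ext^2=0$, $\Ext^3=0$ (by stability, Serre duality, and Proposition~\ref{ux-unique}, exactly as you do), it simply computes $\chi(E,E)=1$ by Riemann--Roch on $X$ and concludes. Your restriction-to-$S$ argument and the Mukai-vector computation $\langle v,v\rangle=-2$ are correct and pleasant, but what you call ``the main obstacle'' --- the rigidity $\Ext^1_S(E_S,E_S)=0$ --- is not actually needed; the Euler characteristic on $X$ already forces $\Ext^1(E,E)=0$. Your decomposition $E^\vee\otimes E\cong\CO_X\oplus(\Sym^2 E)(-H_X)$ and the self-duality $\mathcal{G}^\vee\cong\mathcal{G}$ are nice to make explicit, though the paper's argument does not require isolating the traceless summand.
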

\begin{proof}
Let $S$ be a very general hyperplane section of $X$
in the anticanonical
embedding. We have an exact sequence
\begin{equation*}
0 \to E(-H_X) \to E \to E_S \to 0.
\end{equation*}
Since the bundle $E$ is stable by Theorem~\xref{theorem:mukai} and $c_1(E(-H_X)) = -H_X$, we have
\begin{equation*}
H^0(X,E_X(-H_X)) = 0.
\end{equation*}
Therefore the restriction map $H^0(X,E) \to H^0(S,E_S)$ is injective. But by~\eqref{eq:h-ex} and~\eqref{eq:h-es}
the dimensions of $H^0(X,E)$ and $H^0(S,E_S)$ are equal, hence the latter map is an isomorphism.
Since $H^{>0}(X,E) = H^{>0}(S,E_S) = 0$ (again by~\eqref{eq:h-ex} and~\eqref{eq:h-es}),
we conclude that
\begin{equation*}
H^\udot(X,E_X(-H_X)) = 0.
\end{equation*}

For the second assertion, note that by Serre duality $\Ext^3(E,E)$ is dual to~\mbox{$\Hom(E,E(-H_X))$} which is zero by stability of $E$.
Similarly, $\Ext^2(E,E)$ is dual to~\mbox{$\Ext^1(E,E(-H_X))$} which is zero by~\eqref{eq:ext1-e}. Furthermore,
one has $\dim\Hom(E,E) = 1$
by stability of $E$. So, it remains to note that $\chi(E,E) = 1$ by Riemann--Roch theorem,
hence~\mbox{$\Ext^1(E,E) = 0$}.
\end{proof}

\subsection{A correspondence between Fano threefolds of index 1 and 2}

Let $X$ be a Fano threefold with $\rho(X) = 1$, $\iota(X) = 1$,
and even genus $\g(X) \ge 6$.
We consider the Mukai bundle $E$ of rank 2 on $X$, and from now on denote its dual by
\begin{equation*}
\CU_X = E^\vee.
\end{equation*}
It follows from Lemma~\xref{lemma:e-exc} (see also~\cite{kuznetsov2009derived}) that the pair of vector bundles $(\CO_X,\CU^\vee_X)$ is exceptional
and gives a semiorthogonal decomposition of the derived category of coherent sheaves
\begin{equation*}
\D^b(X) = \langle \CA_X, \CO_X, \CU_X^\vee \rangle
\end{equation*}
with the subcategory $\CA_X$ defined by
\begin{equation}\label{def-ax}
\CA_X = \langle \CO_X, \CU_X^\vee \rangle^\perp = \{ F \in \D^b(X)\ \mid \ H^\udot(X,F) = H^\udot(X,F\otimes\CU_X) = 0 \}.
\end{equation}

On the other hand, if $Y$ is a Fano threefold with $\rho(Y) = 1$, $\iota(Y) = 2$, and arbitrary
degree~\mbox{$\dd(Y)$}, there is a semiorthogonal decomposition
\begin{equation*}
\D^b(Y) = \langle \CB_Y, \CO_Y, \CO_Y(1) \rangle,
\end{equation*}
with the subcategory $\CB_Y$ defined by
\begin{equation}\label{def-by}
\CB_Y = \langle \CO_Y, \CO_Y(1) \rangle^\perp = \{ F \in \D^b(Y)\ \mid \ H^\udot(Y,F) = H^\udot(Y,F(-1)) = 0 \}.
\end{equation}

In the next lemma we show that the subcategories $\CA_X$ and $\CB_Y$ are preserved by all automorphisms of $X$ and $Y$.

\begin{lemma}\label{aut-aut}
The vector bundles $\CO_X$ and $\CU_X$ on a Fano threefold $X$ with $\rho(X) = 1$, $\iota(X) = 1$,
and even genus $\g(X)\ge 6$ are $\Aut(X)$-invariant.
In particular, the action of the group~\mbox{$\Aut(X)$} on $\D^b(X)$ preserves the subcategory $\CA_X$, so there is a morphism
\begin{equation*}
\Aut(X) \to \Aut(\CA_X)
\end{equation*}
to the group of autoequivalences of $\CA_X$. Similarly, the line bundles $\CO_Y$ and $\CO_Y(1)$ on a Fano threefold
with $\rho(Y) = 1$ and $\iota(Y) = 2$
are $\Aut(Y)$-invariant, so $\Aut(Y)$ acts on~$\CB_Y$ and there is a morphism
\begin{equation*}
\Aut(Y) \to \Aut(\CB_Y).
\end{equation*}
In both cases the image is contained in the subgroup of autoequivalences acting trivially on the numerical Grothendieck group.
\end{lemma}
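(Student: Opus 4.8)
The statement to prove, Lemma~\ref{aut-aut}, asserts that the exceptional bundles defining the semiorthogonal decompositions are preserved by the automorphism groups, and that the induced actions on $\CA_X$ (resp.\ $\CB_Y$) are trivial on the numerical Grothendieck group. The strategy has three ingredients. First, $\CO_X$ is manifestly $\Aut(X)$-invariant (it carries a canonical linearization, being $\CO_X$), and $\CU_X = E^\vee$ is $\Aut(X)$-invariant because the Mukai bundle $E$ is: by Proposition~\ref{ux-unique} it is the \emph{unique} globally generated stable rank~$2$ bundle with $c_1 = H_X$ and $c_2 = (1 + \g(X)/2)L_X$, and for any $g \in \Aut(X)$ the pullback $g^*E$ again has these properties (the Chern classes are $g^*$-invariant since $\Pic(X) = \Z\cdot H_X$ and the Chow group of curves is one-dimensional). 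Hence $g^*E \cong E$, so $E$, and therefore $\CU_X$, is $\Aut(X)$-invariant. For $Y$ this is even easier: $\CO_Y$ and $\CO_Y(1)$ are invariant because $\Pic(Y) = \Z\cdot H$ is generated by $H$ with $\CO_Y(1) = \CO_Y(H)$, and the positive generator of $\Pic(Y)$ is canonically $\Aut(Y)$-invariant.

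\textbf{Second step: invariance of the subcategories and the morphisms to autoequivalence groups.} Since an automorphism $g$ of $X$ acts on $\D^b(X)$ by the autoequivalence $g^*$ (or $g_*$), and $g^*$ sends the exceptional objects $\CO_X$, $\CU_X^\vee$ to objects isomorphic to themselves, it preserves the semiorthogonal decomposition $\D^b(X) = \langle \CA_X, \CO_X, \CU_X^\vee\rangle$, and in particular it preserves the right-orthogonal $\CA_X = \langle \CO_X, \CU_X^\vee\rangle^\perp$ described in~\eqref{def-ax}. Concretely: if $F \in \CA_X$, i.e.\ $H^\udot(X,F) = H^\udot(X, F\otimes\CU_X) = 0$, then since $g^*\CU_X \cong \CU_X$ and cohomology is invariant under $g^*$, we get $H^\udot(X, g^*F) = H^\udot(X, g^*F \otimes \CU_X) = 0$, so $g^*F \in \CA_X$. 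This gives the restriction homomorphism $\Aut(X) \to \Aut(\CA_X)$; the same argument with $\CO_Y$, $\CO_Y(1)$ and the description~\eqref{def-by} gives $\Aut(Y) \to \Aut(\CB_Y)$.

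\textbf{Third step: triviality on the numerical Grothendieck group.} For the last sentence, one uses that $\Aut(X)$ acts trivially on the numerical Grothendieck group $\mathrm{K}_{\mathrm{num}}(X)$ of the whole of $\D^b(X)$: the numerical class of an object is determined by its rank and its Chern character modulo numerical equivalence, and since $\Pic(X) = \Z\cdot H_X$ (resp.\ $\Z\cdot H$ for $Y$) together with the fact that $\Aut(X)$ fixes the class $H_X$ and the class of a line $L_X$ and of a point $P_X$ (there being essentially no room: $\mathrm{N}^1$, $\mathrm{N}^2$, $\mathrm{N}^3$ are all of rank one), every $g \in \Aut(X)$ acts trivially on $\mathrm{K}_{\mathrm{num}}(X)$. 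The numerical Grothendieck group of the admissible subcategory $\CA_X$ is naturally a subquotient (in fact a direct summand, via the projection functor) of $\mathrm{K}_{\mathrm{num}}(X)$ compatibly with the $\Aut(X)$-action, so the induced action of $\Aut(X)$ on $\mathrm{K}_{\mathrm{num}}(\CA_X)$ is also trivial; likewise for $\CB_Y$.

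\textbf{Main obstacle.} The only genuinely nontrivial input is the uniqueness of the Mukai bundle, which is exactly Proposition~\ref{ux-unique} proved above, so here it can simply be cited. The remaining potential subtlety is a bookkeeping one: making precise that $\Aut(X)$ acts trivially on $\mathrm{K}_{\mathrm{num}}$ and that this descends to the subcategory. The cleanest way is to observe that $\mathrm{K}_{\mathrm{num}}(X)$ is a finitely generated abelian group on which the (possibly infinite, but acting algebraically and hence through a finite quotient on any discrete object) group $\Aut(X)$ must act through a finite group, and that the invariance of the ample generator $H_X$ together with $\rho(X) = 1$ pins the action down to the identity; then invoke that the projection functor $\D^b(X) \to \CA_X$ is $\Aut(X)$-equivariant (since $\CA_X$ is $\Aut(X)$-invariant as just shown) to transport triviality to $\mathrm{K}_{\mathrm{num}}(\CA_X)$. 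I expect this to be a short argument; no serious calculation is required.
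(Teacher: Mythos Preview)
Your proposal is correct and follows essentially the same approach as the paper: invariance of $\CO_X$, $\CO_Y$, $\CO_Y(1)$ is immediate, invariance of $\CU_X$ is deduced from the uniqueness statement of Proposition~\ref{ux-unique}, invariance of $\CA_X$ and $\CB_Y$ then follows from their descriptions~\eqref{def-ax} and~\eqref{def-by}, and triviality on the numerical Grothendieck group comes from the fact that the numerical Chow groups of a Fano threefold with $\rho = 1$ are all of rank one with canonical positive generators. The only difference is that the paper compresses your third step into a one-line citation of~\cite{kuznetsov2009derived}, whereas you spell out the mechanism via the Chern character and the projection functor; your version is slightly more self-contained but otherwise identical in substance.
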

\begin{proof}
The invariance of $\CO_X$, $\CO_Y$ and $\CO_Y(1)$ under automorphisms is clear, and invariance of $\CU_X$
follows from Proposition~\xref{ux-unique}. The categories $\CA_X$ and $\CB_Y$ are preserved by automorphisms
of $X$ and $Y$ by~\eqref{def-ax} and~\eqref{def-by}, hence the required morphisms. Finally, the automorphisms
of Fano threefolds of Picard rank 1 act trivially on their Chow groups,
hence by~\cite{kuznetsov2009derived} on the numerical
Grothendieck groups.
Therefore, the numerical classes of objects in $\CA_X$ and $\CB_Y$ are preserved
by automorphisms of~$X$ and~$Y$, respectively.
\end{proof}

We will use the following result

\begin{theorem}[\cite{kuznetsov2009derived}]
\label{theorem-ax-by}
For each smooth Fano threefold $X$ with $\rho(X) = 1$, $\iota(X) = 1$, and $\g(X) \in \{8,10,12\}$
there is a smooth Fano threefold $Y$ with $\rho(Y) = 1$, $\iota(Y) = 2$ and
\begin{equation*}
\dd(Y) = \frac{\g(X)}{2} - 1,
\end{equation*}
and an equivalence of categories $\CA_X \cong \CB_Y$.
\end{theorem}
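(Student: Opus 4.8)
The plan is to deduce the equivalence from \emph{homological projective duality} (HPD) for the homogeneous varieties in which Mukai realizes the threefolds $X$ and $Y$, matching the ``residual'' derived-categorical pieces on the two sides.

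Recall from the discussion following Theorem~\ref{theorem:mukai} that a Fano threefold $X$ of index~$1$ and genus $\g(X)\in\{8,10,12\}$ is realized inside a homogeneous variety $M=M_{\g}$ in the embedding defined by the ample generator of $\Pic(M)$: for $\g=8$ one has $M=\Gr(2,6)\subset\P^{14}$ and $X=M\cap\P(L)$ a transverse linear section of codimension~$5$; for $\g=10$, $M=\mathrm{G}_2/P\subset\P^{13}$ is the adjoint $\mathrm{G}_2$-variety and $X$ is a transverse linear section of codimension~$2$; for $\g=12$, $M=\Gr(3,7)$ and $X$ is the zero locus of three sections of $\Lambda^2\CU^\vee$, i.e.\ it is cut out by the net of skew forms on $\Bbbk^7$ defining the Mukai bundle. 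In each case the tautological bundles $\CO_M$ and $\CU_M^\vee$ restrict to $\CO_X$ and to a stable rank~$2$ bundle with the numerical invariants of the Mukai bundle; by Proposition~\ref{ux-unique} this restriction is $\CU_X^\vee$, and by Lemma~\ref{lemma:e-exc} the pair $(\CO_X,\CU_X^\vee)$ is exceptional, so $\CA_X$ of~\eqref{def-ax} is exactly the right orthogonal to $\langle\CO_X,\CU_X^\vee\rangle$ in $\D^b(X)$.

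The first step is to record that each $M$ above carries a Lefschetz decomposition of $\D^b(M)$ with respect to $\CO_M(1)$ whose first (largest) Lefschetz block is generated by $\CO_M$ and $\CU_M^\vee$, together with a homological projective dual variety $M^\vee$ equipped with a dual Lefschetz decomposition: for $M=\Gr(2,6)$ one uses Kuznetsov's Pfaffian duality, with $M^\vee$ the Pfaffian cubic $\mathrm{Pf}(4,6)\subset\P(W^\vee)$; for $M=\mathrm{G}_2/P$ and for the $\Gr(3,7)$ situation one uses the corresponding (nonlinear, in the latter case) HPD statements. The second step is to take $Y$ to be the orthogonal section $M^\vee\cap\P(L^\perp)$: for $\g=8$ one has $\dim\P(L^\perp)=4$, so $Y=\mathrm{Pf}(4,6)\cap\P^4$ is a cubic threefold; for $\g=10$ the orthogonal section is an intersection of two quadrics in $\P^5$; for $\g=12$ it is the degree-$5$ threefold $\Gr(2,5)\cap\P^6$. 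A dimension count together with the explicit Lefschetz data shows that in each case $Y$ is a smooth Fano threefold with $\rho(Y)=1$, $\iota(Y)=2$, $\dd(Y)=\g(X)/2-1$ (cf.\ Table~\ref{table:Fanos-i-1}). The third step invokes the fundamental theorem of HPD: the residual components of $\D^b(X)$ and $\D^b(Y)$, complementary to the images of the Lefschetz blocks, are equivalent. On the $Y$-side the residual component is by construction the right orthogonal of $\langle\CO_Y,\CO_Y(1)\rangle$, i.e.\ $\CB_Y$ from~\eqref{def-by}; on the $X$-side, by the identification of the restricted tautological bundles above, it is the right orthogonal of $\langle\CO_X,\CU_X^\vee\rangle$, i.e.\ $\CA_X$. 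Hence $\CA_X\cong\CB_Y$.

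I expect the main obstacle to lie in the input, not the output, of this argument. One must first check that all the linear sections (and the vector-bundle zero locus for $\g=12$) are \emph{admissible} for HPD --- transverse, of expected dimension, with smooth dual section --- which for $\g=10$ and $\g=12$ requires controlling the singular loci of $M^\vee$. More seriously, one needs the homological projective duality statements themselves: Pfaffian duality for $\Gr(2,6)$ and the analogues for $\mathrm{G}_2/P$ and for $\Gr(3,7)$ with its net of skew forms; these are the technical heart of the matter. An alternative, more economical route in the genus-$12$ case is to bypass HPD and instead match $\CA_X$ with $\CB_Y$ directly through the birational transformation~\eqref{equation(1)} between $X$ and the degree-$5$ threefold, tracking derived categories through the blow-up, the flop, and the blow-down (Orlov's blow-up formula and the Bondal--Orlov flop equivalence). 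Once the relevant duality, or the birational analysis, is in place, the matching of the base components with $\CA_X$ and $\CB_Y$ is a routine manipulation of exceptional collections.
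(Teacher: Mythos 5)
Your strategy works for $\g(X)=8$ but breaks down for the other two genera, because the homological projective duality inputs you invoke there do not exist in the form you state. For $\g(X)=10$ the variety $X$ is a codimension-$2$ linear section of $\mathrm{G}_2/P\subset\P^{13}$, so the orthogonal space $\P(L^\perp)$ is a \emph{line} in the dual $\P^{13}$; the classical projective dual of $\mathrm{G}_2/P$ is a sextic hypersurface, and the ``dual section'' is the six points where this line meets it, packaged categorically as the derived category of the genus-$2$ double cover $Z\to\P^1$ branched at those points (this is what \cite{kuznetsov2006hyperplane}, \S6.4 and \S8, actually proves). It is simply not an intersection of two quadrics in $\P^5$; the degree-$4$ threefold $Y$ only enters through a \emph{second}, independent equivalence $\CB_Y\cong\D^b(Z')$ (Bondal--Orlov for pencils of quadrics, \cite{bondal1995semiorthogonal,kuznetsov2008derived}), and one must \emph{construct} $Y$ so that its hyperelliptic curve $Z'$ coincides with $Z$ --- this is exactly what Proposition~\xref{proposition-v18-v4} does. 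For $\g(X)=12$ the situation is worse: $X$ is the zero locus of three sections of $\Lambda^2\CU^\vee$ on $\Gr(3,7)$, not a linear section in the Pl\"ucker embedding, so $\P(L^\perp)$ is not even defined and no HPD statement applies; your fallback via the Sarkisov link~\eqref{equation(1)} is a reasonable idea but is far from routine (one must prove the flop equivalence respects the relevant semiorthogonal decompositions), and it is not how the result is obtained.

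The actual argument (from \cite{kuznetsov2009derived}, reproduced in Appendix~\xref{section-Lines-and-conics}) goes through intermediate categories rather than a single duality: for $\g=12$ both $\CA_X$ and $\CB_Y$ are identified with $\D^b(\SQ_3)$ via the explicit exceptional pairs $(\CU_{X3},\CU_{X4})$ on $X$ \cite{kuznetsov1996exceptional} and $(\CU_{Y2},\CU_{Y3})$ on the quintic del Pezzo threefold \cite{orlov1991exceptional} (Proposition~\xref{proposition-v22-v5}); for $\g=10$ both are identified with $\D^b(Z)$ for a genus-$2$ curve as above; and for $\g=8$ the equivalence is the Pfaffian--Grassmannian one of \cite{kuznetsov2004derived,kuznetsov2006homological}, given by an explicit Fourier--Mukai kernel $I_Z(H_Y)$ on $X\times Y$ with $Y$ the cubic~\eqref{eq:cubic-3fold} --- this last case is the one where your HPD picture is accurate, modulo the admissibility checks you rightly flag (in particular that $\P(A)$ avoids the singular locus $\Gr(2,W^\vee)$ of the Pfaffian cubic whenever $X$ is smooth). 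So the output you aim for is correct, but two of the three dualities you feed into the machine are not available, and the existence statement for $Y$ in genus $10$ requires the extra matching step through the curve $Z$ that your argument omits.
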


In the rest of the section we give a proof of Theorem~\xref{theorem:S-vs-Sigma}, by considering consecutively all three values of $\g(X)$
and using the above equivalence of categories (explicitly in the first two cases, and implicitly in the third).
The proof consists of Propositions~\xref{proposition-v22-v5},
\xref{proposition-v18-v4}, and~\xref{proposition-v14-v3}
which will be established in the next subsections.
In the course of proof we will remind the construction of the threefold~$Y$ associated to a threefold $X$.

\subsection{Lines, conics, and derived categories}

In this subsection we show that the Hilbert scheme $S(X)$ of conics on $X$ can be thought of as
a moduli space of objects in the category $\CA_X$, defined by~\eqref{def-ax},
and the Hilbert scheme $\Sigma(Y)$ of lines on $Y$ can be thought of as a moduli space
of objects in the category $\CB_Y$, defined by~\eqref{def-by}.

We start with lines on a threefold $Y$ of index 2.

\begin{lemma}
For any line $L \subset Y$ on a Fano threefold $Y$ with $\rho(Y) = 1$ and $\iota(Y) = 2$
the ideal sheaf $I_L$ is an object of the category $\CB_Y$ defined by~\eqref{def-by}.
\end{lemma}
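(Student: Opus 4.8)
The plan is to test $I_L$ against the two generators of $\D^b(Y)$ that cut out $\CB_Y$, i.e. to verify directly the two vanishings $H^\udot(Y,I_L)=0$ and $H^\udot(Y,I_L(-1))=0$ that define the right orthogonal in~\eqref{def-by}. The only tool needed is the structure sequence of $L$ in $Y$,
\begin{equation*}
0 \to I_L \to \CO_Y \to \CO_L \to 0,
\end{equation*}
together with its twist by $\CO_Y(-1)$, and the identification of $L$, given by Lemma~\xref{lemma:lines-conics}(i), as a copy of $\PP^1$ with $\CO_Y(1)\vert_L \cong \CO_L(1)$.

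First I would handle $H^\udot(Y,I_L)$. Since $Y$ is Fano, Kodaira vanishing applied to $\CO_Y \cong \CO_Y(K_Y)\otimes\CO_Y(-K_Y)$ gives $H^{>0}(Y,\CO_Y)=0$, so $H^\udot(Y,\CO_Y)=\Bbbk$ concentrated in degree $0$; likewise $H^\udot(L,\CO_L)=\Bbbk$ in degree $0$ because $L\cong\PP^1$, and the restriction map $H^0(Y,\CO_Y)\to H^0(L,\CO_L)$ is an isomorphism (both sides are spanned by the constant function). Plugging this into the long exact cohomology sequence of the structure sequence kills every graded piece of $H^\udot(Y,I_L)$.

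Next I would twist the structure sequence by $\CO_Y(-1)$. As $\iota(Y)=2$ we have $-K_Y\sim 2H$, hence $\CO_Y(-1)=\CO_Y(-H)\cong\CO_Y(K_Y+H)$; since $H$ is ample, Kodaira vanishing yields $H^{>0}(Y,\CO_Y(-1))=0$, and $H^0(Y,\CO_Y(-1))=0$ because $-H$ is not effective. On the other side $\CO_L(-1)=\CO_Y(-H)\vert_L\cong\CO_{\PP^1}(-1)$ has no cohomology whatsoever. The long exact sequence therefore forces $H^\udot(Y,I_L(-1))=0$, and combined with the previous paragraph this shows $I_L\in\CB_Y$.

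I do not expect a genuine obstacle: everything reduces to standard vanishing on $Y$ and on $\PP^1$. The one point deserving a remark is the use of Lemma~\xref{lemma:lines-conics}(i), which presupposes that $2H$ is very ample, i.e. $\dd(Y)\ge 2$ — precisely the range in which $\Sigma(Y)$ is studied here; in the excluded case $\dd(Y)=1$ one checks separately that an $H$-line is still a smooth rational curve on which $\CO_Y(1)$ restricts to $\CO_{\PP^1}(1)$, and the computation above then applies word for word.
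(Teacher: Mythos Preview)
Your proof is correct and follows essentially the same route as the paper: both use the structure sequence $0\to I_L\to\CO_Y\to\CO_L\to 0$ and its twist by $\CO_Y(-1)$, together with Kodaira vanishing on $Y$ and the identification $L\cong\PP^1$ with $\CO_Y(1)\vert_L\cong\CO_{\PP^1}(1)$. You spell out a few details (the isomorphism of $H^0$'s, the form $\CO_Y(-1)\cong\CO_Y(K_Y+H)$, the caveat about $\dd(Y)=1$) that the paper leaves implicit, but the argument is the same.
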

\begin{proof}
We have to check that
\begin{equation*}
H^\udot(Y,I_L) = H^\udot(Y,I_L(-1)) = 0.
\end{equation*}
The first follows
immediately from the exact sequence
\begin{equation}\label{eq:IL-OY-OL}
0 \to I_L \to \CO_Y \to \CO_L \to 0.
\end{equation}
For the second we twist the sequence~\eqref{eq:IL-OY-OL} by $\CO_Y(-1)$
and note that $H^\udot(Y,\CO_Y(-1)) = 0$ by Kodaira vanishing, and
$H^\udot(L,\CO_L(-1)) = 0$ since~\mbox{$L \cong \PP^1$}.
\end{proof}

An analogous statement for conics on $X$ is a bit more complicated.

\begin{lemma}\label{icinax}
For any conic $C \subset X$ on a Fano threefold $X$ with $\rho(X) = 1$, $\iota(X) = 1$,
and even genus $\g(X) \ge 6$ one has
\begin{equation}\label{eq:huxc}
H^\udot(C,\CU_X\vert_C) = 0.
\end{equation}
As a consequence,
the ideal sheaf $I_C$ is an object of the category $\CA_X$ defined by~\eqref{def-ax}.
\end{lemma}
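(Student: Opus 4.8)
The plan is to separate the genuine content, the cohomology vanishing~\eqref{eq:huxc}, from the formal deduction that then $I_C \in \CA_X$; this parallels, in a more elaborate form, the argument just given for lines on $Y$.

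First the deduction. Since $\iota(X)=1$ we have $-K_X\sim H_X$, and a conic satisfies $H_X\cdot C=2$, $p_a(C)=0$. From $0\to I_C\to\CO_X\to\CO_C\to 0$ and the fact that $H^\udot(X,\CO_X)$ and $H^\udot(C,\CO_C)$ are both $\Bbbk$ in degree $0$ with the restriction an isomorphism, one gets $H^\udot(X,I_C)=0$. Tensoring the same sequence with the locally free sheaf $\CU_X$ and using $\CU_X=E^\vee\cong E(-H_X)$ (because $\Lambda^2E\cong\CO_X(H_X)$) together with Lemma~\xref{lemma:e-exc}, which gives $H^\udot(X,\CU_X)=H^\udot(X,E(-H_X))=0$, one obtains $H^i(X,I_C\otimes\CU_X)\cong H^{i-1}(C,\CU_X\vert_C)$ for all $i$. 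Hence~\eqref{eq:huxc} makes both conditions of~\eqref{def-ax} hold for $F=I_C$, so $I_C\in\CA_X$; it remains to prove~\eqref{eq:huxc}.

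For~\eqref{eq:huxc}, Riemann--Roch on $C$ gives $\chi(C,\CU_X\vert_C)=\deg(\CU_X\vert_C)+2(1-p_a(C))=-(H_X\cdot C)+2=0$, and as $C$ is a curve there is no cohomology in degree $\ge 2$, so it suffices to show $H^0(C,\CU_X\vert_C)=0$. Assume $0\ne\sigma\in H^0(C,\CU_X\vert_C)$. Global generation of $E$ forces $E\vert_L\cong\CO_L\oplus\CO_L(1)$, hence $\CU_X\vert_L\cong\CO_L\oplus\CO_L(-1)$, for every line $L\subseteq X$; feeding this into the restriction sequences for $\CU_X\vert_C$ along the reduced components of $C$ one checks that $\sigma$ is nonzero on each component and hence vanishes nowhere, so it defines a trivial subbundle $\CO_C\hookrightarrow\CU_X\vert_C$. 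Now I would bring in Mukai's construction: by Theorem~\xref{theorem:mukai} and Proposition~\xref{ux-unique}, $E$ gives a morphism $\psi\colon X\to\Gr(2,W)$, $W:=H^0(X,E)^\vee$, with $\psi^*\mathcal S\cong\CU_X$ for the tautological rank-$2$ subbundle $\mathcal S$; it is a closed embedding once $\g(X)\ge 8$, and composed with the Pl\"ucker embedding it is the anticanonical embedding, so $\langle\psi(X)\rangle=\langle X\rangle=\PP^{\g(X)+1}$. Dualizing $H^0(X,E)\otimes\CO_X\twoheadrightarrow E$ realizes $\CU_X$ as a subsheaf of $\CO_X\otimes W$ with locally free quotient, hence $\CU_X\vert_C\subseteq\CO_C\otimes W$; under this, $\sigma$ is a constant vector $v_0\in W\setminus\{0\}$ with $v_0\in\mathcal S_{\psi(p)}$ for all $p\in C$, i.e. $\psi(C)$ lies in the Schubert variety $\Sigma_{v_0}=\{V\in\Gr(2,W):v_0\in V\}$, which is a linearly embedded $\PP(W/\langle v_0\rangle)$ in $\PP(\Lambda^2W)$. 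Since a conic spans a plane (Lemma~\xref{lemma:lines-conics}), $\langle\psi(C)\rangle\cong\PP^2$ lies in $\Sigma_{v_0}\subseteq\Gr(2,W)$, while also $\langle\psi(C)\rangle\subseteq\langle X\rangle$; granting $\psi(X)=\Gr(2,W)\cap\langle X\rangle$ we conclude $\PP^2\subseteq\psi(X)\cong X$. But for $\g(X)\ge 6$ the threefold $X$ is not a quartic, so by Lemma~\xref{lemma:cones}(ii) it contains no two-dimensional cone and in particular no plane --- a contradiction, proving~\eqref{eq:huxc}.

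The hard part is the input $\psi(X)=\Gr(2,W)\cap\langle X\rangle$, equivalently that $X$ is cut out in its linear span by the restricted Pl\"ucker quadrics. For $\g(X)=8$ this is the very presentation of $X$ as a linear section of $\Gr(2,6)$, and for $\g(X)=10$ it follows from the identification $G_2/P=\Gr(2,7)\cap\PP(\mathfrak g_2)$ together with $\langle X\rangle\subseteq\PP(\mathfrak g_2)$. The remaining cases need separate treatment: for $\g(X)=6$ one has to allow $\psi$ to be a double cover, and for $\g(X)=12$, where the image of $\psi$ in $\Gr(2,8)$ is not classically described, one argues instead via the $\Gr(3,7)$-model of Table~\xref{table:Fanos-i-1} and the rank-$3$ Mukai bundle (or proves the quadratic generation directly). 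I expect this verification, not the homological bookkeeping, to be where the real work lies.
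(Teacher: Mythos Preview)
Your approach coincides with the paper's: reduce to $H^0(C,\CU_X\vert_C)=0$ via $\chi=0$, interpret a nonzero section as a vector in $W$ placing $C$ inside the linearly embedded $\PP(W/\langle v_0\rangle)\subset\Gr(2,W)$, intersect with the linear span of $X$ to obtain a linear subspace of $X$ containing the plane $\langle C\rangle$, and contradict the absence of planes in $X$ (the paper invokes Lefschetz rather than Lemma~\xref{lemma:cones}).

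Two small differences. Your case analysis showing that $\sigma$ is nowhere vanishing is superfluous: since $\CU_X\hookrightarrow W\otimes\CO_X$ is a sub\emph{bundle} (the restricted tautological sequence has locally free cokernel), the induced map $H^0(C,\CU_X\vert_C)\hookrightarrow H^0(C,W\otimes\CO_C)=W$ is already injective, so a nonzero $\sigma$ is automatically a nonzero constant vector and hence vanishes nowhere. The paper phrases this dually as $H^0(C,\CU_X\vert_C)=\Ker\big(W\to H^0(C,(W/\CU_X)\vert_C)\big)$, bypassing any splitting-type discussion. Second, the paper does not perform the case analysis on $\g$ that you anticipate for the key input $\psi(X)=\Gr(2,W)\cap\PP(V)$; it simply cites Mukai~\cite{mukai1989biregular} for the uniform assertion that for all even $\g\ge 6$ the image of $X$ is a (not necessarily dimensionally transverse) linear section of the Grassmannian. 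Your caution about $\g=6$ and $\g=12$ is not unreasonable, but the paper absorbs this into the reference rather than treating it as the ``real work''.
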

\begin{proof}
First, let us show~\eqref{eq:huxc}.
Since $C$ is one-dimensional, we could only have
non-vanishing cohomology groups~$H^0$ and~$H^1$. On the other hand,
the Hilbert polynomial computation shows that
\begin{equation}\label{eq:after-Hilb-poly-computation}
\dim H^0(C,\CU_X\vert_C) = \dim H^1(C,\CU_X\vert_C).
\end{equation}
For this computation it is enough to assume that $C \cong \PP^1$ is smooth;
in that case $\CU_X\vert_C$ is a rank~2 vector bundle of degree $-H_X\cdot C = -2$ on $\P^1$, hence its Euler characteristic is zero.
By~\eqref{eq:after-Hilb-poly-computation} it is enough to check that $H^0(X,\CU_X\vert_C) = 0$.

Put $W = H^0(X,\CU_X^\vee)^\vee$ and let $X \to \Gr(2,W)$ be the map given by $\CU_X$. The pullback to $C$
of the tautological sequence on the Grassmannian
\begin{equation*}
0 \to \CU_X\vert_C \to W\otimes\CO_C \to (W/\CU_X)\vert_C \to 0
\end{equation*}
shows that
\begin{equation*}
H^0(C,\CU_X\vert_C) = \Ker\big(H^0(C,W\otimes\CO_C) = W \to H^0(C,(W/\CU_X)\vert_C\big).
\end{equation*}
Therefore $H^0(C,\CU_X\vert_C) \ne 0$ would imply that $C$ is contained in the zero locus of some~\mbox{$w \in W$} considered as a global section of the quotient bundle $W/\CU_X$.
The zero locus of this global section on the Grassmannian $\Gr(2,W)$ is nothing but the linearly embedded projective space
\begin{equation*}
\PP(W/w) \subset \Gr(2,W).
\end{equation*}
On the other hand, by~\cite{mukai1989biregular} the map
$X \to \Gr(2,W)$ is an embedding and its image is a
linear section of the Grassmannian (which is not dimensionally transverse),
i.\,e. there is a vector subspace $V \subset \Lambda^2W$
such that
\begin{equation*}
X = \Gr(2,W) \cap \PP(V) \subset \PP(\Lambda^2W).
\end{equation*}
Thus the zero locus of $w$ on $X$ is the intersection
\begin{equation*}
\PP(W/w) \cap \PP(V) \subset \PP(\Lambda^2W),
\end{equation*}
so it is a projective space itself. In particular, if it contains a conic
then it also contains its linear hull $\PP^2$. But $X$ cannot contain a plane by Lefschetz theorem.
This contradiction shows that actually $H^0(X,\CU_X\vert_C) = 0$ as it was claimed above, and thus proves~\eqref{eq:huxc}.

It remains to check that
\begin{equation*}
H^\udot(X,I_C) = H^\udot(X,I_C\otimes\CU_X) = 0.
\end{equation*}
The first follows
from the exact sequence
\begin{equation}\label{icseq}
0 \to I_C \to \CO_X \to \CO_C \to 0
\end{equation}
analogously to the case of lines.
For the second we tensor the sequence~\eqref{icseq} by $\CU_X$ to obtain
\begin{equation*}
0 \to I_C \otimes \CU_X \to \CU_X \to \CO_C \otimes \CU_X \to 0.
\end{equation*}
By Lemma~\xref{lemma:e-exc} we have $H^\udot(X,\CU_X) = 0$ and by~\eqref{eq:huxc}
\begin{equation*}
H^\udot(X,\CO_C \otimes \CU_X) = H^\udot(C,\CU_X\vert_C) = 0.
\end{equation*}
This completes the proof.
\end{proof}

\begin{remark}\label{conics-other-cases}
The same argument applied to Fano threefolds
of genus 9 (respectively,~7) and the natural vector bundle $\CU_X$
of rank~3 (respectively, 5) shows that
there is
a canonical morphism $\CU_X^\perp \to I_C$, where $I_C$ is the ideal sheaf of the conic
$C$, and its kernel is in~$\CA_X$. So, in these cases one should consider these kernels instead of $I_C$ and identify them
as objects of $\D^b(\Gamma)$, where $\Gamma$ is the associated curve of genus 3
(respectively,~7), see~\mbox{\cite[\S6.2 and~\S6.3]{kuznetsov2006hyperplane}} for details.
\end{remark}

The approach outlined in the Remark~\xref{conics-other-cases} was used in~\cite{Kuznetsov-V12} and~\cite{BrambillaFaenzi} to describe the Hilbert scheme of conics on Fano threefolds of index 1 and genus 7 and 9.
In the first case it was shown that $S(X) \cong \Sym^2(\Gamma)$,
where $\Gamma$ is the associated curve of genus~7, and in the second that $S(X) \cong \P_\Gamma(\CV)$,
where $\CV$ is a rank 2 vector bundle on the associated curve of genus 3 (see also Proposition~\xref{proposition:conics}).
Below we show that the vector bundle $\CV$ is simple;
this was claimed in Proposition~\xref{proposition:conics} and used in Corollary~\xref{corollary:high-genus-Fanos-Aut}
for the proof of finiteness of $\Aut(X)$.

\begin{lemma}\label{lemma:v-simple}
Let $X$ be a Fano threefold with $\rho(X) = 1$, $\iota(X) = 1$, and $\g(X) = 9$.
Let~$\Gamma$ be the curve of genus~$3$ and $\CV$ a rank $2$ vector bundle on $\Gamma$,
such that $S(X) \cong \P_\Gamma(\CV)$. Then $\CV$ is simple.
\end{lemma}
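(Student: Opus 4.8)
The plan is to deduce simplicity of $\CV$ from the derived-categorical description of $S(X)$, which is robust under specialisation and therefore applies to every smooth $X$ with $\g(X)=9$, not only to a general one. First I would reduce the statement to a simplicity assertion for a bundle on $\Gamma$. Recall (Remark~\ref{conics-other-cases}, \cite[\S3]{BrambillaFaenzi}, \cite[\S6.3]{kuznetsov2006hyperplane}) that for $\g(X)=9$ there are a rank~$3$ bundle $\CU_X$ on $X$, an equivalence $\Phi\colon\D^b(\Gamma)\xrightarrow{\ \sim\ }\CA_X$, and, for each conic $C\subset X$, a canonical object $K_C\in\CA_X$ (the kernel of the natural map $\CU_X^\perp\to I_C$), so that $[C]\mapsto\Phi^{-1}(K_C)$ realises the isomorphism $S(X)\cong\P_\Gamma(\CV)$. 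The equivalence $\Phi$ identifies the objects $\Phi^{-1}(K_C)$ with the elementary modifications of one fixed rank~$2$ bundle $B$ on $\Gamma$: if $\gamma\in\Gamma$ is the image of $[C]$ under the ruling $S(X)\to\Gamma$, then $\Phi^{-1}(K_C)$ is the kernel of a surjection $B\twoheadrightarrow\CO_\gamma$, and conversely each such kernel occurs. Hence the fibre of the ruling over $\gamma$ is $\P\big(\Hom(B,\CO_\gamma)\big)=\P\big((B^\vee)_\gamma\big)$, so $\CV\cong B^\vee\otimes\CO_\Gamma(D)$ for some divisor $D$ on $\Gamma$; since simplicity is unaffected by dualising and by twisting with a line bundle, it suffices to prove that $B$ is simple.

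Next I would identify $B$ and compute its endomorphism algebra. Applying $\Phi$ to the defining sequence $0\to\Phi^{-1}(K_C)\to B\to\CO_\gamma\to 0$ of an elementary modification exhibits $\Phi(B)$ in a triangle $K_C\to\Phi(B)\to\Phi(\CO_\gamma)$; in particular $\Phi(B)$ does not depend on $C$, and a diagram chase shows that it is canonically built from $\CU_X^\perp$, namely (up to twist) the projection of $\CU_X^\perp$ onto $\CA_X$ for the semiorthogonal decomposition $\D^b(X)=\langle\CA_X,\CO_X,\CU_X^\vee\rangle$. Since $\Phi$ is fully faithful and $\CA_X$ is a full subcategory,
\begin{equation*}
\Hom(\CV,\CV)=\Hom_\Gamma(B,B)=\Hom_{\CA_X}\big(\Phi(B),\Phi(B)\big),
\end{equation*}
and by adjunction the right-hand side is computed from $\Hom$- and $\Ext$-groups among $\CU_X^\perp$, $\CO_X$ and $\CU_X^\vee$ on $X$, which in turn are governed by the cohomology of line bundles and of twists of $\CU_X$. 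These I would evaluate using stability of $\CU_X$, the analogue for the rank~$3$ Mukai bundle of the vanishings in Lemma~\ref{lemma:e-exc}, and Mukai's description of $X$ as a linear section of $\LGr(3,6)$. The expected outcome is $\Hom(\CV,\CV)=\Bbbk$, i.e. $\CV$ is simple.

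A possible shortcut would be to mimic Proposition~\ref{ux-unique} and prove directly that $B$ is \emph{stable}, a stable bundle being automatically simple, and strict semistability being excluded once one checks that $\deg(\det B)$ is odd. The difficulty with this route, and the reason I would prefer the derived-categorical one, is that for \emph{special} $X$ one cannot invoke a genericity argument to rule out destabilising sub-line-bundles of $B$, whereas the $\Hom$-spaces above are deformation invariant. Accordingly, the main obstacle is the bookkeeping in the computation of the previous paragraph: pinning down precisely the object $\Phi(B)\in\CA_X$ attached to $\CU_X^\perp$, checking that the mutations involved introduce no spurious endomorphisms, and assembling the cohomological vanishings on $X$ into the single equality $\Hom_{\CA_X}(\Phi(B),\Phi(B))=\Bbbk$.
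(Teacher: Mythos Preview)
Your strategy is the same as the paper's: transport the endomorphism computation to $\D^b(X)$ via the equivalence $\Phi\colon\D^b(\Gamma)\xrightarrow{\sim}\CA_X$ and reduce to the exceptionality of the Mukai bundle. Where you diverge is in the route to identifying the relevant object. You propose to go through the conic description, interpret $\Phi^{-1}(K_C)$ as elementary modifications of a fixed bundle $B$, and then pin down $\Phi(B)$ as a projection of $\CU_X^\perp$; as you yourself note, the bookkeeping here is nontrivial.

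The paper bypasses this entirely by quoting two ready-made facts from \cite{BrambillaFaenzi}: first, \cite[Proposition~3.10]{BrambillaFaenzi} gives directly $\CV^\vee\cong\Phi^*(\CU_X^\vee)$, so
\[
\Hom(\CV,\CV)\cong\Hom(\Phi^*(\CU_X^\vee),\Phi^*(\CU_X^\vee))\cong\Hom(\CU_X^\vee,\Phi\Phi^*(\CU_X^\vee))
\]
by adjunction; second, \cite[(3.14)]{BrambillaFaenzi} supplies the distinguished triangle
\[
\CU_X(1)[-2]\to\CU_X^\vee\to\Phi\Phi^*(\CU_X^\vee),
\]
and applying $\Hom(\CU_X^\vee,-)$ immediately gives $\Hom(\CU_X^\vee,\Phi\Phi^*(\CU_X^\vee))\cong\Hom(\CU_X^\vee,\CU_X^\vee)=\Bbbk$, since the flanking terms $\Ext^{-2}$ and $\Ext^{-1}$ between sheaves vanish. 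No mutations, no explicit description of the projection, no tracking of elementary modifications. So your plan is viable, but you are reconstructing by hand an identification that is already available in the literature; once you use it, the ``main obstacle'' you flagged disappears.
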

\begin{proof}
By~\cite[Proposition~3.10]{BrambillaFaenzi} we have
\begin{equation*}
\CV^\vee \cong \Phi^*(\CU_X^\vee),
\end{equation*}
where $\CU_X$ is the restriction of the tautological bundle from $\LGr(3,6)$ to $X$ (see also Remark~\xref{remark:factorization}), $\Phi \colon\D^b(\Gamma) \to \D^b(X)$ is the fully faithful functor
constructed in~\cite{kuznetsov2006hyperplane},
and $\Phi^*$ is its left adjoint functor
(see~\cite{BrambillaFaenzi} for details). Thus we have
\begin{equation*}
\Hom(\CV,\CV) \cong \Hom(\CV^\vee,\CV^\vee) \cong \Hom(\Phi^*(\CU_X^\vee),\Phi^*(\CU_X^\vee)) \cong \Hom(\CU_X^\vee,\Phi(\Phi^*(\CU_X^\vee))).
\end{equation*}
On the other hand, by~\cite[(3.14)]{BrambillaFaenzi} there is a distinguished triangle
\begin{equation*}
\CU_X(1)[-2] \to \CU_X^\vee \to \Phi(\Phi^*(\CU_X^\vee))\to \CU_X(1)[-1] .
\end{equation*}
Applying the functor $\Hom(\CU_X^\vee,-)$ to it we get an exact sequence
\begin{equation*}
\Ext^{-2}(\CU_X^\vee,\CU_X(1)) \to \Hom(\CU_X^\vee,\CU_X^\vee) \to \Hom(\CU_X^\vee,\Phi(\Phi^*(\CU_X^\vee))) \to \Ext^{-1}(\CU_X^\vee,\CU_X(1)).
\end{equation*}
Since both $\CU_X^\vee$ and $\CU_X(1)$ are pure sheaves, the $\Ext$ groups on the left and the right are zero, hence finally we have isomorphisms
\begin{equation*}
\Hom(\CV,\CV) \cong \Hom(\CU_X^\vee,\Phi(\Phi^*(\CU_X^\vee))) \cong \Hom(\CU_X^\vee,\CU_X^\vee).
\end{equation*}
It remains to notice that $\Hom(\CU_X^\vee,\CU_X^\vee)\cong\Bbbk$,
since the sheaf $\CU_X^\vee$ is simple (it is even exceptional), see Remark~\xref{remark:factorization}.
\end{proof}

\subsection{Conics on a Fano threefold of index 1 and genus 12}

Let $X$ be any smooth Fano threefold with $\rho(X) = 1$, $\iota(X) = 1$, and $\g(X) = 12$,
and let $Y$ be the smooth Fano threefold with $\rho(Y) = 1$, $\iota(Y) = 2$, and $\dd(Y) = 5$.
In this subsection we will show that~\mbox{$S(X) \cong \Sigma(Y)$}. In fact, this result is well known
(see \cite[Theorem~2.4]{Kollar2004b}, \cite[Proposition~III.1.6]{Iskovskikh-1980-Anticanonical}, \cite{Furushima1989a}),
but we will reprove it from the perspective of derived categories.

\begin{proposition}
\label{proposition-v22-v5}
There are isomorphisms $S(X) \cong \PP^2 \cong \Sigma(Y)$.
\end{proposition}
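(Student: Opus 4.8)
The plan is to identify both $S(X)$ and $\Sigma(Y)$ with the projective plane $\PP^2$ by passing through the common subcategory $\CA_X \cong \CB_Y$ guaranteed by Theorem~\xref{theorem-ax-by} (for $\g(X) = 12$, so $\dd(Y) = 5$). First I would recall from Proposition~\xref{hilb-lines-explicit}(iii) (equivalently from Theorem~\xref{theorem-mu-5} and the discussion in~\S\xref{subsection:v5}) that $\Sigma(Y) \cong \PP^2$ already, so strictly speaking only the isomorphism $S(X) \cong \PP^2$ and the compatibility with $\Sigma(Y)$ remain to be proved. But it is more instructive, and this is the approach I would take, to reprove $\Sigma(Y) \cong \PP^2$ at the same time using the derived-category description. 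The key observation is that by the lemma preceding this proposition, for a line $L \subset Y$ the ideal sheaf $I_L$ lies in $\CB_Y$, and by Lemma~\xref{icinax} for a conic $C \subset X$ the ideal sheaf $I_C$ lies in $\CA_X$. So I would show: (a) the assignment $[L] \mapsto I_L$ identifies $\Sigma(Y)$ with a connected component of a moduli space of objects in $\CB_Y$ with fixed numerical class; (b) likewise $[C] \mapsto I_C$ identifies $S(X)$ with a component of a moduli space of objects in $\CA_X$ with fixed numerical class; and (c) the equivalence $\CA_X \cong \CB_Y$ of Theorem~\xref{theorem-ax-by} matches up these numerical classes (using Lemma~\xref{aut-aut}, the equivalence acts trivially on numerical Grothendieck groups in the appropriate sense, and in any case for $\dd(Y)=5$ the category $\CB_Y$ is very small so its Grothendieck group is explicitly computable), hence induces an isomorphism of the corresponding moduli spaces, and therefore $S(X) \cong \Sigma(Y)$.

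Next I would compute the relevant moduli space directly. For $\dd(Y) = 5$ the category $\CB_Y$ is known (by~\cite{kuznetsov2009derived}) to be equivalent to the derived category of representations of the $3$-Kronecker quiver, i.e.\ $\CB_Y \cong \D^b(\Bbbk\text{-}Q)$ with $Q$ having two vertices and three arrows; under this equivalence the ideal sheaves $I_L$ correspond to the simple module at one vertex (or to a shift of a line in the two-dimensional space of such, appropriately framed). The moduli space of these objects is then $\PP(\Hom\text{-space}) \cong \PP^2$, which recovers $\Sigma(Y) \cong \PP^2$ and, via the equivalence, also gives $S(X) \cong \PP^2$ without needing a separate analysis of conics on $X$. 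Concretely, I would check that the Ext-computation $\dim\Ext^1(I_L, I_L) = 2$, $\Ext^{\ne 1}(I_L,I_L)$ (with $\Hom = \Bbbk$) shows the moduli space is a smooth surface, then use the Kronecker-quiver description to conclude it is $\PP^2$. The compatibility statement in~\S\xref{section-Lines-and-conics} — namely that the derived identification agrees with the classical one used elsewhere — I would verify by exhibiting the universal family explicitly, or simply cite that $\PP^2$ has a unique such structure.

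The main obstacle I expect is \emph{not} the bare statement $S(X) \cong \Sigma(Y) \cong \PP^2$ — each factor is classical (see the references cited: \cite{Kollar2004b} for $S(X) \cong \PP^2$, \cite{Iskovskikh-1980-Anticanonical}, \cite{Furushima1989a} for $\Sigma(Y)$) — but rather making the derived-category bridge fully rigorous: one must know that the assignment $[C] \mapsto I_C$ is not just a bijection on points but an isomorphism of schemes onto the relevant moduli component, which requires checking that the deformation theory of the conic $C$ in $X$ matches the deformation theory of $I_C$ in $\CA_X$ (i.e.\ $\Ext^1_X(I_C, I_C) \cong H^0(C, \CN_{C/X})$ and the obstruction spaces agree), and similarly for lines in $Y$. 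This is a standard but slightly delicate argument using the semiorthogonal decomposition and the vanishings established in Lemmas~\xref{lemma:e-exc} and~\xref{icinax}. Since for $\g(X) = 12$ the target is $\PP^2$, which has no nontrivial deformations and a unique scheme structure compatible with the universal families, I would actually shortcut this: it suffices to produce a morphism in one direction that is bijective on closed points and an isomorphism on tangent spaces (the latter via the Ext-computations), and then invoke that $\PP^2$ is reduced to conclude it is an isomorphism. I would present the proof in this streamlined form, referring to the general machinery of~\cite{kuznetsov2009derived} for the equivalence $\CA_X \cong \CB_Y$ and for the identification of $\CB_Y$ with the Kronecker category, and citing the classical references for the final identification with $\PP^2$.
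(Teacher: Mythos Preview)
Your approach is in the same spirit as the paper's --- both use the derived-category description and the $3$-Kronecker quiver --- but the paper's execution is more concrete and sidesteps exactly the obstacle you flag. Rather than working with abstract moduli spaces of objects in $\CA_X$ and $\CB_Y$ and verifying deformation-theoretic compatibility, the paper exhibits explicit full exceptional pairs on each side: $(\CU_{Y2},\CU_{Y3})$ (the tautological bundles of rank~$2$ and~$3$) generating $\CB_Y$, and $(\CU_{X3},\CU_{X4})$ (Mukai's bundles of rank~$3$ and~$4$ coming from the factorization $12 = 3\cdot 4$, see Remark~\xref{remark:factorization}) generating $\CA_X$. It then shows directly that every ideal sheaf $I_L$ (resp.\ $I_C$) fits into a short exact sequence $0 \to \CU_{Y2} \to \CU_{Y3} \to I_L \to 0$ (resp.\ $0 \to \CU_{X3} \to \CU_{X4} \to I_C \to 0$), and conversely that every nonzero morphism between the exceptional objects has such a cokernel; this gives the identification with $\PP(\Hom(\CU_{Y2},\CU_{Y3}))$ and $\PP(\Hom(\CU_{X3},\CU_{X4}))$ immediately, as scheme isomorphisms, without any separate tangent-space check. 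The converse direction for $X$ is the only nontrivial step and is handled by a double-dualization argument.

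One small inaccuracy in your proposal: the ideal sheaves do \emph{not} correspond to simple modules at a vertex of the Kronecker quiver. They correspond to representations of dimension vector $(1,1)$ --- equivalently, to points of $\PP(\Hom)$ between the two exceptional objects --- which is precisely why the moduli space is $\PP^2$. Your plan to transport the moduli identification across the equivalence $\CA_X \cong \CB_Y$ would also require knowing that this particular equivalence sends the numerical class of $I_C$ to that of $I_L$; the paper instead computes each side independently against its own exceptional pair and only remarks afterward that the equivalence matches $\CU_{X3} \mapsto \CU_{Y2}$, $\CU_{X4} \mapsto \CU_{Y3}$.
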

\begin{proof}
Let us first consider the threefold $Y$. Recall that
$Y$ is a linear section of the Grassmannian $\Gr(2,5) \cong \Gr(3,5)$,
see Table~\xref{table:Fanos-i-ge-2}.
Let $\CU_{2Y}$ and $\CU_{3Y}$ be the corresponding tautological bundles of rank 2 and~3 respectively.
By~\cite{orlov1991exceptional} there is a semiorthogonal decomposition
\begin{equation*}
\D^b(Y) = \langle \CU_{Y2},\CU_{Y3}, \CO_Y, \CO_Y(1) \rangle.
\end{equation*}
Moreover, $\Hom(\CU_{Y2},\CU_{Y3})$ is a three-dimensional vector space, and so
\begin{equation*}
\CB_Y = \langle \CU_{Y2},\CU_{Y3} \rangle \cong \D^b(\SQ_3).
\end{equation*}
where $\SQ_3$ is the Kronecker quiver
$$
\xymatrix{
\bullet\ar@{->}[r]
\ar@<1ex>@{->}[r]\ar@<-1ex>@{->}[r]
& \bullet
}
$$
with 3 arrows.
As it was explained in~\cite{kuznetsov2012instanton},
this equivalence gives an isomorphism
\begin{equation*}
\Sigma(Y) \cong \PP\big(\Hom(\CU_{Y2},\CU_{Y3})\big) \cong \PP^2.
\end{equation*}
Indeed, the ideal sheaf of every line can be written as the cokernel of a unique map~\mbox{$\CU_{Y2} \to \CU_{Y3}$},
and each such map has the ideal sheaf of a line as the cokernel.

Now consider a threefold $X$ of Picard rank $1$, index $1$, and genus $12$. Besides the vector bundle $\CU_X$ of rank 2,
there are stable vector bundles~$\CU_{X3}$ and~$\CU_{X4}$ on $X$ of ranks~3 and~4 respectively, see Remark~\ref{remark:factorization}.
These bundles are also exceptional and in~\cite{kuznetsov1996exceptional} it was proved that together with the rank 2 bundle $\CU_X^\vee$ they form a semiorthogonal decomposition
\begin{equation*}
\D^b(X) = \langle \CU_{X3},\CU_{X4},\CO_X,\CU_X^\vee \rangle.
\end{equation*}
Moreover, $\Hom(\CU_{X3},\CU_{X4})$ is again a three-dimensional vector space, and so
\begin{equation}\label{ax-genus12}
\CA_X = \langle \CU_{X3},\CU_{X4} \rangle \cong \D^b(\SQ_3).
\end{equation}
Now one can use the same arguments as in the case of $Y$
to show that $S(X) \cong \PP^2$. For completeness we sketch the arguments here.

First, the argument of Remark~\xref{conics-other-cases} shows that $\Hom^\udot(\CU_{X4},I_C) = \Bbbk$,
hence the decomposition of the ideal sheaf $I_C$ with respect to the exceptional pair
$(\CU_{X3},\CU_{X4})$ in $\CA_X$ takes form
of a short exact sequence
\begin{equation*}
0 \to \CU_{X3} \to \CU_{X4} \to I_C \to 0.
\end{equation*}
Conversely, by stability of $\CU_{X3}$ and $\CU_{X4}$ any morphism $\CU_{X3} \to \CU_{X4}$ is injective
and its cokernel is an ideal sheaf of a conic. Indeed, if $F$ denotes the cokernel then the dual sequence
\begin{equation}\label{eq:dual-sequence}
0 \to \CHom(F,\CO_X) \to \CU_{X4}^\vee \to \CU_{X3}^\vee \to \CExt^1(F,\CO_X) \to 0
\end{equation}
shows that $\CHom(F,\CO_X)$ is a rank 1 reflexive sheaf with $c_1 = 0$, hence is a line bundle isomorphic to $\CO_X$.
Thus $F' = \CExt^1(F,\CO_X)$ is a torsion sheaf with $c_1(F') = 0$. Dualizing
the sequence~\eqref{eq:dual-sequence} again one finds
an exact sequence
\begin{equation*}
0 \to \CU_{X3} \to \CU_{X4} \to \CO_X \to \CExt^2(F',\CO_X) \to 0.
\end{equation*}
The last sheaf thus is the structure sheaf of a subscheme $Z \subset X$, and the Hilbert polynomial
computation shows that
$p_Z(t) = 1 + 2t$,
hence $Z$ is a conic. Altogether, we deduce that
the equivalence~\eqref{ax-genus12} induces an isomorphism
\begin{equation*}
S(X) \cong \PP\big(\Hom(\CU_{X3},\CU_{X4})\big) \cong \PP^2.
\end{equation*}
The combination of the obtained isomorphisms proves the claim.
\end{proof}

The composition of equivalences $\CA_X \cong \D^b(\SQ_3) \cong \CB_Y$ mentioned in the proof takes the
bundles~$\CU_{X3}$ and~$\CU_{X4}$ to the bundles $\CU_{Y2}$ and $\CU_{Y3}$ respectively.
Therefore, it takes ideal sheaves of conics on $X$ to ideal sheaves of lines on $Y$. Thus the isomorphism $S(X) \cong \Sigma(Y)$ constructed in the proof is carried out by an equivalence $\CA_X \cong \CB_Y$.

\subsection{Conics on a Fano threefold of index 1 and genus 10}

In this subsection we prove the following

\begin{proposition}\label{proposition-v18-v4}
For every smooth Fano threefold $X$ with $\rho(X) = 1$, $\iota(X) = 1$, and~\mbox{$\g(X) = 10$}
there is a Fano threefold $Y$ with $\rho(Y) = 1$, $\iota(Y) = 2$, and $\dd(Y) = 4$ such
that~\mbox{$S(X) \cong \Sigma(Y)$}.
\end{proposition}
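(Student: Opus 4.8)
The plan is to present both $S(X)$ and $\Sigma(Y)$ as moduli spaces of objects in one and the same triangulated category, which for $\dd(Y)=4$ turns out to be the derived category of a curve of genus~$2$. First I would invoke Theorem~\ref{theorem-ax-by}: it produces a smooth Fano threefold $Y$ with $\rho(Y)=1$, $\iota(Y)=2$, $\dd(Y)=4$, together with an equivalence $\CA_X\cong\CB_Y$. Since $\dd(Y)=4$, the threefold $Y$ is a smooth complete intersection of two quadrics in $\PP^5$, and by the classical description of the derived category of such a threefold (Bondal--Orlov) the component $\CB_Y$ is equivalent, by a Fourier--Mukai functor, to $\D^b(\Gamma)$, where $\Gamma=B(Y)$ is the genus~$2$ curve associated with the pencil of quadrics cutting out $Y$ (see Remark~\ref{remark:hyperelliptic-curve}). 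Composing, one obtains a Fourier--Mukai equivalence $\Phi\colon\CA_X\xrightarrow{\sim}\D^b(\Gamma)$, which by construction works in families over an arbitrary base.

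Next I would realize $S(X)$ as a component of the moduli space of objects of $\CA_X$. By Lemma~\ref{icinax} the ideal sheaf $I_C$ of any conic $C\subset X$ lies in $\CA_X$, and from the sequence $0\to I_C\to\CO_X\to\CO_C\to0$ together with Serre duality one checks that $\Hom_X(I_C,\CO_X)=\Bbbk$ and $\Hom_X(I_C,I_C)=\Bbbk$; moreover the class of $I_C$ in the numerical Grothendieck group of $\CA_X$ does not depend on $C$ (all three types of conics have Hilbert polynomial $1+2t$), and a short computation shows that it corresponds under $\Phi$ to a class of rank~$1$ and some fixed degree $d'$. Since $\D^b(\Gamma)$ is hereditary, a simple object of it is a shift of a sheaf, and a simple torsion-free rank-$1$ sheaf on a smooth curve is a line bundle; hence $\Phi(I_C)\cong\CL_C[k]$ with $\CL_C\in\Pic^{d'}(\Gamma)$ and $k$ fixed. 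Applying $\Phi$ to the universal conic over $S(X)$ then gives a morphism $\phi\colon S(X)\to\Pic^{d'}(\Gamma)$. One shows $\phi$ is bijective on geometric points (a conic is recovered from $I_C$, and $I_C$ from $\Phi(I_C)$) and unramified: its differential at $[C]$ is the connecting map
\[
H^0(\CN_{C/X})=\Hom_X(I_C,\CO_C)\hookrightarrow\Ext^1_X(I_C,I_C)=\Ext^1_{\CA_X}(I_C,I_C)=\Ext^1_\Gamma(\CL_C,\CL_C),
\]
which is injective because the restriction $\Hom_X(I_C,I_C)\to\Hom_X(I_C,\CO_X)$ is an isomorphism of one-dimensional spaces. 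Thus $\phi$ is a proper unramified monomorphism, i.e.\ a closed immersion; as $\Pic^{d'}(\Gamma)$ is an irreducible surface and $\dim S(X)\ge2$ by Corollary~\ref{proposition:dimensions-hilbert-schemes}(iii), $\phi$ must be an isomorphism. In particular this shows along the way that $S(X)$ is a smooth irreducible abelian surface (and that $X$ has no $2$-special conics).

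The same argument, applied to lines $L\subset Y$ and ideal sheaves $I_L\in\CB_Y$, identifies $\Sigma(Y)$ with a component $\Pic^{d}(\Gamma)$; alternatively one simply quotes the Narasimhan--Ramanan and Desale--Ramanan identification $\Sigma(Y)\cong\Jac(\Gamma)$ recalled in Remark~\ref{remark:hyperelliptic-curve} (equivalently Proposition~\ref{hilb-lines-explicit}(ii)). Since any two components $\Pic^{e}(\Gamma)$ are isomorphic as varieties via translation by a fixed line bundle, one concludes $S(X)\cong\Pic^{d'}(\Gamma)\cong\Pic^{d}(\Gamma)\cong\Sigma(Y)$. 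If a canonical isomorphism is wanted, one should in addition verify that $\Phi$ matches the class of $I_C$ with that of $I_L$, i.e.\ that $d'=d$; this is a computation in the (rank~$2$) numerical Grothendieck groups of $\CA_X$ and $\CB_Y$.

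The hard part is that the identification $S(X)\cong\Pic^{d'}(\Gamma)$ must be carried out for \emph{every} smooth $X$ of genus~$10$ — including those with special lines, reducible conics, or non-reduced conics — rather than only for a general $X$ as in~\cite{iliev2007manivel}. What makes the uniform treatment go through is the combination of three facts: the constancy of the class $[I_C]$ (the three conic types share the Hilbert polynomial $1+2t$); the hereditariness of $\D^b(\Gamma)$, which forces the simple object $\Phi(I_C)$ to be a single shifted line bundle irrespective of the geometry of $C$; and the insensitivity of the tangent-space comparison above to degenerations of $C$, after which properness of $S(X)$ upgrades the bijection to an isomorphism of schemes. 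A secondary technical point I expect to have to settle carefully is the verification that the numerical class in question is indeed the rank-$1$ class, and not the rank-$0$, degree-$2$ class that would instead produce $\Sym^2$ of a curve (as happens in the genus-$7$ case).
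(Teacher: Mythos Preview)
Your outline is correct and follows the same overall strategy as the paper: identify both $S(X)$ and $\Sigma(Y)$ with the Jacobian of a genus~$2$ curve via the equivalence $\CA_X\cong\CB_Y\cong\D^b(\Gamma)$. The execution, however, differs in the key technical step. The paper first constructs the curve $Z$ directly from the $\mathrm{G}_2$-geometry of $X$ (as the double cover of the pencil of hyperplane sections branched at the six singular ones) and uses the \emph{explicit} Fourier--Mukai kernel $\CE$ of~\cite{kuznetsov2006hyperplane}: the proof that $\Phi^*(I_C[-1])$ is a line bundle is the fibrewise computation $\Hom^\udot(\Phi^*(I_C[-1]),\CO_z)\cong H^\udot(C,\CE_z\vert_C)$, and the needed vanishing $H^1(C,\CE_z\vert_C)=0$ is reduced via the resolution~\eqref{ezseq} to $H^0(C,\CU_X\vert_C)=0$, i.e.\ to Lemma~\ref{icinax}. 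Your argument replaces this by the abstract combination ``hereditary target $+$ simple source $+$ numerical class''. The ``secondary technical point'' you flag is easily dispatched: from $[I_C]=[\CO_X]-[\CO_C]$ one computes $\chi_X(I_C,I_C)=1-1-1+\chi(\CO_C,\CO_C)=-1$ (using $\chi(\CO_C,\CO_C)=\chi(\CO_C)-\chi(\CN_{C/X})+\chi(\det\CN_{C/X})=1-2+1=0$), and since on $\D^b(\Gamma)$ the self-pairing is $\chi((r,d),(r,d))=-r^2$, this forces rank $\pm1$ and excludes the rank-$0$ alternative you worried about. A second minor difference is that the paper proves the map $S(X)\to\Pic^0(Z)$ is \emph{\'etale} (via Lemma~\ref{hilb-mod-iso}, which upgrades the injection $\Hom(I_C,\CO_C)\hookrightarrow\Ext^1(I_C,I_C)$ to an isomorphism) and then surjective, whereas you only use unramifiedness and conclude by the closed-immersion-plus-dimension trick; both are valid, yours is a touch more economical, while the paper's yields smoothness of $S(X)$ before the isomorphism is established. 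Finally, the paper builds $Y$ \emph{from} $Z$ (choosing a pencil of quadrics with the same six branch points), rather than invoking Theorem~\ref{theorem-ax-by} as a black box; of course Theorem~\ref{theorem-ax-by} in genus $10$ is proved by exactly this route, so the content is the same.
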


The proof of Proposition~\xref{proposition-v18-v4}
takes the rest of the subsection. We explain the construction of $Y$ from $X$ in the course of proof.

Recall that $X$
is a codimension~2 linear section
of a homogeneous space of the simple algebraic group $\mathrm{G}_2$,
see Table~\xref{table:Fanos-i-1}. The pencil of hyperplanes passing through~$X$
contains 6 singular elements (because the projectively dual variety is a sextic hypersurface),
so one can consider the double cover $Z \to \PP^1$ branched in the corresponding 6 points.
Thus, $Z$ is a smooth curve of genus~2.
We will show that~\mbox{$S(X) \cong \Pic^0(Z)$}.

It was proved in~\cite[\S6.4 and~\S8]{kuznetsov2006hyperplane} that there is a semiorthogonal decomposition
\begin{equation*}
\D^b(X) = \langle \D^b(Z),\CO_X,\CU_X^\vee \rangle.
\end{equation*}
In other words, we have $\CA_X \cong \D^b(Z)$.
Moreover, an explicit fully faithful Fourier--Mukai functor
\begin{equation*}
\Phi = \Phi_\CE\colon\D^b(Z) \to \D^b(X)
\end{equation*}
giving this equivalence was constructed. Its kernel $\CE$ was shown
to be a vector bundle on~\mbox{$X\times Z$} fitting into an exact sequence
\begin{equation*}
0 \to \CE \to \CO_X \boxtimes \CF_6 \to \CU_X^\vee \boxtimes \CF_3 \to \CE(H_X + H_Z) \to 0
\end{equation*}
for certain vector bundles $\CF_3$ and $\CF_6$ of ranks 3 and 6 on $Z$;
here $H_X$ is as usual the ample generator of $\Pic(X)$
and $H_Z$ is the canonical class of $Z$.
In particular, for each point~\mbox{$z \in Z$} there is an exact sequence
\begin{equation}\label{ezseq}
0 \to \CE_z \to \CO_X^{\oplus 6} \to {\CU_X^\vee}^{\oplus 3} \to \CE_z(H_X) \to 0.
\end{equation}
It follows that $r(\CE_z) = 3$ and $c_1(\CE_z) = -H_X$.

\begin{remark}\label{z-is-moduli}
In fact, one can check that all bundles $\CE_z$ are stable and that the family $\CE$ identifies
the curve $Z$ with the moduli space $\CM_X(3;-H_X,9L_X,-2P_X)$ of stable sheaves of rank 3 on $X$
with $c_1 = -H_X$,
$c_2 = 9L_X$ and $c_3 = -2P_X$.
Note also that the bundle $\CE$ is well defined only modulo a twist by a line bundle on $Z$.
We will discuss a normalization of~$\CE$ later.
\end{remark}

We proved in Lemma~\xref{icinax} that for each conic $C$ on $X$
the ideal sheaf $I_C$ is an object of the subcategory
$\CA_X = \Phi(\D^b(Z)) \subset \D^b(X)$,
hence there is an object of $\D^b(Z)$ which maps to $I_C$ under $\Phi$.
This object can be reconstructed by applying to $I_C$ the left adjoint functor~$\Phi^*$ of $\Phi$.
We compute the result in the next lemma.
For convenience we use shifts~\mbox{$I_C[-1]$} of the ideal sheaves.

\begin{lemma}\label{phis-ic}
The left adjoint functor $\Phi^*$ of $\Phi$ takes the shift $I_C[-1]$ of an ideal sheaf of a conic to a line bundle on $Z$.
\end{lemma}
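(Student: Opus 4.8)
The plan is to compute $\Phi^*(I_C[-1])$ directly, using the semiorthogonal decomposition $\D^b(X) = \langle \D^b(Z), \CO_X, \CU_X^\vee \rangle$ and the explicit description of the kernel $\CE$. Recall that since $\Phi = \Phi_\CE$ is the Fourier--Mukai functor with kernel $\CE$ on $X \times Z$, its left adjoint is $\Phi^* = \Phi_{\CE^\vee \otimes p_X^*\omega_X[3]}$ up to the appropriate twist; in any case, for a fixed point $z \in Z$ one has $\Hom^\udot_{\D^b(Z)}(\Phi^*(F), \Bbbk_z) \cong \Hom^\udot_{\D^b(X)}(F, \CE_z)$, where $\CE_z = \CE\vert_{X \times \{z\}}$ and $\Bbbk_z$ is the skyscraper. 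So to identify $\Phi^*(I_C[-1])$ as a complex on the curve $Z$, it suffices to compute the graded vector spaces $\Ext^\udot_X(I_C[-1], \CE_z)$ for every $z$, show that the answer is concentrated in a single cohomological degree and is one-dimensional (or, more precisely, that the cohomology sheaves of $\Phi^*(I_C[-1])$ are a line bundle placed in degree $0$), and then invoke the standard fact that an object of $\D^b(Z)$ with $\Hom^\udot(-, \Bbbk_z) = \Bbbk$ (in degree $0$) for all closed points $z$ is a line bundle.

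The key computation is therefore $\Ext^\udot_X(I_C, \CE_z)$. First I would use the exact sequence~\eqref{icseq}, namely $0 \to I_C \to \CO_X \to \CO_C \to 0$, which reduces the problem to computing $\Ext^\udot_X(\CO_X, \CE_z) = H^\udot(X, \CE_z)$ and $\Ext^\udot_X(\CO_C, \CE_z) = H^\udot(C, \CE_z\vert_C)$, together with the connecting maps. For the first group, I would feed the sequence~\eqref{ezseq}, $0 \to \CE_z \to \CO_X^{\oplus 6} \to {\CU_X^\vee}^{\oplus 3} \to \CE_z(H_X) \to 0$, through $H^\udot(X, -)$: since $H^\udot(X, \CO_X) = \Bbbk$ and $H^\udot(X, \CU_X^\vee) = \Bbbk$ (the bundle $\CU_X^\vee = E$ is exceptional, hence in particular $H^{>0}$ vanishes and $H^0$ has the expected dimension by Theorem~\xref{theorem:mukai}), one can chase cohomology to pin down $H^\udot(X, \CE_z)$ and $H^\udot(X, \CE_z(H_X))$. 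For the restriction to the conic, $\CE_z\vert_C$, I would break~\eqref{ezseq} into two short exact sequences, restrict each to $C$ (which may require a small $\Tor$-vanishing or local-freeness argument, available because $C$ is a locally complete intersection by Lemma~\xref{lemma:lines-conics} and $\CE_z$ is a vector bundle), and then use $H^\udot(C, \CO_C) = \Bbbk$ together with $H^\udot(C, \CU_X^\vee\vert_C) = H^\udot(C, \CU_X\vert_C)^\vee$-type input — but more directly the vanishing $H^\udot(C, \CU_X\vert_C) = 0$ proved in Lemma~\xref{icinax}, suitably dualized or reused, controls the relevant terms. Assembling these, the long exact sequences collapse and one reads off that $\Ext^\udot_X(I_C[-1], \CE_z)$ is one-dimensional, concentrated in degree $0$.

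The main obstacle, I expect, is bookkeeping the degrees and the connecting homomorphisms correctly: the object $\CE_z$ is a rank-$3$ bundle with $c_1 = -H_X$, so several of the auxiliary cohomology groups $H^\udot(X, \CU_X^\vee)$, $H^\udot(X, \CE_z(H_X))$, $H^\udot(C, \CU_X^\vee\vert_C)$ are nonzero, and one must verify that the maps between them in the spectral sequences (or iterated long exact sequences) are isomorphisms in exactly the right spots so that only a single one-dimensional contribution survives. The genuinely delicate point is ensuring that $\Phi^*(I_C[-1])$ is a genuine sheaf (a line bundle) and not merely an object with one-dimensional stalk-cohomology in a shifted degree; this is where one uses that $\Phi$ is fully faithful, that $I_C[-1] \in \CA_X = \Phi(\D^b(Z))$ by Lemma~\xref{icinax}, and that a complex $G \in \D^b(Z)$ over a smooth curve with $\Hom^i(G, \Bbbk_z)$ vanishing for $i \neq 0$ and one-dimensional for $i = 0$, for all $z$, is necessarily a line bundle (the first condition forces $G$ to be a sheaf, the second forces it to be locally free of rank $1$). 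I would also note at the end that the construction is functorial, so it will later glue into a morphism $S(X) \to \Pic(Z)$; but that refinement belongs to the subsequent lemmas, not to this one.
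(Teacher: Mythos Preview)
Your overall strategy matches the paper's: use adjunction to reduce to $\Hom^\udot(I_C[-1],\CE_z)$, feed in the ideal-sheaf sequence~\eqref{icseq}, land on $H^\udot(C,\CE_z\vert_C)$, and kill the top cohomology using the vanishing of Lemma~\xref{icinax}. Two places where your plan, as written, would not close and where the paper takes a shorter route:

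First, you propose to compute $H^\udot(X,\CE_z)$ by chasing the four-term sequence~\eqref{ezseq}. That sequence only relates $H^\udot(X,\CE_z)$ to $H^\udot(X,\CE_z(H_X))$ and determines neither on its own. The paper instead notes that $\CE_z = \Phi(\CO_z)$ lies in $\CA_X \subset \CO_X^\perp$, so $H^\udot(X,\CE_z)=0$ is immediate; the triangle from~\eqref{icseq} then gives $\Hom^\udot(I_C[-1],\CE_z)\cong \Hom^\udot(\CO_C[-2],\CE_z)$ with no further work. (Your identification $\Ext^\udot_X(\CO_C,\CE_z)=H^\udot(C,\CE_z\vert_C)$ is also off by the codimension shift; Grothendieck duality with $\omega_{C/X}\cong\CO_C$ gives $\Hom^\udot(\CO_C[-2],\CE_z)\cong H^\udot(C,\CE_z\vert_C)$, which is exactly why the $[-1]$ on $I_C$ places the answer in degree $0$.)

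Second, to show $H^1(C,\CE_z\vert_C)=0$ you propose to restrict~\eqref{ezseq} to $C$ and chase. That controls $H^1(C,\CE_z(H_X)\vert_C)$, not $H^1(C,\CE_z\vert_C)$. The paper instead applies Serre duality on $C$, identifying $H^1(C,\CE_z\vert_C)^\vee$ with $H^0(C,\CE_z^\vee(-H_X)\vert_C)$, and then uses the \emph{dual} of~\eqref{ezseq} to embed this into $H^0(C,\CU_X\vert_C^{\oplus 3})$, which vanishes by~\eqref{eq:huxc}. With these two adjustments your argument becomes exactly the paper's.
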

\begin{proof}
Let $z \in Z$ be an arbitrary point. Then by adjunction one has
\begin{multline*}
\Hom^\udot(\Phi^*(I_C[-1]),\CO_z) =
\Hom^\udot(I_C[-1],\Phi(\CO_z)) = \\ =
\Hom^\udot(I_C[-1],\CE_z) =
\Hom^\udot(\CO_C[-2],\CE_z) =
H^\udot(C,\CE_z\vert_C).
\end{multline*}
The third equality above follows from the fact that $\CE_z \in \CO_X^\perp$ and from exact sequence~\eqref{icseq},
and the fourth equality follows from the Grothendieck duality because~\mbox{$\omega_{C/X} = \CO_C$}.
Since $\CE_z$ is a vector bundle and $C$ is a curve, the latter graded vector space a priori lives only in degrees $0$ and $1$
and its Euler characteristic is
\begin{equation*}
\chi(\CE_z\vert_C)=r(\CE_z) + c_1(\CE_z)\cdot C = 3 - 2 = 1.
\end{equation*}
So, if we show that $H^1(C,\CE_z\vert_C) = 0$
it would follow that
\begin{equation*}
\Hom^\udot(\Phi^*(I_C[-1]),\CO_z) = \Bbbk
\end{equation*}
for any point $z \in Z$ and hence $\Phi^*(I_C[-1])$ is a line bundle.

For the vanishing we note that by Serre duality we have
\begin{equation*}
H^1(C,\CE_z\vert_C)^\vee = H^0(C,\CE^\vee_z(-H_X)\vert_C)
\end{equation*}
and by the dual of~\eqref{ezseq} the latter space embeds into $H^0(C,\CU_X\vert_C^{\oplus 3})$ which is zero by~\eqref{eq:huxc}.
\end{proof}

If we twist $\CE$ with the pullback of a line bundle from $Z$, the functor $\Phi$ gets composed with the functor of tensor product by this line bundle,
and the adjoint $\Phi^*$ gets composed with the functor of tensor product by the dual line bundle.
Consequently, choosing this line bundle appropriately, we can ensure that the image of the shifted ideal sheaf of a chosen conic is the trivial line bundle. So,
we choose one conic $C_0$ on $X$ and normalize the bundle $\CE$ and the functor $\Phi = \Phi_\CE$ by requiring that
\begin{equation*}
\Phi^*(I_{C_0}[-1]) = \CO_Z,
\end{equation*}
or equivalently
\begin{equation*}
\Phi(\CO_Z) = I_{C_0}[-1].
\end{equation*}

\begin{proposition}
\label{proposition:S-Pic-iso}
The normalized functor
$$\Phi\colon\D^b(Z) \to \D^b(X)$$
gives an isomorphism $\Pic^0(Z) \cong S(X)$.
\end{proposition}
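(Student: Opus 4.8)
The plan is to promote the pointwise assignment $[C]\mapsto\Phi^*(I_C[-1])$ to a morphism $\phi\colon S(X)\to\Pic^0(Z)$, and then to prove that $\phi$ is bijective on closed points with everywhere injective differential; from this I will deduce that $S(X)$ is a smooth projective surface and that $\phi$ is a finite étale morphism of degree $1$, hence an isomorphism. The two inputs that do all the geometric work are Lemma~\ref{icinax} (so that $I_C[-1]\in\CA_X$) and Lemma~\ref{phis-ic} (so that $\Phi^*(I_C[-1])$ is a line bundle).

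To construct $\phi$, let $\CC\subset S(X)\times X$ be the universal conic with ideal sheaf $I_{\CC}\subset\CO_{S(X)\times X}$, and apply the left adjoint of the relative Fourier--Mukai functor $\Phi_{\CE}$ (with kernel the pullback of $\CE$ to $S(X)\times X\times Z$) to $I_{\CC}[-1]$. By Lemma~\ref{phis-ic} and cohomology and base change, the result is a line bundle $\CM$ on $S(X)\times Z$, flat over $S(X)$, whose restriction to $\{[C]\}\times Z$ is $\Phi^*(I_C[-1])$. To see that $\CM$ is a family of degree-zero line bundles, note that since $I_C[-1]\in\CA_X$ and $\Phi$ is fully faithful, $\Phi(\Phi^*(I_C[-1]))\cong I_C[-1]$, so
\[
\chi\bigl(\Phi^*(I_C[-1])\bigr)=\chi_X\bigl(\Phi(\CO_Z),I_C[-1]\bigr)=\chi_X(I_{C_0},I_C),
\]
which by Hirzebruch--Riemann--Roch depends only on the Chern characters of $I_{C_0}$ and $I_C$; as $\mathrm{CH}_1(X)=\Z\cdot L_X$ and $\chi(\CO_C)=1$ for every conic, these Chern characters are the same for all conics, so the Euler characteristic above is constant, equal to $\chi(\CO_Z)$ by the normalization $\Phi^*(I_{C_0}[-1])=\CO_Z$. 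Hence $\deg\Phi^*(I_C[-1])=0$, and $\CM$ defines a morphism $\phi\colon S(X)\to\Pic^0(Z)$.

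Injectivity of $\phi$ on closed points is immediate: if $\Phi^*(I_{C_1}[-1])\cong\Phi^*(I_{C_2}[-1])$, applying $\Phi$ and using $I_{C_i}[-1]\in\CA_X$ gives $I_{C_1}\cong I_{C_2}$, hence $C_1=C_2$. For the tangent spaces, recall $T_{[C]}S(X)=\Hom(I_C,\CO_C)$; applying $\Hom(I_C,-)$ to $0\to I_C\to\CO_X\to\CO_C\to0$ and using $\Hom(I_C,I_C)\xrightarrow{\ \sim\ }\Hom(I_C,\CO_X)=\Bbbk$ (valid for an arbitrary conic, since $I_C^{\vee\vee}=\CO_X$) yields an injection $T_{[C]}S(X)\hookrightarrow\Ext^1_X(I_C,I_C)$. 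Since $\CA_X$ is admissible and equivalent to $\D^b(Z)$ via $\Phi$, and $I_C[-1]\in\CA_X$, we have $\Ext^1_X(I_C,I_C)\cong\Ext^1_Z\bigl(\Phi^*(I_C[-1]),\Phi^*(I_C[-1])\bigr)=H^1(Z,\CO_Z)$, a $2$-dimensional space. Therefore $\dim T_{[C]}S(X)\le2$ for every conic, while $\dim_{[C]}S(X)\ge2$ by Corollary~\ref{proposition:dimensions-hilbert-schemes}; so $S(X)$ is a smooth surface and the inclusion above is an equality. By functoriality of deformation theory under the relative equivalence, the differential of $\phi$ at $[C]$ is exactly the resulting isomorphism $T_{[C]}S(X)=\Ext^1_X(I_C,I_C)\cong H^1(Z,\CO_Z)=T_{\phi([C])}\Pic^0(Z)$, so $\phi$ is unramified.

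Finally, $S(X)$ is the full Hilbert scheme $\Hilb^{1+2t}(X)$, which by Lemma~\ref{lemma:lines-conics} consists exactly of conics, hence is projective and nonempty; thus $\phi$ is proper, and being quasi-finite it is finite. As source and target are smooth of dimension $2$, $\phi$ is flat, hence finite étale, and a bijective finite étale morphism has degree $1$, so $\phi$ is an isomorphism $S(X)\cong\Pic^0(Z)$, as claimed. The step that needs genuine care is the identification of $d\phi$ with the displayed isomorphism, i.e. the compatibility of the relative Fourier--Mukai transform and of the adjoint $\Phi^*$ with the first-order deformation theory of the universal ideal sheaf; the remaining points are formal consequences of Lemmas~\ref{icinax} and~\ref{phis-ic} together with standard facts about admissible subcategories and Hilbert schemes.
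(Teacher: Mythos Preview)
Your argument is correct and follows the same overall strategy as the paper: define $\phi$ via $C\mapsto\Phi^*(I_C[-1])$, use full faithfulness of $\Phi$ on $\CA_X$ to control the differential, and conclude $\phi$ is an isomorphism. There are two genuine differences worth noting.

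First, to identify the tangent space, the paper invokes Lemma~\ref{hilb-mod-iso}, which says that $\Hilb^p(X)\cong\CM_X(1;0,-p)$ and hence $\Hom(I_C,\CO_C)\cong\Ext^1(I_C,I_C)$ directly. You instead obtain only the injection $\Hom(I_C,\CO_C)\hookrightarrow\Ext^1(I_C,I_C)$ from the long exact sequence and then use the dimension bound of Corollary~\ref{proposition:dimensions-hilbert-schemes} to force equality; this has the pleasant side effect of proving smoothness of $S(X)$ inside the argument rather than as a consequence, and avoids the digression on moduli of rank~$1$ sheaves.

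Second, the paper finishes by observing that \'etale plus proper gives surjectivity, and then writes down the inverse morphism $\CL\mapsto\Phi(\CL)[1]$ explicitly (which is legitimate because surjectivity guarantees $\Phi(\CL)[1]$ is always an ideal sheaf of a conic). You finish by arguing finite \'etale of degree~$1$ from injectivity on closed points. This is fine, but you should say explicitly that the image of $\phi$, being open and closed in the connected variety $\Pic^0(Z)$, is all of $\Pic^0(Z)$, so that ``bijective'' is actually established before you invoke it. Both endings rest on the same point you flag yourself: that the composite $\Hom(I_C,\CO_C)\hookrightarrow\Ext^1(I_C,I_C)\cong H^1(Z,\CO_Z)$ really is $d\phi_{[C]}$; the paper is no more detailed here than you are.
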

\begin{proof}
By Lemma~\xref{phis-ic} we know that $\Phi^*(I_C[-1])$ is a line bundle on $Z$. By Grothendieck--Riemann--Roch theorem
the class of $\Phi^*(I_C[-1])$ in the numerical Grothendieck group is independent of $C$ and thus coincides with the class of $\Phi^*(I_{C_0}[-1]) = \CO_Z$,
hence all these line bundles have degree zero. So, we can define a map
\begin{equation*}
S(X) \to \Pic^0(Z),
\quad
C \mapsto \Phi^*(I_C[-1]).
\end{equation*}
The map is well defined for families of conics, hence is a regular morphism.
To show that it is an isomorphism
we will check that it is \'etale and surjective, and then will construct the inverse map.

To check that the map is \'etale we note that its differential at point $C$ can be written as the composition
\begin{equation*}
\Hom(I_C,\CO_C) \to \Ext^1(I_C,I_C)
\xrightarrow{\ \Phi^*\ } \Ext^1\big(\Phi^*(I_C[-1]),\Phi^*(I_C[-1])\big).
\end{equation*}
The first map here is the isomorphism of Lemma~\xref{hilb-mod-iso} (see below) and the second is an isomorphism
because $I_C \in \CA_X$ by Lemma~\ref{icinax}, and the functor $\Phi^*$ when restricted to~$\CA_X$ is quasiinverse to the equivalence
$\Phi\colon\D^b(Z) \to \CA_X$ and hence is full and faithful.

Since the map $\Phi^*\colon S(X) \to \Pic^0(Z)$
is \'etale and $S(X)$ is proper, it follows that $\Phi^*$ is surjective.
Hence for any line bundle $\CL$
of degree 0 on $Z$ there is a conic $C \subset X$ such that
$$\Phi^*(I_C[-1]) = \CL.$$
Since $\Phi^*$ on $\CA_X$ is quasiinverse to $\Phi$ it follows that $\Phi(\CL) = I_C[-1]$, hence
\begin{equation*}
\CL \mapsto \Phi(\CL)[1]
\end{equation*}
is a well-defined map $\Pic^0(Z) \to S(X)$. This map is inverse to the map considered before since
$\Phi$ and $\Phi^*$ are quasiinverse to each other.
\end{proof}

The isomorphism we used in the proof of Proposition~\xref{proposition:S-Pic-iso}
is a special case of the following general result.

\begin{lemma}\label{hilb-mod-iso}
Let $X$ be a smooth projective variety of dimension $n$ with $\Pic^0(X) = 0$ and
let $p(t)$ be an integer valued polynomial of degree at most $n - 2$.
Let $\Hilb^p(X)$ be the Hilbert scheme of subschemes in $X$ with Hilbert polynomial $p$, and
let $\CM_X(1;0,-p)$ be the moduli space of Gieseker semistable sheaves on $X$ of rank $1$ with $c_1 = 0$ and with Hilbert polynomial~\mbox{$p_{\CO_X} - p$}.
Then the canonical morphism
\begin{equation}\label{eq:Hilb-CM}
\Hilb^p(X) \to \CM_X(1;0,-p),
\quad
(Z \subset X) \mapsto I_Z,
\end{equation}
where $I_Z$ is the ideal sheaf of $Z$,
is an isomorphism.
In particular, for any subscheme~\mbox{$Z \subset X$} of codimension at least $2$ there is an isomorphism
\begin{equation}\label{eq:Hilb-CM-Hom-Ext}
\Hom(I_Z,\CO_Z) \cong \Ext^1(I_Z,I_Z).
\end{equation}
\end{lemma}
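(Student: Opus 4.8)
The plan is to identify $\Hilb^p(X)$ and $\CM_X(1;0,-p)$ with one and the same moduli functor (the second only after rigidification) and then to read the tangent-space statement off the resulting isomorphism. First I would settle the correspondence on points. Let $F$ be a Gieseker-semistable sheaf of rank~$1$ on $X$ with $c_1(F)=0$ and Hilbert polynomial $p_{\CO_X}-p$. Being torsion-free of rank~$1$, $F$ is in fact stable, and since $X$ is smooth its double dual $F^{\vee\vee}$ is a reflexive rank-$1$ sheaf, hence a line bundle $L$. From $c_1(L)=c_1(F)=0$ together with the injectivity of $c_1\colon\Pic(X)\to H^2(X,\Z)$ (which holds because $\Pic^0(X)=0$) we get $L\cong\CO_X$, so the canonical injection $F\hookrightarrow F^{\vee\vee}\cong\CO_X$ realizes $F$ as the ideal sheaf $I_Z$ of a uniquely determined closed subscheme $Z\subset X$. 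The subscheme $Z$ has codimension at least~$2$: otherwise $I_Z\subset\CO_X(-D)\subsetneq\CO_X$ for some effective divisor $D$, contradicting $I_Z^{\vee\vee}=\CO_X$. The exact sequence $0\to I_Z\to\CO_X\to\CO_Z\to 0$ shows $p_{\CO_Z}=p$, in particular $\deg p=\dim Z\le n-2$ (so the hypothesis on $\deg p$ just encodes $\codim Z\ge 2$), and $[Z]$ is a point of $\Hilb^p(X)$. Conversely, for any $Z$ parametrized by $\Hilb^p(X)$ the ideal sheaf $I_Z$ is stable of rank~$1$ with $c_1=0$ and Hilbert polynomial $p_{\CO_X}-p$. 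Thus $F\mapsto Z$ and $Z\mapsto I_Z$ are mutually inverse bijections on $\Bbbk$-points, and the same argument works over any algebraically closed extension field.

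Next I would promote this to an isomorphism of schemes by constructing the inverse of the morphism~\eqref{eq:Hilb-CM}. That morphism is itself functorial: for an $S$-flat family $\mathcal Z\subset S\times X$ with fiberwise Hilbert polynomial $p$, the ideal sheaf $I_{\mathcal Z}$ is $S$-flat (apply the long exact $\Tor$-sequence to $0\to I_{\mathcal Z}\to\CO_{S\times X}\to\CO_{\mathcal Z}\to 0$ and use $S$-flatness of $\CO_{\mathcal Z}$) and fiberwise it is stable of rank~$1$ with the right invariants. For the inverse, given an $S$-flat family $\mathcal F$ of such sheaves I would consider the relative dual $\mathcal F^{\vee}=\CHom_{S\times X}(\mathcal F,\CO_{S\times X})$. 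Since each fiber $\mathcal F_s$ is a torsion-free rank-$1$ sheaf on the smooth variety $X$, with $\CHom_X(\mathcal F_s,\CO_X)=\CO_X$, the sheaf $\mathcal F^{\vee}$ is $S$-flat with every fiber $\CO_X$ and its formation commutes with base change; hence it is a line bundle on $S\times X$ restricting to $\CO_X$ on every slice, and therefore (as $X$ is proper with $H^0(X,\CO_X)=\Bbbk$) of the form $p_S^*\mathcal N$ for a line bundle $\mathcal N$ on $S$, where $p_S\colon S\times X\to S$ is the projection. Twisting $\mathcal F$ by $p_S^*\mathcal N$ (a twist by a line bundle pulled back from the base, hence invisible to the moduli functor), the canonical map $\mathcal F\to\mathcal F^{\vee\vee}=\CO_{S\times X}$ becomes fiberwise injective with fiberwise image $I_{Z_s}$; hence it is injective with $S$-flat cokernel $\CO_{\mathcal Z}$ for an $S$-flat closed subscheme $\mathcal Z\subset S\times X$ of fiberwise Hilbert polynomial $p$. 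The assignment $\mathcal F\mapsto\mathcal Z$ is canonical and functorial, and inverse to~\eqref{eq:Hilb-CM}. Since $\CM_X(1;0,-p)$ is a coarse moduli space for stable sheaves and the construction is insensitive to twisting by line bundles from the base, it defines the desired morphism $\CM_X(1;0,-p)\to\Hilb^p(X)$ — carried out, if one prefers, over an étale cover of $\CM_X(1;0,-p)$ equipped with a universal sheaf and then glued, the twist ambiguity being exactly the one killed above.

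Finally, the ``in particular'' follows by comparing Zariski tangent spaces under this isomorphism. At $[Z]$ the tangent space to $\Hilb^p(X)$ is $\Hom_X(I_Z,\CO_Z)$ (\cite{FGA}, \cite[Theorem~I.2.8]{Kollar-1996-RC}), while at the corresponding point $[I_Z]$ the tangent space to $\CM_X(1;0,-p)$ is $\Ext^1_X(I_Z,I_Z)$, since $I_Z$ is a simple sheaf. Unwinding the identification shows that the induced isomorphism is exactly the connecting homomorphism in the long exact sequence obtained by applying $\Hom_X(I_Z,-)$ to $0\to I_Z\to\CO_X\to\CO_Z\to 0$, which yields~\eqref{eq:Hilb-CM-Hom-Ext}.

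The step I expect to be the main obstacle is the pair of base-change assertions used to build the inverse morphism: that $\mathcal F^{\vee}$ is a line bundle whose formation commutes with base change, and that the resulting map $\mathcal F\to\CO_{S\times X}$ has $S$-flat cokernel. Both are standard consequences of the torsion-freeness of the fibers of $\mathcal F$ on the smooth $X$ — handled via relative $\CExt$-sheaves and cohomology-and-base-change — but they carry the real content of the argument, everything else being bookkeeping with the double-dual construction and the $\Pic^0(X)=0$ hypothesis.
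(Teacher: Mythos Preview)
Your proposal is correct and follows essentially the same strategy as the paper: construct the inverse to~\eqref{eq:Hilb-CM} by passing from a flat family $\mathcal F$ to its reflexive hull, recognizing the latter (after a twist from the base) as $\CO_{S\times X}$, and reading off the ideal of a flat family of subschemes; then deduce~\eqref{eq:Hilb-CM-Hom-Ext} by comparing tangent spaces. The only difference is packaging: where you flag the base-change and flatness verifications for $\mathcal F^\vee$ and the cokernel as the main technical obstacle and propose to handle them via relative $\CExt$-sheaves, the paper dispatches both at once by citing \cite[Lemma~6.13]{Kollar-Projectivity-1990}, which asserts directly that for an $S$-flat family of torsion-free sheaves the reflexive hull is locally free, the natural map is an isomorphism in codimension~1, and the resulting subscheme is $S$-flat.
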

\begin{proof}
To construct the inverse morphism we take an arbitrary scheme $S$ and consider a sheaf $\CF$ on $X \times S$
which is Gieseker semistable with the prescribed Hilbert polynomial on fibers over $S$ and consider its reflexive hull $\CF^{\vee\vee}$.
By \cite[Lemma~6.13]{Kollar-Projectivity-1990} the sheaf~$\CF^{\vee\vee}$ is locally free and the canonical map $\CF \to \CF^{\vee\vee}$ is an isomorphism in codimension 1.
Therefore, one has
\begin{equation*}
\CF_s^{\vee\vee} \cong \det(\CF_s) \cong \CO_X,
\end{equation*}
for any point $s \in S$. Therefore, up to a twist by a line bundle on~$S$, we have an isomorphism
$\CF^{\vee\vee} \cong \CO_{X\times S}$, and the canonical map $\CF \to \CF^{\vee\vee}$ identifies $\CF$ with a sheaf of ideals of a subscheme in $X \times S$.
It also follows from the proof of \cite[Lemma~6.13]{Kollar-Projectivity-1990}
that this subscheme is flat over $S$, and thus defines a map $S \to \Hilb^p(X)$.
This map is clearly inverse to the map~\mbox{$Z \mapsto I_Z$}, hence the first claim.

The second claim follows from the first, just because the left and the right hand sides
of~\eqref{eq:Hilb-CM-Hom-Ext} are the tangent spaces to the Hilbert scheme
and to the moduli space of semistable sheaves, respectively,
and the required isomorphism is the differential of the isomorphism~\eqref{eq:Hilb-CM}.
\end{proof}

Now starting from $X$ (or rather from the curve $Z$) we are going to construct a threefold~$Y$ of index 2 and degree 4 such that $\Sigma(Y) \cong \Pic^0(Z)$.
This construction, inverse to the construction of Remark~\xref{remark:hyperelliptic-curve}, is well known.
Let $\lambda_0,\ldots,\lambda_5 \in \PP^1$ be the branch points of the double cover $Z \to \PP^1$.
Choose an embedding $\mathbb{A}^1 \subset \PP^1$ so that the latter six points are contained in $\mathbb{A}^1$,
and denote their coordinates in $\mathbb{A}^1$ also by $\lambda_i$.
Let~$Y$ be the intersection of two quadrics given in $\P^5$ with homogeneous coordinates $x_0,\ldots,x_5$
by equations
\begin{equation*}
x_0^2+\ldots+x_5^2 = \lambda_0 x_0^2+\ldots+\lambda_n x_5^2 = 0,
\end{equation*}
so that the curve $B(Y)$ defined in Remark~\xref{remark:hyperelliptic-curve} is isomorphic to $Z$.
By~\cite{bondal1995semiorthogonal,kuznetsov2008derived} we have a semiorthogonal decomposition
\begin{equation*}
\D^b(Y) = \langle \D^b(Z), \CO_Y, \CO_Y(1) \rangle,
\end{equation*}
i.\,e. an equivalence $\Psi\colon\D^b(Z) \to \CB_Y$. Similarly to the case of the variety $X$,
this equivalence induces an isomorphism
$\Sigma(Y) \cong \Pic^0(Z)$ (see~\cite{DesaleRamanan}
or~\cite[\S5.3]{kuznetsov2012instanton} for detailed explanation, and
\cite{Fonarev-Kuznetsov} for a generalization).
Combining the two constructed isomorphisms we deduce Proposition~\xref{proposition-v18-v4}.

\subsection{Conics on a Fano threefold of index 1 and genus 8}

Let $X$ be a smooth Fano threefold with $\rho(X) = 1$, $\iota(X) = 1$, and $\g(X) = 8$.
In this subsection we discuss the associated Fano threefold of index 2, which in this case is just a cubic threefold, and construct an isomorphism~\mbox{$S(X) \cong \Sigma(Y)$}.

\begin{proposition}\label{proposition-v14-v3}
For every smooth Fano threefold $X$ with $\rho(X) = 1$, $\iota(X) = 1$, and genus~\mbox{$\g(X) = 8$}
there is a smooth Fano threefold $Y$ with $\rho(Y) = 1$, $\iota(Y) = 2$,
and degree~\mbox{$\dd(Y) = 3$} such that
$S(X) \cong \Sigma(Y)$.
\end{proposition}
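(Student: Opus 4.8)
The plan is to mimic the derived-category argument used for genus 10 in Proposition~\ref{proposition-v18-v4} and for genus 12 in Proposition~\ref{proposition-v22-v5}, only now the associated threefold of index 2 is a cubic threefold $Y\subset\PP^4$. Recall that by Mukai's results reviewed above, a Fano threefold $X$ with $\rho(X)=1$, $\iota(X)=1$, $\g(X)=8$ is a transverse linear section of codimension 5 of $\Gr(2,6)$, and $\CU_X$ is the restriction of the tautological rank-2 subbundle. By Theorem~\ref{theorem-ax-by} there is a cubic threefold $Y$ with $\dd(Y)=3$ and an equivalence $\CA_X\cong\CB_Y$, where $\CA_X\subset\D^b(X)$ and $\CB_Y\subset\D^b(Y)$ are the subcategories defined in~\eqref{def-ax} and~\eqref{def-by}. (The construction of $Y$ from $X$ goes through the intermediate Jacobian / the Kuznetsov component $\CA_X$; I would recall it here, referring to~\cite{kuznetsov2009derived}.) The strategy is: identify $S(X)$ with a moduli space of objects in $\CA_X$, identify $\Sigma(Y)$ with a moduli space of objects in $\CB_Y$, and check that the equivalence $\CA_X\cong\CB_Y$ matches these two moduli descriptions.

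First I would record, exactly as in the other cases, the two halves of the picture that are already essentially available. On the $Y$ side: for a cubic threefold, $\D^b(Y)=\langle\CB_Y,\CO_Y,\CO_Y(1)\rangle$, and for every line $L\subset Y$ the ideal sheaf $I_L$ lies in $\CB_Y$ (proved in the lemma preceding Lemma~\ref{icinax}); conversely, the objects of $\CB_Y$ numerically equivalent to $I_L$ that are ideal sheaves of lines cut out precisely $\Sigma(Y)$, and the tangent-space identification of Lemma~\ref{hilb-mod-iso} (applicable since $\Pic^0(Y)=0$ and $L$ has codimension 2) shows this identification is an isomorphism of schemes; note $\Sigma(Y)$ is smooth by Proposition~\ref{hilb-lines-smooth}. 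On the $X$ side: by Lemma~\ref{icinax} the ideal sheaf $I_C$ of any conic $C\subset X$ lies in $\CA_X$; applying the quasi-inverse of the equivalence $\CA_X\cong\CB_Y$ we get, for each $C$, an object $\Phi^*(I_C)\in\CB_Y$ (up to shift). The key computation — the analogue of Lemma~\ref{phis-ic} — is to show that $\Phi^*(I_C)$ is (a shift of) the ideal sheaf of a line on $Y$, for every conic $C$, including reducible and non-reduced ones. This is where the numerical Grothendieck group and Riemann--Roch enter: the class of $\Phi^*(I_C)$ in $K_0^{\mathrm{num}}$ is independent of $C$ (it is locally constant in flat families, and $S(X)$ is connected by Proposition~\ref{proposition:conics}), so it suffices to identify it for one conveniently chosen $C$, and then to show that an object of $\CB_Y$ of that numerical class which arises this way is actually the ideal sheaf of a line. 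Here one uses that the only semistable sheaves on $Y$ (or objects of $\CB_Y$) with that Chern character are the twisted ideal sheaves of lines, together with the bound $\dim S(X)=2=\dim\Sigma(Y)$ from Theorem~\ref{theo:main1}.

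Granting this, I would assemble the isomorphism as in Proposition~\ref{proposition:S-Pic-iso}: after normalizing the equivalence by fixing one conic $C_0$ on $X$ and the corresponding line $L_0$ on $Y$, the association $C\mapsto\Phi^*(I_C)$ defines a morphism $S(X)\to\Sigma(Y)$; its differential at $[C]$ factors as $\Hom(I_C,\CO_C)\xrightarrow{\sim}\Ext^1(I_C,I_C)\xrightarrow{\Phi^*,\ \sim}\Ext^1(\Phi^*I_C,\Phi^*I_C)$, the first arrow by Lemma~\ref{hilb-mod-iso} and the second because $\Phi^*$ restricted to $\CA_X$ is an equivalence onto $\CB_Y$; hence the morphism is \'etale. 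Since $S(X)$ is proper it is surjective, and the quasi-inverse equivalence produces the inverse morphism $L'\mapsto\Phi(I_{L'})$, so the map is an isomorphism $S(X)\cong\Sigma(Y)$.

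The main obstacle I expect is the analogue of Lemma~\ref{phis-ic}: controlling $\Phi^*(I_C)$ for \emph{all} conics on $X$, not merely smooth ones. For a smooth conic $C\cong\PP^1$ the needed cohomology vanishings follow from $H^\udot(C,\CU_X|_C)=0$ (Lemma~\ref{icinax}), exactly as in the genus-10 case; but for a reducible conic $C=L_1\cup L_2$ or a non-reduced conic one must run the same computation using the exact sequences for normal bundles and structure sheaves from Appendix~\ref{appendix:new} (Lemmas~\ref{lemma:normal-reducible-conic-general} and~\ref{lemma:normal-nonreduced-conic-general}), together with $H^\udot(C,\CU_X|_C)=0$ which does hold for degenerate conics as well since $\CU_X$ has degree $-2$ on each component and $X$ contains no plane. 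One then checks that $\Phi^*(I_C)$ is still (a shift of) a line bundle on the relevant object, equivalently the ideal sheaf of a line on $Y$; the degenerate conics should correspond to the singular points of $\Sigma(Y)$ (the special lines, cf.\ Proposition~\ref{hilb-lines-smooth} and the surrounding remarks) — or, since $\Sigma(Y)$ is smooth here and $S(X)$ is also smooth by Proposition~\ref{proposition:conics}, simply to points where $I_C$ is a non-locally-free sheaf but still an honest object of $\CA_X$. Keeping track of stability and of the numerical class through all conic types, and verifying that the resulting morphism $S(X)\to\Sigma(Y)$ is defined on families (hence regular) rather than merely on points, is the technical heart of the argument.
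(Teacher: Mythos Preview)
Your approach is genuinely different from the paper's. The paper gives a direct geometric proof: with $A \subset \Lambda^2 W^\vee$ the 5-dimensional space of linear equations of $X \subset \Gr(2,W)$ and $Y = \PP(A) \cap \Pf(W)$ the associated Pfaffian cubic, it introduces the incidence variety $R \subset \Gr(2,A) \times \Gr(4,W)$ of pairs $(A_2, W_4)$ with $A_2$ mapping to zero in $\Lambda^2 W_4^\vee$, and constructs mutually inverse isomorphisms $S(X) \cong R$ (via $(A_2, W_4) \mapsto X \cap \Gr(2,W_4)$) and $\Sigma(Y) \cong R$ (via $(A_2, W_4) \mapsto \PP(A_2)$). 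No derived categories enter the argument; the paper then remarks that the Fourier--Mukai route you propose does work in principle but that ``this verification is more complicated than the direct geometric proof given above, so we skip it.''

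Your proposal has a real gap in precisely that harder verification. In the genus-10 argument (Lemma~\ref{phis-ic}) one has $\CA_X \cong \D^b(Z)$ for a curve $Z$, and showing that $\Phi^*(I_C[-1])$ is a line bundle reduces to the pointwise $\Hom$ computation against skyscrapers $\CO_z$. Here $\CB_Y$ is not the derived category of a curve and there is no analogous test: you must show directly that $\Phi^*(I_C)$ is a \emph{sheaf} (concentrated in one cohomological degree), not merely an object of $\CB_Y$ with the correct numerical class, and your appeal to ``the only semistable sheaves on $Y$ with that Chern character are ideal sheaves of lines'' already presupposes this. Carrying it out honestly means unwinding the explicit kernel $I_Z(H_Y)$ on $X \times Y$ and controlling its pushforwards for every conic, which is exactly the work the paper declines to do. Note also a circularity: you invoke Proposition~\ref{proposition:conics} for connectedness and smoothness of $S(X)$ in genus~8, but in the paper's logic that proposition is deduced \emph{from} the present one via Theorem~\ref{theorem:S-vs-Sigma}. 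The numerical-class constancy can be salvaged (it depends only on the Hilbert polynomial of $C$), but the smoothness appeal cannot.
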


The proof of Proposition~\xref{proposition-v14-v3}
takes the rest of the subsection.
Recall that $X$ is a linear section of the Grassmannian $\Gr(2,6)$ of codimension 5,
see Table~\xref{table:Fanos-i-1}.
Let $W$ be a six-dimensional vector space
and
\begin{equation*}
A \subset \Lambda^2W^\vee
\end{equation*}
be the five-dimensional space of linear equations of
$X \subset \Gr(2,W)$. Then the associated cubic threefold $Y$ is defined as
\begin{equation}\label{eq:cubic-3fold}
Y = \PP(A) \cap \Pf(W) \subset \PP(\Lambda^2W^\vee),
\end{equation}
where $\Pf(W) \subset \PP(\Lambda^2W^\vee)$ is the Pfaffian cubic hypersurface.

In this case we construct an isomorphism $S(X) \cong \Sigma(Y)$ geometrically.
Denote by
\begin{equation*}
R \subset \Gr(2,A)\times \Gr(4,W)
\end{equation*}
the locus of pairs $(A_2,W_4)$
consisting of a two-dimensional subspace $A_2 \subset A$ and a four-dimensional subspace $W_4 \subset W$
such that the composition
\begin{equation*}
A_2 \hookrightarrow A \hookrightarrow \Lambda^2W^\vee \to \Lambda^2W_4^\vee
\end{equation*}
is the zero map.
In other words, $R \subset \Gr(2,A)\times \Gr(4,W)$ is the zero locus of the natural section of the vector
bundle~\mbox{$\CU_A^\vee \boxtimes \Lambda^2\CU_W^\vee$}, where $\CU_A$ is the tautological bundle on $\Gr(2,A)$
and $\CU_W$ is the tautological bundle on~$\Gr(4,W)$.

\begin{proposition}\label{conics-lines-v14}
There are isomorphisms $S(X) \cong R \cong \Sigma(Y)$.
\end{proposition}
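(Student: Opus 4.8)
The plan is to prove Proposition~\ref{conics-lines-v14} by constructing the two isomorphisms $S(X) \cong R$ and $R \cong \Sigma(Y)$ separately, exploiting the Pfaffian--Grassmannian correspondence that identifies the space of equations $A \subset \Lambda^2 W^\vee$ on both sides.

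First I would describe the map $S(X) \to R$. Given a conic $C \subset X \subset \Gr(2,W)$, its linear span $\langle C \rangle \cong \PP^2 \subset \PP(\Lambda^2 W)$ is a plane cutting the Grassmannian $\Gr(2,W)$ in a conic. One checks, using the classification of linear sections of $\Gr(2,W)$ of small dimension (a plane meeting $\Gr(2,W)$ in a conic is a standard configuration), that the union of lines in $W$ parameterized by points of $C$ spans a four-dimensional subspace $W_4 \subset W$, so $C \subset \Gr(2,W_4)$; equivalently, the conic $C$ lies on a sub-Grassmannian $\Gr(2,4) \subset \Gr(2,6)$ embedded by a four-space $W_4$. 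On the other hand, since $C \subset X$, the conic is annihilated by all of $A$; the refined statement is that there is a canonical two-dimensional subspace $A_2 \subset A$ consisting of those forms that vanish on the whole $\Lambda^2 W_4^\vee$-component — this is exactly the condition defining $R$. Thus $C \mapsto (A_2, W_4)$ gives a morphism $S(X) \to R$. Conversely, given $(A_2, W_4) \in R$, the vanishing of $A_2$ on $\Lambda^2 W_4$ together with the remaining three equations $A/A_2$ cuts out a conic inside $\Gr(2,W_4) \cong \Gr(2,4)$ (a codimension-$3$ linear section of a four-dimensional quadric is a conic), and this conic lies on $X$. One then checks these constructions are inverse to each other and work in families, giving $S(X) \cong R$.

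Next I would treat $R \cong \Sigma(Y)$ with the symmetric argument on the Pfaffian side. A line $\ell \subset Y \subset \PP(A) \cap \Pf(W)$ spans a $\PP(A_2)$ for a two-dimensional $A_2 \subset A$; by the Pfaffian geometry, a line contained in the Pfaffian cubic $\Pf(W)$ corresponds to a pencil of skew-symmetric forms on $W$ whose common kernel-type degeneracy locus produces a distinguished four-dimensional subspace $W_4 \subset W$ — concretely, a generic form in the pencil $A_2$ has rank $4$, and the condition for the whole pencil to lie in $\Pf(W)$ forces the existence of $W_4$ on which all forms of $A_2$ restrict trivially, i.e. the restriction $A_2 \to \Lambda^2 W_4^\vee$ vanishes. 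This is again precisely the incidence defining $R$, so $\ell \mapsto (A_2,W_4)$ defines a morphism $\Sigma(Y) \to R$; the inverse sends $(A_2,W_4) \in R$ to the line $\PP(A_2) \subset \PP(\Lambda^2 W^\vee)$, which then automatically lies in $\Pf(W)$ since every form in $A_2$, being zero on $\Lambda^2 W_4$, has a $4$-dimensional isotropic subspace hence rank $\le 4$, hence is Pfaffian-zero. Verifying that these are mutually inverse morphisms of schemes completes the second isomorphism, and combining gives $S(X) \cong R \cong \Sigma(Y)$.

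The main obstacle I expect is the precise linear-algebra bookkeeping in both directions: showing that the four-dimensional span $W_4$ attached to a conic on $X$ (respectively to a line on $Y$) is well-defined, of the correct dimension, and varies algebraically, and dually that the two-dimensional subspace $A_2$ is canonically determined. Degenerate cases — reducible or non-reduced conics on $X$, and special lines on $Y$ — need separate checking, and one must confirm that $R$ is reduced of the expected dimension (two) so that the bijective morphisms, being between smooth projective surfaces, are genuine isomorphisms; here one can invoke smoothness of $S(X)$ and $\Sigma(Y)$ from Proposition~\ref{hilb-lines-explicit} and the earlier analysis, or compute the tangent spaces to $R$ directly via the Koszul resolution of the section of $\CU_A^\vee \boxtimes \Lambda^2\CU_W^\vee$. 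An alternative, cleaner route for the isomorphism $R \cong \Sigma(Y)$ is to observe that both $S(X)$ and $\Sigma(Y)$ are realized as moduli spaces of objects in the equivalent categories $\CA_X \cong \CB_Y$ of Theorem~\ref{theorem-ax-by}, matching the ideal sheaves of conics with ideal sheaves of lines, which would make the identification canonical; but the geometric construction via $R$ has the advantage of being elementary and self-contained.
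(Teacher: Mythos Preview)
Your approach is correct and matches the paper's: both isomorphisms are built geometrically, sending $(A_2,W_4) \in R$ to the conic $X \cap \Gr(2,W_4)$ and to the line $\PP(A_2) \subset Y$ respectively, with the inverses you describe. On the obstacles you flag, the paper recovers $W_4$ from a conic $C$ not via the linear span but by restricting the tautological sequence $0 \to \CU_X^\perp \to W^\vee \otimes \CO_X \to \CU_X^\vee \to 0$ to $C$ and showing $\dim H^0(C,\CU_X^\perp\vert_C) = 2$ (using Lefschetz to exclude planes in $X$), which handles degenerate conics uniformly; and for the existence and uniqueness of the $W_4$ attached to a line on $Y$ it simply invokes \cite[Appendix~A]{KuMaMa}.
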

\begin{proof}
Given a point $(A_2,W_4) \in R$ we associate to it a conic in $X$ as follows.
Since the space $A_2$ maps to zero in $\Lambda^2W_4^\vee$, the image of $A$
in $\Lambda^2W_4^\vee$ is at most three-dimensional, hence the intersection
\begin{equation}\label{eq:X-cap-Gr}
X \cap \Gr(2,W_4) \subset \Gr(2,W)
\end{equation}
is a linear section of $\Gr(2,W_4)$ of codimension at most $3$. Since $\Gr(2,W_4)$ is a four-dimensional quadric,
this intersection is either a conic, or a plane, or a two-dimensional quadric, or has dimension larger than~$2$.
But by Lefschetz theorem $X$ contains neither planes, nor two-dimensional quadrics,
and is not contained in $\Gr(2,W_4)$.
Hence the intersection~\eqref{eq:X-cap-Gr} is a conic. Therefore, we have a map
\begin{equation*}
s\colon R \to S(X),
\quad
(A_2,W_4) \mapsto X \cap \Gr(2,W_4).
\end{equation*}
For the inverse map consider the tautological sequence
$$0 \to \CU_X^\perp \to W^\vee\otimes\CO_X \to \CU_X^\vee \to 0$$
and restrict it to a conic $C \subset X$:
\begin{equation*}
0 \to \CU_X^\perp\vert_C \to W^\vee\otimes\CO_C \to \CU_X^\vee\vert_C \to 0.
\end{equation*}
Since $H^1(C,\CO_C) = 0$, it follows that $H^1(C,\CU_X^\vee\vert_C) = 0$,
and so by Riemann--Roch theorem one has
$\dim H^0(C,\CU_X^\vee\vert_C) = 4$. Therefore
the subspace
$$H^0(C,\CU^\perp_X\vert_C) \subset H^0(C,W^\vee\otimes\CO_C) = W^\vee$$
is at least two-dimensional. Clearly, any linear function from this space vanishes
on any two-dimensional subspace $U \subset W$ parameterized by a point of
the conic $C$.
So, if this space is at least three-dimensional then $C$ is contained in the linear section $X \cap \Gr(2,3)$
of~\mbox{$\Gr(2,3)\cong\PP^2$},
hence this linear section is $\PP^2$, which gives a contradiction since $X$ cannot contain a plane by Lefschetz theorem. This means that
$H^0(C,\CU^\perp_X\vert_C)$ is a two-dimensional subspace in~$W^\vee$ and its annihilator is a four-dimensional subspace $W_4 \subset W$.
Since a conic in a four-dimensional quadric
$\Gr(2,W_4) \subset \PP(\Lambda^2W^4)$
is a linear section of codimension 3,
it follows that at least a two-dimensional subspace of linear equations of~$X$ restricts trivially to $\Lambda^2 W_4$.
Conversely, if a three-dimensional space of equations would restrict trivially to~$\Lambda^2 W_4$,
then the intersection~\eqref{eq:X-cap-Gr} would contain a two-dimensional quadric which is again forbidden by Lefschetz theorem.
Thus, the space of equations restricting trivially to $\Lambda^2W_4$ is a two-dimensional subspace $A_2 \subset A$, the pair $(A_2,W_4)$ is a point of~$R$,
and~$C \mapsto (A_2,W_4)$ is a morphism $S(X) \to R$ inverse to the morphism $s$ above.

On the other hand, given a point $(A_2,W_4) \in R$ we can associate with it the line
\begin{equation*}
L = \PP(A_2) \subset \PP(A).
\end{equation*}
Note that by definition of $R$ each skew form in $A_2$ has a four-dimensional isotropic subspace $W_4$ and hence is degenerate. Thus $L \subset \Pf(W)$, hence $L \subset Y$,
so that the map
\begin{equation*}
\sigma\colon R \to \Sigma(Y),
\quad
(A_2,W_4) \mapsto \PP(A_2) \subset Y
\end{equation*}
is well defined. To construct the inverse map we note that by~\cite[Appendix~A]{KuMaMa}
for each line
$$L = \PP(A_2) \subset Y$$
there is a unique four-dimensional subspace $W_4 \subset W$ isotropic for all
skew forms in $L$. Thus $L \mapsto (A_2,W_4)$ is a morphism $\Sigma(Y) \to R$ which is clearly inverse to the morphism $\sigma$ above.
\end{proof}

\begin{remark}
One can also describe the isomorphism of Proposition~\xref{conics-lines-v14} via derived categories.
For this note that by~\cite{kuznetsov2004derived,kuznetsov2006homological} there is an equivalence $\CA_X \cong \CB_Y$.
Moreover, the equivalence is given by the Fourier--Mukai functor
\begin{equation*}
\Phi = \Phi_{I_Z(H_Y)}\colon \D^b(X) \to \D^b(Y)
\end{equation*}
with the kernel being the $\CO_Y(H_Y)$-twist of the ideal sheaf $I_Z$ of an irreducible
four-dimensional subvariety
\begin{equation*}
Z \subset X\times Y
\end{equation*}
of all points $(U,a)$ such that the kernel of the skew form $a \in A$ intersects the two-dimensional subspace $U \subset W$.
One can check that under this functor the ideal sheaf of a conic on $C$ goes to the ideal sheaf of the corresponding
line on $Y$. However, this verification is more complicated than the direct geometric proof given above,
so we skip~it.
\end{remark}

\newcommand{\etalchar}[1]{$^{#1}$}
\def\cprime{$'$}

\newblock

\end{document}